\newtheorem{theorem}{Theorem}[section]
\newtheorem{corollary}[theorem]{Corollary}
\newtheorem{lemma}[theorem]{Lemma}
\newtheorem{proposition}[theorem]{Proposition}
\theoremstyle{definition}
\newtheorem{conjecture}[theorem]{Conjecture}
\theoremstyle{remark}
\newtheorem{remark}[theorem]{Remark}
\newcommand{\eps}{\varepsilon}
\newcommand{\calB}{\mathcal{B}}
\newcommand{\calF}{\mathcal{F}}
\newcommand{\calG}{\mathcal{G}}
\newcommand{\calK}{\mathcal{K}}
\newcommand{\calL}{\mathcal{L}}
\newcommand{\calN}{\mathcal{N}}
\newcommand{\calR}{\mathcal{R}}
\newcommand{\R}{\mathds{R}}
\newcommand{\Rd}{\mathds{R}^d}
\newcommand{\N}{{\mathds{N}}}
\newcommand{\Bb}{\calB_b(\Rd)}
\newcommand{\tbfs}[1]{\tilde{b}_{#1 1}(x,y)}
\newcommand{\E}{\mathbb{E}}
\newcommand{\p}{\mathbb{P}}
\DeclareMathOperator{\supp}{supp}
\DeclareMathOperator{\dist}{dist}
\title[SDEs driven by L{\'e}vy processes]{Semigroup properties of solutions of SDEs driven by L{\'e}vy processes with independent coordinates}
\author[T. Kulczycki]{Tadeusz Kulczycki}
\author[M. Ryznar]{Micha{\l} Ryznar}
\address{Faculty of Pure and Applied Mathematics, Wroc{\l}aw University of Science and Technology, Wyb. Wyspia{\'n}skiego 27, 50-370 Wroc{\l}aw, Poland.}
\email{Tadeusz.Kulczycki@pwr.edu.pl}
\email{Michal.Ryznar@pwr.edu.pl}
\begin{document}

\begin{abstract} 
We study the stochastic differential equation $dX_t = A(X_{t-}) \, dZ_t$, $ X_0 = x$,
where $Z_t = (Z_t^{(1)},\ldots,Z_t^{(d)})^T$ and $Z_t^{(1)}, \ldots, Z_t^{(d)}$ are independent one-dimensional L{\'e}vy processes with characteristic exponents $\psi_1, \ldots, \psi_d$. We assume that each $\psi_i$ satisfies a weak lower scaling condition WLSC($\alpha,0,\underline{C}$), a weak upper scaling condition WUSC($\beta,1,\overline{C}$) (where $0< \alpha \le \beta < 2$) and some additional regularity properties. We consider two mutually exclusive assumptions: either (i) all $\psi_1, \ldots, \psi_d$ are the same and $\alpha, \beta$ are arbitrary, or (ii) not all $\psi_1, \ldots, \psi_d$ are the same and $\alpha > (2/3)\beta$. We also assume that the determinant of $A(x) = (a_{ij}(x))$ is bounded away from zero, and $a_{ij}(x)$ are bounded and Lipschitz continuous. In both cases (i) and  (ii) we prove that for any fixed $\gamma \in (0,\alpha) \cap (0,1]$ the semigroup $P_t$ of the process $X$ satisfies $|P_t f(x) - P_t f(y)| \le c t^{-\gamma/\alpha} |x - y|^{\gamma} ||f||_\infty$ for arbitrary bounded Borel function $f$. We also show the existence of a transition density of the process $X$.
\end{abstract}

\maketitle

\section{Introduction}
We study the following stochastic differential equation
\begin{equation}
\label{main}
dX_t = A(X_{t-}) \, dZ_t, \quad X_0 = x \in \R^d. 
\end{equation}
We make the following assumptions on a family of matrices $A = (A(x), x \in \R^d)$ and a process $Z = (Z_t, t \ge 0)$.

\vskip 6pt

{\bf{Assumptions (A0).}}
$A(x) = (a_{ij}(x))$ is a $d \times d$ matrix for each $x \in \R^d$ ($d \in \N$, $d \ge 2$). There are constants $\eta_1, \eta_2, \eta_3 > 0$, such that for any $x, y \in \R^d$, $i, j \in \{1,\ldots,d\}$
\begin{equation}
\label{bounded}
|a_{ij}(x)| \le \eta_1,
\end{equation}
\begin{equation}
\label{determinant}
\det(A(x)) \ge \eta_2,
\end{equation}
\begin{equation}
\label{Lipschitz}
|a_{ij}(x) - a_{ij}(y)| \le \eta_3 |x - y|.
\end{equation}
For notational convenience we may and do assume that $\eta_1, \eta_3\ge 1$.
\vskip 6pt

{\bf{Assumptions (Z0).}}
$Z_t = (Z_t^{(1)},\ldots,Z_t^{(d)})^T$, where $Z_t^{(1)}, \ldots, Z_t^{(d)}$ are independent one-dimensional L{\'e}vy processes (not necessarily identically distributed). For each $i \in \{1,\ldots,d\}$  the characteristic exponent $\psi_i$ of the process $Z_t^{(i)}$ is given by
$$
\psi_i(\xi) = \int_{\R} (1 - \cos(\xi x)) \nu_i(x) \, dx,
$$
where $\nu_i(x)$ is the density of a symmetric, infinite L{\'e}vy measure (i.e. $\nu_i:\R\setminus\{0\} \to [0,\infty)$, $\int_{\R} (x^2 \wedge 1) \nu_i(x) \, dx < \infty$, $\int_{\R} \nu_i(x) \, dx = \infty$, $\nu_i(-x) = \nu_i(x)$ for $x \in \R\setminus\{0\}$). There exists $\eta_4 > 0$ such that $\nu_i \in C^1(0,\eta_4)$, $\nu_i'(x) < 0$ for $x \in (0,\eta_4)$ and $-\nu_i'(x)/x$ is decreasing on $(0,\eta_4)$. 
$\psi_i$ satisfies a weak lower scaling condition WLSC($\alpha,0,\underline{C}$) and a weak upper scaling condition WUSC($\beta,1,\overline{C}$) for some constants $0 < \alpha \le \beta < 2$, $\underline{C}, \overline{C} > 0$ (the definitions of WLSC and WUSC are presented in Section 2).

It is well known that under these assumptions SDE (\ref{main}) has a unique strong solution $X$, see e.g. \cite[Theorem 34.7 and Corollary 35.3]{M1982}. By \cite[Corollary 3.3]{SS2010} $X$ is a Feller process.

In the paper we will consider two mutually exclusive assumptions:

\vskip 6pt

{\bf{Assumptions (Z1).}} The process $Z$ satisfies assumptions (Z0). All $\psi_1, \ldots, \psi_d$ are the same.

\vskip 6pt

{\bf{Assumptions (Z2).}} The process $Z$ satisfies assumptions (Z0). Not all $\psi_1, \ldots, \psi_d$ are the same. $\alpha > (2/3) \beta$.

Put $\nu_0(x) = (\nu_1(x),\ldots,\nu_d(x))$. Let $\E^x$ denote the expected value of the process $X$ starting from $x$ and $\Bb$ denote the set of all Borel bounded functions $f: \R^d \to \R$. For any $t \ge 0$, $x \in \R^d$ and $f \in \Bb$ we put 
\begin{equation}
\label{semigroup}
P_t f(x) = \E^x f(X_t).
\end{equation}

The main result of this paper is the following theorem.
\begin{theorem} 
\label{mainthm} Let $A$ satisfy (A0), $Z$ satisfy (Z1) or (Z2), $X$ be the solution of (\ref{main}) and $P_t$ be given by (\ref{semigroup}). Then for any $\gamma \in (0,\alpha) \cap (0,1]$, $\tau > 0$, $t \in (0,\tau]$, $x, y \in \R^d$ and $f \in \Bb$ we have
\begin{equation}
\label{Holder}
|P_t f(x) - P_t f(y)| \le c t^{-\gamma/\alpha} \, |x - y|^{\gamma} \, ||f||_\infty,
\end{equation}
where $c$ depends on $\gamma, \tau, \alpha, \beta, \underline{C}, \overline{C}, d, \eta_1, \eta_2, \eta_3, \eta_4, \nu_0$.
\end{theorem}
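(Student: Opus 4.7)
I would prove (\ref{Holder}) by first establishing it for frozen-coefficient processes $Y_t^A = A Z_t$ with $A$ a constant matrix satisfying (A0), and then transferring the estimate to the SDE (\ref{main}) via a Levi/parametrix construction of the transition density of $X$. For the frozen problem the crucial ingredient is a gradient bound
\[
\int_{\Rd} |\nabla_z p^A(t,z)|\, dz \le c\, t^{-1/\alpha}, \qquad t \in (0,\tau],
\]
uniform in $A$ over the admissible class. Because the coordinates $Z^{(i)}$ are independent, $p^A(t,z) = |\det A|^{-1}\prod_{i=1}^d p_i\bigl(t,(A^{-1}z)_i\bigr)$, where $p_i$ is the density of $Z_t^{(i)}$, so the estimate reduces to the one-dimensional bound $\int_{\R}|\partial_z p_i(t,z)|\,dz \le c\,t^{-1/\alpha}$. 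The latter follows from WLSC($\alpha,0,\underline{C}$) (which pins down the scale on which $p_i$ concentrates) together with the monotonicity of $-\nu_i'(x)/x$ near zero assumed in (Z0). Interpolating the resulting Lipschitz bound for the frozen semigroup $Q_t^A f(x) := \int f(z)\, p^A(t, z-x)\,dz$ against the trivial $2\|f\|_\infty$ bound yields (\ref{Holder}) in the frozen case for every $\gamma \in (0,\alpha)\cap(0,1]$.

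\textbf{Parametrix for the full SDE.} Let $\calL_x$ denote the Lévy-type generator of $X$ frozen at $x$. Setting $p_0(t;x,y) := p^{A(y)}(t, y - x)$ and looking for the transition density of $X$ in Levi's form
\[
p^X(t;x,y) = p_0(t;x,y) + \int_0^t\!\!\int_{\Rd} p_0(t-s; x, z)\, \Phi(s; z, y)\,dz\,ds
\]
reduces the problem to a Volterra equation $\Phi = R + R * \Phi$ whose kernel is $R(t;x,y) = (\calL_x - \calL_y) p_0(t;\,\cdot\,, y)(x)$. The Lipschitz hypothesis (\ref{Lipschitz}) turns into a cancellation of the form $|R(t;x,y)| \le c\, |x-y|\, t^{-1+\theta}\rho(t, y-x)$ for an integrable profile $\rho$ and an exponent $\theta > 0$; summing the Neumann series on $(0,\tau]$ builds $p^X$ (which also proves the secondary existence-of-density claim) and transfers from $p_0$ a Hölder modulus in $x$ of exactly the form in (\ref{Holder}). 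Integrating against $f \in \Bb$ then gives the theorem.

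\textbf{Main obstacle.} The delicate case is the anisotropic setting (Z2). When the $\psi_i$ differ, the coordinates of $A(y) Z_t$ cease to be independent, so $p^{A(y)}$ and its Lipschitz dependence on $y$ must be controlled simultaneously across all intrinsic scales between $t^{1/\beta}$ and $t^{1/\alpha}$. The $|x-y|$ improvement in $R$ is not enough on its own to overcome the worse time-singularity that anisotropy introduces, and the exponent $\theta$ in the bound on $R$ is strictly positive exactly when the spread of scales is not too large, which gives the threshold $\alpha > (2/3)\beta$. In case (Z1) the common exponent removes the anisotropy entirely and no such restriction is needed. The main analytic work is therefore the careful, anisotropic bookkeeping of the density $p^{A(y)}$, of its gradient, and of its Lipschitz dependence on the matrix $A(y)$, uniformly in $y$ over $\Rd$.
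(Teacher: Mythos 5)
Your overall architecture — freeze coefficients, write $p^{A(y)}(t,z)=|\det A(y)|^{-1}\prod_i p_i(t,(A(y)^{-1}z)_i)$ using independence, then sum a Levi/parametrix Neumann series whose error kernel $R=(\calL_x-\calL_y)p_0$ gains a factor from the Lipschitz hypothesis — is indeed the skeleton of the paper's proof, and your heuristic for the $\alpha>(2/3)\beta$ threshold (the spread of scales in the anisotropic case) matches the role it plays in making the time-singularity exponent $\sigma$ strictly less than $1$. However, there is a genuine structural step missing and two further gaps.

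First, the paper does \emph{not} apply Levi's method to the full generator $\calK$. It first truncates each $\nu_i$ to $\mu_i^{(\delta)}$, writes $\calK=\calL+\calR$ with $\calL$ the truncated generator and $\calR$ the tail, constructs via the parametrix a heat kernel $u(t,x,y)$ only for $\calL$, and then builds the full semigroup by a perturbation series $T_t=e^{-\lambda_0 t}\sum_n\Psi_{n,t}$ which ``adds back'' the long jumps (Section 5). This truncation is not a convenience: it is what forces the $w$-integration inside $q_0$ onto the compact set $\{|w|\le 2\delta\}$, so that the change-of-variables map $\Psi_x$ of Lemma \ref{intA} applies and Corollary \ref{int10} yields $\int|q_0(t,x,y)|\,dy\le ct^{-\sigma}$. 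It also gives the exponential decay of $g_t^{(\delta)}$ (Lemma \ref{gtht}), without which the frozen density may only have polynomial tails and the series estimates degenerate. Your proposal, by running the parametrix on the untruncated operator, faces integrals of differences of frozen densities against $\nu_i$ over all of $\R$, and the Lipschitz gain $|(a_i(x)-a_i(y))w|\le\eta_3|x-y||w|$ produces a factor $|w|$ that is not integrable against $\nu_i$ when $\alpha\le1$. The truncation-plus-perturbation decomposition is exactly what sidesteps this.

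Second, the claimed \emph{pointwise} bound $|R(t;x,y)|\le c|x-y|\,t^{-1+\theta}\rho(t,y-x)$ is too optimistic and is not what the paper obtains. The pointwise estimate in Proposition \ref{integralq02} is the much worse $|q_0|\le ct^{-(d+\beta)/\alpha}$; the useful gain is only the integral estimate $\int|q_0|\,dy\le ct^{-\sigma}$, and it is produced not by factoring out $|x-y|$ pointwise but by the change of variables $(w,y)\mapsto\Psi_x(w,y)$ and the moment bounds in Lemma \ref{momenty}, which exploit the Lipschitz cancellation only after integration over $y$. This distinction is where the real work lives and it is not visible from your sketch.

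Third, building a kernel $p^X$ by summing a Neumann series does not by itself identify it as the transition density of the SDE solution $X$. The paper verifies that $T_t$ solves the Kolmogorov equation on $C^2_c$ (Proposition \ref{martingaleproblem}), that it is a Feller semigroup (Theorem \ref{FellerT}), and then invokes weak uniqueness for (\ref{main}) via Yamada--Watanabe and the martingale-problem correspondence to conclude $T_t=P_t$. Your proposal ends with ``integrating against $f$ gives the theorem'' and omits this identification step entirely.
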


This gives the strong Feller property of the semigroup $P_t$. Note that the weaker result namely the strong Feller property of the resolvent $R_s f (x) = \int_0^{\infty} e^{-st} P_t f(x) \, dt$ ($s > 0$) follows from \cite[Theorem 3.6]{SW2012}. Strong Feller property for SDEs driven by additive cylindrical L{\'e}vy processes have been studied recently (see e.g. \cite{PZ2011,DPSZ2016}).

We also show the existence of a transition density of the process $X$.
\begin{proposition} 
\label{heatkernel} 
Let $A$ satisfy (A0), $Z$ satisfy (Z1) or (Z2) and $X$ be the solution of (\ref{main}). Then the  process $X$ has a lower semi-continuous transition density function $p(t,x,y)$, $p: (0,\infty)\times\Rd\times\Rd \to [0,\infty]$ with respect to the Lebesgue measure on $\Rd$.  
\end{proposition}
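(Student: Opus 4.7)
The proposition has two components: (a) absolute continuity $P_t(x,\cdot)\ll\lambda$ (Lebesgue) for every $t>0$ and $x\in\Rd$, and (b) the production of a lower-semi-continuous version of the density. Part (b) is abstract: given (a), by the Lebesgue differentiation theorem,
\[
p(t,x,y) := \liminf_{r\to 0^+}\frac{P_t(x,B(y,r))}{|B(y,r)|}
\]
is a version of $dP_t(x,\cdot)/d\lambda$ at Lebesgue-a.e.\ $y$. Realized as a liminf over a countable sequence of radii, it is lower semi-continuous in $(t,x,y)$: in $x$ by the H\"older estimate of Theorem \ref{mainthm}, in $y$ because $B(y,r)$ varies continuously in $y$, and in $t$ by the c\`adl\`ag property of $X$ combined with the Portmanteau theorem applied to the open ball $B(y,r)$.

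The real content is therefore (a). The starting point is that $Z_t$ itself has a density on $\Rd$: WLSC($\alpha,0,\underline C$) forces $\psi_i(\xi)\gtrsim|\xi|^\alpha$ as $|\xi|\to\infty$, so $e^{-t\psi_i}$ is integrable on $\R$, each $Z_t^{(i)}$ has a bounded continuous density, and by independence $Z_t$ has a product density $q_t$ on $\Rd$. Consequently, for any $z_0\in\Rd$, the frozen process $Y_t^{z_0}:=z_0+A(z_0)Z_t$ has an explicit absolutely continuous transition density, invoking the invertibility of $A(z_0)$ guaranteed by (\ref{determinant}).

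To transfer this absolute continuity to $X$, let $N\subset\Rd$ be a Lebesgue null set and set $g(x):=P_t(x,N)$. By Theorem \ref{mainthm}, $g$ is H\"older continuous, so it suffices to prove $g(x_0)=0$ at a single point $x_0$ and for all sufficiently small $t$; the extension to arbitrary $t>0$ is then by the Markov property applied to $P_t=P_{t/n}\circ\dots\circ P_{t/n}$. For small $t$, decomposing $X_t = x_0 + A(x_0)Z_t + R_t$ with $R_t := \int_0^t(A(X_{s-})-A(x_0))\,dZ_s$, the Lipschitz condition (\ref{Lipschitz}) together with moment bounds on $Z$ coming from WUSC($\beta,1,\overline C$) make $R_t$ of strictly smaller order than $A(x_0)Z_t$. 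Combining this with the density of $A(x_0)Z_t$ via a perturbation argument --- for example, based on Fourier inversion applied to $\Ee^{x_0}[e^{i\xi\cdot X_t}]$, or on comparison with the parametrix/Picard approximation underlying Theorem \ref{mainthm} --- yields a bounded continuous density of $X_t^{x_0}$, and in particular $g(x_0)=0$.

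\textbf{Main obstacle.} The delicate step is the small-time perturbation: a small additive perturbation of an absolutely continuous random vector is not automatically absolutely continuous in general, so one cannot simply say ``$X_t = (x_0+A(x_0)Z_t) + R_t$ is an a.c.\ vector plus something small, hence a.c.'' One needs either an analytic bound on the characteristic function of $X_t^{x_0}$ --- which is not a clean product/convolution because $X$ is not a sum of independent pieces --- or the constructive parametrix-type scheme that lies behind the proof of Theorem \ref{mainthm}, which produces a quantitative density candidate directly and from which (a) and the lower-semi-continuous representation $p(t,x,y)$ follow at once.
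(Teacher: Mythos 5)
Your proposal is structured differently from the paper's and has two genuine gaps, one in each half.

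\textbf{Part (a), absolute continuity.} You outline a small-time perturbation argument (decompose $X_t = x_0 + A(x_0)Z_t + R_t$, use the density of the frozen process, control $R_t$) but you candidly note you cannot close it without either a characteristic-function bound or ``the constructive parametrix-type scheme that lies behind the proof of Theorem \ref{mainthm}.'' That second option is exactly what the paper does: absolute continuity does not require any new argument in Proposition \ref{heatkernel} because it has already been established as part of the semigroup construction. In Theorem \ref{FellerT}(vi) the operator $T_t$ is shown to have a kernel $p(t,x,\cdot)$ with $T_t f(x) = \int p(t,x,y)f(y)\,dy$, and the identification $P_t = T_t$ in the proof of Theorem \ref{mainthm} (via the martingale problem and weak uniqueness) transfers this to $P_t$, giving (\ref{ExAX}). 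So your ``main obstacle'' is not circumvented in the paper: it is resolved by the parametrix machinery that you only allude to.

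\textbf{Part (b), lower semi-continuity.} This is where there is a real error, not just an omission. You claim that
\[
p(t,x,y) := \liminf_{r\to 0^+}\frac{P_t(x,B(y,r))}{|B(y,r)|},
\]
taken over a countable sequence of radii, is lower semi-continuous because each term is. That inference is false: the $\liminf$ of lower semi-continuous (even continuous) functions is in general neither lsc nor usc, since $\liminf_n f_n = \sup_N \inf_{n\ge N} f_n$, and the inner infimum destroys lower semi-continuity. A concrete counterexample: for the measure with density $1_{[0,1]}$ on $\R$, the Lebesgue-differentiation $\liminf$ equals $1$ on $(0,1)$, $1/2$ at $0$ and $1$, and $0$ elsewhere, which fails lower semi-continuity at the endpoints. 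So the abstract route via Lebesgue differentiation does not produce an lsc version; one has to build it. The paper does this explicitly: it represents $p(t,x,y) = e^{-\lambda_0 t}\sum_{n\ge 0} u_n(t,x,y)$, where $u_0 = u$ is the (jointly continuous and, by Proposition \ref{positivity}, nonnegative) parametrix kernel and $u_n$ comes from the long-jump perturbation series (\ref{Psi_nt}); it then defines truncations $u_n^{(k)}$ by capping at level $k$, checks that each $(t,x,y)\mapsto u_n^{(k)}(t,x,y)$ is continuous, and observes that $u_n^{(k)}\uparrow u_n$ as $k\to\infty$, so $\tilde p = \sup_k e^{-\lambda_0 t}\sum_n u_n^{(k)}$ is a monotone supremum of continuous functions, hence lower semi-continuous. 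Your approach to (b) should be replaced by this constructive one (or by any other argument producing a genuine lsc representative, which cannot come for free from Lebesgue differentiation alone).
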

Recently, the existence of densities for stochastic differential equations driven by L{\'e}vy processes have been studied in \cite{FJR2018} (cf. also \cite{DF2013}). Our existence results and the existence results from \cite{FJR2018} have some intersection. However, their results do not imply ours and our results do not imply theirs. Some more comments on this are in the Remark \ref{existence}.

One may ask about the boundedness of $p(t,x,y)$. It turns out that for some choices of matrices $A$ and processes $Z$ (satisfying assumptions (A0) and (Z1) respectively) and for some $t > 0$ and $x \in \Rd$ we might have $p(t,x,\cdot) \notin L^{\infty}(\R^d)$ (see Remarks 4.23 and 4.24 in \cite{KRS2018}). Nevertheless we have the following regularity result.
\begin{theorem} 
\label{PtL1Linfty} 
Let $A$ satisfy (A0), $Z$ satisfy (Z1) or (Z2), $X$ be the solution of (\ref{main}) and $P_t$ be given by (\ref{semigroup}).
Then for any $\gamma \in (0,\alpha/(d + \beta - \alpha))$, $\tau > 0$, $t \in (0,\tau]$, $x \in \R^d$ and $f \in L^1(\R^d) \cap L^{\infty}(\R^d)$ we have
\begin{equation*}
|P_t f(x)| \le c t^{-\gamma (d + \beta - \alpha)/\alpha} \, \|f\|_\infty^{1-\gamma} \, \|f\|_1^{\gamma},
\end{equation*}
where $c$ depends on $\gamma, \tau, \alpha, \beta, \underline{C}, \overline{C}, d, \eta_1, \eta_2, \eta_3, \eta_4, \nu_0$.
\end{theorem}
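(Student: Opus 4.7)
The plan is to combine the H\"older regularity from Theorem~\ref{mainthm} with an $L^1$-control on the semigroup extracted from Proposition~\ref{heatkernel}, and then run a mean-plus-oscillation argument on a ball whose radius is optimized at the end.

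Set $u := P_t f$. By Proposition~\ref{heatkernel} we may write $u(x) = \int_{\R^d} p(t,x,y) f(y)\,dy$, while Theorem~\ref{mainthm} yields, for any $\gamma' \in (0,\alpha)\cap(0,1]$,
\[
|u(x) - u(y)| \le c\,t^{-\gamma'/\alpha}\,|x-y|^{\gamma'}\,\|f\|_\infty .
\]
For any $x_0 \in \R^d$ and $r > 0$, the triangle inequality on $B(x_0,r)$ gives
\[
|u(x_0)| \le \frac{1}{|B(x_0,r)|} \int_{B(x_0,r)} |u(y)|\,dy + c\, t^{-\gamma'/\alpha}\, r^{\gamma'}\,\|f\|_\infty .
\]

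The mean term will then be controlled by Fubini:
\[
\int_{B(x_0,r)} |u(y)|\,dy \le \int_{\R^d} |f(z)| \left(\int_{B(x_0,r)} p(t,y,z)\,dy\right) dz \le K(t)\,\|f\|_1 ,
\]
where $K(t) := \sup_{z \in \R^d} \int_{\R^d} p(t,y,z)\,dy$ is the $L^1$-norm of the dual kernel. The crucial technical step will be an estimate of the form
\[
K(t) \le c\, t^{-(\beta-\alpha)/\alpha}, \qquad t \in (0,\tau].
\]
Under (Z1) (where $\alpha = \beta$) this collapses to a uniform bound $K(t) \le c$; under (Z2) the factor $t^{-(\beta-\alpha)/\alpha}$ encodes the gap between the lower and upper scaling exponents and should be extractable from the density estimates underlying Proposition~\ref{heatkernel}, together with WLSC, WUSC and the non-degeneracy of $\det A(x)$.

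Combining the two displays above gives
\[
|u(x_0)| \le c\, r^{-d}\, K(t)\,\|f\|_1 + c\, t^{-\gamma'/\alpha}\, r^{\gamma'}\,\|f\|_\infty ,
\]
and balancing the two summands by the choice $r^{d+\gamma'} = K(t)\,t^{\gamma'/\alpha}\,\|f\|_1/\|f\|_\infty$, together with the substitution $\gamma = \gamma'/(d+\gamma')$, produces the interpolation weights $\|f\|_\infty^{1-\gamma}\|f\|_1^\gamma$ and reduces the total $t$-exponent to
\[
-\frac{\gamma d}{\alpha} - \frac{\gamma(\beta-\alpha)}{\alpha} = -\frac{\gamma(d+\beta-\alpha)}{\alpha},
\]
which is exactly the exponent in the statement.

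The main obstacle is the $L^1$-control $K(t) \le c\,t^{-(\beta-\alpha)/\alpha}$: it requires a careful analysis of the dual kernel and should rest on the refined pointwise upper bounds for $p(t,x,y)$ that underlie Proposition~\ref{heatkernel}, especially in the anisotropic (Z2) setting. A secondary point is attaining the full range $\gamma \in (0, \alpha/(d+\beta-\alpha))$: the naive substitution $\gamma = \gamma'/(d+\gamma')$ with $\gamma' \in (0,\alpha)\cap(0,1]$ covers $\gamma < (\alpha \wedge 1)/(d+\alpha \wedge 1)$, and reaching the stated endpoint will require replacing the averaging by an $L^q$-H\"older interpolation (using $\|P_t f\|_q \le K(t)^{1/q}\|f\|_q$ from Riesz--Thorin together with $\|f\|_q \le \|f\|_1^{1/q}\|f\|_\infty^{1-1/q}$), which preserves the same exponent on $t$ but widens the admissible range of $\gamma$. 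Once the $L^1$-estimate is in hand, all remaining steps are elementary.
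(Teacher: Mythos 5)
Your proposal takes a genuinely different route from the paper, but it contains a real gap that prevents it from reaching the stated range of $\gamma$, and the key technical input you leave unestablished is in fact not the right quantity.

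First, the $L^1$-control. You posit $K(t):=\sup_z\int_{\R^d}p(t,y,z)\,dy\le c\,t^{-(\beta-\alpha)/\alpha}$ and say that under (Z1) this collapses because $\alpha=\beta$. That last claim is false: (Z1) only says all $\psi_i$ coincide; $\alpha$ and $\beta$ are still the WLSC/WUSC exponents from (Z0) and need not be equal. More importantly, the paper does not need a $t$-dependent $K(t)$ at all: Corollary~\ref{uintegral} yields $\int_{\R^d}|u(t,y,x)|\,dy\le c$ uniformly for $t\in(0,\tau]$, and propagating this through the series (\ref{Psi_nt0})--(\ref{Psi_nteps}) gives the uniform bound $K(t)\le c$. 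So the assumption you flag as the main obstacle is simply $K(t)\le c$; with it your computation actually produces the \emph{better} exponent $t^{-\gamma d/\alpha}$, and the factor $\beta-\alpha$ in the target exponent does not come from where you think.

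Second, and more seriously, the $\gamma$-range. Your substitution forces $\gamma=\gamma'/(d+\gamma')$ with $\gamma'\in(0,\alpha)\cap(0,1]$, so you only reach $\gamma<(\alpha\wedge 1)/(d+\alpha\wedge 1)$. For $\alpha>1$ this is always strictly smaller than $\alpha/(d+\beta-\alpha)$, and under (Z2) (where $2\alpha>\beta$) it is strictly smaller even when $\alpha\le 1$. Your proposed fix via Riesz--Thorin does not help: redoing the average with an $L^q$--H\"older split replaces the $\|f\|_1$-exponent by $\gamma'/(q\gamma'+d)$, which is \emph{decreasing} in $q$ and maximal at $q=1$, i.e.\ back to the original range. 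So the ball-averaging scheme, built on the H\"older estimate of Theorem~\ref{mainthm}, cannot reach the endpoint $\alpha/(d+\beta-\alpha)$.

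The paper's actual proof is structurally different. It works on the truncated semigroup $U_t$ and interpolates directly between the two kernel estimates in Corollary~\ref{uintegral}: the pointwise bound $|u(t,x,y)|\le c\,t^{1-(d+\beta)/\alpha}$ and the $L^1$-bounds $\int|u(t,x,y)|\,dy\le c$, $\int|u(t,y,x)|\,dy\le c$. From $|U_tf(x)|\le\min\!\bigl(t^{1-(d+\beta)/\alpha}\|f\|_1,\,c\|f\|_\infty\bigr)$ one gets $|U_tf(x)|\le c\,t^{-\gamma(d+\beta-\alpha)/\alpha}\|f\|_1^\gamma\|f\|_\infty^{1-\gamma}$ for every $\gamma\in[0,1]$. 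The restriction $\gamma<\alpha/(d+\beta-\alpha)$ then arises only when passing from $U_t$ to $T_t$: one must iterate $\Psi_{n,t}f=\int_0^t U_{t-s}\calN(\Psi_{n-1,s}f)\,ds$, and the convergence of $\int_0^t (t-s)^{-\gamma(d+\beta-\alpha)/\alpha}\,ds$ forces that exponent to be below $1$. Finally $T_t=P_t$ via (\ref{pttt}). So there is no averaging over balls, no H\"older-to-pointwise conversion, and no dependence on Theorem~\ref{mainthm}; the full range is reached because the interpolation is done at the kernel level, where it is unconstrained, and the only restriction is an integrability condition in the Duhamel iteration.
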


Note that we have been able to show only lower semi-continuity of $p(t,x,y)$. In fact, we believe that a stronger result is true.
\begin{conjecture}
Let $A$ satisfy (A0), $Z$ satisfy (Z1) or (Z2) and $X$ be the solution of (\ref{main}). Then the  process $X$ has a continuous transition density function $p(t,x,y)$, $p: (0,\infty)\times\Rd\times\Rd \to [0,\infty]$ with respect to the Lebesgue measure on $\Rd$. If $p(t_0,x_0,y_0) = \infty$ for some $t_0 > 0$, $x_0,y_0 \in \R^d$ then for all $t > 0$, $x \in \R^d$ we have $p(t,x,y_0) = \infty$.  
\end{conjecture}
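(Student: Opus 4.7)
The conjecture has two parts: (a) continuity of the density $p(t,x,y)$ as a map into $[0,\infty]$, and (b) the all-or-nothing dichotomy that if $p$ blows up at one point with second spatial coordinate $y_0$, then it is infinite on the whole slice $(0,\infty)\times\R^d\times\{y_0\}$. Both parts would build on the regularity already proved in the paper: H{\"o}lder continuity of $P_t f$ in $x$ (Theorem~\ref{mainthm}), the $L^1$--$L^\infty$ smoothing estimate (Theorem~\ref{PtL1Linfty}), and lower semi-continuity of $p$ (Proposition~\ref{heatkernel}).

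For continuity, my plan is to iterate the Chapman-Kolmogorov identity $p(t+s,x,y) = \int p(s,x,z)\,p(t,z,y)\,dz$. Theorem~\ref{PtL1Linfty} applied at time $s$ provides an $L^\infty$ bound on the $z$-integrand at finite points, and a further application of the H{\"o}lder estimate in the first variable should yield joint continuity of $p$ in $(x,y)$ on the complement of the singular set $S_{y_0} := \{(t,x) : p(t,x,y_0) = \infty\}$. Continuity at singular points (now viewed as continuity into $[0,\infty]$) should then follow from the lower semi-continuity in Proposition~\ref{heatkernel} by a routine approximation.

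For part (b), the cleanest approach would be a parabolic Harnack inequality for the function $u(t,x) := p(t,x,y_0)$, which satisfies in a weak sense the backward Kolmogorov equation associated with the generator of $X$; such an inequality would immediately propagate $u = \infty$ at a single point to the whole of $(0,\infty)\times\R^d$. A more elementary route uses Chapman-Kolmogorov directly: for $t > t_0$ and arbitrary $x_1 \in \R^d$,
\[
p(t,x_1,y_0) = \int_{\R^d} p(t-t_0,x_1,z)\,p(t_0,z,y_0)\,dz,
\]
and one argues that $p(t-t_0,x_1,\cdot)$ charges every neighborhood of $x_0$ while $p(t_0,\cdot,y_0)$ is non-integrable near $x_0$ (inherited from lower semi-continuity). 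The case $t < t_0$ is handled by the reversed identity plus iteration on time-steps.

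The main obstacle in both parts is the positivity/support claim for the transition kernel, namely that for every $s > 0$ and every $x_0, x_1 \in \R^d$ the measure $p(s,x_1,z)\,dz$ gives positive mass to every open neighborhood of $x_0$. For L{\'e}vy-driven SDEs with independent coordinates and non-degenerate $A$ this is widely expected, but a genuine support theorem is not available from the estimates of the present paper and likely requires additional structural assumptions on the $\nu_i$ beyond the scaling WLSC/WUSC. Equivalently, establishing the full parabolic Harnack inequality under only assumptions (Z1) or (Z2) appears to be the genuine difficulty of the conjecture, and is the reason why the authors state it as a conjecture rather than a theorem.
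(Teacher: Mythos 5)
This statement is labeled a \emph{conjecture} in the paper, and the authors explicitly present it as an open problem (``we believe that a stronger result is true''); the paper offers no proof, so there is nothing to compare your argument against. Your write-up is therefore best read as a roadmap rather than a proof, and you correctly diagnose why the gap is genuine: the missing ingredient is a positivity/support statement for the kernel (equivalently, a parabolic Harnack inequality for the generator $\calK$), and nothing in the paper's toolbox --- the H{\"o}lder estimate of Theorem~\ref{mainthm}, the $L^1$--$L^\infty$ smoothing of Theorem~\ref{PtL1Linfty}, or the lower semi-continuity of Proposition~\ref{heatkernel} --- supplies it under hypotheses (Z1)/(Z2) alone. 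The only caveat I would add concerns your continuity step: the paper notes (citing \cite{KRS2018}) that $p(t,x,\cdot)$ can fail to be in $L^\infty(\R^d)$, and $\int p(t,z,y)\,dz$ over the \emph{first} spatial argument need not be finite either, so the Chapman--Kolmogorov splitting $p(t+s,x,y)=\int p(s,x,z)p(t,z,y)\,dz$ combined with Theorem~\ref{PtL1Linfty} does not straightforwardly yield boundedness of the integrand; even the ``regular part'' of the continuity claim is not obviously within reach of the existing estimates. So the honest assessment is exactly the one the authors make: this is a conjecture, and your analysis of where a proof would have to go --- and why it cannot yet be completed --- is accurate.
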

The continuity should be understood here in the extended sense (as a function with values in $[0,\infty]$).

Estimates of the type $|P_t f(x) - P_t f(y)| \le c_t \, |x - y|^{\gamma} \, ||f||_\infty$ or $|\nabla_x P_tf(x)| \le c_{p,t} \|f\|_{p} $ (for $p > 1$) of semigroups of solutions of SDEs
\begin{equation}
\label{generalSDE}
dX_t = A(X_{t-}) \, dZ_t + b(X_t) \, dt, \quad X_0 = x \in \R^d
\end{equation}
driven by general L{\'e}vy processes $Z$ with jumps have attracted a lot of attention recently. Similarly, of great interest were 
H{\"o}lder or gradient estimates of transition densities of the semigroups of the type $|p(t,x,y) - p(t,z,y)| \le c_{t,y} |x-z|^{\gamma}$, $|\nabla_x p(t,x,y)| \le c_{t,y}$. A lot is known about such estimates when the driving process $Z$ has a non-degenerate diffusion part \cite{XZ2017}. Another well studied case is when $Z$ is a subordinated Brownian motion \cite{WXZ2015}. There are also results for pure-jump L{\'e}vy processes in $\R^d$ such that their L{\'e}vy measure satisfies $\nu(dz) \ge c 1_{|z| \le r} |z|^{-d-\alpha}$ for some $\alpha \in (0,2)$ and $c, r > 0$ \cite{LW2018}. The typical techniques are the coupling method, the use of the Bismut-Elworthy-Li formula or the Levi (parametrix) method. 

Much more demanding case is when the L{\'e}vy measure of the driving process $Z$ is singular. The above gradient type estimates have been studied for SDEs driven by additive cylindrical L{\'e}vy processes (i.e. when $A \equiv I$ and $b \not\equiv 0$ in (\ref{generalSDE})) \cite{WZ2015}. The above H{\"o}lder (or Lipschitz) type estimates for SDEs  driven by processes $Z$ with singular L{\'e}vy measures were also studied in the case when matrices $A(x)$ were diagonal \cite{KR2017}, \cite{LW2018}. The case when the L{\'e}vy measure of the driving process $Z$ is singular and matrices $A(x)$ are not diagonal is much more difficult (heuristically it corresponds to rotations of singular jumping measures). The first important step in understanding this case was done in \cite{KRS2018} in which it was assumed that the driving process $Z$ is a cylindrical $\alpha$-stable process in $\R^d$ with $\alpha \in (0,1)$.
 
The proof of the main result Theorem \ref{mainthm} is based on ideas from \cite{KRS2018}. Similarly as in \cite{KRS2018} we first truncate the L{\'e}vy measure of the process $Z$. Then, as in \cite{KRS2018}, we construct the semigroup of the solution of (\ref{main}), driven by the process with truncated L{\'e}vy measure using the Levi method. Finally, we construct the semigroup of the solution of (\ref{main}), driven by the not truncated process, by (roughly speaking) adding long jumps to the truncated process. 

Nevertheless, there are big differences between this paper and \cite{KRS2018}. First, in \cite{KRS2018} the generators of processes $Z^{(i)}$ are operators of order smaller than $1$ and in this paper they may be of order bigger than $1$. This is much more difficult situation. Secondly, in \cite{KRS2018} the processes $Z_t^{(i)}$ are stable processes and in this paper they are quite general L{\'e}vy processes. The investigation of these processes is much more complicated than stable processes (see Section 2). Thirdly, and most importantly, in \cite{KRS2018} all components $Z_t^{(i)}$ are identically distributed and in our paper we consider the case in which $Z_t^{(i)}$ have different distributions. From technical point of view, in order to use Levi's method, we have to apply generators of $Z_t^{(i)}$ to the density of $Z_t^{(j)}$. When $Z_t^{(i)}$ and $Z_t^{(j)}$ has different distributions this leads to major difficulties in proofs (see e.g. proofs of Lemma \ref{integralgt}, Corollary \ref{int10} and Proposition \ref{integralq02}).

It is worth mentioning that Levi's method has been recently used to study gradient estimates of heat kernels corresponding to various non-local, L{\'e}vy-type operators see e.g. \cite{CZ2016, KSV2018, GS2018}. The coupling method was used in \cite{SSW2012} to obtain gradient estimates of semigroups of transition operators of L{\'e}vy processes satisfying some asymptotic behaviour of their symbols. 
 Let us also add that the properties of harmonic functions corresponding to the solutions of (\ref{main}), when the driving process $Z$ is just the cylindrical $\alpha$-stable process were studied in \cite{BC2010}, (see also \cite{CK2018} for more general results). 

Now we exhibits some examples of processes for which assumptions (Z1) or (Z2) are satisfied. 
\vskip 6pt

{\bf{Example 1.}}
Assume that for each $i \in \{1,\ldots,d\}$ we have $Z_t^{(i)} = B^{(i)}_{S_t^{(i)}}$ where $B_t^{(i)}$ is the one-dimensional Brownian motion and $S_t^{(i)}$ is a subordinator with an infinite L{\'e}vy measure $\mu$ and Laplace exponent $\varphi$. Assume also that $B_t^{(1)},\ldots, B_t^{(d)}, S_t^{(1)},\ldots, S_t^{(d)}$ are independent and for each $i \in \{1,\ldots,d\}$ we have $\varphi \in \text{WLSC}(\alpha/2, 0, \underline{C})$, $\varphi \in \text{WUSC}(\beta/2, 1, \overline{C})$ for some constants $0 < \alpha \le \beta < 2$, $\underline{C}, \overline{C} > 0$. Then assumptions (Z1) are satisfied.

In particular, this holds when $Z_t^{(1)},\ldots,Z_t^{(d)}$ are independent and for each $i \in \{1,\ldots,d\}$ $Z_t^{(i)}$ is a one-dimensional, symmetric $\alpha$-stable process, where $\alpha \in (0,2)$. 

Similarly, this holds when $Z_t^{(1)},\ldots,Z_t^{(d)}$ are independent and for each $i \in \{1,\ldots,d\}$ $Z_t^{(i)}$ is a one-dimensional, relativistic $\alpha$-stable process with $\psi_i(\xi) = \left(m^{2/\alpha} + |\xi|^2\right)^{\alpha/2} - m$, where $\alpha \in (0,2)$, $m > 0$ (cf. \cite{R2001}).

\vskip 6pt

{\bf{Example 2.}}
Assume that for each $i \in \{1,\ldots,d\}$ we have $Z_t^{(i)} = B^{(i)}_{S_t^{(i)}}$ where $B_t^{(i)}$ is the one-dimensional Brownian motion and $S_t^{(i)}$ is a subordinator with an infinite L{\'e}vy measure $\mu_i$ and Laplace exponent $\varphi_i$ such that not all $\varphi_1,\ldots,\varphi_d$ are equal. Assume also that $B_t^{(1)},\ldots, B_t^{(d)}, S_t^{(1)},\ldots, S_t^{(d)}$ are independent and for each $i \in \{1,\ldots,d\}$ we have $\varphi_i \in \text{WLSC}(\alpha/2, 0, \underline{C})$, $\varphi_i \in \text{WUSC}(\beta/2, 1, \overline{C})$ for some constants $0 < \alpha \le \beta < 2$, $\alpha > (2/3) \beta$, $\underline{C}, \overline{C} > 0$. Then assumptions (Z2) are satisfied.

In particular, let $Z_t = (Z_t^{(1)},\ldots,Z_t^{(d)})^T$ be such that $Z_t^{(1)},\ldots,Z_t^{(d)}$ are independent and for each $i \in \{1,\ldots,d\}$ $Z_t^{(i)}$ is a one-dimensional, symmetric $\alpha_i$-stable process ($\alpha_i \in (0,2)$ and they are not all equal). Put $\alpha = \min(\alpha_1,\ldots,\alpha_d)$ and $\beta = \max(\alpha_1,\ldots,\alpha_d)$. If $\alpha > (2/3) \beta$ then assumptions (Q2) are satisfied. The SDE (\ref{main}) driven by such process $Z$ is of great interest see e.g. \cite{C2016}, \cite{C2018}, \cite[example (Z2) on page 2]{FJR2018}.

\vskip 6pt

{\bf{Example 3.}}
Assume that for each $i \in \{1,\ldots,d\}$ the process $Z_t^{(i)}$ is the pure-jump symmetric L{\'e}vy process in $\R$ with the L{\'e}vy measure $\nu(x) \, dx$ given by the formula
\[ \nu(x) = \left\{              
\begin{array}{ll}  \mathcal{A}_{\alpha} |x|^{-1-\alpha}& \text{for} \quad x \in (-1,1)\setminus \{0\},\\
0 & \text{for} \quad |x| > 1,          \end{array}       
\right. \]
where $\mathcal{A}_{\alpha} |x|^{-1-\alpha}$ is the L{\'e}vy density  for the standard one-dimensional, symmetric $\alpha$-stable process, $\alpha \in (0,2)$. Assume also that $Z_t^{(1)},\ldots,Z_t^{(d)}$ are independent. Then assumptions (Z1) are satisfied. Clearly, $Z$ is not a subordinated Brownian motion.

\vskip 6pt

\begin{remark}
\label{existence}
In \cite{FJR2018} the following SDE
\begin{equation*}
dX_t = A(X_{t-}) \, dZ_t + b(X_t) \, dt, \quad X_0 = x \in \R^d. 
\end{equation*}
is studied, where $A(x)$, $b(x)$ are bounded, H{\"o}lder continuous and $Z$ is a L{\'e}vy process in $\R^d$ such that $Z_t$ has a density $f_t$ and there exist $\alpha_1,\ldots,\alpha_d \in (0,2)$ for which we have
$$
\limsup_{t \to 0^+} t^{1/\alpha_k} \int_{\R^d} \left|f_t(z+e_kh) - f_t(z) \, dz\right| \le c |h|, \quad h \in \R,\, k \in \{1,\ldots,d\}.
$$
The main result in \cite{FJR2018} states that there exists a density of $X$ and that the density belongs to the appropriate anisotropic Besov space. This result holds if some conditions on $\alpha_1,\ldots,\alpha_d$ and on the L{\'e}vy measure of $Z$ are satisfied (see \cite[(2.8), (2.9)]{FJR2018}). 

On one hand, the existence result in \cite{FJR2018} holds for some processes $Z$, some matrices $A$ and nonzero drifts $b$ which are not considered in our paper. On the other hand, there are some processes $Z$ for which our result holds and the result in \cite{FJR2018} does not hold, because their conditions on $\alpha_1,\ldots,\alpha_d$ are in some cases more restrictive than our condition $\alpha > (2/3) \beta$. Take for example the process $Z_t = (Z_t^{(1)},\ldots,Z_t^{(d)})^T$ such that $Z_t^{(1)},\ldots,Z_t^{(d)}$ are independent and for each $i \in \{1,\ldots,d\}$ $Z_t^{(i)}$ is a one-dimensional, symmetric $\alpha_i$-stable process ($\alpha_i \in (0,2)$). Put $\alpha =\alpha^{min} = \min(\alpha_1,\ldots,\alpha_d)$ and $\beta = \alpha^{max} = \max(\alpha_1,\ldots,\alpha_d)$. Assume that $\alpha =\alpha^{min} = 1/8$ and $\beta = \alpha^{max} = 1/6$. Then our condition $\alpha/\beta = 3/4 > 2/3$ is satisfied and the condition in \cite[(2.9)]{FJR2018} $\alpha^{min} (1/\gamma + \chi) > 1$ is not satisfied. Indeed, we have
$$
\alpha^{min} \left(\frac{1}{\gamma} + \chi\right) < \alpha^{min} \left(\frac{1}{\alpha^{max}} + 1\right) = \frac{7}{8} < 1.
$$
Note also that we prove that $p(t,x,y)$ is lower semi-continuous in $(t,x,y)$ and no such result is proven in \cite{FJR2018}. Moreover, the methods in \cite{FJR2018} do not give strong Feller property of the semigroup $P_t$.
 \end{remark} 

The paper is organized as follows. In Section 2 we study properties of the transition density of a one-dimensional L{\'e}vy process with a suitably truncated L{\'e}vy measure.  In Section 3 we prove some inequalities involving one dimensional transition densities 
$g_{i,t}^{(\delta)}(x)$ and densities of L{\'e}vy measures $\mu_j^{(\delta)}(w)$ obtained by truncation procedures used in Section 2. In Section 4 we construct the transition density $u(t,x,y)$ of the solution of (\ref{main})  in which the process $Z$ is replaced by a process with a truncated L{\'e}vy measure. We also show that it satisfies the appropriate heat equation in the approximate setting. In Section 5 we construct the transition semigroup of the solution of (\ref{main}). We also prove Theorems \ref{mainthm}, \ref{PtL1Linfty} and Proposition \ref{heatkernel}.

\section{One-dimensional density}\label{1dim}
This section is devoted to showing various estimates of the transition  density and its derivatives for a one-dimensinal   symmetric L\'evy process satisfying certain regularity properties including weak  scaling  conditions. These estimates will play a crucial role in the next sections, specially to make the parametrix construction  in Section \ref{parametrix} work. 

First, we introduce the definition of {\it{ a weak lower scaling condition}} and {\it{ a weak upper scaling condition}} (cf. \cite{BGR2014}). Let $\varphi$ be a non-negative, non-zero function on $[0,\infty)$. We say that $\varphi$ satisfies {\it{a weak lower scaling condition}} \text{WLSC}($\alpha,\theta_1,\underline{C}$) if there are numbers $\alpha > 0$, $\theta_1 \ge 0$ and $\underline{C} > 0$ such that
$$
\varphi(\lambda \theta) \ge \underline{C} \lambda^{\alpha} \varphi(\theta), \quad \text{for} \quad \lambda \ge 1, \, \theta \ge \theta_1.
$$
We say that $\varphi$ satisfies {\it{a weak upper scaling condition}} WUSC($\beta,\theta_2,\overline{C}$) if there are numbers $\beta > 0$, $\theta_2 \ge 0$ and $\overline{C} > 0$ such that
$$
\varphi(\lambda \theta) \le \overline{C} \lambda^{\beta} \varphi(\theta), \quad \text{for} \quad \lambda \ge 1, \, \theta \ge \theta_2.
$$

Let $Z^*$ be a one-dimensional, symmetric L\'evy  process with a characteristic exponent $\psi$ given by
$$
\psi(\xi) = \int_{\R} (1 - \cos(\xi x)) \nu(x) \, dx,
$$
where $\nu(x)$ is the density of a symmetric, infinite L{\'e}vy measure. We assume that there exists $\eta_4 > 0$ such that $\nu \in C^1(0,\eta_4)$, $\nu'(x) < 0$ for $x \in (0,\eta_4)$ and $-\nu'(x)/x$ is decreasing on $(0,\eta_4)$. We also assume that
$\psi$ satisfies a weak lower scaling condition WLSC($\alpha,0,\underline{C}$) and a weak upper scaling condition WUSC($\beta,1,\overline{C}$) for some constants $0 < \alpha \le \beta < 2$, $\underline{C}, \overline{C} > 0$. As a matter of fact we may think that $Z^*$ is any of the processes $Z^{(1)},\ldots,Z^{(d)}$ defined in Introduction. In this section we examine the properties of the transition density of the process $Z^*$  and its truncated version.

Similarly as in \cite{KRS2018} we truncate the density $\nu$  and  the truncated density will be denoted by $\mu^{(\delta)}(x)$. One may easily prove that there exists $\delta_{0} \in (0,1/24]$ such that for any $\delta \in (0,\delta_{0}]$ the following construction of $\mu^{(\delta)}: \R \setminus \{0\} \to [0,\infty)$ is possible. For $x \in (0,\delta]$ we put $\mu^{(\delta)}(x) = \nu(x)$, for $x \in (\delta, 2 \delta)$ we put $\mu^{(\delta)}(x) \in [0,\nu(x)]$ and for $x \ge 2 \delta$ we put $\mu^{(\delta)}(x) = 0$. Moreover, $\mu^{(\delta)}$ is constructed so that $\mu^{(\delta)} \in C^1(0,\infty)$, $(\mu^{(\delta)})'(x) \le 0$ for $x \in (0,\infty)$, $-(\mu^{(\delta)})'(x)/x$ is nonincreasing on $(0,\infty)$ and satisfies $\mu^{(\delta)}(-x) =\mu^{(\delta)}(x)$ for $x \in (0,\infty)$. By $\psi^{(\delta)}$ we denote the characteristic exponent corresponding to the L\'evy measure with density  $\mu^{(\delta)}$.

Let us choose $\delta \in (0,\delta_{0}]$.
%In this section the constants denoted by $c$ (or $c_1, c_2, \ldots$) depend on $\nu, \tau, \alpha, \beta, \underline{C}, \overline{C}, \eta_4$ and $\delta$. We  also understand that constants $c$ (or $c_1, c_2, \ldots$) may change their value from one use to the next. 
We define
$$
\calG^{(\delta)} f(x) = \frac{1}{2} \int_{\R} (f(x+w) + f(x-w) - 2 f(x)) \mu^{(\delta)}(w) \, dw.
$$
By $g_{t}^{(\delta)}$ we denote the heat kernel corresponding to $\calG^{(\delta)}$ that is 
$$
\frac{\partial}{\partial t} g_{t}^{(\delta)}(x) = \calG^{(\delta)} g_{t}^{(\delta)}(x), \quad t > 0, \, x \in \R,
$$
$$
\int_{R} g_{t}^{(\delta)}(x) \, dx = 1, \quad t > 0.
$$
It is well known that $g_{t}^{(\delta)}(x)$ belongs to $C^1((0,\infty))$ as a function of $t$ and  belongs to $C^2(\R)$ as a function of $x$.  

For $r > 0$ we put
$$
h(r) = \int_{\R} (1 \wedge (|x|^2 r^{-2})) \nu(x) \, dx,
$$
$$
h^{(\delta)}(r) = \int_{\R} (1 \wedge (|x|^2 r^{-2})) \mu^{(\delta)}(x) \, dx,
$$
$$
K(r) = \int_{\{x\in\R: \, |x| \le r\}} |x|^2 r^{-2} \nu(x) \, dx,
$$
$$
K^{(\delta)}(r) = \int_{\{x\in\R: \, |x| \le r\}} |x|^2 r^{-2} \mu^{(\delta)}(x) \, dx.
$$
Clearly, $h$ and $h^{(\delta)}$ are decreasing.
By \cite[(6), (7)]{BGR2014} we have
\begin{equation}
\label{eqiv}
\frac 2{\pi^2} h(r)\le  \psi(1/r)\le 2 h(r),\ r>0,\end{equation} 
and the same inequality holds if we replace $\psi$ and $h$
by $\psi^{(\delta)}$ and $h^{(\delta)}$.
By the  scaling properties of $\psi$ and (\ref{eqiv}), if $C_1= \underline{C}/{\pi^2}$ and $C_2=\pi^2\overline{C}$, then 
%\begin{equation}
%\label{h-scaling}
%c_2 h( r) \lambda^{-\alpha} \le h(\lambda r) \le c_2 h( r) \lambda^{-\beta}, \quad r \in (0,1].
%\end{equation}

\begin{equation}
\label{h-scaling_l}
C_1 {\lambda}^{-\alpha} h\left({\theta}\right) \le h\left(\lambda \theta\right),  \   \theta >0,\ 0<\lambda \le 1
\end{equation}
and
\begin{equation}
\label{h-scaling_u}
 h\left(\lambda \theta\right) \le
C_2 {\lambda}^{-\beta} h\left({\theta}\right), \  0<\theta \le 1,\ 0<\lambda \le 1.
\end{equation}
Let us observe that  (\ref{h-scaling_l}) is equivalent to

\begin{equation}
\label{h-scaling_l0}
C_1 {\lambda}^{\alpha}   h\left(\lambda \theta\right)\le h\left({\theta}\right),  \   \theta >0,\ \lambda \ge 1.
\end{equation}
Combining  (\ref{h-scaling_u}) and (\ref{h-scaling_l0}) and taking $\theta=1$ we obtain 

\begin{equation}
\label{h-ubound}
   h\left(x\right)\le (C_2+C_1^{-1}) h\left({1}\right) \left({x}^{-\alpha}+{x}^{-\beta}\right),  \   x >0.
\end{equation}
%\end{document}
%Note that \ref{h-scaling_u}) can be extended to   $\theta \in (0,R]$, where $R \ge 1$ at the expense of the constant  $C_2$ which may be replaced by a constant dependent additionally  on $R$.
We also note that the last estimate  together with \cite[ (15)]{BGR2014} and (\ref{eqiv}) yields
\begin{equation}
\label{nu-ubound}
   \nu\left(x\right)\le 16(C_2+C_1^{-1}) h\left({1}\right) \left({x}^{-\alpha-1}+{x}^{-\beta-1}\right),  \   x >0.
\end{equation}

Next, by (\ref{h-scaling_l}) and  \cite[Theorem 1.1 and its proof]{GS2018} we have the following inequality

$$ h(r)\le \left(\frac 2{C_1}\right)^{2/\alpha} K(r),\ r>0.$$

%One can easily show that $h^{(\delta)}(r) \approx h(r)$ for $r \in (0,1]$.
\begin{lemma}\label{K_est}
%Let $C$ be a constant such that
%$$h(r)\le CK(r).$$
%Then 
%$$  h(r)\le C \frac {r^2\vee\delta^2}{\delta^2} h^{(\delta)}(r).$$
%
%Then 

For $r>0$ we have 
$$  K(r)\le  \left(\frac 2{C_1}\right)^{2/\alpha}\frac {r^2\vee\delta^2} {\delta^2} K^{(\delta)}(r)$$
and 
$$h^{(\delta)}(r)\le h(r)\le \left(\frac 2{C_1}\right)^{4/\alpha}\frac {r^2\vee\delta^2}{\delta^2} K^{(\delta)}(r). $$
Also
\begin{equation}
\label{hiKi}
h^{(\delta)}(r)\le 4\left(\frac 2{C_1}\right)^{4/\alpha} K^{(\delta)}(r). 
\end{equation}
\end{lemma}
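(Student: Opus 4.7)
The plan is to treat the three inequalities in order, using as the single nontrivial input the already-cited bound $h(r)\le (2/C_1)^{2/\alpha}K(r)$ (coming from the scaling of $h$ and \cite[Theorem 1.1]{GS2018}, and valid for $h^{(\delta)},K^{(\delta)}$ too, since the truncated exponent $\psi^{(\delta)}$ still satisfies the relevant lower scaling near $0$ via the unchanged part of the measure). Beyond this I will only need three elementary facts: (a) $K(r)\le h(r)$ directly from the definitions, since the integrand $(1\wedge x^2/r^2)$ of $h$ dominates the integrand $x^2/r^2\,\mathbf 1_{|x|\le r}$ of $K$; (b) $h$ and $h^{(\delta)}$ are decreasing; (c) $\mu^{(\delta)}=\nu$ on $\{|x|\le\delta\}$ and $\mu^{(\delta)}\equiv 0$ on $\{|x|\ge 2\delta\}$.

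For the first inequality I would split into $r\le\delta$ and $r>\delta$. When $r\le\delta$ the integration in $K(r)$ is supported inside $\{|x|\le\delta\}$, where $\nu=\mu^{(\delta)}$, so $K(r)=K^{(\delta)}(r)$ and the factor $(r^2\vee\delta^2)/\delta^2=1$ suffices, using that the constant $(2/C_1)^{2/\alpha}\ge 1$ (which is forced by $K\le h$). When $r>\delta$, I chain
\[
K(r)\le h(r)\le h(\delta)\le \left(\tfrac{2}{C_1}\right)^{2/\alpha}K(\delta)=\left(\tfrac{2}{C_1}\right)^{2/\alpha}K^{(\delta)}(\delta),
\]
the last equality from fact (c). Then, restricting the integral defining $K^{(\delta)}(r)$ to $\{|x|\le\delta\}$ and using $\mu^{(\delta)}=\nu$ there gives
\[
K^{(\delta)}(r)\ge \int_{|x|\le\delta}\tfrac{x^2}{r^2}\mu^{(\delta)}(x)\,dx=\tfrac{\delta^2}{r^2}K^{(\delta)}(\delta),
\]
that is, $K^{(\delta)}(\delta)\le (r^2/\delta^2)K^{(\delta)}(r)$, which combined with the previous display yields the claim.

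The second inequality reduces to two steps: $h^{(\delta)}(r)\le h(r)$ is immediate from the pointwise bound $\mu^{(\delta)}\le\nu$ and positivity of the integrand; the upper bound then follows by concatenating $h(r)\le (2/C_1)^{2/\alpha}K(r)$ with the first inequality, producing the factor $(2/C_1)^{4/\alpha}$. For \eqref{hiKi} I split again, now at $r=2\delta$. If $r\le 2\delta$, then the already-proved second inequality gives the result with an extra factor $(r^2\vee\delta^2)/\delta^2\le 4$. If $r>2\delta$, the support of $\mu^{(\delta)}$ lies inside $\{|x|\le 2\delta\}\subset\{|x|\le r\}$, so $(1\wedge x^2/r^2)=x^2/r^2$ throughout the support, giving $h^{(\delta)}(r)=K^{(\delta)}(r)$, which is stronger than \eqref{hiKi}. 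Everything is bookkeeping with the definitions; the only mildly delicate step is correctly combining the ``transfer'' from $K^{(\delta)}(\delta)$ to $K^{(\delta)}(r)$ with the monotonicity of $h$, and I do not anticipate any substantive obstacle.
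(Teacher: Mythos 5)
Your proof is correct and follows essentially the same route as the paper's: split at $r=\delta$ for the first two inequalities, chain $K(r)\le h(r)\le h(\delta)\le(2/C_1)^{2/\alpha}K(\delta)=(2/C_1)^{2/\alpha}K^{(\delta)}(\delta)$ and then transfer from $K^{(\delta)}(\delta)$ to $K^{(\delta)}(r)$ using (what the paper phrases as) the monotonicity of $r^2K^{(\delta)}(r)$, and finally split at $r=2\delta$ for \eqref{hiKi} using $h^{(\delta)}=K^{(\delta)}$ beyond the support of $\mu^{(\delta)}$. The only differences are cosmetic: you spell out the transfer step as an explicit integral restriction, and you include a (correct but unused) remark that the scaling bound also holds in the truncated setting.
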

\begin{proof} 

If $r\le \delta$, then 
$$K(r)= K^{(\delta)}(r)\le  h^{(\delta)}(r)$$

%If $r\le \delta$ then 
%$$h(r)\le CK(r)= CK^{(\delta)}(r)\le C h^{(\delta)}(r).$$

For $r> \delta$, since $ r^2 K^{(\delta)}(r)$ is non-decreasing, we obtain
$$K(r)\le h(r)\le h(\delta)=  \frac{h(\delta)}{\delta^2K(\delta)} \delta^2K^{(\delta)}(\delta)\le 
\left(\frac 2{C_1}\right)^{2/\alpha}  \frac{r^2}{\delta^2}K^{(\delta)}(r). $$%\delta^2 h^{(\delta)}(\delta) \frac1{\delta^2}=  r^2 h^{(\delta)}(r) \frac1{\delta^2}.$$
This completes the proof of the first inequality. The second inequality is an obvious consequence of the first one. Finally the last inequality follows from the second one for $ r\le 2\delta$ and for $ r\ge 2\delta$ we have $K^{(\delta)}(r)=h^{(\delta)}(r)$.
%$$h^{(\delta)}(r)\le h(r)\le \left(\frac 2{c_1}\right)^{2/\alpha}\frac{h(\delta)}{\delta^2K(\delta)} ( {r^2\vee\delta^2}) K^{(\delta)}(r) $$

\end{proof}

\begin{lemma} 
%Suppose that
%\begin{equation}
%\label{scal_h}
%c_1 {\lambda}^{-\alpha} h\left({\theta}\right) \le h\left(\lambda \theta\right) \le
%c_2 {\lambda}^{-\beta} h\left({\theta}\right), \  0<\theta \le 1,\ 0<\lambda \le 1.\end{equation}
Let $\tau>0$. For $ t\le \tau$ we have
\begin{equation}
\label{h-1}
C_3 t^{1/\alpha} \le h^{-1}(1/t) \le C_4 t^{1/\beta},%\ t\le \tau,
\end{equation}
where $C_3= C_1^{1/\alpha}(h(1)\wedge \frac1\tau)^{1/\alpha}$ and $C_4= C_2^{1/\beta}\left( h^{-1}\left(\frac1\tau\right) \vee 1\right)h\left({1}\right)^{1/\beta} $.
\end{lemma}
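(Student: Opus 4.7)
The plan is to set $s = h^{-1}(1/t)$, so $h(s) = 1/t$, and to read off bounds on $s$ directly from the weak scaling relations by comparing $h(s)$ with $h(1)$. The only subtlety is that the scaling relations (\ref{h-scaling_l}) and (\ref{h-scaling_u}) are calibrated at the reference point $\theta=1$ and require $\lambda \le 1$, so the argument splits cleanly into the case $s\le 1$ (the primary regime) and the case $s>1$ (which only arises when $\tau$ is so large that $h^{-1}(1/\tau)>1$).

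For the lower bound $h^{-1}(1/t)\ge C_3 t^{1/\alpha}$: if $s\le 1$, apply (\ref{h-scaling_l}) with $\theta=1$, $\lambda=s$ to get $h(s)\ge C_1 s^{-\alpha} h(1)$; rearranging and using $h(s)=1/t$ yields $s\ge (C_1 h(1))^{1/\alpha} t^{1/\alpha}\ge C_3 t^{1/\alpha}$, since $h(1)\wedge \tau^{-1}\le h(1)$. If instead $s>1$, then $1/t = h(s) < h(1)$, forcing $t > 1/h(1)$ and hence $\tau > 1/h(1)$, in which case $h(1)\wedge\tau^{-1} = \tau^{-1}$, so $C_3 t^{1/\alpha} \le C_1^{1/\alpha}(t/\tau)^{1/\alpha} \le C_1^{1/\alpha}\le 1 < s$ (using $C_1\le 1$, which follows from (\ref{h-scaling_l}) with $\lambda=1$).

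For the upper bound $h^{-1}(1/t)\le C_4 t^{1/\beta}$: if $s\le 1$, apply (\ref{h-scaling_u}) with $\theta=1$, $\lambda=s$ to get $h(s)\le C_2 s^{-\beta}h(1)$; substituting $h(s)=1/t$ gives $s\le (C_2 h(1))^{1/\beta} t^{1/\beta}\le C_4 t^{1/\beta}$. If $s>1$, then as above $\tau > 1/h(1)$ and monotonicity of $h$ gives $s\le h^{-1}(1/\tau)$; combined with $t>1/h(1)$ we obtain $C_4 t^{1/\beta} \ge C_2^{1/\beta}(h^{-1}(1/\tau)\vee 1) h(1)^{1/\beta}\cdot h(1)^{-1/\beta} = C_2^{1/\beta}(h^{-1}(1/\tau)\vee 1) \ge h^{-1}(1/\tau)\ge s$, using $C_2\ge 1$ (from (\ref{h-scaling_u}) with $\lambda=1$).

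I do not expect any real obstacle: the whole argument is a careful bookkeeping of the constants on the boundary between the regimes $s\le 1$ and $s>1$. The slightly unusual shape of $C_3$ (featuring $h(1)\wedge 1/\tau$) and $C_4$ (featuring $h^{-1}(1/\tau)\vee 1$) is exactly what is needed to make the ``$s>1$'' case degenerate safely — $C_3$ gets shrunk when $\tau$ is large, and $C_4$ gets inflated — and this is the one point where one must be careful, but it requires no delicate estimate beyond the monotonicity of $h$ and the trivial bounds $C_1\le 1\le C_2$.
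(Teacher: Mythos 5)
Your proof is correct and follows essentially the same route as the paper's: you calibrate the weak scaling conditions at $\theta=1$, apply them directly when $s=h^{-1}(1/t)\le 1$, and in the complementary regime (which only occurs when $\tau>1/h(1)$) you fall back on monotonicity of $h$ together with the bounds $C_1\le 1\le C_2$; the paper phrases this second regime as a general inequality valid for $s_0\le s\le h(1)$ and then substitutes $s_0=\tfrac1\tau\wedge h(1)$, which amounts to the same bookkeeping. The constant $C_4$ you verified matches the paper's because $h^{-1}\bigl(\tfrac1\tau\wedge h(1)\bigr)=h^{-1}\bigl(\tfrac1\tau\bigr)\vee 1$, as $h^{-1}$ is decreasing.
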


\begin{proof}

Taking $\theta=1$ we can rewrite (\ref{h-scaling_l}) and (\ref{h-scaling_u}) as 
$$
C_1^{1/\alpha} h\left({1}\right)^{1/\alpha} h\left({\lambda}\right)^{-1/\alpha} \le \lambda  \le
C_2^{1/\beta} h\left({1}\right)^{1/\beta} h\left({\lambda}\right)^{-1/\beta}, \ 0<\lambda \le 1.$$
Putting  $\lambda= h^{-1}(s)$, for $s\ge h(1)$,  
we have 
$$(C_1 h\left({1}\right))^{1/\alpha} s^{-1/\alpha} \le h^{-1}(s)\le (C_2 h\left({1}\right))^{1/\beta} s^{-1/\beta}.$$
If  $0<s_0\le s\le h(1)$ we have 
$$ s_0^{1/\alpha}s^{-1/\alpha} \le h^{-1}(s) \le h^{-1}(s_0)    h\left({1}\right)^{1/\beta} s^{-1/\beta}.$$
Choosing $\frac1s =t\le \tau $ we show that 
$$
C_1^{1/\alpha}\left(\frac1\tau \wedge h(1)\right)^{1/\alpha} t^{1/\alpha} \le h^{-1}(1/t) \le C_2^{1/\beta} h^{-1}\left(\frac1\tau \wedge h(1)\right)h\left({1}\right)^{1/\beta}  t^{1/\beta}.
$$

\end{proof}

\begin{lemma} 
%We have the following scaling property.
 For any $x > 0$, $a \in \R \setminus \{0\}$, 
 \begin{equation}
\label{WLSCcaling1}
h\left(\frac{x}{|a|}\right)\le C_1^{-1}(|a|^\alpha + |a|^2) h\left(x\right).
\end{equation}
\end{lemma}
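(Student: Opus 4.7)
The plan is to split the argument into the two regimes $|a|\le 1$ and $|a|\ge 1$, since the two terms $|a|^{\alpha}$ and $|a|^{2}$ on the right-hand side control different cases.

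For $|a|\le 1$ the estimate comes directly from the lower-scaling inequality (\ref{h-scaling_l0}) applied with $\lambda=1/|a|\ge 1$ and $\theta=x$, giving
$$C_{1}|a|^{-\alpha}\, h(x/|a|)\le h(x),$$
which rearranges to $h(x/|a|)\le C_{1}^{-1}|a|^{\alpha}h(x)\le C_{1}^{-1}(|a|^{\alpha}+|a|^{2})h(x)$.

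For $|a|\ge 1$ the scaling goes the wrong way (it would only produce a lower bound for $h(x/|a|)$). Instead I would rewrite
$$r^{2}h(r)=\int_{\R}\bigl(r^{2}\wedge|y|^{2}\bigr)\,\nu(y)\,dy,$$
which is nondecreasing in $r>0$ because $r\mapsto r^{2}\wedge|y|^{2}$ is nondecreasing for every fixed $y$. Applying this with $r_{1}=x/|a|\le x=r_{2}$ gives $(x/|a|)^{2}h(x/|a|)\le x^{2}h(x)$, hence $h(x/|a|)\le|a|^{2}h(x)$. The WLSC constant satisfies $\underline{C}\le 1$ (take $\lambda=1$ and any $\theta>0$ with $\psi(\theta)>0$ in the WLSC inequality), so $C_{1}=\underline{C}/\pi^{2}<1$, and therefore $h(x/|a|)\le C_{1}^{-1}|a|^{2}h(x)\le C_{1}^{-1}(|a|^{\alpha}+|a|^{2})h(x)$.

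Combining the two cases yields the claimed inequality for all $a\in\R\setminus\{0\}$ and $x>0$. The only mildly subtle point is recognizing that the $|a|^{2}$ contribution does not come from the scaling of $\psi$ itself but from the universal doubling property that $r\mapsto r^{2}h(r)$ is nondecreasing, a property shared by every Lévy characteristic.
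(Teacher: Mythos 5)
Your proof is correct and follows essentially the same two-case strategy as the paper: the weak lower scaling inequality for $|a|\le 1$, and the monotonicity of $r\mapsto r^{2}h(r)$ for $|a|\ge 1$. The extra remarks (writing $r^{2}h(r)$ as $\int_{\R}(r^{2}\wedge|y|^{2})\,\nu(y)\,dy$, and noting $C_{1}\le 1$) simply make explicit what the paper leaves implicit.
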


\begin{proof}

By  (\ref{h-scaling_l}), for  $|a|\le 1$, we have
$$h\left(\frac{x}{|a|}\right)\le C_1^{-1}|a|^\alpha  h\left(x\right).$$

Since $x^2h(x)$ is nondecreasing on $( 0,\infty)$, we obtain   for $|a|\ge 1$,  
$$h\left(\frac{x}{|a|}\right)\le   |a|^2 h\left(x\right).$$

Combining both estimates we get the conclusion.\end{proof}

\begin{lemma} \label{momenty}Let $\eta\ge 0$. There is $c=c(\eta, \beta, h(1), C_2)$ such that for all $t>0$,
\begin{equation}
\label{moment}
\int_0^1 x^\eta\left(\frac{1}{h^{-1}\left(\frac{1}{t}\right)} \wedge \frac{t h(x)}{x}\right)dx\le C t^{1\wedge(\eta/\beta)},\ \eta\ne \beta
\end{equation}
and
\begin{equation}
\label{moment1}
\int_0^1 x^\eta\left(\frac{1}{h^{-1}\left(\frac{1}{t}\right)} \wedge \frac{t h(x)}{x}\right)dx\le C t \log(1+1/t),\ \eta= \beta.
\end{equation}

\end{lemma}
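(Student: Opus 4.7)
The plan is to estimate the minimum by splitting the integral at the crossover point $r:=h^{-1}(1/t)$, where the two arguments of the minimum coincide. Since $h$ is decreasing, on $(0,r)$ we have $th(x)/x\ge th(r)/r=1/r$, so the minimum equals $1/r$; on $(r,1)$ we have $th(x)\le th(r)=1$, so the minimum equals $th(x)/x$. The argument then branches on the size of $r$.

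If $r\ge 1$, then $h(r)=1/t\le h(1)$ forces $t\ge 1/h(1)$, and the crude bound $\min\le 1/r\le 1$ gives $I\le 1/(\eta+1)$, which is $\le C\,t^{1\wedge(\eta/\beta)}$ because $t$ is bounded below. The main case is $r<1$. Applying the upper scaling $h(x)\le C_2 h(1) x^{-\beta}$ at $x=r$ produces the central inequality
\[
r\le R:=(C_2 h(1) t)^{1/\beta}.
\]
Writing $I=I_1+I_2$ with $I_1=r^\eta/(\eta+1)$, the bound $r\le R$ gives $r^\eta\le (C_2h(1))^{\eta/\beta}t^{\eta/\beta}$ when $\eta\le\beta$, and $r^\eta\le r^\beta\le R^\beta=C_2h(1)t$ when $\eta>\beta$ (using $r\le 1$). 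Either way, $I_1\le C\,t^{1\wedge(\eta/\beta)}$.

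For $I_2=\int_r^1 x^{\eta-1}th(x)\,dx$, the unifying estimate is the pointwise bound on $[r,1]$:
\[
th(x)\le \min\left(1,(R/x)^\beta\right),
\]
where the $1$ comes from $h(x)\le h(r)=1/t$ and $(R/x)^\beta$ from the upper scaling combined with $R^\beta=C_2h(1)t$. When $R\le 1$, split the integral at $x=R$: use $th(x)\le 1$ on $(r,R)$ and $th(x)\le (R/x)^\beta$ on $(R,1)$. Both pieces evaluate in closed form when $\eta\ne\beta$, and the identities $R^\eta=(C_2h(1))^{\eta/\beta}t^{\eta/\beta}$ and $R^\beta=C_2h(1)t$ yield exactly $I_2\le C\,t^{1\wedge(\eta/\beta)}$. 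When $R>1$, then $t\ge 1/(C_2h(1))$ is bounded below and the crude $th(x)\le 1$ already gives $I_2\le 1/\eta$, which is of the required order.

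The logarithmic case $\eta=\beta$ is identical except that the piece on $(R,1)$ becomes $R^\beta\int_R^1 x^{-1}dx=R^\beta\log(1/R)$; since $R^\beta=C_2h(1)t$ and $|\log(C_2h(1)t)|\le C(1+\log(1+1/t))$, this gives the claimed bound $Ct\log(1+1/t)$. The main obstacle, and the only delicate point, is the case bookkeeping in $I_2$: one must verify that in each of the three regimes ($r\ge 1$; $r<1\le R$; $r\le R\le 1$) the constants depend only on $\eta,\beta,h(1),C_2$. The crucial observation enabling this is that $r^\eta$ and $R^\eta$ are controlled using only the upper scaling, which is why no lower scaling (and hence no dependence on $\alpha$ or $\underline{C}$) enters the final constant.
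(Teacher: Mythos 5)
Your proof is correct for $\eta>0$ and it reaches the bound by a genuinely different, and in fact cleaner, route than the paper once you pass the common starting point. Both proofs split at $r=h^{-1}(1/t)$, treat $I_1=\int_0^r$ identically, and derive $r\le R=(C_2h(1)t)^{1/\beta}$ from the upper scaling; but for $I_2=\int_r^1 x^{\eta-1}\,t\,h(x)\,dx$ the paper integrates by parts to produce $t\int_r^1 x^{\eta}\bigl(-h'(x)\bigr)\,dx$ and then estimates that Stieltjes integral by a dyadic decomposition over the level sets $[h^{-1}(1/(kt)),h^{-1}(1/((k+1)t))]$, summing $\sum_{k\le N}(t(k+1))^{\eta/\beta}k^{-2}$ with $N\le (h(1)t)^{-1}$. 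You instead bound the integrand pointwise via $th(x)\le 1\wedge(R/x)^{\beta}$ on $[r,1]$ and split once more at $x=R$, which reduces everything to elementary power integrals and immediately explains the $\log$ at $\eta=\beta$. Your version avoids both the derivative $h'$ and the infinite sum, and makes transparent why only $\eta,\beta,h(1),C_2$ enter the constant. One thing you should make explicit, as the paper does: the estimate $I_2\le 1/\eta$ (and more generally your treatment of $I_2$) uses $\eta>0$, so the case $\eta=0$ has to be handled separately; the paper disposes of it by citing Grzywny--Szczypkowski, and you should do the same or supply a short direct argument.
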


\begin{proof} The result in the case $\eta=0$ was proved in \cite{GS2017}, hence we assume that $\eta>0$.

 If $h^{-1}\left(\frac{1}{t}\right)\ge 1$, that is $t\ge \frac1{h(1)}$, we have 

$$\int_0^1 x^\eta\left(\frac{1}{h^{-1}\left(\frac{1}{t}\right)} \wedge \frac{t h(|x|)}{|x|}\right)dx\le \frac{1}{h^{-1}\left(\frac{1}{t}\right)}\le 1,$$
hence the conclusions are true in this case.

Next, we  assume that  $h^{-1}\left(\frac{1}{t}\right)< 1$. Note that, by (\ref{h-1}), 
$$h^{-1}\left(\frac{1}{s}\right)\le  \left(C_2h\left({1}\right)\right)^{1/\beta}s^{1/\beta} $$
 for $s\le 1/h(1)$. %$c_4= c_2^{1/\beta}\left( h^{-1}\left(\frac1\tau\right) \vee 1\right)h\left({1}\right)^{1/\beta}
We have $$I= \int_0^1 x^\eta\left(\frac{1}{h^{-1}\left(\frac{1}{t}\right)} \wedge \frac{t h(x)}{x}\right)dx= 
\int_0^{h^{-1}\left(\frac{1}{t}\right)} + \int_{h^{-1}\left(\frac{1}{t}\right)}^1=I_1+I_2,$$
where
$$I_1= \frac 1{\eta+1} \left(h^{-1}\left(\frac{1}{t}\right)\right)^{\eta}\le  \left(C_2h\left({1}\right)\right)^{\eta/\beta}t^{\eta/\beta}$$
and
$$\eta I_2=  t\int_{h^{-1}\left(\frac{1}{t}\right)}^1 \eta x^{\eta-1} h(x)dx=
t h(1) -\left(h^{-1}\left(\frac{1}{t}\right)\right)^{\eta} - t\int_{h^{-1}\left(\frac{1}{t}\right)}^1  x^{\eta} h^\prime_i(x)dx $$

Next, we estimate the last integral. Let $N$ be the smallest integer such that $h^{-1}\left(\frac{1}{(N+1)t}\right)\ge 1$, then
\begin{eqnarray*}I_3&=& t\int_{h^{-1}\left(\frac{1}{t}\right)}^1  x^{\eta} \left(- h^\prime(x))\right)dx\\
&=& t\sum_{k=1}^N\int_{h^{-1}\left(\frac{1}{kt}\right)}^{h^{-1}\left(\frac{1}{(k+1)t}\right) \wedge 1} x^{\eta} \left(- h^\prime(x))\right)dx\\
&\le& t\sum_{k=1}^N \left[h^{-1}\left(\frac{1}{(k+1)t}\right) \wedge 1\right]^{\eta} \int_{h^{-1}\left(\frac{1}{kt}\right)}^{h^{-1}\left(\frac{1}{(k+1)t}\right)}\left(- h^\prime(x))\right)dx\\
&=&\sum_{k=1}^N \left[h^{-1}\left(\frac{1}{(k+1)t}\right) \wedge 1\right]^{\eta}\left(\frac{1}{k}-\frac{1}{(k+1)}\right)\\
&\le &\left(C_2h\left({1}\right)\right)^{\eta/\beta}\sum_{k=1}^N (t(k+1))^{\eta/\beta}\frac{1}{k^2}%\\
%&\le& ct^{\beta/\beta}.
\end{eqnarray*}

Note that $N\le  (h(1)t)^{-1}$, hence the last sum is of order $t^{1\wedge\eta/\beta}$ if $\eta/\beta\ne1$ and of order $t \log (1+1/t)$ if $\eta=\beta$. 
The proof is completed.
\end{proof}

\begin{lemma}\label{g_0} For every $n\in \N$, there is a constant $c=c(n, \alpha, C_1)$ such that 

$$\int_{\R^n}e^{-t\psi^{(\delta)}(|\xi|)} d\xi\le c e^{h(\delta)t} \frac 1{\left(h^{-1}\left(\frac{1}{t}\right)\right)^n}, \quad t>0.$$

\end{lemma}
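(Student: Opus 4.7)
The plan is twofold: first, compare the truncated exponent $\psi^{(\delta)}$ to the full exponent $\psi$ (equivalently $h^{(\delta)}$ to $h$) in order to reduce the problem to an integral involving only the unbounded $h$, at the cost of the claimed factor $e^{h(\delta)t}$; second, control the reduced integral by passing to spherical coordinates and applying the weak lower scaling \eqref{h-scaling_l}.

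For the first step, since $\nu-\mu^{(\delta)}$ vanishes on $[-\delta,\delta]$ and is pointwise dominated by $\nu$ elsewhere, I would write
$$h(r)-h^{(\delta)}(r)=\int(1\wedge|x|^2/r^2)(\nu-\mu^{(\delta)})(x)\,dx\le\int_{|x|>\delta}\nu(x)\,dx\le h(\delta),$$
where the last inequality uses $1\wedge|x|^2/\delta^2=1$ on $\{|x|\ge\delta\}$. Combining $h^{(\delta)}(r)\ge h(r)-h(\delta)$ with the equivalence $\psi^{(\delta)}(1/r)\ge \tfrac{2}{\pi^2}h^{(\delta)}(r)$ from \eqref{eqiv} produces
$$\psi^{(\delta)}(|\xi|)\ge\tfrac{2}{\pi^2}h(1/|\xi|)-\tfrac{2}{\pi^2}h(\delta).$$
Multiplying by $-t$, exponentiating, and using $2/\pi^2<1$ to absorb the constant into the exponent of $h(\delta)t$ then yields
$$e^{-t\psi^{(\delta)}(|\xi|)}\le e^{h(\delta)t}\,e^{-(2t/\pi^2)h(1/|\xi|)},$$
so that the factor $e^{h(\delta)t}$ can be pulled out of the integral.

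It remains to bound $\int_{\R^n}e^{-(2t/\pi^2)h(1/|\xi|)}\,d\xi$ by a constant multiple of $(h^{-1}(1/t))^{-n}$. I would pass to spherical coordinates, set $r_0:=h^{-1}(1/t)$, and split the radial integration at $|\xi|=1/r_0$. On $\{|\xi|\le 1/r_0\}$ the integrand is bounded by $1$, giving a contribution at most the volume of a ball of radius $1/r_0$, namely $c_n r_0^{-n}$. On the complement, I would apply \eqref{h-scaling_l} with $\theta=r_0$ and $\lambda=1/(|\xi|r_0)\in(0,1]$, which gives $h(1/|\xi|)\ge C_1(|\xi|r_0)^{\alpha}h(r_0)=C_1(|\xi|r_0)^\alpha/t$. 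The substitution $s=|\xi|r_0$ then reduces this tail to $c_n r_0^{-n}\int_1^\infty s^{n-1}e^{-(2C_1/\pi^2)s^\alpha}\,ds$, a convergent integral depending only on $n,\alpha,C_1$. Adding the two pieces yields the claim.

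No step presents a real obstacle; the only small subtlety is that the precise form of the prefactor $e^{h(\delta)t}$ (with constant $1$ in front of $h(\delta)t$) relies on $2/\pi^2<1$ to absorb the constant into the exponent. Beyond that, the argument is a standard scaling estimate.
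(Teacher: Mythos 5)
Your proof is correct and follows the same basic strategy as the paper (first pay a factor $e^{h(\delta)t}$ to replace the truncated exponent by the full one, then estimate the resulting integral), but you differ in two small ways. For the first step the paper bounds the exponents directly, via $0\le\psi(r)-\psi^{(\delta)}(r)\le\int_{|u|>\delta}\nu(u)\,du\le h(\delta)$, and only then invokes \eqref{eqiv} on $\psi$ alone, whereas you detour through $h-h^{(\delta)}\le h(\delta)$ and apply \eqref{eqiv} to $\psi^{(\delta)},h^{(\delta)}$, which forces you to absorb the harmless constant $2/\pi^2<1$ into the prefactor; both routes are sound and give the same bound. For the second step the paper simply cites \cite[Lemma 16]{BGR2014} for the estimate $\int_{\R^n}e^{-c\,t\,h(1/|\xi|)}\,d\xi\le c\,(h^{-1}(1/t))^{-n}$, while you prove it from scratch by splitting at $|\xi|=1/h^{-1}(1/t)$ and using the lower scaling \eqref{h-scaling_l} to produce a convergent tail $\int_1^\infty s^{n-1}e^{-cs^\alpha}\,ds$; this self-contained scaling argument is exactly the content of the cited lemma, so nothing is gained or lost, but your version is a bit more transparent for a reader unfamiliar with \cite{BGR2014}.
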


\begin{proof}

For every $r\in \R$ we have $0\le \psi(r)- \psi^{(\delta)}(r)\le \int_\delta^\infty\nu(u)du\le h(\delta)$, hence using (\ref{eqiv}) we obtain

\begin{eqnarray*}
\int_{\R^n}e^{-t\psi^{(\delta)}(|\xi|)} d\xi 
&\le&  e^{h(\delta)t} \int_{\R^n}e^{-t\psi(|\xi|)} d\xi\\
&\le&  e^{h(\delta)t}  \int_{\R^n}e^{-\frac 2{\pi^2} t h(1/|\xi|)} d\xi\\
&\le& c e^{h(\delta)t} \frac 1{\left(h^{-1}\left(\frac{1}{t}\right)\right)^n},
\end{eqnarray*}
where the last inequality follows from \cite[Lemma 16]{BGR2014}.
%$$g_{n,t}^{(\delta)}(0) &=&  \frac1{(2\pi)^n}\int_{\R^n}e^{-t\psi^{(\delta)}(|\xi|)} d\xi$$
\end{proof}

We denote 
$g^*_{t}(x)= \left(\frac{1}{h^{-1}\left(\frac{1}{t}\right)} \wedge  \frac{t h(|x|)}{|x|}\right), t>0, x\in \R$. By Lemma \ref{K_est}, %and Lemma \ref{hihi_1}  
according to \cite[Theorem 1.1]{GS2018}, we have the following estimate  

\begin{equation}
\label{g_est0}
\left|\frac{d^k}{dx^k}g_{t}^{(\delta)}(x)\right|\le c [g_{t}^{(\delta)}(0)]^k\left[g_{t}^{(\delta)}(0)\wedge \frac{t h(|x|)}{|x|}\right],\ t> 0,  x\in \R,  \end{equation}
where $c=c(k, \alpha, C_1) $. Since  
$\int_{\R} (\nu(x) - \mu^{(\delta)}(x)) \, dx \le h(\delta)$ we have, by \cite{R2001},
$$\frac {g_{t}^{(\delta)}(0)} {g_{t}(0)}\le e^{h(\delta)t},\ t>0.$$
%where $C$ depends on $C_1$ and  $k$.
Hence,
\begin{equation}
\label{g_est01}
\left|\frac{d^k}{dx^k}g_{t}^{(\delta)}(x)\right|\le c e^{(k+1)h(\delta)t} [g_{t}(0)]^k\left[g_{t}(0)\wedge \frac{t h(|x|)}{|x|}\right],\ t> 0, x\in \R.  \end{equation}
Moreover, 
\begin{equation}
\label{g(0)}
g_{t}(0) \le c  \frac 1{h^{-1}\left(\frac{1}{t}\right)},\ t>0,
\end{equation}
where $c=c(\alpha, C_1)$.  

%\begin{eqnarray*}
%g_{t}^{(\delta)}(0) &=& c_n \int_{\R^n} e^{- t \psi^{(\delta)}(y)} \, dy
%= c'_n \int_0^{\infty} e^{- t \psi^{(\delta)}(\rho)} \rho^{n-1} \, d\rho\\
%&&\le c''_n \int_0^{\infty} e^{- t \rho^{\alpha}} \rho^{n-1} \, d\rho
%\le c'''_n t^{-n/\alpha}.
%\end{eqnarray*}

%\begin{equation}
%\label{g_est}
%\frac{d^k}{dx^k}g_{t}^{(\delta)}(x)\le C \frac{g^*_{t}(x)}{\left(h^{-1}\left(\frac{1}{t}\right)\right)^k},\ t\le \tau, x\in \R,  \end{equation}
%The constant $C$ depends on $c_1$ $\tau, \delta, k$ and $h^{-1}(1/\tau)$.

\begin{lemma}
\label{gtht}
For any $\delta \in (0, \delta_0],$ there exist $c=c(\alpha,\delta,h, C_1)$, where $C_1$ is from (\ref{h-scaling_l}),  such that for 
 any $t \in (0,\infty)$, $x\in \R$, we have
\begin{align}
\label{gdelta10}
g_{t}^{(\delta)}(x) 
& \le c (t\wedge1)^{\frac{|x|}{8\delta}} e^{c t} \frac 1{h^{-1}\left(\frac{1}{t}\right)}  e^{-|x|},\\
\label{gdelta12}
\left|\frac{d}{dx}g_{t}^{(\delta)}(x) \right| 
& \le c (t\wedge1)^{\frac{|x|}{8\delta}} e^{c t} \frac 1{h^{-1}\left(\frac{1}{t}\right)^3}  e^{-|x|} ,\\
\label{gdelta13}\left|\frac{d^2}{dx^2}g_{t}^{(\delta)}(x)\right| 
& \le c (t\wedge1)^{\frac{|x|}{8\delta}} e^{c t} \left[\frac 1{h^{-1}\left(\frac{1}{t}\right)^3} +\frac 1{h^{-1}\left(\frac{1}{t}\right)^5}\right] e^{-|x|}\\
\label{gdelta14}
\left|\frac{d}{dx}g_{t}^{(\delta)}(x) \right|
& \le c e^{ct}\frac{|x|}{\left(h^{-1}\left(\frac{1}{t}\right)\right)^2} {g}_{t}^{*}(x).
\end{align}
\end{lemma}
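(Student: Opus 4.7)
The plan is to combine Fourier inversion with a contour shift that exploits the compact support of $\mu^{(\delta)}$. Writing
\[
g_t^{(\delta)}(x) = \frac{1}{2\pi}\int_\R e^{-i\xi x}e^{-t\psi^{(\delta)}(\xi)}\,d\xi,
\]
one observes that since $\mu^{(\delta)}$ is supported in $[-2\delta,2\delta]$, $\psi^{(\delta)}$ extends to an entire function of $\xi$, and for $x\ne 0$ Cauchy's theorem permits shifting the contour of integration to $\R-i\lambda\,\text{sgn}(x)$ for any $\lambda>0$. (The justification is that $\text{Re}\,\psi^{(\delta)}(\xi_R-i\lambda)\to\infty$ as $|\xi_R|\to\infty$, which follows from the WLSC on $\psi$ together with $\psi^{(\delta)}\ge\psi-h(\delta)$; this kills the integral over the vertical sides of the rectangle.) The shift produces an external factor $e^{-\lambda|x|}$, and we will choose
\[
\lambda=\lambda(t)=1+\frac{1}{8\delta}\log\frac{1}{t\wedge 1},
\]
so that $e^{-\lambda|x|}=e^{-|x|}(t\wedge 1)^{|x|/(8\delta)}$, matching the decay in (\ref{gdelta10})--(\ref{gdelta13}).

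The main quantitative step is the lower bound
\[
\text{Re}\,\psi^{(\delta)}(\xi-i\lambda)=\psi^{(\delta)}(\xi)-\int_\R(\cosh(\lambda w)-1)\cos(\xi w)\,\mu^{(\delta)}(w)\,dw\ge \psi^{(\delta)}(\xi)-h^{(\delta)}(2\delta)\cosh(2\lambda\delta),
\]
obtained by expanding $\cosh(\lambda w)-1$ as a Taylor series, using $|w|^{2n}\le(2\delta)^{2n-2}w^2$ on $\supp\mu^{(\delta)}$, and the identity $\int w^2\mu^{(\delta)}(w)\,dw=(2\delta)^2 h^{(\delta)}(2\delta)$. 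For our $\lambda$, $\cosh(2\lambda\delta)\le e^{2\delta}(t\wedge 1)^{-1/4}$, hence $t\cdot h^{(\delta)}(2\delta)\cosh(2\lambda\delta)\le C(1+t)$ since $t(t\wedge 1)^{-1/4}\le t^{3/4}\le 1$ for $t\le 1$, and this contributes only a factor $e^{ct}$. Combined with Lemma~\ref{g_0} (applied with $n=1$) this yields
\[
|g_t^{(\delta)}(x)|\le \frac{e^{-\lambda|x|}e^{ct}}{2\pi}\int_\R e^{-t\psi^{(\delta)}(\xi)}\,d\xi\le c(t\wedge 1)^{|x|/(8\delta)}e^{ct}e^{-|x|}\frac{1}{h^{-1}(1/t)},
\]
which is (\ref{gdelta10}). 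For (\ref{gdelta12})--(\ref{gdelta13}) we differentiate under the (shifted) Fourier integral, producing an additional factor $|\xi-i\lambda|^k$ in the integrand for $k\in\{1,2\}$. Splitting $|\xi-i\lambda|^k\le 2^{k-1}(|\xi|^k+\lambda^k)$ and invoking Lemma~\ref{g_0} in dimension $j+1$ (in polar coordinates) gives $\int_\R |\xi|^j e^{-t\psi^{(\delta)}(\xi)}\,d\xi\le c\,e^{h(\delta)t}/(h^{-1}(1/t))^{j+1}$ for $j\in\{0,1,2\}$. Since $\lambda\le c(1+\log(1/t))\le c/(h^{-1}(1/t))^2$ on $(0,\tau]$ by (\ref{h-1}), the $\lambda$-contributions are absorbed into $1/(h^{-1}(1/t))^3$ for (\ref{gdelta12}) and into $1/(h^{-1}(1/t))^3+1/(h^{-1}(1/t))^5$ for (\ref{gdelta13}).

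For (\ref{gdelta14}) I will split on $|x|\gtrless h^{-1}(1/t)$. For $|x|\ge h^{-1}(1/t)$, the estimate (\ref{g_est01}) with $k=1$ directly gives $|\frac{d}{dx}g_t^{(\delta)}(x)|\le c e^{ct} g_t^*(x)/h^{-1}(1/t)$, and since $1/h^{-1}(1/t)\le |x|/(h^{-1}(1/t))^2$ in this range the claim follows. For $|x|\le h^{-1}(1/t)$ I will use the symmetry of $\mu^{(\delta)}$: $g_t^{(\delta)}$ is even, hence $\frac{d}{dx}g_t^{(\delta)}$ is odd and
\[
\frac{d}{dx}g_t^{(\delta)}(x)=-\frac{1}{\pi}\int_0^\infty \xi\sin(\xi x)\,e^{-t\psi^{(\delta)}(\xi)}\,d\xi.
\]
The bound $|\sin(\xi x)|\le|\xi x|$ and $\int_0^\infty \xi^2 e^{-t\psi^{(\delta)}(\xi)}\,d\xi\le c/(h^{-1}(1/t))^3$ then give $|\frac{d}{dx}g_t^{(\delta)}(x)|\le c|x|/(h^{-1}(1/t))^3$, which is exactly $c|x|\,g_t^*(x)/(h^{-1}(1/t))^2$ because $g_t^*(x)=1/h^{-1}(1/t)$ whenever $|x|\le h^{-1}(1/t)$. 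The main obstacle I anticipate is the second paragraph: one must establish a quantitative lower bound on $\text{Re}\,\psi^{(\delta)}(\xi-i\lambda)$ and simultaneously choose $\lambda$ so that the growth from $\cosh(2\lambda\delta)$ contributes only $e^{ct}$ while the decay factor $e^{-\lambda|x|}$ produces exactly $e^{-|x|}(t\wedge 1)^{|x|/(8\delta)}$; this delicate balancing is what forces the specific exponent $1/(8\delta)$ in the statement.
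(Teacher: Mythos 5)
Your contour-shift argument is correct and takes a genuinely different route from the paper. The paper passes through the $3$- and $5$-dimensional densities $g_{3,t}^{(\delta)},g_{5,t}^{(\delta)}$ of \cite[Theorem 1.5, Proposition 3.1]{KR2016} (whose construction requires the monotonicity of $-\nu'(r)/r$), invokes Lemma 4.2 of \cite{S2017} for the exponential decay of densities of compactly-supported L\'evy measures, controls $g_{n,t}^{(\delta)}(0)$ via Lemma \ref{g_0}, and deduces (\ref{gdelta14}) from \cite[Theorem 21]{BGR2014} applied to $g_{3,t}^{(\delta)}$. You instead extend $\psi^{(\delta)}$ to an entire function (legitimate because $\supp\mu^{(\delta)}\subset[-2\delta,2\delta]$), shift the Fourier contour by $-i\lambda\,\mathrm{sgn}(x)$, and only ever invoke Lemma \ref{g_0}. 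The algebra is right: the identity $\mathrm{Re}\,\psi^{(\delta)}(\xi-i\lambda)=\psi^{(\delta)}(\xi)-\int(\cosh(\lambda w)-1)\cos(\xi w)\,\mu^{(\delta)}(w)\,dw$, the Taylor bound $\cosh(\lambda w)-1\le \frac{w^2}{(2\delta)^2}(\cosh(2\lambda\delta)-1)$ on $\supp\mu^{(\delta)}$, and $\int w^2\,\mu^{(\delta)}(w)\,dw=(2\delta)^2h^{(\delta)}(2\delta)$ are all correct, and with $\lambda=1+\frac{1}{8\delta}\log\frac{1}{t\wedge1}$ one indeed gets $t\,h^{(\delta)}(2\delta)\cosh(2\lambda\delta)\le C(1+t)$, hence a harmless $e^{ct}$. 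Your split on $|x|\gtrless h^{-1}(1/t)$ for (\ref{gdelta14}) is also fine, and arguably cleaner than the route through \cite[Theorem 21]{BGR2014}.

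One imprecision worth fixing: you write $\lambda\le c/(h^{-1}(1/t))^2$ ``on $(0,\tau]$'' and tacitly absorb the $|\xi|$-term $1/(h^{-1}(1/t))^2$ into $1/(h^{-1}(1/t))^3$; neither is true with a $\tau$-free constant once $h^{-1}(1/t)>1$ (i.e.\ $t>1/h(1)$), yet the lemma asserts a bound with $c=c(\alpha,\delta,h,C_1)$ valid for all $t\in(0,\infty)$. The estimate is nonetheless saved — and this should be said — by the $e^{ct}$ prefactor: since $h(r)\ge r^{-2}\int_{|w|<1}w^2\nu(w)\,dw$ for $r\ge1$, one has $h^{-1}(1/t)\le C\sqrt{t}$, so both $h^{-1}(1/t)$ and $\lambda(h^{-1}(1/t))^2$ (with $\lambda=1$ for $t\ge1$) are bounded by $Ce^{ct}$. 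Once this is inserted the proof is complete and the constant has the claimed dependence.
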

\begin{proof} 
%By (\ref{g_est01}) and (\ref{g(0)}), to get (\ref{gdelta1}, \ref{gdelta12}, \ref{gdelta13}),  we have to consider the case $|x| \ge \eps$.
Let $Z^{(\delta)}(t)$ be a L{\'e}vy process in $\R$ with a L{\'e}vy measure $\mu^{(\delta)}(x) \, dx$. Its transition density equals 
$g_{t}^{(\delta)}(x)$. Put $Z_{1}^{(\delta)}(t) := Z^{(\delta)}(t)$, $\mu_{1}^{(\delta)}(x) = \mu^{(\delta)}(x)$, 
$g_{1,t}^{(\delta)}(x) = g_{t}^{(\delta)}(x)$.

By \cite[Theorem 1.5]{KR2016} there exists a L{\'e}vy process 
$Z_{3}^{(\delta)}(t)$ in $\R^3$ with the characteristic exponent $\psi^{(\delta)}(\xi) = \psi^{(\delta)}(|\xi|)$, $\xi \in \R^3$ and the radial, radially nonincreasing transition density 
$g_{3,t}^{(\delta)}(x) = g_{3,t}^{(\delta)}(|x|)$, $x \in \R^3$, satisfying
\begin{equation}
\label{g1tderivative1}
g_{3,t}^{(\delta)}(r) = \frac{-1}{2 \pi r} \frac{d}{dr} g_{1,t}^{(\delta)}(r), \quad r > 0.
\end{equation}
The L{\'e}vy measure of $Z_{3}^{(\delta)}(t)$ has a density $\mu_{3}^{(\delta)}(x) = \mu_{3}^{(\delta)}(|x|)$, $x \in \R^3 \setminus \{0\}$, which satisfies
$$
\mu_{3}^{(\delta)}(r) = \frac{-1}{2 \pi r} \frac{d}{dr} \mu_{1}^{(\delta)}(r), \quad r > 0.
$$
In particular, by our assumptions, $\mu_{3}^{(\delta)}(r)$ is nonincreasing on $(0,\infty)$. Moreover, by monotonicity,
$$\limsup_{t \to 0^+} \frac{1}{t}  g_{3,t}^{(\delta)}(r)\le \mu_{3}^{(\delta)}(r), \quad r > 0,
$$
which implies that
\begin{equation}
\label{g3limit}
\lim_{t \to 0^+} \frac{1}{t}  g_{3,t}^{(\delta)}(r)= 0, \quad r > 2\delta.
\end{equation}

By \cite[Proposition 3.1]{KR2016} there exists a L{\'e}vy process 
$Z_{5}^{(\delta)}(t)$ in $\R^5$ with the characteristic exponent $\psi_{5}^{(\delta)}(\xi) = \psi^{(\delta)}(|\xi|)$, $\xi \in \R^5$,
L{\'e}vy measure $d\mu_{5}^{(\delta)}$ and the radial transition density 
$g_{5,t}^{(\delta)}(x) = g_{5,t}^{(\delta)}(|x|)$, $x \in \R^5$, satisfying
\begin{equation}
\label{g5g3}
g_{5,t}^{(\delta)}(r) = \frac{-1}{2 \pi r} \frac{d}{dr} g_{3,t}^{(\delta)}(r), \quad r > 0.
\end{equation}

We have
\begin{eqnarray}
\nonumber
\frac{d^2}{dr^2} g_{1,t}^{(\delta)}(r)
&=& \frac{d}{dr} \left(- 2 \pi r g_{3,t}^{(\delta)}(r)\right)\\
\nonumber
&=& - 2 \pi g_{3,t}^{(\delta)}(r) - 2 \pi r \frac{d}{dr} g_{3,t}^{(\delta)}(r)\\
\label{gi1tderivative2}
&=& - 2 \pi g_{3,t}^{(\delta)}(r) + (2 \pi r)^2 g_{5,t}^{(\delta)}(r).
\end{eqnarray}

Let $R > 2 \delta^\prime > 2 \delta$. Applying (\ref{g5g3}) and then (\ref{g3limit})  we obtain
\begin{eqnarray*}
&&\int_{B(0,R) \setminus \overline{B(0,2\delta^\prime)}} d\mu_{5}^{(\delta)} (y)
= \lim_{t \to 0^+} \frac{1}{t} \int_{B(0,R) \setminus B(0,2\delta^\prime)} g_{5,t}^{(\delta)}(y) \, dy\\
&& = \frac{8\pi^2}{3}\lim_{t \to 0^+} \frac{1}{t} \int_{2\delta^\prime}^R r^4 g_{5,t}^{(\delta)}(r) \, dr\\
&& = \frac{4\pi}{3}\lim_{t \to 0^+} \frac{1}{t} \int_{2\delta^\prime}^R (-r^3)\frac{d}{dr} g_{3,t}^{(\delta)}(r) \, dr\\
&& \le \frac{4\pi R^3}{3}\limsup_{t \to 0^+} \frac{1 }{t} \int_{2\delta^\prime}^R -\frac{d}{dr} g_{3,t}^{(\delta)}(r) \, dr\\
&& \le \frac{4\pi R^3}{3}\limsup_{t \to 0^+} \frac{1}{t}  g_{3,t}^{(\delta)}(2 \delta^\prime)\\
&& = 0.
\end{eqnarray*}
This gives that $\supp(\mu_{5}^{(\delta)}) \subset \overline{B(0,2\delta)}$.

%For $n =1, 3, 5$ and $t \in (0,\tau]$ we have
%\begin{eqnarray*}
%g_{i,n,t}^{(\delta)}(0) &=& c_n \int_{\R^n} e^{- t \psi_{i,n}^{(\delta)}(y)} \, dy
%= c'_n \int_0^{\infty} e^{- t \psi_{i}^{(\delta)}(\rho)} \rho^{n-1} \, d\rho\\
%&&\le c''_n \int_0^{\infty} e^{- t \rho^{\alpha}} \rho^{n-1} \, d\rho
%\le c'''_n t^{-n/\alpha}.
%\end{eqnarray*}

Denote $d\mu_{n}^{(\delta)}(x) = \mu_{n}^{(\delta)}(x) \, dx$, for $n = 1, 3$.

Let $t\le 1$. Using Lemma 4.2 from \cite{S2017} we get for $n =1, 3, 5$ 
\begin{align*}
  g_{n,t}^{(\delta)}(x) 
	& \leq  e^{\frac{-|x|}{8\delta}\log\left(\frac{\delta |x|}{tm_0}\right)} g_{n,t}^{(\delta)}(0)\\ 
	& =    t^{\frac{|x|}{8\delta}}g_{n,t}^{(\delta)}(0)   e^{\frac{-|x|}{8\delta}\log\left(\frac{\delta |x|}{m_0}\right)}, \quad |x| \geq \tfrac{em_0}{\delta} t,
\end{align*}
where $m_0 =\max \{\int_{\R^n} |y|^2  d\mu_{n}^{(\delta)}(y), n=1,3,5\}$. We observe that there exists $c_1=c_1(\delta,m_0)$ such that
$$
  e^{\frac{-|x|}{8\delta}\log\left(\frac{\delta |x|}{m_0}\right)} \leq c_1 e^{-3|x|}, \quad x\in\R^n.
$$
This yields
$$
  g_{n,t}^{(\delta)}(x) \leq  c_1 t^{\frac{|x|}{8\delta}}g_{n,t}^{(\delta)}(0)e^{-3|x|},
$$
provided $|x| \geq \tfrac{em_0}{\delta} t.$ If $|x|\le \tfrac{em_0}{\delta} t$, then 

$$t^{\frac{|x|}{8\delta}}\ge c_2,$$
where $c_2=c_2(\delta,m_0)$, which implies
$$
  g_{n,t}^{(\delta)}(x) \leq  c^{-1}_2 t^{\frac{|x|}{8\delta}} e^{3\tfrac{em_0}{\delta} t}   g_{n,t}^{(\delta)}(0)e^{-3|x|}.
$$
Hence, for $t<1$, there is a constant $c_3=c_3(\delta, m_0)$ such that 

\begin{equation} \label{g_n}
  g_{n,t}^{(\delta)}(x) \leq  c_3 t^{\frac{|x|}{8\delta}}    g_{n,t}^{(\delta)}(0)e^{-3|x|} , \quad x\in\R^n.
\end{equation}

Let $ t\ge 1$. Using again  Lemma 4.2 from \cite{S2017} we get 
\begin{align}
  g_{n,t}^{(\delta)}(x) 
	& \leq  e^{\frac{-|x|}{8\delta}\log\left(\frac{\delta |x|}{tm_0}\right)} g_{n,t}^{(\delta)}(0) 
	\leq   e^{\frac{-|x|}{8\delta}}g_{n,t}^{(\delta)}(0)\nonumber\\   
	& \leq  g_{n,t}^{(\delta)}(0)   \, e^{-3|x|}, \quad |x| \geq \tfrac{em_0}{\delta} t.
\label{estexpg2}\end{align}
If $|x| \le \tfrac{em_0}{\delta} t$, then 
$$
  g_{n,t}^{(\delta)}(x) \leq   e^{3\tfrac{em_0}{\delta} t}   g_{n,t}^{(\delta)}(0)e^{-3|x|}.
$$
Combining the last two estimates we arrive at 
$$
  g_{n,t}^{(\delta)}(x) \leq   e^{3\tfrac{em_0}{\delta} t}   g_{n,t}^{(\delta)}(0)e^{-3|x|} ,  \quad x\in\R^n, t\ge 1,
$$
which together with (\ref{g_n}) yields
\begin{equation} \label{g_n1}
  g_{n,t}^{(\delta)}(x) \leq  c_3 t^{\frac{|x|}{8\delta}} e^{3\tfrac{em_0}{\delta} t}   g_{n,t}^{(\delta)}(0)e^{-3|x|} , \quad x\in\R^n, t>0,
\end{equation}
where   Next, we note that, by Lemma \ref{g_0} and the inversion Fourier formula, we have 
  $$g_{n,t}^{(\delta)}(0)\le c e^{h(\delta)t} \frac 1{\left(h^{-1}\left(\frac{1}{t}\right)\right)^n},$$
where $c_4=c_4(\alpha,C_1)$. 
This combined with  (\ref{g_n1}), (\ref{g1tderivative1}) and (\ref{gi1tderivative2}) proves the first three inequalities. 

We observe that, by \cite[Theorem 21]{BGR2014}, for $ t> 0, x\in \R^3$ we have
\begin{eqnarray*} g_{3,t}^{(\delta)}(x)&\le& c_5 \left[g_{3,t}^{(\delta)}(0)\wedge \frac{t h(|x|)}{|x|^3}\right]\\
&\le&c_6 e^{h(\delta)t} \frac{1}{\left(h^{-1}\left(\frac{1}{t}\right)\right)^2}
g^*_{t}(|x|),\end{eqnarray*}
where both  $c_5$ and $c_6$ depend only on $C_1$ and $\alpha$.  Hence (\ref{gdelta14}) follows from (\ref{g1tderivative1}).
\end{proof}

For any $\eps \in (0,1], \tau >0 $, $t \in (0,\infty)$ and $x \in \R$ we define
\begin{equation}
\label{hdefinition}
\tilde{g}_{t}^{(\eps)}(x) = \left\{              
\begin{array}{lll}                   
\frac{1}{h^{-1}\left(\frac{1}{t}\right)} \wedge \frac{t h(|x|)}{|x|}&\text{for}& |x| < \eps, \\  
c_{\eps} t^{(d + \beta -1)/\alpha} e^{-|x|} &\text{for}& |x| \ge \eps,
\end{array}       
\right. 
\end{equation}
where $c_{\eps} =  
\left(\frac{1}{h^{-1}\left(\frac{1}{\tau}\right)} \wedge 
\frac{\tau h(|\eps|)}{|\eps|}\right) \frac{e^{\eps}}{\tau^{(d + \beta -1)/\alpha}}$ 
and where we understand $h(0)/0 = \infty$. The constant $c_{\eps}$ is chosen so that for any $t \in (0,\tau]$ the function $x \to \tilde{g}_{t}^{(\eps)}(x)$ is nonincreasing on $[0,\infty)$. Note  that $\tilde{g}_{t}^{(\eps)}$ depends on $\tau$ only by 
$c_{\eps}$.
We observe that for $t \in (0,\tau]$ and $|x| \le h^{-1}(1/t) \wedge \eps$ we have $\tilde{g}_{t}^{(\eps)}(x) = 1/h^{-1}(1/t)$.

The following corollary, whose proof is omitted, follows easily  from the Lemma \ref{momenty} and the definition of $\tilde{g}_{t}^{(\eps)}$.
\begin{corollary}\label{int_tilde_q} For any $0<t\le \tau<\infty$, 
$$\int_{\R}\tilde{g}_{t}^{(\eps)}(x)dx \le c,$$
where $c=c(\eps, h, \tau)$.
Moreover, for any $a>0$,

$$\lim_{t\to 0^+}\int_{|x|>a}\tilde{g}_{t}^{(\eps)}(x)dx =0.$$
\end{corollary}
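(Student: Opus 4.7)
The plan is to split the integral $\int_{\R}\tilde g_t^{(\eps)}(x)\,dx$ along the breakpoint $|x|=\eps$ coming from the piecewise definition \eqref{hdefinition}, control each piece separately, and then reuse both estimates for the vanishing statement.

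On the inner region $\{|x|<\eps\}$ the integrand equals $\frac{1}{h^{-1}(1/t)}\wedge\frac{t h(|x|)}{|x|}$, so by symmetry together with $\eps\le 1$ I would write
$$
\int_{-\eps}^{\eps}\!\Bigl(\tfrac{1}{h^{-1}(1/t)}\wedge\tfrac{t h(|x|)}{|x|}\Bigr)dx \;=\;2\int_0^{\eps}\!\Bigl(\tfrac{1}{h^{-1}(1/t)}\wedge\tfrac{t h(x)}{x}\Bigr)dx
\;\le\;2\int_0^{1}\!\Bigl(\tfrac{1}{h^{-1}(1/t)}\wedge\tfrac{t h(x)}{x}\Bigr)dx,
$$
and then invoke Lemma \ref{momenty} at the endpoint exponent $\eta=0$, where the bound $c\,t^{1\wedge 0}=c$ is independent of $t\in(0,\tau]$. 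On the outer region the integral is explicit:
$$
\int_{|x|\ge\eps} c_{\eps}\,t^{(d+\beta-1)/\alpha}e^{-|x|}\,dx\;=\;2 c_{\eps}e^{-\eps}\,t^{(d+\beta-1)/\alpha}.
$$
Since $d\ge 2$ and $\beta,\alpha>0$ the exponent $(d+\beta-1)/\alpha$ is strictly positive, so this is at most $2 c_{\eps}e^{-\eps}\tau^{(d+\beta-1)/\alpha}$ for $t\in(0,\tau]$. Summing the two pieces gives the first claim with $c=c(\eps,h,\tau)$.

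For the limit statement I would distinguish $a\ge\eps$ and $0<a<\eps$. If $a\ge\eps$ only the outer piece contributes, and the computation above (with $\eps$ replaced by $a$) furnishes the vanishing prefactor $t^{(d+\beta-1)/\alpha}\to 0$. If $0<a<\eps$ the outer piece still vanishes for the same reason, while the remaining middle band is controlled by dropping the minimum and using monotonicity of $h$:
$$
2\int_a^{\eps}\!\Bigl(\tfrac{1}{h^{-1}(1/t)}\wedge\tfrac{t h(x)}{x}\Bigr)dx\;\le\;2t\int_a^{\eps}\tfrac{h(x)}{x}\,dx\;\le\;2 t\,h(a)\log(\eps/a),
$$
which tends to $0$ as $t\to 0^+$, since $h(a)<\infty$ whenever $a>0$.

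There is no substantial obstacle: the whole statement is a bookkeeping consequence of Lemma \ref{momenty} at the borderline exponent $\eta=0$ and explicit integration of the exponential tail. The only point that deserves a moment's attention is checking the sign of the exponent $(d+\beta-1)/\alpha$ multiplying $t$ in the tail piece, which under the standing hypotheses $d\ge 2$, $0<\alpha\le\beta<2$ is strictly positive and so makes this factor both uniformly bounded on $(0,\tau]$ and vanishing as $t\to 0^+$.
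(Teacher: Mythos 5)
Your proof is correct and is exactly the argument the paper gestures at when it says the corollary ``follows easily from Lemma \ref{momenty} and the definition of $\tilde{g}_{t}^{(\eps)}$'': split at $|x|=\eps$, apply Lemma \ref{momenty} with $\eta=0$ (where the bound is $Ct^{1\wedge 0}=C$) on the inner band, integrate the explicit exponential tail on the outer band noting $(d+\beta-1)/\alpha>0$, and for the limit treat the intermediate band $a<|x|<\eps$ by dropping the minimum and using that $h(a)<\infty$ for $a>0$.
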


We introduce the following convention. For a function $f$ and arguments $x,u \in \R$ we write $f(x\pm u)= f(x- u)+f(x+ u)$.

\begin{lemma}
\label{gtht1}
For any $\eps \in (0,1], \tau>0,$ there exists $c$ such that for 
$$\delta = \min\{\delta_0, \eps \alpha /(8d+8\beta + 16), \eps/(d \eta_1^2)\},$$ and any $t \in (0,\tau]$, $x,u,w\in \R$, we have
\begin{align}
\label{gdelta1}
g_{t}^{(\delta)}(x) 
& \le c \tilde{g}_{t}^{(\eps)}(x),\\
\label{gdelta2}
|g_{t}^{(\delta)}(x+u) - g_{t}^{(\delta)}(x)| 
& \le \frac{c |u|}{h^{-1}\left(\frac{1}{t}\right)} (\tilde{g}_{t}^{(\eps)}(x+u) + \tilde{g}_{t}^{(\eps)}(x)),\\
\label{gdelta3}
|g_{t}^{(\delta)}(x\pm u) - 2g_{t}^{(\delta)}(x)| 
&\le \frac{c |u|^2}{\left(h^{-1}\left(\frac{1}{t}\right)\right)^2} \max_{\xi \in [x-|u|,x+|u|]} \tilde{g}_{t}^{(\eps)}(\xi),\\
\label{gdelta4}
|g_{t}^{(\delta)}(x\pm u)-g_{t}^{(\delta)}(x\pm w)|
\le& \frac{c \left||u|^2 -|v|^2\right|}{\left(h^{-1}\left(\frac{1}{t}\right)\right)^2}
(g^*_{t}(x\pm u)+ g^*_{t}(x\pm w)), 
\end{align}
where $c= c(\alpha, \beta, \delta, \tau, h, C_1, C_2)$.
\end{lemma}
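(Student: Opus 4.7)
The four estimates follow by combining the pointwise and derivative bounds from Lemma~\ref{gtht} (and the related bound~(\ref{g_est01})) with the two-regime structure of $\tilde g_t^{(\eps)}$ from~(\ref{hdefinition}). The specific value of $\delta$ is chosen so that the subexponential factor $(t\wedge 1)^{|x|/(8\delta)}$ from Lemma~\ref{gtht} absorbs all stray powers of $1/h^{-1}(1/t)$, which by~(\ref{h-1}) is bounded by $C_3^{-1} t^{-1/\alpha}$, whenever $|x| \ge \eps$; indeed, for $|x| \ge \eps$ and $t<1$, $t^{|x|/(8\delta)} \le t^{\eps/(8\delta)} \le t^{(d+\beta+2)/\alpha}$, which balances out the factors $t^{-k/\alpha}$ with $k=1,2,3$ arising in the respective estimates.

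For~(\ref{gdelta1}), I would split on $|x|<\eps$ versus $|x|\ge\eps$. In the first case, combining~(\ref{g_est01}) with $k=0$ and~(\ref{g(0)}) gives $g_t^{(\delta)}(x)\le c\,e^{h(\delta)t}\,g_t^*(x)=c\,e^{h(\delta)t}\,\tilde g_t^{(\eps)}(x)$, and the exponential is absorbed since $t\le\tau$. In the second case I would apply~(\ref{gdelta10}), use~(\ref{h-1}), and invoke the calibration of $\delta$ described above to match the definition of $c_\eps$ in~(\ref{hdefinition}).

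For~(\ref{gdelta2}) and~(\ref{gdelta3}), the plan is to apply the Mean Value Theorem with~(\ref{gdelta14}) (resp.\ (\ref{gdelta13})) when $|u|/h^{-1}(1/t)$ is small, and the triangle inequality together with~(\ref{gdelta1}) when it is large. In the MVT regime with $|\xi|<\eps$, the inequality $|\xi|\,g_t^*(\xi)/h^{-1}(1/t)^{2}\le g_t^*(\xi)/h^{-1}(1/t)$ is verified by splitting on whether $|\xi|\le h^{-1}(1/t)$ or not and using the definition of $g_t^*$; for $|\xi|\ge\eps$, the subexponential factor from~(\ref{gdelta12}) again soaks up the extra factor $1/h^{-1}(1/t)^{2}$. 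For~(\ref{gdelta4}), I would view $H(s):=g_t^{(\delta)}(x+s)+g_t^{(\delta)}(x-s)$ as a smooth function of $r=s^{2}$ and write
$$
H(|u|)-H(|w|)=\int_{|w|^{2}}^{|u|^{2}}\frac{dH}{dr}\,dr;
$$
since $dH/dr=[g_t^{(\delta)'}(x+s)-g_t^{(\delta)'}(x-s)]/(2s)$, bounding each derivative by~(\ref{gdelta14}) yields $|dH/dr|\le c\,(g_t^*(x+s)+g_t^*(x-s))/h^{-1}(1/t)^{2}$, and monotonicity of $g_t^*$ in $|\,\cdot\,|$ lets me replace the resulting supremum over $s\in[|w|,|u|]$ by the values at the endpoints $\pm u$, $\pm w$.

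The main obstacle will be the careful bookkeeping across the three regime splits ($|x|$ vs.\ $\eps$, $|u|$ vs.\ $h^{-1}(1/t)$, and $t$ vs.\ $1$) and in verifying that the prescribed choice of $\delta$ suffices in each. For~(\ref{gdelta4}) the subtle point is justifying that $H'(s)/(2s)$ (rather than $g_t^{(\delta)'}$ alone) is the right quantity to control, so that the $|u|^{2}-|w|^{2}$ factor emerges naturally from the $s^{2}$-reparametrization via cancellation between the two $\pm$ terms.
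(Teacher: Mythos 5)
Your overall plan for (\ref{gdelta1})--(\ref{gdelta3}) matches the paper's strategy: the paper proves that the single derivative bound (\ref{der_est}) (derived for $|x|<\eps$ from (\ref{g_est01}) and (\ref{g(0)}), and for $|x|\ge\eps$ from (\ref{gdelta10})--(\ref{gdelta13}), using the calibration of $\delta$ exactly as you describe) suffices, and (\ref{gdelta2})--(\ref{gdelta3}) follow by the same Taylor/triangle dichotomy you outline. That part is sound, modulo the bookkeeping you flag, which is indeed the labor.

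For (\ref{gdelta4}), however, there is a genuine gap in the step
$$
\left|\frac{dH}{dr}\right|=\frac{|g_t^{(\delta)\prime}(x+s)-g_t^{(\delta)\prime}(x-s)|}{2s}\le \frac{c\,(g_t^*(x+s)+g_t^*(x-s))}{h^{-1}(1/t)^{2}},
$$
which you assert follows from ``bounding each derivative by (\ref{gdelta14}).'' Applying (\ref{gdelta14}) to each term separately gives only
$$
\left|\frac{dH}{dr}\right|\le \frac{ce^{ct}}{2s\,h^{-1}(1/t)^2}\bigl(|x+s|\,g_t^*(x+s)+|x-s|\,g_t^*(x-s)\bigr),
$$
and the factor $|x\pm s|/(2s)$ is unbounded when $|x|\gg s$. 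The cancellation you invoke between the two $\pm$ terms is real, but it cannot be extracted by bounding the two derivatives individually: for $|x|>s$ one must instead use $g_t^{(\delta)\prime}(x+s)-g_t^{(\delta)\prime}(x-s)=\int_{x-s}^{x+s}g_t^{(\delta)\prime\prime}(v)\,dv$ together with the second-derivative bound $|g_t^{(\delta)\prime\prime}(v)|\le ce^{ct}\,g_t^*(v)/h^{-1}(1/t)^2$ (from (\ref{g_est0})--(\ref{g(0)})); only for $|x|\le s$ does (\ref{gdelta14}) alone give the claimed bound, because then $|x\pm s|\le 2s$. There is a second, related gap: when $|x|>s$, the interval $[x-s,x+s]$ does not contain $0$ and $\sup_{[x-s,x+s]}g_t^*=g_t^*(|x|-s)$, which is one of the two desired endpoint values; but when $|x|\le s$, $g_t^*$ attains its maximum at $0$ inside the interval and your ``monotonicity lets me replace the supremum by endpoint values'' argument fails. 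This is precisely the subcase the paper handles by reflecting $w\mapsto w^*=2x-w$. Your $r=s^2$ reparametrization is a cleaner way to organize the computation than the paper's case-by-case argument, but it does not remove the need for the $|x|\lessgtr s$ dichotomy with the two distinct derivative bounds, nor the interior-maximum issue; you should spell both out.
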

\begin{proof} Here in the proof below a constant $c$ may change its value from line to line but it is dependent only  on
$\alpha, \beta, \delta, \tau, h, C_1, C_2$. 
To prove (\ref{gdelta1} - \ref{gdelta3}) it is enough to show that, for $k=0,1,2$ and $x\in\R$, we have

\begin{equation}\label{der_est}\left|\frac{d^k}{dx^k}g_{t}^{(\delta)}(x) \right|\le c \frac 1{\left(h^{-1}\left(\frac{1}{t}\right)\right)^k}\tilde{g}_{t}^{(\eps)}(x), \  t\le \tau. \end{equation}

For $|x|\le\eps$,  (\ref{der_est}) follows directly from (\ref{g_est01}) and  (\ref{g(0)}).
From (\ref{h-1}) we infer that  $\frac 1{h^{-1}(1/t) }\le c t^{-1/\alpha}$, hence applying (\ref{gdelta10}) we obtain

$$g_{t}^{(\delta)}(x) 
 \le c (t\wedge1)^{\frac{|x|}{8\delta}}  \frac 1{h^{-1}\left(\frac{1}{t}\right)}  e^{-|x|}\le c t^{-1/\alpha}(t\wedge1)^{\frac{|x|}{8\delta}}   e^{-|x|}, \ t\le \tau.$$
Similarly, by (\ref{gdelta12}) and (\ref{gdelta13}), we have
$$\left|\frac{d}{dx}g_{t}^{(\delta)}(x) \right| 
\le c t^{-2/\alpha}(t\wedge1)^{\frac{|x|}{8\delta}} \frac 1{h^{-1}\left(\frac{1}{t}\right)}   e^{-|x|}, \ t\le \tau $$
and
$$\left|\frac{d^2}{dx^2}g_{t}^{(\delta)}(x)\right| 
\le c t^{-3/\alpha}(t\wedge1)^{\frac{|x|}{8\delta}} \frac 1{\left(h^{-1}\left(\frac{1}{t}\right)\right)^2}   e^{-|x|}, \ t\le \tau. $$
By the choice of $\delta$, for $|x|\ge\eps$ we have  $\frac{|x|}{8\delta} \ge \frac{d+\beta+2}\alpha$, which proves 
(\ref{der_est}) if  $|x|\ge\eps$.

Next,   for $ 0\le a\le b$ we have, by (\ref{gdelta14}),
\begin{eqnarray}|g_{t}^{(\delta)}(b)-g_{t}^{(\delta)}(a)|
&=& \left|\int_a^{b} \frac d{d \xi} g_{t}^{(\delta)}(\xi) d\xi\right| \nonumber\\
&\le& c e^{ct}\frac{1}{\left(h^{-1}\left(\frac{1}{t}\right)\right)^2} 
 \int_a^{b}\xi {g}_{t}^{*}(\xi) d\xi \nonumber\\
&\le & 
c e^{ct}\frac{1}{\left(h^{-1}\left(\frac{1}{t}\right)\right)^2}{g}_{t}^{*}(a) \int_a^{b}\xi  d\xi
\nonumber\\
&=& \frac12c e^{ct}\frac{1}{\left(h^{-1}\left(\frac{1}{t}\right)\right)^2}{g}_{t}^{*}(a) \left(b^2 -a^2\right).\label{diff}
%\\
%&\le & 2c e^{ct}\frac{1}{\left(h^{-1}\left(\frac{1}{t}\right)\right)^2}{g}_{t}^{*}(x+w^*) \left((w+x)^2 -(w^*+x)^2\right)\label{diff0}
\end{eqnarray}
   %
%\begin{eqnarray}&&|g_{t}^{(\delta)}(x+w)-g_{t}^{(\delta)}(x+w^*)|\nonumber
%\\&=& \left|\int_0^{w-w^*} \frac d{d \xi} g_{t}^{(\delta)}(x+w^*+\xi) d\xi\right| \nonumber\\
%&\le& c e^{ct}\frac{1}{\left(h^{-1}\left(\frac{1}{t}\right)\right)^2} 
% \int_0^{w-w^*}(x+w^*+\xi) {g}_{t}^{*}(x+w^*+\xi) d\xi \nonumber\\
%&\le & 
%c e^{ct}\frac{1}{\left(h^{-1}\left(\frac{1}{t}\right)\right)^2}{g}_{t}^{*}(x+w^*) \int_0^{w-w^*}(x+w^*+\xi)  d\xi
%\nonumber\\
%&\le & c e^{ct}\frac{1}{\left(h^{-1}\left(\frac{1}{t}\right)\right)^2}{g}_{t}^{*}(x+w^*) (w+ w^*+x) (w- w^*)\label{diff}\\
%&\le & 2c e^{ct}\frac{1}{\left(h^{-1}\left(\frac{1}{t}\right)\right)^2}{g}_{t}^{*}(x+w^*) \left((w+x)^2 -(w^*+x)^2\right)\label{diff0}
%\end{eqnarray}

Now, we proceed with the proof of (\ref{gdelta4}). It is enough to prove it for $x>0$ and $0\le u\le w$.

We first consider the case  $|x-u|> |x-w|$, which can be split into two subcases. One of them  is the subcase 
  $0\le u \le  w\le x$. Then, by (\ref{g_est01}) and (\ref{g(0)}), we obtain
\begin{eqnarray}&&|g_{t}^{(\delta)}(x+u)+ g_{t}^{(\delta)}(x-u)-(g_{t}^{(\delta)}(x+w)+ g_{t}^{(\delta)}(x-w))|\nonumber\\&=&\left| \int_u^{w}\left( \frac d{d \xi} g_{t}^{(\delta)}(x+\xi)-\frac d{d \xi} g_{t}^{(\delta)}(x-\xi)\right)d\xi\right|\nonumber\\
&\le& \sup_{x-w\le\xi\le x+w} \left|\frac {d^2}{d \xi^2} g_{t}^{(\delta)}(\xi)\right|(w^2-u^2)
 \nonumber\\
&\le & c e^{ct} \frac{1}{\left(h^{-1}\left(\frac{1}{t}\right)\right)^2}
g^*_{t}(x-w)(w^2-u^2) \label{diff1}.
\end{eqnarray}
Next, consider the second subcase  $u\le x\le w $ and $|x-w|< x-u$. Denote $w^*= 2x-w$, then  $0<u\le w^*\le x$ and $|x- w^*|= |x- w|$. Hence, by (\ref{diff}) and  (\ref{diff1}),  
\begin{eqnarray}&&|g_{t}^{(\delta)}(x+u)+ g_{t}^{(\delta)}(x-u)-(g_{t}^{(\delta)}(x+w)+ g_{t}^{(\delta)}(x-w))|\nonumber \\
&&\le |g_{t}^{(\delta)}(x+w)-g_{t}^{(\delta)}(x+w^*)|\nonumber\\
 &&+|g_{t}^{(\delta)}(x+w^*) +g_{t}^{(\delta)}(x-w^*) - (g_{t}^{(\delta)}(x+u) +g_{t}^{(\delta)}(x-u) )|\nonumber\\
&&\le  c e^{ct} \frac{1}{\left(h^{-1}\left(\frac{1}{t}\right)\right)^2}
g^*_{t}(x-w)(w^2-{w^*}^2+{w^*}^2-u^2).\label{diff2}\end{eqnarray}
Combining (\ref{diff1}) and (\ref{diff2}) we get (\ref{gdelta4}) if $|x-w|< |x-u|$.
Next, we consider the case $|x-w|\ge |x-u|$. We have 
\begin{eqnarray*}&&|g_{t}^{(\delta)}(x\pm u)-(g_{t}^{(\delta)}(x\pm w)| \\
&=&|g_{t}^{(\delta)}(x+u)+ g_{t}^{(\delta)}(|x-u|)-(g_{t}^{(\delta)}(x+w)+ g_{t}^{(\delta)}(|x-w|))| \\
&\le&|g_{t}^{(\delta)}(x+u)-g_{t}^{(\delta)}(x+w)|+  |g_{t}^{(\delta)}(|x-u|)- g_{t}^{(\delta)}(|x-w|)|
\end{eqnarray*}
By (\ref{diff}) this is bounded form above by
\begin{eqnarray*}
&&   \frac{c e^{ct}}{\left(h^{-1}\left(\frac{1}{t}\right)\right)^2}
\left[g^*_{t}(x+u)((x+w)^2-(x+u)^2) + g^*_{t}(|x-u|)((x-w)^2-(x-u)^2)\right]\\
%&+& \frac{c}{\left(h^{-1}\left(\frac{1}{t}\right)\right)^2} g^*_{t}(|x-u|)((x-w)^2-(x-u)^2)\\
&\le&   c e^{ct}\frac{1}{\left(h^{-1}\left(\frac{1}{t}\right)\right)^2}
g^*_{t}(|x-u|) ((x+w)^2-(x+u)^2+(x-w)^2-(x-u)^2)\\&=&   2c e^{ct}\frac{1}{\left(h^{-1}\left(\frac{1}{t}\right)\right)^2}
g^*_{t}(x-u)(w^2-u^2),\end{eqnarray*}
which completes the proof of (\ref{gdelta4}).
\end{proof}

\begin{lemma}
\label{xxprime}
Let $\eps \in (0,1]$. For any $t \in (0,\tau]$, 
$x, x' \in \R$ if $|x- x'| \le h^{-1}(1/t)/4$ and $|x - x'| \le \eps/4$ then
$$
\tilde{g}_{t}^{(\eps)}(x') \le  \tilde{g}_{t}^{(\eps)}(x/2).
$$
\end{lemma}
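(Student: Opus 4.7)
The plan is to split into two cases depending on how $|x-x'|$ compares to $|x|$, and in each case exploit the fact that $\tilde{g}_t^{(\eps)}$ is radial and nonincreasing (as remarked right after its definition).

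\textbf{Case 1:} $|x| \ge 2|x-x'|$. Then $|x'| \ge |x| - |x-x'| \ge |x|/2$, so by the monotonicity of $r \mapsto \tilde{g}_t^{(\eps)}(r)$ on $[0,\infty)$ we immediately get $\tilde{g}_t^{(\eps)}(x') \le \tilde{g}_t^{(\eps)}(x/2)$.

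\textbf{Case 2:} $|x| < 2|x-x'|$. In this regime both $x/2$ and $x'$ are small, and I would show that both sides are essentially $1/h^{-1}(1/t)$. Explicitly, from the hypotheses,
$$|x/2| \le |x-x'| \le \tfrac{1}{4} h^{-1}(1/t), \qquad |x/2| \le |x-x'| \le \eps/4,$$
so $|x/2| < \eps$ and we are on the first branch of the definition of $\tilde{g}_t^{(\eps)}$. Since $|x/2| \le h^{-1}(1/t)/4 \le h^{-1}(1/t)$ and $h$ is decreasing, $h(|x/2|) \ge h(h^{-1}(1/t)) = 1/t$, hence
$$\frac{t\, h(|x/2|)}{|x/2|} \ge \frac{1}{|x/2|} \ge \frac{4}{h^{-1}(1/t)} > \frac{1}{h^{-1}(1/t)},$$
so the minimum in the definition is attained by the first term, giving $\tilde{g}_t^{(\eps)}(x/2) = 1/h^{-1}(1/t)$. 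Similarly, $|x'| \le |x| + |x-x'| < 3|x-x'| \le \tfrac{3}{4} h^{-1}(1/t) < h^{-1}(1/t)$ and $|x'| \le 3\eps/4 < \eps$, so $x'$ is again in the first branch of the definition, and by the very construction of the minimum,
$$\tilde{g}_t^{(\eps)}(x') = \frac{1}{h^{-1}(1/t)} \wedge \frac{t\, h(|x'|)}{|x'|} \le \frac{1}{h^{-1}(1/t)} = \tilde{g}_t^{(\eps)}(x/2).$$

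The only subtle point is ensuring in Case 2 that neither $x/2$ nor $x'$ can spill into the exponential ``large $|x|$'' branch of the definition and that the min picks the constant $1/h^{-1}(1/t)$ term — both facts follow cleanly from the two hypotheses $|x-x'| \le h^{-1}(1/t)/4$ and $|x-x'| \le \eps/4$, which is precisely why the constant $1/4$ appears in the assumption. There is no analytic difficulty; the proof is essentially a careful bookkeeping of cases.
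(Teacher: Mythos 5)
Your proof is correct and rests on the same two facts the paper uses: the monotonicity of $r \mapsto \tilde{g}_t^{(\eps)}(r)$ on $[0,\infty)$ and the flat plateau $\tilde{g}_t^{(\eps)}(z) = 1/h^{-1}(1/t)$ for $|z| \le h^{-1}(1/t) \wedge \eps$. You split into cases according to the ratio $|x|/|x-x'|$ while the paper conditions on the size of $|x'|$ relative to $(h^{-1}(1/t)/2) \wedge (\eps/2)$, but this is a cosmetic difference and the two arguments are essentially identical.
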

\begin{proof}
Recall that $x \to \tilde{g}_{t}^{(\eps)}(x)$ is nonincreasing on $[0,\infty)$ and $\tilde{g}_{t}^{(\eps)}(-x) = \tilde{g}_{t}^{(\eps)}(x)$ for $x \in \R$. Therefore we may assume that $x > x' \ge 0$.

Assume that $x' \le (h^{-1}(1/t)/2) \wedge (\eps/2)$. Then $x = x' + x - x' \le h^{-1}(1/t) \wedge \eps$ so 
$\tilde{g}_{t}^{(\eps)}(x') =  \tilde{g}_{t}^{(\eps)}(x) = 1/(h^{-1}(1/t))$.

Assume now that $x' > (h^{-1}(1/t)/2) \wedge (\eps/2)$. Then we have $x' = x - (x - x') \ge x - x/2 = x/2$. 
Hence $\tilde{g}_{t}^{(\eps)}(x') \le  \tilde{g}_{t}^{(\eps)}(x/2)$.
\end{proof}

\section{Some useful estimates}

%{\color{red} Tu musi byc jakis tekst wprowadzajacy (wyjasniajacy) oznaczenia i zalozenia.}

In this section we prove several inequalities used in the sequel involving some relationships between one dimensional densities and L\'evy measures of processes obtained by appropriate truncation procedures described in Section \ref{1dim}. From now on our basic assumption on the process $Z_t = (Z_t^{(1)},\ldots,Z_t^{(d)})^T$ is the assumption (Z0). Let us recall that by $\nu_i$ we denote the density of the L\'evy measure of the process $Z_t^{(i)}, i \in \{1,\dots, d\}$. 

Let $\tau>0$ and $\eps, \delta\in (0, 1]$, where usually $\delta$ is picked conveniently.      For each $i \in \{1,\dots, d\}$ we denote by 
$h_i,   \mu_i^{(\delta)}, {g}_{i,t}^{(\delta)}, \tilde{g}_{i,t}^{(\eps)}$ all the objects defined  in Section \ref{1dim} but now corresponding to the measure $\nu_i$.
Under our  assumptions we may pick a positive  $\delta_0\le 1/24$ such that for all $\delta \in (0,\delta_0]$ the truncated Levy measures $\mu_i^{(\delta)}$ have the properties required in Section \ref{1dim}, hence we can apply all the estimates proved in that section. 

We adopt the convention that constants denoted by $c$ (or $c_1, c_2, \ldots$) may change their value from one use to the next. In the rest of the paper, unless is explicitly stated otherwise, we understand that constants denoted by $c$ (or $c_1, c_2, \ldots$) depend on ${\bf{ \nu_0}}, \tau, \alpha, \beta, \underline{C}, \overline{C}, d, \eta_1, \eta_2, \eta_3, \eta_4$. We  also understand that they may depend on the choice of the constants $\delta_0$, $\eps_0$ and $\gamma$. We write $f(x) \approx g(x)$ for $x \in A$ if $f, g \ge 0$ on $A$ and there is a constant $c \ge 1$ such that $c^{-1} f(x) \le g(x) \le c f(x)$ for $x \in A$. The standard inner product for $x, y \in \R^d$ we denote by $xy$. We denote by $B(x,r)$ an open  ball of the center $x\in \R^d$ and radius $r>0$.

%In this section we prove several estimates involving the products of one dimensional densities which will be basic tools in the sequel.  

\begin{lemma}
\label{product_gt}
Let $\eps \in (0,1]$, $\delta = \min\{\delta_0 , \frac{\eps \alpha}{8(d+\beta + 2)},\frac{\eps}{d\eta_1^2}\}$. For any 
$t \in (0,\tau]$, $x, x' \in \R^d$ if $\sum_{j = 1}^d \frac{|x_j - x'_j|}{h_j^{-1}(1/t)} \le \frac{1}{4}$ and $|x - x'| \le \delta$ then
\begin{equation}
\label{product_gt1}
\left| \prod_{i=1}^d  g_{i,t}^{(\delta)}\left({x_i}\right) - 
\prod_{i=1}^d  g_{i,t}^{(\delta)}\left({x'_i}\right)\right|
\le c  \left(\prod_{i=1}^d \tilde{g}_{i,t}^{(\eps)}(x_i/2) \right)\left[1\wedge \sum_{j = 1}^d \frac{|x_j - x'_j|}{h_j^{-1}(1/t)}\right].
\end{equation}
\end{lemma}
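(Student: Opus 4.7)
The plan is to reduce the multi-dimensional difference to a one-dimensional one by telescoping, and then apply the one-dimensional bounds (\ref{gdelta1}), (\ref{gdelta2}) together with Lemma~\ref{xxprime}. Concretely, I would write
\[
\prod_{i=1}^d g_{i,t}^{(\delta)}(x_i) - \prod_{i=1}^d g_{i,t}^{(\delta)}(x'_i)
 = \sum_{k=1}^d \Bigl(\prod_{i<k} g_{i,t}^{(\delta)}(x'_i)\Bigr)\bigl(g_{k,t}^{(\delta)}(x_k)-g_{k,t}^{(\delta)}(x'_k)\bigr)\Bigl(\prod_{i>k} g_{i,t}^{(\delta)}(x_i)\Bigr),
\]
and estimate each of the $d$ summands separately. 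The middle factor is controlled by (\ref{gdelta2}) with constant $c|x_k-x'_k|/h_k^{-1}(1/t)$ times $\tilde g_{k,t}^{(\eps)}(x_k)+\tilde g_{k,t}^{(\eps)}(x'_k)$, and every other factor $g_{i,t}^{(\delta)}(\cdot)$ is dominated by $c\tilde g_{i,t}^{(\eps)}(\cdot)$ via (\ref{gdelta1}).

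The next step is to show that every occurrence of $\tilde g_{i,t}^{(\eps)}(x'_i)$ and $\tilde g_{i,t}^{(\eps)}(x_i)$ on the right-hand side can be replaced by $\tilde g_{i,t}^{(\eps)}(x_i/2)$, at the cost of an absolute constant. From the hypothesis $\sum_j |x_j-x'_j|/h_j^{-1}(1/t)\le 1/4$ we get in particular $|x_i-x'_i|\le h_i^{-1}(1/t)/4$ for every $i$; and from $|x-x'|\le\delta$ together with $\delta\le \eps\alpha/(8(d+\beta+2))\le \eps/16$ (using $\alpha<2$, $\beta\ge 0$, $d\ge 2$) we get $|x_i-x'_i|\le\delta\le\eps/4$. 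Hence Lemma~\ref{xxprime} applied coordinatewise gives $\tilde g_{i,t}^{(\eps)}(x'_i)\le \tilde g_{i,t}^{(\eps)}(x_i/2)$, while monotonicity of $r\mapsto \tilde g_{i,t}^{(\eps)}(r)$ on $[0,\infty)$ combined with its symmetry gives $\tilde g_{i,t}^{(\eps)}(x_i)\le \tilde g_{i,t}^{(\eps)}(x_i/2)$. Inserting these bounds into the telescoping sum and summing over $k$ yields
\[
\left|\prod_{i=1}^d g_{i,t}^{(\delta)}(x_i)-\prod_{i=1}^d g_{i,t}^{(\delta)}(x'_i)\right|
\le c\left(\prod_{i=1}^d \tilde g_{i,t}^{(\eps)}(x_i/2)\right) \sum_{k=1}^d \frac{|x_k-x'_k|}{h_k^{-1}(1/t)}.
\]

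To upgrade $\sum_k |x_k-x'_k|/h_k^{-1}(1/t)$ to $1\wedge \sum_k |x_k-x'_k|/h_k^{-1}(1/t)$ on the right-hand side, I would separately observe that, by the triangle inequality and (\ref{gdelta1}) applied to both terms, the same quantity is bounded by
\[
\prod_{i=1}^d g_{i,t}^{(\delta)}(x_i)+\prod_{i=1}^d g_{i,t}^{(\delta)}(x'_i)\le c\prod_{i=1}^d \tilde g_{i,t}^{(\eps)}(x_i/2),
\]
again via the two displacement comparisons above. Taking the minimum of the two bounds gives the claimed inequality.

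I expect no serious obstacle beyond bookkeeping; the only subtlety is verifying that the hypotheses of Lemma~\ref{xxprime} are met simultaneously for all coordinates, which is exactly why the choice of $\delta$ in the statement is calibrated to satisfy both $\delta\le\eps/4$ and the smallness condition needed in Lemma~\ref{gtht1}. Once $\tilde g_{i,t}^{(\eps)}(x_i/2)$ is established as a common dominant, the telescoping and the min-of-two-bounds argument give the result mechanically.
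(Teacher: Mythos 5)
Your proposal is correct and takes essentially the same route as the paper's proof: a coordinate-wise telescoping of the product, the one-dimensional bounds (\ref{gdelta1}) and (\ref{gdelta2}) from Lemma~\ref{gtht1}, and then Lemma~\ref{xxprime} (plus monotonicity and symmetry of $\tilde g_{i,t}^{(\eps)}$) to pass to the common dominating factor $\tilde g_{i,t}^{(\eps)}(x_i/2)$, together with the trivial product bound to produce the $1\wedge(\cdots)$. The only cosmetic difference is that the paper packages the intermediate factors as $g_{i,t}^{(\delta)}(|x_i|\wedge|x'_i|)$ using symmetry/unimodality, whereas you track $x_i$ and $x'_i$ explicitly in the telescoping; both yield the same estimate.
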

\begin{proof}
By Lemma \ref{gtht1} we get
\begin{eqnarray*}
&&
\left| \prod_{i=1}^d  g_{i,t}^{(\delta)}\left({x_i}\right) - 
\prod_{i=1}^d  g_{i,t}^{(\delta)}\left({x'_i}\right)\right|\\
&&
\le  \sum_{j=1}^d \left[\left|g_{j,t}^{(\delta)}\left(x_{j}\right) - g_{j,t}^{(\delta)}\left(x'_{j}\right)\right| 
\prod_{i\ne j, 1\le i\le d} g_{i,t}^{(\delta)}\left(|x_{i}|\wedge |x'_{i}|\right) \right]\\
&& \le c \left(\prod_{i=1}^d \tilde{g}_{i,t}^{(\eps)}(|x_i|\wedge |x'_i|) \right) \sum_{j = 1}^d  
\frac{|x_j-x'_j|}{h_j^{-1}(1/t)}.
\end{eqnarray*}

Clearly we have
$$\left| \prod_{i=1}^d  g_{i,t}^{(\delta)}\left({x_i}\right) - 
\prod_{i=1}^d  g_{i,t}^{(\delta)}\left({x'_i}\right)\right|
\le \prod_{i=1}^d g_{i,t}^{(\delta)}(|x_i|\wedge |x'_i|).$$

Now the assertion follows from Lemma \ref{xxprime}.
\end{proof}

\begin{lemma}
\label{integralgt}
Let $\eps \in (0,1]$, $\delta = \min\{\delta_0 , \frac{\eps \alpha}{8(d+\beta + 2)},\frac{\eps}{d\eta_1^2}\}$ and let $a, b\in\R$. Then there exists $c$ such that for any $t \in (0,\tau]$, $x \in \R$, $i, j, k \in \{1,\ldots,d\}$ we have

\begin{eqnarray}\nonumber
&& \int_{\R} |g_{i,t}^{(\delta)}(x + aw)+g_{i,t}^{(\delta)}(x - aw) - 2g_{i,t}^{(\delta)}(x)| \mu_j^{(\delta)}(w) \, dw \\
\label{gdelta41}
&\le& \frac{c (|a|^{\alpha}+ |a|^{2})\left(\max_{|u|\le 2\delta }
 \tilde{g}_{i,t}^{(\eps)}(x + au )\right)}{t^{\beta/\alpha}},
\end{eqnarray}

\begin{eqnarray}
\nonumber
&&\int_{\R} |g_{i,t}^{(\delta)}(x + aw)+g_{i,t}^{(\delta)}(x - aw) - 2g_{i,t}^{(\delta)}(x)| \mu_i^{(\delta)}(w) \, dw \\
\label{gdelta31}
&\le& \frac{c (|a|^{\alpha}+ |a|^{2})\left(\max_{|u|\le 2\delta }
 \tilde{g}_{i,t}^{(\eps)}(x + au )\right)}{t},
\end{eqnarray}

\begin{eqnarray}
&\int_{\R} |g_{i,t}^{(\delta)}(x + aw)-g_{i,t}^{(\delta)}(x)| 
|g_{k,t}^{(\delta)}(y + bw)-g_{k,t}^{(\delta)}(y)|\mu_j^{(\delta)}(w) \, dw \nonumber\\ 
\label{gdelta42}
&\le c  \frac{(|a||b|)^{\alpha/2} +|a||b|}{t^{\beta/\alpha }}\left(\max_{|u|\le 2\delta }
 \tilde{g}_{i,t}^{(\eps)}(x + au )\right)\left(\max_{|u|\le 2\delta }
 \tilde{g}_{k,t}^{(\eps)}(y + bu ) \right), 
\end{eqnarray}

\begin{eqnarray} 
&\int_{\R} |g_{i,t}^{(\delta)}(x + aw)-g_{i,t}^{(\delta)}(x)| 
|g_{i,t}^{(\delta)}(y + bw)-g_{i,t}^{(\delta)}(y)|\mu_i^{(\delta)}(w) \, dw \nonumber\\ 
\label{gdelta32}
&\le c  \frac{(|a||b|)^{\alpha/2} +|a||b|}{t}\left(\max_{|u|\le 2\delta }
 \tilde{g}_{i,t}^{(\eps)}(x + au )\right)\left(\max_{|u|\le 2\delta }
 \tilde{g}_{i,t}^{(\eps)}(y + bu ) \right). 
\end{eqnarray}

\end{lemma}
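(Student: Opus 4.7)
The plan for these four estimates is to reduce them all to the same template, which I will first describe for (\ref{gdelta41}). Set $r := h_i^{-1}(1/t)/|a|$ and split the support $[-2\delta,2\delta]$ of $\mu_j^{(\delta)}$ into $\{|w|\le r\}$ and $\{|w|>r\}$. On $\{|w|\le r\}$ I apply the second-order difference bound (\ref{gdelta3}) to dominate the integrand by $c\,|aw|^2(h_i^{-1}(1/t))^{-2}M_i$, where $M_i:=\max_{|u|\le 2\delta}\tilde g_{i,t}^{(\eps)}(x+au)$ absorbs the inner maximum produced by (\ref{gdelta3}) since $|w|\le 2\delta$. On $\{|w|>r\}$ I apply (\ref{gdelta1}) to each of the three summands individually to obtain a pointwise bound $\le c\,M_i$.

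Integrating against $\mu_j^{(\delta)}$, and using the identities $\int_{|w|\le R}w^2\mu_j^{(\delta)}(w)\,dw=R^2K_j^{(\delta)}(R)$ and $\int_{|w|>R}\mu_j^{(\delta)}(w)\,dw\le h_j^{(\delta)}(R)$, the two pieces sum to at most $c M_i\bigl(|a|^2\min(r,2\delta)^2(h_i^{-1}(1/t))^{-2}K_j^{(\delta)}(\min(r,2\delta))+h_j^{(\delta)}(r)\mathbf{1}_{\{r<2\delta\}}\bigr)$. When $r\le 2\delta$ the identity $|a|^2r^2/(h_i^{-1}(1/t))^2=1$ together with (\ref{hiKi}) collapses this to $cM_ih_j(r)$. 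When $r\ge 2\delta$ only the first piece survives, and monotonicity of $R\mapsto R^2h_j^{(\delta)}(R)$ together with $K_j^{(\delta)}(R)=h_j^{(\delta)}(R)$ for $R\ge 2\delta$ yields $|a|^2(2\delta)^2K_j^{(\delta)}(2\delta)/(h_i^{-1}(1/t))^2\le h_j(r)$, so the total is again $\le cM_ih_j(r)$.

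It remains to estimate $h_j(r)=h_j(h_i^{-1}(1/t)/|a|)$. For $|a|\le 1$ I apply WLSC in the form (\ref{h-scaling_l0}) with $\lambda=1/|a|$ to obtain $h_j(r)\le C_1^{-1}|a|^\alpha h_j(h_i^{-1}(1/t))$; for $|a|\ge 1$ the monotonicity of $\theta\mapsto\theta^2h_j(\theta)$ yields $h_j(r)\le|a|^2h_j(h_i^{-1}(1/t))$. Finally $h_i^{-1}(1/t)\ge c_3t^{1/\alpha}$ from (\ref{h-1}) combined with (\ref{h-ubound}) forces $h_j(h_i^{-1}(1/t))\le ct^{-\beta/\alpha}$, which completes (\ref{gdelta41}). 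The sharper (\ref{gdelta31}) follows at once because for $j=i$ one has $h_i(h_i^{-1}(1/t))=1/t$ exactly, so $t^{-\beta/\alpha}$ is replaced by $t^{-1}$.

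For the bilinear estimates (\ref{gdelta42}) and (\ref{gdelta32}) the reduction rests on the elementary inequality $\min(1,A)\min(1,B)\le\min(1,AB)$. Combining (\ref{gdelta2}) with the trivial bound (\ref{gdelta1}) gives $|g_{i,t}^{(\delta)}(x+aw)-g_{i,t}^{(\delta)}(x)|\le cM_i\min(1,|a||w|/h_i^{-1}(1/t))$ and analogously for the $k$-factor, so their product is dominated by $cM_iM_k\min\bigl(1,|a||b|w^2/(h_i^{-1}(1/t)h_k^{-1}(1/t))\bigr)$. This is exactly the structure handled in the previous paragraphs, now with rescaled threshold $r=\sqrt{h_i^{-1}(1/t)h_k^{-1}(1/t)/(|a||b|)}$ and effective parameter $\sqrt{|a||b|}$; repeating the split--integrate--scale argument produces $h_j(r)\le c\bigl((|a||b|)^{\alpha/2}+|a||b|\bigr)h_j\bigl(\sqrt{h_i^{-1}(1/t)h_k^{-1}(1/t)}\bigr)\le c\bigl((|a||b|)^{\alpha/2}+|a||b|\bigr)t^{-\beta/\alpha}$, with the usual sharpening to $t^{-1}$ when $i=j=k$ since the geometric mean then equals $h_i^{-1}(1/t)$. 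The delicate point I expect is the corner case $r\ge 2\delta$ in the second paragraph: the finite support of $\mu_j^{(\delta)}$ removes the large-$w$ tail, so one must invoke monotonicity of $R^2h_j^{(\delta)}(R)$ (instead of direct scaling of $h_j$) to recover the correct $|a|$-dependence; once this is in place the bilinear case is a structural repetition.
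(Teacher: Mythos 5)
Your proof is correct and follows essentially the same route as the paper: a pointwise bound from Lemma \ref{gtht1}, integration against the truncated L\'evy density, and then scaling of $h_j$. The one real detour is in the middle step. After writing the pointwise majorant as $c\,M_i\,\min\bigl(1,|a|^2w^2/h_i^{-1}(1/t)^2\bigr)$, the paper simply bounds $\mu_j^{(\delta)}\le\nu_j$ and observes that the resulting $w$-integral is, by the very definition of $h_j$, equal to $h_j\bigl(h_i^{-1}(1/t)/|a|\bigr)$ --- no split needed. You instead explicitly split the integral at $r=h_i^{-1}(1/t)/|a|$, pass through $K_j^{(\delta)}$, $h_j^{(\delta)}$, the monotonicity of $R\mapsto R^2h_j^{(\delta)}(R)$, and Lemma \ref{K_est}, and only at the end compare to $h_j(r)$; this is valid but re-derives by hand what the definitions of $h_j$ and $h_j^{(\delta)}$ (plus the trivial $h_j^{(\delta)}\le h_j$) already encode. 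Your final scaling step (treating $|a|\le1$ and $|a|\ge1$ separately, then invoking \eqref{h-1} and \eqref{h-ubound} to get $t^{-\beta/\alpha}$ or $t^{-1}$) differs only cosmetically from the paper's use of \eqref{WLSCcaling1} and \eqref{h-scaling_u}, and your reduction of the bilinear estimates via the product of two first-order differences to the same $\min$-structure with threshold $\sqrt{h_i^{-1}(1/t)h_k^{-1}(1/t)/(|a||b|)}$ mirrors the paper's argument exactly.
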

\begin{proof}
Let $|w|\le 2\delta$. Then, by (\ref{gdelta1}) and (\ref{gdelta3}), 
$$
|g_{i,t}^{(\delta)}(x+aw)+ g_{i,t}^{(\delta)}(x-aw) - 2g_{i,t}^{(\delta)}(x)| 
\le c\left(\frac{ |a|^2|w|^2}{\left(h_i^{-1}\left(\frac{1}{t}\right)\right)^2}\wedge1\right)\max_{|u|\le 2\delta }
 \tilde{g}_{i,t}^{(\eps)}(x + au ).
$$

First we show (\ref{gdelta41}) and (\ref{gdelta31}).
We have
\begin{eqnarray*}
	&& \int_{\R} 
	|g_{i,t}^{(\delta)}(x + aw)+g_{i,t}^{(\delta)}(x - aw) - 2g_{i,t}^{(\delta)}(x)| 
	\mu_j^{(\delta)}(w) \, dw\\
	&& \leq \int_{|w|<2\delta} 
	|g_{i,t}^{(\delta)}(x + aw)+g_{i,t}^{(\delta)}(x - aw) - 2g_{i,t}^{(\delta)}(x)| 
	\nu_j(w) \, dw\\
	&& \leq c\max_{|u|\le 2\delta }
 \tilde{g}_{i,t}^{(\eps)}(x + au ) \int_{\R} 
	\left(\frac{ |a|^2|w|^2}{\left(h_i^{-1}\left(\frac{1}{t}\right)\right)^2}\wedge1\right)
	\nu_j(w) \, dw\\
	&& = c\max_{|u|\le 2\delta }
 \tilde{g}_{i,t}^{(\eps)}(x + au )h_j\left(\frac {h_i^{-1}\left(\frac{1}{t}\right)}{|a|}\right)
\end{eqnarray*}
Next, by (\ref{WLSCcaling1}),
$$h_j\left(\frac {h_i^{-1}\left(\frac{1}{t}\right)}{|a|}\right)\le c(|a|^\alpha + |a|^2) h_j\left( {h_i^{-1}\left(\frac{1}{t}\right)}\right).$$
By (\ref{h-scaling_u}) and (\ref{h-1}) we get
$${h_j\left({h_i^{-1}\left(\frac{1}{t}\right)}\right)}\le c t ^{-\beta/\alpha}$$ if $i\ne j$ and 
$${h_i\left({h_i^{-1}\left(\frac{1}{t}\right)}\right)}=  t ^{-1}.$$
This finishes the proof of  (\ref{gdelta41}) and (\ref{gdelta31}).

 To show (\ref{gdelta42}) and (\ref{gdelta32})  we 
use (\ref{gdelta1}) and (\ref{gdelta2}) to   obtain
\begin{eqnarray}
\nonumber
&& |g_{i,t}^{(\delta)}(x + aw)-g_{i,t}^{(\delta)}(x)| 
|g_{k,t}^{(\delta)}(y + bw)-g_{k,t}^{(\delta)}(y)|\\
\nonumber
&& \le c\left(\frac{ |a| |b| w^2}{h_i^{-1}(1/t) h_k^{-1}(1/t)}\wedge1\right)
\max_{|w|\le 2\delta }
 \tilde{g}_{i,t}^{(\eps)}(y + aw ) \max_{|w|\le 2\delta }
 \tilde{g}_{k,t}^{(\eps)}(y + bw ) 
\end{eqnarray}
Therefore 
\begin{eqnarray*}
&& \int_{\R} |g_{i,t}^{(\delta)}(x + aw)-g_{i,t}^{(\delta)}(x)| 
|g_{k,t}^{(\delta)}(y + bw)-g_{k,t}^{(\delta)}(y)|\mu_j^{(\delta)}(w) \, dw\\
&& \le c  h_j\left(\sqrt{\frac{h_i^{-1}(1/t)h_k^{-1}(1/t)}{|a| |b|}}\right)  \max_{|u|\le 2\delta }
 \tilde{g}_{i,t}^{(\eps)}(x + au ) \max_{|u|\le 2\delta }
 \tilde{g}_{k,t}^{(\eps)}(y + bu )   \\
&& \le  c \left[{(|a||b|)^{\alpha/2}+|a||b|}\right]h_j\left(\sqrt{{h_i^{-1}(1/t)h_k^{-1}(1/t)}}\right) \max_{|u|\le 2\delta }
 \tilde{g}_{i,t}^{(\eps)}(x + au ) \max_{|u|\le 2\delta }
 \tilde{g}_{k,t}^{(\eps)}(y + bu ) 
\end{eqnarray*}
which proves (\ref{gdelta42}) and (\ref{gdelta32}) since 
$h_j\left(\sqrt{{h_i^{-1}(1/t)h_k^{-1}(1/t)}}\right)$ is equal to $t^{-1}$ if $i=j=k$ and it is smaller than  $ c t^{-\beta/\alpha}$ in the general case.

\end{proof}

\begin{lemma}
\label{levy1} There is a constant $c$ such that for $ a \in \R$  any $0<t\le\tau$ and $i, j, k \in \{1, \ldots, d\}$ we have 
\begin{equation}
\label{levy1a}
\int_{\R} \left(\frac{(|a|+|w|)^2|w|^2}{\left(h_i^{-1}(1/t)\right)^2}\wedge1\right)\mu_i^{(\delta)}(w) \, dw\le c 
\frac{t^{\alpha/(2\beta)}+|a|^{\alpha}+|a|^{2}}{t},
\end{equation}
\begin{equation}
\label{levy1b}
\int_{\R} \left(\frac{(|a|+|w|)|w|^2}{\left(h_i^{-1}(1/t)\right)^2}\wedge1\right)\mu_i^{(\delta)}(w) \, dw\le c 
\frac{t^{\alpha/(3\beta)}+|a|^{\alpha/2}+|a|}{t},
\end{equation}

\begin{equation}
\label{levy2a}
\int_{\R} \left(\frac{(|a|+|w|)^2|w|^2}{h_i^{-1}(1/t)h_j^{-1}(1/t)}\wedge1\right)\mu_k^{(\delta)}(w) \, dw\le c 
\left[|a|^{\beta} t^{-\beta/\alpha}+   t^{-{\beta}/(2\alpha)}\right],
\end{equation}
%\end{equation}
\begin{equation}
\label{levy2b}
\int_{\R} \left(\frac{(|a|+|w|)|w|^2}{\left(h_i^{-1}(1/t)\right)^2}\wedge1\right)\mu_k^{(\delta)}(w) \, dw\le c \left[
|a|^{\beta/2} t^{-\beta/\alpha}+ c_2  {t^{-{2\beta}/(3\alpha)}}\right].
\end{equation}
\end{lemma}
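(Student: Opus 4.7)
The plan is to reduce each of the four integrals to an expression of the form $h_k^{(\delta)}(r) \le h_k(r)$ for a judiciously chosen argument $r$, and then to apply the scaling estimates for $h_k$ together with the two-sided bound $C_3 t^{1/\alpha} \le h_i^{-1}(1/t) \le C_4 t^{1/\beta}$ from (\ref{h-1}).

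For (\ref{levy1a}) I would first split $(|a|+|w|)^2 \le 2(|a|^2+|w|^2)$ and use the elementary inequality $(A+B)\wedge 1 \le (A\wedge 1)+(B\wedge 1)$, reducing the estimate with $D = h_i^{-1}(1/t)$ to two pieces. The first, $\int (|a|^2|w|^2/D^2 \wedge 1)\mu_i^{(\delta)}\,dw$, equals $h_i^{(\delta)}(D/|a|)$ and is bounded by $c(|a|^\alpha+|a|^2)/t$ via (\ref{WLSCcaling1}). For the second I would use the pointwise inequality $|w|^4 \wedge D^2 \le D\,|w|^2$ to reduce to $h_i^{(\delta)}(\sqrt{D}) \le h_i(\sqrt{D})$. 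The sharp estimate here comes from (\ref{h-scaling_l}) applied with $\lambda = \sqrt{D}$, $\theta = \sqrt{D}$ (valid when $D \le 1$): this gives $h_i(\sqrt{D}) \le C_1^{-1}D^{\alpha/2}/t$, and substituting $D \le C_4 t^{1/\beta}$ yields the claimed rate $c\,t^{\alpha/(2\beta)-1}$. The complementary regime $D \ge 1$ corresponds to $t$ bounded below and is handled by absorbing constants into those depending on $\tau$. For (\ref{levy1b}) the argument is analogous, using the decomposition $(|a|+|w|)|w|^2 \le |a||w|^2 + |w|^3$ and the inequality $|w|^3 \wedge D^2 \le D^{2/3}|w|^2$; the first piece yields $h_i^{(\delta)}(D/\sqrt{|a|}) \le c(|a|^{\alpha/2}+|a|)/t$, and for the second (\ref{h-scaling_l}) applied with $\lambda = D^{1/3}$ gives $h_i(D^{2/3}) \le C_1^{-1}D^{\alpha/3}/t \le c\,t^{\alpha/(3\beta)-1}$. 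A noteworthy subtlety is that the cruder bound $h_i(D^{2/3}) \le c(D^{-2\alpha/3}+D^{-2\beta/3})$ from (\ref{h-ubound}) is insufficient when $\alpha<\beta$; the key is to apply (\ref{h-scaling_l}) in exactly the right direction, using the identity $h_i(D)=1/t$ to pin down the exponent.

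Estimates (\ref{levy2a}) and (\ref{levy2b}) follow the same blueprint but with $R = h_i^{-1}(1/t)\,h_j^{-1}(1/t)$ in place of $D^2$ and $\mu_k^{(\delta)}$ replacing $\mu_i^{(\delta)}$. Since (\ref{h-1}) gives $C_3^2\, t^{2/\alpha} \le R \le C_4^2\, t^{2/\beta}$, the same algebraic reductions produce $h_k$ evaluated at $\sqrt{R}$ or at $R^{1/3}$, and (\ref{h-scaling_l}) applied to $h_k$ with the corresponding choice of $\lambda$ delivers the stated rates $t^{-\beta/(2\alpha)}$ and $t^{-2\beta/(3\alpha)}$.

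I expect the main obstacle to be matching the $|a|$-dependence in (\ref{levy2a}) and (\ref{levy2b}): a naive application of (\ref{WLSCcaling1}) to $h_k(\sqrt{R}/|a|)$ produces an $|a|^\alpha$ factor (respectively $|a|^{\alpha/2}$), whereas the stated right-hand side has $|a|^\beta$ (respectively $|a|^{\beta/2}$). I would handle this by a case split at a threshold of the form $|a| \sim t^{1/\alpha}$: for $|a|$ above the threshold the upper scaling (\ref{h-scaling_u}) applied to $h_k$ upgrades the exponent from $\alpha$ to $\beta$, whereas for $|a|$ below the threshold the contribution $|a|^\alpha h_k(\sqrt{R})$ is absorbed into the $t^{-\beta/(2\alpha)}$ (respectively $t^{-2\beta/(3\alpha)}$) term by a direct comparison of exponents that uses $\alpha \le \beta$. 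The bookkeeping of the resulting exponents is routine but tedious, and ensuring that the correct power of $|a|$ is extracted in every subcase is the technically delicate step.
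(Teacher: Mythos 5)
For \eqref{levy1a} and \eqref{levy1b} your argument is essentially identical to the paper's: you use the same decomposition $(|a|+|w|)^k \le 2^{k-1}(|a|^k+|w|^k)$, the same pointwise inequality reducing $|w|^{k+2}/D^2 \wedge 1$ to a function of $|w|^2$, and the same use of the scaling of $h_i$ combined with the exact identity $h_i(D)=1/t$ for $D = h_i^{-1}(1/t)$. This part is fine.

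For \eqref{levy2a} and \eqref{levy2b} you propose to repeat the same blueprint, replacing $D^2$ by $R=h_i^{-1}(1/t)h_j^{-1}(1/t)$ and arriving at $h_k$ evaluated at powers of $R$. This is a genuinely different route from the paper's, which at this point switches strategy entirely: it invokes \eqref{nu-ubound} to bound $\mu_k^{(\delta)}(w)\le c|w|^{-1-\beta}$ (a pure power law, since $\mu_k^{(\delta)}$ is supported in $[-2\delta,2\delta]$), uses only the one-sided bound $h_i^{-1}(1/t)h_j^{-1}(1/t)\ge c\,t^{2/\alpha}$ from \eqref{h-1}, and then computes the resulting stable-like integrals explicitly by scaling. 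The payoff is that the $|a|$-dependence $|a|^\beta t^{-\beta/\alpha}$ and $|a|^{\beta/2}t^{-\beta/\alpha}$ come out directly from a single change of variables, with no case split on $|a|$ at all. Your route can be salvaged, but your description underestimates the friction: unlike in \eqref{levy1a}, there is no exact identity $h_k(\sqrt R)=1/t$, because $R$ is built from $h_i^{-1}$ and $h_j^{-1}$, not from $h_k^{-1}$. Consequently ``\eqref{h-scaling_l} applied to $h_k$ with the corresponding choice of $\lambda$'' does not deliver the rate by itself; what you actually need is the two-sided scaling bound \eqref{h-ubound}, $h_k(x)\le c(x^{-\alpha}+x^{-\beta})$, paired with $R\gtrsim t^{2/\alpha}$, and then a case split on $|a|$ to absorb the spurious $|a|^\alpha/t$ (resp.\ $|a|^{\alpha/2}/t$) term. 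The threshold for that split is $|a|\sim t^{1/\alpha}$ for \eqref{levy2a}, but $|a|\sim t^{2/\alpha}$ for \eqref{levy2b} (because there $|a|$ enters as $|a||w|^2$, producing $h_k(D/\sqrt{|a|})$ rather than $h_k(\sqrt R/|a|)$); your proposal conflates these. Also, invoking \eqref{h-scaling_u} to ``upgrade $\alpha$ to $\beta$'' is not quite the right mechanism since that lemma gives an upper bound on $h(\lambda\theta)$ only for $\lambda\le1$ and $\theta\le1$; the absorption really proceeds by elementary comparison of exponents once \eqref{h-ubound} is in place. In short, your approach is workable but noticeably more involved than the paper's, and the specific lemma citations and thresholds you gave would need to be corrected before the argument closes.
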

\begin{proof} Let $k\ge 1$, $k \in \N$ and $b>0$. Then

\begin{eqnarray}
 \int_{\R} \left(\frac{(|a|+|w|)^k|w|^2}{b}\wedge1\right)\mu_i^{(\delta)}(w) \, dw  \nonumber
& \le&2^{k-1} \int_{\R} \left(\frac{|a|^k|w|^2}{b}\wedge1\right)\nu_i(w) \, dw\\ \nonumber
 &+& 2^{k-1} \int_{\R} \left(\frac{|w|^{k+2}}{b}\wedge1\right)\nu_i(w) \, dw\\ \nonumber
& \le& 2^{k-1} \int_{\R} \left(\frac{|a|^k|w|^2}{b}\wedge1\right)\nu_i(w) \, dw\\ \nonumber
&+& 2^{k-1} \int_{\R} \left(\frac{|w|^{2}}{b^{2/(k+2)}}\wedge1\right)\nu_i(w) \, dw\\
& =& 2^{k-1}\left(h_i\left(\frac{\sqrt{b}}{|a|^{k/2}}\right)+ h_i\left(b^{\tfrac1{k+2}}\right)\right).\label{levy3}%&& = \text{I} + \text{II}.
\end{eqnarray}
Taking $b= (h_i^{-1}(1/t))^2$ and $k=2$ we arrive at 
$$
\int_{\R} \left(\frac{(|a|+|w|)^2|w|^2}{\left(h_i^{-1}(1/t)\right)^2}\wedge1\right)\mu_i^{(\delta)}(w) \, dw
\le c \left(h_i\left(\frac{h_i^{-1}(1/t)}{|a|}\right)+ h_i\left(\sqrt{h_i^{-1}(1/t)}\right)\right).
$$
Next, by the scaling property (\ref{WLSCcaling1}), $h_i\left(\frac{h_i^{-1}(1/t)}{|a|}\right)\le c \frac {|a|^{\alpha}+a^2}t$. Moreover,
by (\ref{WLSCcaling1}) and (\ref{h-1}), we get
$$
 h_i\left(\frac{1}{\left(h_i^{-1}(1/t)\right)^{1/2} } h_i^{-1}(1/t)\right)
\le c \frac{\left(h_i^{-1}(1/t)\right)^{\alpha/2}}t\le ct^{\alpha/(2\beta)}/t.
$$
The proof of (\ref{levy1a}) is completed.

By similar arguments, taking $b= (h_i^{-1}(1/t))^2$ and $k=1$ in (\ref{levy3}),  we arrive at (\ref{levy1b}).

Now, we proceed with the proof of (\ref{levy2a}) and (\ref{levy2b}).
First, we observe that, by  (\ref{h-1}),\\  $h_i^{-1}(1/t)h_j^{-1}(1/t)\ge c t^{2/\alpha}$ and  $ \mu_k^{(\delta)}(w)\le \frac c{|w|^{1+\beta}}$ , by (\ref{nu-ubound}). Hence,

\begin{eqnarray*}\int_{\R} \left(\frac{(|a|+|w|)^2|w|^2}{h_i^{-1}(1/t)h_j^{-1}(1/t)}\wedge1\right)\mu_k^{(\delta)}(w)&\le& c
\int_{\R} \left(\frac{(|a|+|w|)^2|w|^2}{t^{2/\alpha}}\wedge1\right)\frac 1{|w|^{1+\beta}}\, dw\\
&\le&c\int_{\R} \left(\frac{|a|^2|w|^2}{t^{2/\alpha}}\wedge1\right)\frac 1{|w|^{1+\beta}}\, dw\\
&+&c\int_{\R} \left(\frac{|w|^4}{t^{2/\alpha}}\wedge1\right)\frac 1{|w|^{1+\beta}}\, dw\\
&=&c_1 |a|^{\beta} t^{-\beta/\alpha}+ c_2  {t^{-{\beta}/(2\alpha)}}.
\end{eqnarray*}
Similar calculations show that
\begin{eqnarray*}\int_{\R} \left(\frac{(|a|+|w|)|w|^2}{h_i^{-1}(1/t)h_j^{-1}(1/t)}\wedge1\right)\mu_k^{(\delta)}(w)&\le&
c |a|^{\beta/2} t^{-\beta/\alpha}+ c_2  {t^{-{2\beta}/(3\alpha)}}.
\end{eqnarray*}
The proof is completed.
\end{proof}

\section{Construction and properties of the transition density of the solution of (\ref{main}) driven by the truncated process}\label{parametrix}

The approach in this section is based on Levi's method (cf. \cite{L1907,F1975,LSU1968}). This method was applied  in the framework of pseudodifferential operators by Kochubei \cite{K1989} to construct a fundamental solution to the related Cauchy problem as well as  transition density for the  corresponding Markow process. In recent years it was used in several papers to study transition densities of L{\'e}vy-type processes see e.g. \cite{CZ2016, KSV2018, CZ2017, J2017, GS2018, BSK2017, KK2017, KK2018, K2017}. Levi's method was also used to study gradient and Schr{\"o}dinger perturbations of fractional Laplacians see e.g. \cite{BJ2007,CKS2012,XZ2014}.

From now on we  assume that the assumptions (A0), and either (Z1) or (Z2) are satisfied.
We first introduce the generator of the process $X$. We define $\calK f(x)$ by the following formula
$$
\calK f(x) =  \frac{1}{2} \sum_{i = 1}^d   \int_{\R} \left[f(x + a_i(x) w) + f(x - a_i(x) w) - 2 f(x)\right] \nu_i(w) \, dw,
$$
for any Borel function $f: \R^d \to \R$ and any $x \in \R^d$ such that all the integrals on the right hand side are well defined. Recall that $a_i(x) = (a_{1i}(x),\ldots, a_{di}(x))$. It is well known that $\calK f(x)$ is well defined for any $f \in C_b^2(\Rd)$ and any $x \in \R^d$. By standard arguments, if $f \in C_c^2(\Rd)$, then $f(X_t) - f(X_0) - \int_0^t \calK f(X_s) \, ds$ is a martingale (see e.g. \cite[page 120]{K2011}). 

Let us fix $\eps \in (0,1]$ (it will be chosen later). 
For the given $\eps$ we choose the constant $\delta$ according to Lemma \ref{gtht1}. For such fixed $\eps$, $\delta$ we abbreviate $\mu_i(x) = \mu_i^{(\delta)}(x)$, $\calG_i = \calG_i^{(\delta)}$, $g_{i,t}(x) = g_{i,t}^{(\delta)}(x)$, 
$\tilde{g}_{i,t}(x) = \tilde{g}_{i,t}^{(\eps)}(x)$.

We divide $\calK$ into two parts
\begin{equation}
\label{KLR}
\calK f(x) = \calL f(x) + \calR f(x),
\end{equation}
where
$$
\calL f(x) =  \frac{1}{2} \sum_{i = 1}^d   \int_{\R} \left[f(x + a_i(x) w) + f(x - a_i(x) w) - 2 f(x)\right] \mu_i(w) \, dw.
$$ 
Our first aim will be to construct the heat kernel $u(t,x,y)$ corresponding to the operator $\calL$. This will be done by using Levi's method. 

For each $z \in \R^d$ we introduce the ``freezing'' operator 
$$
\calL^z f(x) =  \frac{1}{2} \sum_{i = 1}^d   \int_{\R} \left[f(x + a_i(z) w) + f(x - a_i(z) w) - 2 f(x)\right] \mu_i(w) \, dw,
$$ 

Let $G_t(x) = g_{1,t}(x_1) \ldots g_{d,t}(x_d)$ and $\tilde{G}_t(x) = \tilde{g}_{1,t}(x_1) \ldots \tilde{g}_{d,t}(x_d)$ for $t > 0$ and $x = (x_1,\ldots,x_d) \in \R^d$. We also denote $B(x) = (b_{ij}(x)) = A^{-1}(x)$. Note that the coordinates of $B(x)$ satisfy conditions 
(\ref{bounded}) and  (\ref{Lipschitz}) with possibly different constants $\eta_1^*$ and $\eta_3^*$, but taking maximums we can assume that $\eta_1^*=\eta_1$ and $\eta_3^*=\eta_3$.

For any $y \in \R^d$, $i = 1, \ldots, d$ we put
\begin{equation*}
b_i(y) = (b_{i1}(y),\ldots, b_{id}(y)).
\end{equation*}
We also denote $\|B\|_{\infty} = \max\{|b_{ij}|: \, i, j \in \{1,\ldots,d\}\}$.

For any $t > 0$, $x, y \in \R^d$ we define
\begin{eqnarray*}
p_y(t,x) &=& \det(B(y)) G_t(x(B(y))^T)\\
&=& \det(B(y)) g_{1,t}(b_1(y) x) \ldots g_{d,t}(b_d(y) x).
\end{eqnarray*}
It may be easily checked that for each fixed $y \in \R^d$ the function $p_y(t,x)$ is the heat kernel of $\calL^y$ that is 
$$
\frac{\partial}{\partial t} p_y(t,x) = \calL^y p_y(t,\cdot)(x), \quad t > 0, \, x \in \R^d,
$$
$$
\int_{\R^d} p_y(t,x) \, dx = 1, \quad t > 0.
$$
For any $t > 0$, $x, y \in \R^d$ we also define
\begin{eqnarray*}
r_y(t,x) &=&  \tilde{G}_t(x(B(y))^T)\\
&=&  \tilde{g}_{1,t}(b_1(y) x) \ldots \tilde{g}_{d,t}(b_d(y) x).
\end{eqnarray*}
For $x, y \in \R^d$, $t > 0$, let
$$
q_0(t,x,y) = \calL^{x}p_y(t,\cdot)(x-y) - \calL^{y} p_y(t,\cdot)(x-y),
$$
and for $n \in \N$ let
\begin{equation}
\label{defqn}
q_n(t,x,y) = \int_0^t \int_{\R^d} q_0(t-s,x,z)q_{n-1}(s,z,y) \, dz \, ds.
\end{equation}
For $x, y \in \R^d$, $t > 0$ we define
\begin{equation}
\nonumber
q(t,x,y) = \sum_{n = 0}^{\infty} q_n(t,x,y)
\end{equation}
and 
\begin{equation}
\label{defu}
u(t,x,y) = p_y(t,x-y) + \int_0^t \int_{\R^d} p_z(t-s,x-z)q(s,z,y) \, dz \, ds.
\end{equation}

In this section we will show that $q_n(t,x,y)$, $q(t,x,y)$, $u(t,x,y)$ are well defined and we will obtain estimates of these functions. First, we will get some simple properties of $p_y(t,x)$ and $r_y(t,x)$.

\begin{lemma}
\label{pyholder}
%Choose $\gamma \in (0,1]$.
 For any $t \in (0,\tau]$, $x, x', y \in \R^d$ we have
$$
|p_y(t,x) - p_y(t,x')| \le c \left[1\wedge \left(\sum_{j = 1}^d \frac{|x_j - x'_j|}{h_j^{-1}(1/t)}\right) \, \right] 
\left(r_y(t,x/2) + r_y(t,x'/2)\right).
$$
\end{lemma}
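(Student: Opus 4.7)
The proof will apply Lemma \ref{product_gt} after a linear change of variables. Set $u := xB(y)^T$ and $u' := x'B(y)^T$, with components $u_j = b_j(y) \cdot x$ and $u'_j = b_j(y) \cdot x'$. Then
$$p_y(t,x) = \det(B(y))\,\prod_{i=1}^{d} g_{i,t}(u_i), \qquad r_y(t,x/2) = \prod_{i=1}^{d} \tilde g_{i,t}(u_i/2),$$
and analogously in terms of $u'$ for $x'$. The Jacobian $|\det(B(y))|$ and every $|b_{jk}(y)|$ are bounded by constants depending only on $\eta_1, \eta_2, d$, and in particular $|u_j - u'_j| \le \eta_1 \sum_k |x_k - x'_k|$, while $|u - u'|$ and $|x - x'|$ are comparable with constants of the same type.

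I would split the argument into two cases. In the \emph{refined case}, I assume the hypotheses $\sum_j |u_j - u'_j|/h_j^{-1}(1/t) \le 1/4$ and $|u - u'| \le \delta$ of Lemma \ref{product_gt} are satisfied. Applying that lemma to the products in $u$-coordinates gives
$$\Bigl|\prod_i g_{i,t}(u_i) - \prod_i g_{i,t}(u'_i)\Bigr| \le c \prod_i \tilde g_{i,t}(u_i/2) \cdot \Bigl[1 \wedge \sum_j \tfrac{|u_j - u'_j|}{h_j^{-1}(1/t)}\Bigr].$$
Multiplying by $\det(B(y))$ and converting the sum over $|u_j - u'_j|$ back to a sum over $|x_k - x'_k|$ using $|u_j - u'_j| \le \eta_1 \sum_k |x_k - x'_k|$ and the boundedness properties of $h_j^{-1}(1/t)$ on $(0,\tau]$ yields the desired inequality in this case.

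In the complementary \emph{trivial case} the hypotheses of Lemma \ref{product_gt} fail for $u, u'$. Since $|u-u'| \asymp |x-x'|$ and $h_j^{-1}(1/t)$ is uniformly bounded above on $(0,\tau]$, the failure forces $\sum_j |x_j - x'_j|/h_j^{-1}(1/t)$ to exceed some positive constant, so $1\wedge \sum_j |x_j-x'_j|/h_j^{-1}(1/t)$ is bounded below. It therefore suffices to show $|p_y(t,x) - p_y(t,x')| \le c\,(r_y(t,x/2) + r_y(t,x'/2))$. This follows from the triangle inequality $|p_y(t,x) - p_y(t,x')| \le p_y(t,x) + p_y(t,x')$, the pointwise bound $g_{i,t}(v) \le c\,\tilde g_{i,t}(v)$ supplied by Lemma \ref{gtht1} (inequality (\ref{gdelta1})), and the monotonicity of $\tilde g_{i,t}$ on $[0,\infty)$, which gives $\tilde g_{i,t}(u_i) \le \tilde g_{i,t}(u_i/2)$.

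The main technical obstacle is the conversion between the sums $\sum_j |u_j - u'_j|/h_j^{-1}(1/t)$ and $\sum_j |x_j - x'_j|/h_j^{-1}(1/t)$ in the refined case. The linear map $B(y)$ mixes coordinates whose natural scales $h_j^{-1}(1/t)$ may differ substantially under assumption (Z2), where the $\psi_j$ are distinct, so the constants must be tracked with care; under (Z1) the comparison is immediate since all $h_j$ coincide, while under (Z2) one has to exploit the uniform bounds on $B(y)$ together with the restricted range of $t \in (0,\tau]$.
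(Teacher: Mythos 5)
Your strategy --- pass to $u = xB(y)^T$, apply Lemma \ref{product_gt}, and split into a refined and a trivial case --- works cleanly under (Z1), where all the $h_j$ coincide and the two sums $\sum_j |u_j - u'_j|/h_j^{-1}(1/t)$ and $\sum_j |x_j - x'_j|/h_j^{-1}(1/t)$ differ by at most a factor $d\eta_1$. Under (Z2), however, the conversion step you gesture at in the refined case contains a genuine gap. The estimate $|u_j - u'_j| \le \eta_1\sum_k |x_k - x'_k|$ only gives $\sum_j |u_j - u'_j|/h_j^{-1}(1/t) \le \eta_1\bigl(\sum_k |x_k - x'_k|\bigr)\bigl(\sum_j 1/h_j^{-1}(1/t)\bigr)$, and dominating the right side by $c\sum_k |x_k - x'_k|/h_k^{-1}(1/t)$ would require $\max_k h_k^{-1}(1/t)\big/\min_j h_j^{-1}(1/t)$ to be bounded uniformly over $t \in (0,\tau]$. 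By (\ref{h-1}) that ratio can be of order $t^{1/\beta - 1/\alpha}$, which blows up as $t \to 0^+$ whenever $\alpha < \beta$; restricting to $t \le \tau$ does not help, since the divergence is at small $t$. Concretely, if $x - x'$ is supported in the coordinate whose $h^{-1}$ is largest while $B(y)$ genuinely mixes coordinates (a rotation), then $\sum_j |u_j - u'_j|/h_j^{-1}(1/t)$ is of order $|x - x'|\big/\min_j h_j^{-1}(1/t)$ whereas $\sum_j |x_j - x'_j|/h_j^{-1}(1/t)$ is of order $|x - x'|\big/\max_j h_j^{-1}(1/t)$, and their ratio is unbounded. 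The trivial case has the companion gap: you only treat the failure mode $|u - u'| > \delta$, and you do not address $\sum_j |u_j - u'_j|/h_j^{-1}(1/t) > 1/4$ with $|u - u'| \le \delta$, which under (Z2) does not force $\sum_j |x_j - x'_j|/h_j^{-1}(1/t)$ away from zero for the same scale-mismatch reason.

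What your machinery actually yields, with no conversion at all, is the estimate with $\sum_j |b_j(y)(x - x')|/h_j^{-1}(1/t)$ in the bracket instead of $\sum_j |x_j - x'_j|/h_j^{-1}(1/t)$: Lemma \ref{product_gt} gives this directly in the refined case, and when either hypothesis of that lemma fails for $u, u'$, the quantity $1 \wedge \sum_j |b_j(y)(x - x')|/h_j^{-1}(1/t)$ is bounded below uniformly in $t \le \tau$, after which your triangle-inequality bound $|p_y(t,x) - p_y(t,x')| \le c\,(r_y(t,x/2) + r_y(t,x'/2))$ finishes. That formulation is all that is used downstream (Proposition \ref{utfholder} needs only $\sum_j |b_j(y)(x-x')|/h_j^{-1}(1/t) \le c\,|x - x'|\,t^{-1/\alpha}$, which follows from (\ref{h-1})), but it is not the inequality literally stated in Lemma \ref{pyholder}, and as written your proof does not establish the stated form under (Z2).
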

The proof is very similar to the proof of \cite[Lemma 3.1]{KRS2018} and it is omitted.

\begin{lemma}
\label{estimate_pytx} Assume that $ \eps \le \frac1{ \eta_1 d\sqrt{d}}$. 
For any $t \in (0,\tau+1]$, $x, y \in \R^d$, we have
\begin{equation}
\label{htht1}
r_y(t,x-y)
\le c_1 \left(\prod_{i = 1}^d \frac{1}{h_i^{-1}(1/t)}\right) e^{-c |x - y|}.
\end{equation}
For any $t \in (0,\tau+1]$, $x, y \in \R^d$, $|x-y| \ge \eps\eta_1 d^{3/2}$, we have
\begin{equation}
\label{htht2}
r_y(t,x-y)
\le c_1 t e^{-c |x - y|}.
\end{equation}
\end{lemma}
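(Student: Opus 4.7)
The plan rests on two elementary observations. First, $\tilde{g}_{i,t}^{(\eps)}$ is non-increasing on $[0,\infty)$ with $\tilde{g}_{i,t}^{(\eps)}(0)=1/h_i^{-1}(1/t)$, so the trivial bound $\tilde{g}_{i,t}(w)\le 1/h_i^{-1}(1/t)$ holds for every $w\in\R$. Moreover, the upper bound $h_i^{-1}(1/t)\le C_4 t^{1/\beta}$ from (\ref{h-1}) together with the definition of $c_\eps$ shows that
\[
  c_\eps\, t^{(d+\beta-1)/\alpha} \;\le\; \frac{c}{h_i^{-1}(1/t)}, \qquad t\in(0,\tau+1],
\]
because the exponent $(d+\beta-1)/\alpha+1/\beta$ is nonnegative and $t$ stays bounded. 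Second, from $A(y)B(y)=I$ and the entrywise bound $|a_{ij}|\le\eta_1$ one gets $|A(y)v|\le\eta_1 d\,|v|$, hence $|B(y)z|\ge |z|/(\eta_1 d)$; combining with $|B(y)z|^2=\sum_i (b_i(y)z)^2\le d\max_i(b_i(y)z)^2$ yields
\[
  \max_{1\le i\le d}|b_i(y)z| \;\ge\; \frac{|z|}{\eta_1\,d^{3/2}}.
\]

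To prove (\ref{htht1}), I would set $z=x-y$, choose $i^*$ maximizing $|b_i(y)z|$, and split into two cases. If $|b_{i^*}(y)z|\ge\eps$ then $\tilde{g}_{i^*,t}(b_{i^*}(y)z)=c_\eps t^{(d+\beta-1)/\alpha}e^{-|b_{i^*}(y)z|}$, so the first observation absorbs the prefactor into $c/h_{i^*}^{-1}(1/t)$ and the second observation converts $e^{-|b_{i^*}(y)z|}$ into $e^{-|z|/(\eta_1 d^{3/2})}$; the remaining $d-1$ factors are bounded trivially by $1/h_i^{-1}(1/t)$. If $|b_{i^*}(y)z|<\eps$ then the second observation gives $|z|\le\eta_1 d^{3/2}\eps\le 1$ by the hypothesis on $\eps$, so the exponential factor is a harmless constant and the trivial product bound $\prod_i 1/h_i^{-1}(1/t)$ already delivers (\ref{htht1}).

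For (\ref{htht2}), the assumption $|z|\ge\eps\eta_1 d^{3/2}$ puts us in the first case above, so $|b_{i^*}(y)z|\ge\eps$. This time I would keep the factor $c_\eps t^{(d+\beta-1)/\alpha}$ explicit at the coordinate $i^*$ and bound each of the remaining $d-1$ coordinates by the coarser estimate $1/h_i^{-1}(1/t)\le c\,t^{-1/\alpha}$ from (\ref{h-1}), which produces
\[
  r_y(t,z) \;\le\; c\,t^{(d+\beta-1)/\alpha-(d-1)/\alpha}\,e^{-|z|/(\eta_1 d^{3/2})} \;=\; c\,t^{\beta/\alpha}\,e^{-c|z|}.
\]
Since $\beta\ge\alpha$ and $t\le\tau+1$, we have $t^{\beta/\alpha}\le c\,t$, which finishes (\ref{htht2}).

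Nothing in the argument is delicate: the only bookkeeping is verifying that $c_\eps t^{(d+\beta-1)/\alpha}$ is dominated by $1/h_i^{-1}(1/t)$ up to a constant on $(0,\tau+1]$, and tracking the dimensional constant $1/(\eta_1 d^{3/2})$ that encodes the invertibility of $A(y)$. The threshold $\eps\eta_1 d^{3/2}$ appearing in (\ref{htht2}) is precisely the cutoff beyond which the second observation forces us into the outer regime of at least one coordinate, which is exactly what gives the extra power of $t$.
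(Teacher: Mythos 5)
Your proposal is correct and takes essentially the same approach as the paper's (the paper defers to the proof of Corollary 3.3 in \cite{KRS2018}): the lower bound $\max_i |b_i(y)(x-y)| \ge |x-y|/(\eta_1 d^{3/2})$ is precisely \cite[Lemma 3.2]{KRS2018}, which you re-derive, and the remaining case split on whether the maximizing coordinate lands in the near or far regime of $\tilde{g}_{i,t}^{(\eps)}$ is the expected argument. The one extra check you handle — that $c_\eps t^{(d+\beta-1)/\alpha}\le c/h_i^{-1}(1/t)$ on $(0,\tau+1]$ via (\ref{h-1}) — is exactly what is needed to push the bound past $\tau$ to $\tau+1$.
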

The proof is almost the same as the proof of \cite[Corollary 3.3.]{KRS2018}, so we do not repeat it.

Using the definition of $p_y(t,x)$ and properties of $g_t(x)$ we obtain the following regularity properties of $p_y(t,x)$.
\begin{lemma}
\label{pycontinuity}
The function $(t,x,y) \to p_y(t,x)$ is continuous on $(0,\infty) \times \R^d \times \R^d$. The function $t \to p_y(t,x)$ is in $C^1((0,\infty))$ for each fixed $x, y \in \R^d$. The function $x \to p_y(t,x)$ is in $C^2(\R^d)$ for each fixed $t > 0$, $y \in \R^d$.
\end{lemma}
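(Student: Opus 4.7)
\smallskip

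The plan is to reduce the claim to the regularity properties of the one-dimensional heat kernels $g_{i,t}(u) = g_{i,t}^{(\delta)}(u)$ already collected earlier, together with the Lipschitz (hence continuous) dependence of the entries $b_{ij}(y)$ of $B(y) = A(y)^{-1}$ on $y$. Recall the formula
$$
p_y(t,x) = \det(B(y)) \prod_{i=1}^d g_{i,t}\bigl(b_i(y)\cdot x\bigr),
$$
which expresses $p_y(t,x)$ as a finite product of compositions of the smooth one-dimensional kernels with affine-in-$x$, Lipschitz-in-$y$ maps. Since $a_{ij}$ is Lipschitz with $\det(A)$ bounded below by $\eta_2$ (Assumption (A0)), both $b_{ij}(y)$ and $\det(B(y))$ are continuous (in fact Lipschitz) functions of $y$.

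For joint continuity of $(t,x,y) \mapsto p_y(t,x)$, I would first argue that each map $(t,u) \mapsto g_{i,t}(u)$ is jointly continuous on $(0,\infty) \times \R$. Continuity in $t$ for fixed $u$ is part of the $C^1$-in-$t$ statement recalled in Section~\ref{1dim}, and continuity in $u$ for fixed $t$ follows from the $C^2$-in-$x$ statement. Joint continuity then follows because the estimate \eqref{gdelta12} of Lemma~\ref{gtht} gives a local bound on $\partial_u g_{i,t}$ that is uniform for $t$ in compact subsets of $(0,\infty)$, so Lipschitz control in $u$ is uniform locally in $t$. Composing with the continuous map $(x,y) \mapsto b_i(y)\cdot x$ and multiplying by the continuous factor $\det(B(y))$ gives continuity of $p_y(t,x)$ on $(0,\infty)\times\R^d\times\R^d$.

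For the $C^1$ regularity in $t$ for fixed $x,y$, I would apply the product rule to the formula above. Each factor $t \mapsto g_{i,t}(b_i(y)\cdot x)$ is $C^1$ on $(0,\infty)$ with derivative $\partial_t g_{i,t}(b_i(y)\cdot x) = \calG_i g_{i,t}(b_i(y)\cdot x)$, which is continuous in $t$ by the heat-kernel equation recalled at the beginning of Section~\ref{1dim}. Since $\det(B(y))$ does not depend on $t$, summing the $d$ product-rule terms yields a continuous time derivative, so $t \mapsto p_y(t,x) \in C^1((0,\infty))$.

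For the $C^2$ regularity in $x$ for fixed $t,y$, I would use the chain rule. For each $i$ the map $x \mapsto b_i(y)\cdot x$ is linear, and $g_{i,t}$ is $C^2(\R)$, so $x \mapsto g_{i,t}(b_i(y)\cdot x) \in C^2(\R^d)$ with
$$
\frac{\partial^2}{\partial x_k \partial x_\ell} g_{i,t}(b_i(y)\cdot x) = b_{ik}(y)\,b_{i\ell}(y)\,g_{i,t}''(b_i(y)\cdot x).
$$
Applying the product rule (twice) to the product of $d$ such $C^2$ factors gives $p_y(t,\cdot) \in C^2(\R^d)$. No step presents a real obstacle; this lemma is essentially a bookkeeping consequence of the one-dimensional regularity of $g_{i,t}^{(\delta)}$ and the regularity of $B(y)$, which is why the authors presumably state it without proof.
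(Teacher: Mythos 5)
Your argument is correct and is essentially the bookkeeping that the authors omit: the lemma reduces, via the product formula for $p_y(t,x)$ and the Lipschitz regularity of $B(y)=A(y)^{-1}$, to the one-dimensional facts that $g_{i,t}^{(\delta)}$ is $C^1$ in $t$ and $C^2$ in $x$ together with the local-in-$t$ uniform bound on $\partial_u g_{i,t}^{(\delta)}$ from \eqref{gdelta12}. A marginally shorter route to the joint continuity of $(t,u)\mapsto g_{i,t}^{(\delta)}(u)$ is the Fourier inversion formula combined with dominated convergence (a uniform integrable majorant exists for $t$ bounded away from $0$ by Lemma~\ref{g_0}), but your separate-continuity-plus-uniform-Lipschitz argument is equally valid.
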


\begin{lemma}
\label{properties_pytx}
For any $y \in \R^d$ we have
$$
\left|\frac{\partial}{\partial x_i} p_y(t,x-y) \right| \le \frac{c}{t^{(d+1)/\alpha} (1+|x -y|)^{d+1}}, 
\quad i \in \{1,\ldots,d\}, \, t \in (0,\tau], \, x \in \R^d,
$$ 
$$
\left|\frac{\partial^2}{\partial x_i \partial x_j} p_y(t,x-y) \right| \le \frac{c}{t^{(d+2)/\alpha} (1+|x - y|)^{d+1}}, 
\quad i, j \in \{1,\ldots,d\}, \, t \in (0,\tau], \, x \in \R^d.
$$
\end{lemma}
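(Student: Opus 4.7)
The plan is to differentiate $p_y(t,x-y) = \det(B(y)) \prod_{i=1}^d g_{i,t}(b_i(y)(x-y))$ coordinatewise via the chain rule, then bound each $k$-th derivative of $g_{i,t}$ using the inequality (\ref{der_est}) established inside the proof of Lemma \ref{gtht1}, namely
\[
\left|\tfrac{d^k}{dz^k} g_{i,t}(z)\right| \le c\,\frac{1}{(h_i^{-1}(1/t))^k}\,\tilde{g}_{i,t}(z), \qquad k=0,1,2,\ t\in(0,\tau],
\]
and finally invoke Lemma \ref{estimate_pytx} to turn the product of $\tilde g_{i,t}$'s into $r_y(t,x-y)$ with its exponential decay.

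Concretely, since $\partial_{x_k}\bigl(b_i(y)(x-y)\bigr) = b_{ik}(y)$, the chain rule gives
\[
\partial_{x_k} p_y(t,x-y) = \det(B(y)) \sum_{i=1}^{d} b_{ik}(y)\, g'_{i,t}\bigl(b_i(y)(x-y)\bigr) \prod_{j\ne i} g_{j,t}\bigl(b_j(y)(x-y)\bigr),
\]
and analogously for the mixed second derivative, which produces either a single $g''_{i,t}$-factor times $d-1$ plain $g_{j,t}$'s, or two $g'$-factors times $d-2$ plain $g_{j,t}$'s; in every term the prefactor $\det(B(y))\prod b_{\cdot k}(y)$ is bounded by a constant depending only on $\eta_1,\eta_2,d$ thanks to (A0). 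Applying (\ref{der_est}) to each factor and collecting the $h_i^{-1}(1/t)^{-1}$ arising from every differentiation, and then using $1/h_i^{-1}(1/t)\le c\,t^{-1/\alpha}$ from (\ref{h-1}), each summand of $\partial_{x_k}p_y(t,x-y)$ is bounded by
\[
c\,t^{-1/\alpha} \prod_{i=1}^d \tilde{g}_{i,t}\bigl(b_i(y)(x-y)\bigr) = c\,t^{-1/\alpha}\, r_y(t,x-y),
\]
and, by the same reasoning, each summand of $\partial_{x_k}\partial_{x_l}p_y(t,x-y)$ is bounded by $c\,t^{-2/\alpha}\,r_y(t,x-y)$.

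It then remains to bound $r_y(t,x-y)$. By Lemma \ref{estimate_pytx}(\ref{htht1}), assuming $\eps\le 1/(\eta_1 d\sqrt d)$, we have
\[
r_y(t,x-y) \le c_1 \Bigl(\prod_{i=1}^d \tfrac{1}{h_i^{-1}(1/t)}\Bigr) e^{-c|x-y|} \le c\, t^{-d/\alpha}\, e^{-c|x-y|},
\]
using again (\ref{h-1}). Combining the last displays yields
\[
|\partial_{x_k} p_y(t,x-y)| \le c\, t^{-(d+1)/\alpha}\, e^{-c|x-y|}, \qquad |\partial_{x_k}\partial_{x_l} p_y(t,x-y)| \le c\, t^{-(d+2)/\alpha}\, e^{-c|x-y|}.
\]
The elementary inequality $e^{-c|x-y|}\le c'/(1+|x-y|)^{d+1}$ converts exponential decay into the polynomial tail asserted in the lemma.

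The only real subtlety is quantitative bookkeeping: one must ensure the estimate (\ref{der_est}) is applied with exactly one $h_i^{-1}(1/t)^{-1}$ per differentiated factor (so the total power of $t^{-1/\alpha}$ matches $d+1$ resp.\ $d+2$, not $d+2$ resp.\ $d+4$), which is why (\ref{der_est}) is preferable to the coarser pointwise bounds (\ref{gdelta12})--(\ref{gdelta13}). Once that is done, the proof is a routine chain rule calculation supported entirely by previously established one-dimensional estimates and the boundedness/non-degeneracy of $B(y)$ provided by (A0).
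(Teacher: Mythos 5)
Your proof is correct and follows essentially the same route the paper indicates (which simply points to Lemma \ref{gtht1} and the analogous argument in \cite{KRS2018}): a chain-rule expansion of $p_y$, the derivative bound (\ref{der_est}) giving one factor of $1/h_i^{-1}(1/t)$ per differentiation, conversion to $t^{-1/\alpha}$ via (\ref{h-1}), and then Lemma \ref{estimate_pytx} to control $r_y(t,x-y)$ and supply the spatial decay. Your remark on why (\ref{der_est}) rather than the cruder (\ref{gdelta12})--(\ref{gdelta13}) is needed to hit the correct exponents $d+1$, $d+2$ is exactly the right bookkeeping observation.
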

\begin{proof}
The estimates follow from properties of $g_t(x)$ and Lemma \ref{gtht1} and the same arguments as in the proof of \cite[Corollary 3.3.]{KRS2018}.
\end{proof}

Let  $f:\R^n \to \R^n, n\in \N$, be a Lipschitz function. 
It is well known that $y$ almost surely the Jacobi matrix ${J}_{f}(y)$ of $f$ exists. For any  $y_0\in \R^n$ we define (see Definition 1 in \cite{C1976}) the generalized Jacobian denoted $\partial f(y_0)$ as the convex hull of the set of matrices which can be obtained as limits of ${J}_{f}(y_n)$, when $y_n\to y_0$.  

%Now we need to repeat the following two results from \cite{KRS2018}.
Now, we recall two results from \cite{KRS2018} which will be useful in the sequel.
\begin{lemma}\label{intA} \cite[Lemma 3.6]{KRS2018}

Let  $ b^*_{i}(x,y), x, y \in \Rd; i=1, \dots, d$, be real functions such that there are positive  $\eta_5, \eta_6 \ge 1$ and 
\begin{equation}
\label{bounded1}
|b^*_{i}(x,y)| \le \eta_5, \quad x, y \in \R^d,
\end{equation}
\begin{equation}
\label{Lipschitz1}
|b^*_{i}(x,y) - b^*_{i}(\overline{x},\overline{y})| \le \eta_6(|x-\overline{x}|+ |y-\overline{y}|), \quad x, y, \overline{x}, \overline{y} \in \R^d.
\end{equation}
%\begin{equation}
%b^*_{1}(x,x) = 1,\ b^*_{i}(x,x) = 1,\quad i=2,\dots,d,  \quad x \in \R^d,
%\end{equation}
Let, for fixed $x\in \R^d$, $\Psi_x$ be a map $\R^{d+1}\mapsto  \R^{d+1}$ given by

$$\Psi_x(w, y)= (w, \xi_1, \dots, \xi_d)\in \R^{d+1}, \quad w \in \R, y\in \R^{d}, $$
where  $\xi_i=b_i(y)(x-y) +b^*_{i}(x,y)w$.

There is a positive  $\eps_0= \eps_0(\eta_1,\eta_3,\eta_5,\eta_6,d)\le \frac1{2\eta_6}$ such that 
the map $\Psi_x$  and its   Jacobian determinant   denoted by $J_{\Psi_x}(w,y)$ has   the property
\begin{eqnarray*}
|\Psi_x(w, y)|&\le& 1,\\
(1/2) |\det B(y)| \le |J_{\Psi_x}(w,y)|&\le& 2|\det B(y)|, \end{eqnarray*}
 for $ |x-y|\le \eps_0, |w|\le \eps_0$, $(w,y)$ almost surely. Moreover the map $\Psi_x$ is injective on the set $\{(w,y)\in \R^{d+1}; |x-y|\le \eps_0, |w|\le \eps_0\}$.

If, for fixed $y\in \R^d$, $\Phi_y$ be a map $\R^{d+1}\mapsto  \R^{d+1}$ given by 
$$\Phi_y(w, x)=\Psi_x(w, y), \quad w \in \R, x\in \R^{d}, $$ then 
the  Jacobian of $\Phi_y$ denoted by $J_{\Phi_y}(w,x)$ has the property
$$(1/2) |\det B(y)| \le |J_{\Phi_y}(w,x)|\le 2|\det B(y)|,$$
for $ |x-y|\le \eps_0, |w|\le \eps_0$, $(w,x)$ almost surely.  Moreover the map $\Phi_y$ is injective on the set $\{(w,x)\in \R^{d+1}; |x-y|\le \eps_0, |w|\le \eps_0\}$.
\end{lemma}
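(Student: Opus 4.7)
The plan is to compute the Jacobian matrix of $\Psi_x$ explicitly, recognize its leading order part as a perturbation of $-B(y)$, and then handle injectivity via Clarke's calculus for Lipschitz mappings. Since the first output coordinate of $\Psi_x(w,y)$ is $w$ itself, the Jacobian has block form
\begin{equation*}
J_{\Psi_x}(w,y) = \begin{pmatrix} 1 & \mathbf{0}^T \\ \mathbf{b^*}(x,y) & M(w,y) \end{pmatrix}, \qquad M_{ij}(w,y) = \frac{\partial \xi_i}{\partial y_j}(w,y),
\end{equation*}
so $|J_{\Psi_x}(w,y)| = |\det M(w,y)|$. Expanding $b_i(y)(x-y) = \sum_k b_{ik}(y)(x_k-y_k)$ and differentiating gives, at every point of Rademacher-differentiability of $b_{ij}$ and $b^*_i$,
\begin{equation*}
M_{ij}(w,y) = -b_{ij}(y) + \sum_{k=1}^d (x_k-y_k)\,\frac{\partial b_{ik}}{\partial y_j}(y) + w\,\frac{\partial b^*_i}{\partial y_j}(x,y),
\end{equation*}
which we write as $M = -B(y) + E$ with $|E_{ij}| \le d\eta_3|x-y| + \eta_6|w|$ almost everywhere.

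The bound on $B(y)$ comes from (\ref{bounded}) and (\ref{determinant}): since $B(y) = A(y)^{-1}$, the operator norm of $B(y)^{-1} = A(y)$ is at most $\sqrt{d}\,\eta_1$, and $|\det B(y)| = |\det A(y)|^{-1} \ge (d^{d/2}\eta_1^d)^{-1}$. Writing $\det M = (-1)^d \det B(y) \, \det(I - B(y)^{-1}E)$, the matrix $B(y)^{-1}E$ has operator norm at most a constant $c_0 = c_0(d,\eta_1,\eta_3,\eta_6)$ times $\eps_0$, so for $\eps_0$ small enough we obtain $|\det(I - B(y)^{-1}E)| \in [1/2, 2]$, which yields the required Jacobian sandwich. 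The bound $|\Psi_x(w,y)| \le 1$ requires only shrinking $\eps_0$ further: $|w| \le \eps_0$ while $|\xi_i| \le \sqrt{d}\eta_1 |x-y| + \eta_5|w|$, so the Euclidean norm of $\Psi_x(w,y)$ is at most a fixed constant times $\eps_0$.

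Injectivity is the main obstacle, because the Lipschitz (but not $C^1$) regularity of $B$ and $b^*_i$ rules out a direct application of the classical inverse function theorem. Suppose $\Psi_x(w_1,y^{(1)}) = \Psi_x(w_2,y^{(2)})$; then $w_1 = w_2 =: w$, and the Lipschitz map $F(y) := (b_i(y)(x-y) + b^*_i(x,y)w)_{i=1}^d$ satisfies $F(y^{(1)}) = F(y^{(2)})$. By Clarke's mean value theorem \cite{C1976}, there exists $N \in \overline{\mathrm{co}}\{\partial F(\tilde y) : \tilde y \in [y^{(1)}, y^{(2)}]\}$ with $N(y^{(2)} - y^{(1)}) = 0$. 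Every element of $\partial F(\tilde y)$ is of the form $-B(\tilde y) + E$ from the first step, and by (\ref{Lipschitz}) we can further rewrite it as $-B(y^{(1)}) + E'$ with $\|E'\| \le c_1\eps_0$; every convex combination of such matrices has the same structure, hence is invertible provided $\eps_0$ is small, forcing $y^{(1)} = y^{(2)}$. Finally, the statements for $\Phi_y$ follow by the symmetric computation: differentiating $\xi_i$ in $x_j$ gives $\partial \xi_i/\partial x_j = b_{ij}(y) + w\,\partial_{x_j}b^*_i(x,y)$, so the relevant $d\times d$ block is $B(y) + E''$ with $\|E''\| \le \eta_6|w|$, and the arguments above apply verbatim.
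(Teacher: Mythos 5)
The paper does not prove this lemma itself — it is cited as Lemma 3.6 of \cite{KRS2018}. That said, the Clarke-theoretic framework you use is exactly what the paper signals it will need: the paragraph immediately preceding the lemma introduces the generalized Jacobian $\partial f$ in the sense of \cite{C1976} precisely so that the statement can be formulated "$(w,y)$ almost surely" and proven via Lipschitz calculus. Your argument is correct. The block decomposition of $J_{\Psi_x}$ reducing $\det J_{\Psi_x}$ to $\det M$, the identification $M=-B(y)+E$ with an entrywise $O(\eps_0)$ bound on $E$ derived from (\ref{Lipschitz}) and (\ref{Lipschitz1}), and the factorization $\det M=(-1)^d\det B(y)\det(I-B(y)^{-1}E)$ give the Jacobian sandwich once $\|B(y)^{-1}E\|$ is made small, using $\det A(y)\ge\eta_2$ and $|a_{ij}|\le\eta_1$. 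The $|\Psi_x|\le1$ bound and the $\Phi_y$ case are both routine from there.

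The one place worth flagging is the injectivity step. You invoke the vector-valued Lebourg/Clarke mean value theorem — $F(y^{(1)})=F(y^{(2)})$ forces some $N\in\overline{\mathrm{co}}\,\partial F([y^{(1)},y^{(2)}])$ to annihilate $y^{(2)}-y^{(1)}$ — and then show every such $N$ is invertible by re-centering at $-B(y^{(1)})$ and absorbing the $B(\tilde y)-B(y^{(1)})$ discrepancy into the small perturbation (the re-centering at a single base point is essential, since convex combinations of $-B(\tilde y_k)$ at different $\tilde y_k$ are not obviously of the form "invertible plus small"). This works because the segment lies in the convex set $\overline{B(x,\eps_0)}$. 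A common alternative route here would be Clarke's inverse function theorem: since every element of $\partial\Psi_x(w_0,y_0)$ is nonsingular, $\Psi_x$ is a local Lipschitz homeomorphism, and one then globalizes with the perturbed-determinant estimate; your mean-value route reaches global injectivity on the ball in one step without a separate compactness/patching argument, which is slightly cleaner. Either way the conclusion is the same, and your proof stands.
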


\begin{remark} \cite[Remark 3.7]{KRS2018}

\label{intA1}Let for $x\in \R^d$, $\tilde{\Psi}_x$ be the map $\R^{d}\mapsto  \R^{d}$ given by 
$$\tilde{\Psi}_x( y)= (\xi_1, \dots, \xi_d)\in \R^{d}, \quad  y\in \R^{d}, $$
where  $\xi_i=b_i(y)(x-y)$. Then we can find $\eps_0$ such  that all the assertions of Lemma \ref{intA} are true and additionally
$$ (1/2) |\det B(y)| \le |J_{\tilde{\Psi}_x}(y)|\le 2|\det B(y)|,$$
for $ |x-y|\le \eps_0$, $y$ almost surely. Moreover,  the map $\tilde{\Psi}_x$ is injective on $B(x, \eps_0)$. We can also find $\delta_1= \delta_1(\eta_1, \eta_3, \eta_5, \eta_6, d)>0$ and  $\delta_2= \delta_1(\eta_1, \eta_3, \eta_5, \eta_6, d)>0$ such that the $\tilde{\Psi}_x$ image of the ball $B(x, \delta_1)$  contains $B(0, \delta_2)$. 
\end{remark}

Let $b^*_i(x,y)$ be the functions introduced in Lemma \ref{intA}.
We will use the following abbreviations
\begin{eqnarray*}
&& z_i=B_i(x,y) = b_i(y)(x-y) = b_{i1}(y)  (x_1-y_1) + ... + b_{id}(y) (x_d-y_d),\\
%&& B_2(y) = b_{21}(y) y_1 + b_{22}(y) y_2,\\
&&b^*_i= b^*_i(x,y),\\
&&b^*_{i0}= b^*_i(x,x). %&& \tbs = b_{21}(y) a_{11}(0).
\end{eqnarray*}
Let for $k,l,m\in \{1,...,d\}$, 
\begin{equation*}
\text{A}_{l,m} = {A}_{l,m}(x,y)= \int_{\R}  \prod_{i\ne l} g_{i,t}(z_i +b^*_{i}w) \left|g_{l,t}(z_{l} \pm b^*_{l}w) - g_{l,t}(z_{l} \pm b^*_{l0}w)\right|
%\end{equation*}
%\begin{equation*}
  \mu_m(w) \, dw.
\end{equation*}
For $l\ne k$ we denote 
\begin{eqnarray*}
\text{B}_{l,k,m} &=&\text{B}_{l,k,m}(x,y)\\&=& \int_{\R}  \prod_{i\ne l,k} g_{i,t}(z_i +b^*_{i}w) \left|g_{l,t}(z_{l} + b^*_{l}w) - g_{l,t}(z_{l} + b^*_{l0}w)\right|\\
&& \times \left|g_{k,t}(z_{k} + b^*_{k}w) - g_{k,t}(z_{k} - b^*_{k}w)\right| \mu_m(w) \, dw.
\end{eqnarray*}
 
When the assumptions (Z1) are satisfied we put $\sigma = 1 - \alpha/(3\beta)$, wile under the assumptions (Z2)  we put $\sigma = 2\beta /(3\alpha)$. Clearly, in both cases $\sigma \in (0,1)$.

\begin{corollary} \label{int10} Assume that $2\delta< \eps_0$, where $\eps_0$ is from Lemma \ref{intA}. With the assumptions of Lemma \ref{intA} we have for $t\le \tau$, $k, l, m \in \{1,\ldots,d\}$, $k \ne l$
%\label{intAl}
$$\int_{|y-x|\le \eps_0}\left[\text{A}_{l,m}+\text{B}_{l,k,m}\right]dy\le ct^{-\sigma}, \quad x \in \R^d,$$
and 
$$\int_{|y-x|\le \eps_0}\left[\text{A}_{l,m}+\text{B}_{l,k,m}\right]dx\le ct^{-\sigma}, \quad y \in \R^d,$$
where $c = c(\tau, \alpha, d, \eta_1, \eta_2, \eta_3, \eta_4, \eta_5, \eta_6, \eps, \delta, \nu_0)$.
\end{corollary}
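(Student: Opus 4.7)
The plan is to estimate $A_{l,m}$ and $B_{l,k,m}$ pointwise using the one-dimensional density bounds of Lemma \ref{gtht1}, then integrate in $w$ against $\mu_m$ via the scaling inequality (\ref{h-ubound}) and the lower bound $h_l^{-1}(1/t) \ge C_3\, t^{1/\alpha}$ from (\ref{h-1}), and finally integrate in $y$ via the change of variables $\xi_i = b_i(y)(x-y)$ from Remark \ref{intA1}, combined with Corollary \ref{int_tilde_q} and the moment bounds of Lemma \ref{momenty}. The symmetric integration in $x$ uses the simpler linear change of variables $x \mapsto B(y)(x-y)$ with fixed $y$, whose Jacobian is the constant $\det B(y)$, bounded below by a positive constant.

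For the pointwise bound on $A_{l,m}$, I apply (\ref{gdelta4}) to $g_{l,t}(z_l \pm b^*_l w) - g_{l,t}(z_l \pm b^*_{l0} w)$; since $||b^*_l|^2 - |b^*_{l0}|^2| = |b^*_l + b^*_{l0}|\,|b^*_l - b^*_{l0}| \le 2\eta_5\eta_6 |x-y|$ by (\ref{bounded1}) and (\ref{Lipschitz1}), and since the arguments of $g^*_{l,t}$ stay within $[-\eps,\eps]$ by the choice of $\delta$ and $\eps_0$ (so $g^*_{l,t}$ coincides with $\tilde g_{l,t}$ there), the difference is at most $c|x-y|\, w^2/(h_l^{-1}(1/t))^2\, \tilde g_{l,t}$. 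Combined with the trivial bound $c\,\tilde g_{l,t}$ coming from (\ref{gdelta1}) and using (\ref{gdelta1}) on the remaining factors, the integrand is dominated by
$$c\,\min\!\left(\frac{|x-y|\, w^2}{(h_l^{-1}(1/t))^2},\, 1\right)\,\mu_m(w)\prod_i \max_{|u|\le 2\delta}\tilde g_{i,t}(z_i + b^*_i u).$$
For $B_{l,k,m}$, one applies (\ref{gdelta2}) to both differences separately (using $|b^*_l - b^*_{l0}| \le \eta_6|x-y|$ for the first and $|b^*_k| \le \eta_5$ for the symmetric second), yielding the analogous bound with $(h_l^{-1}(1/t))^2$ replaced by $h_l^{-1}(1/t)\,h_k^{-1}(1/t)$.

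The $w$-integral against $\mu_m$ is exactly $h_m(\sqrt D/\sqrt{|x-y|})$ with $D$ one of $(h_l^{-1}(1/t))^2$ or $h_l^{-1}(1/t) h_k^{-1}(1/t)$; by (\ref{h-ubound}) and $\sqrt D \ge c\,t^{1/\alpha}$ this is bounded by $c(|x-y|^{\alpha/2}/t + |x-y|^{\beta/2}\, t^{-\beta/\alpha})$, with the sharper estimate $c(|x-y|^{\alpha/2} + |x-y|)/t$ available under (Z1) via $h_m = h_l$ and (\ref{WLSCcaling1}). After the change of variables from Remark \ref{intA1}, for which $|x-y| \asymp |\xi|$ and the Jacobian is bounded above and below, the shifts $b^*_i u$ in the $\tilde g_{i,t}$-arguments are controlled via Lemma \ref{xxprime} on the regime where the shift is small compared to $h_i^{-1}(1/t) \wedge \eps$, with direct estimates on the complementary small region. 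Applying Lemma \ref{momenty} to get $\int |\xi_j|^\eta \tilde g_{j,t}(\xi_j)\,d\xi_j \le c\,t^{\eta/\beta}$ for $\eta < \beta$, together with Corollary \ref{int_tilde_q} and the elementary inequality $|\xi|^\eta \le \sum_j |\xi_j|^\eta$ for $\eta \le 1$, yields
$$\int_{\R^d}(|\xi|^{\alpha/2} + |\xi|^{\beta/2})\prod_i \tilde g_{i,t}(\xi_i)\,d\xi \le c\bigl(t^{\alpha/(2\beta)} + t^{1/2}\bigr).$$

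Assembling the pieces, under (Z1) we get $c(t^{\alpha/(2\beta)-1} + t^{1/\beta - 1}) \le c\, t^{-\sigma}$ with $\sigma = 1-\alpha/(3\beta)$, since $\alpha/(2\beta) \ge \alpha/(3\beta)$ and $1/\beta \ge \alpha/(3\beta)$ (using $\alpha < 2$); under (Z2) we get $c(t^{\alpha/(2\beta)-1} + t^{1/2-\beta/\alpha}) \le c\, t^{-\sigma}$ with $\sigma = 2\beta/(3\alpha)$, the first term controlled by $\alpha/(2\beta) + 2\beta/(3\alpha) \ge 1$ for $\alpha/\beta \in (2/3, 1]$ and the second by the equivalence $1/2 - \beta/\alpha \ge -2\beta/(3\alpha) \Leftrightarrow \alpha \ge 2\beta/3$. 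The main obstacle is precisely this last interpolation under (Z2): the threshold $\alpha = 2\beta/3$ is the exact point at which the bound $t^{1/2-\beta/\alpha} \le t^{-\sigma}$ becomes equality, so the hypothesis of (Z2) is sharp for this argument. A secondary technical point is the handling of the $b^*_i u$ shifts in the $\tilde g_{i,t}$-arguments after the change of variables, where one must split by the size of $|\xi_i|$ relative to $2\delta\eta_5$ in order to avoid the small-$t$ blow-up of $1/h_i^{-1}(1/t)$.
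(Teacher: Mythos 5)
Your pointwise bounds (via (\ref{gdelta4}) and (\ref{gdelta2})), the bound $||b^*_l|^2-|b^*_{l0}|^2| \le 2\eta_5\eta_6|x-y|$, and the scaling step that converts the $w$-integral into a power of $|x-y|$ are all in the right spirit, and your interpolation between the $(Z1)$ and $(Z2)$ regimes reproduces the correct exponents $\sigma = 1-\alpha/(3\beta)$ and $\sigma = 2\beta/(3\alpha)$ at the end. But there is a genuine gap in the middle, and it is exactly the thing you flag as a ``secondary technical point.''

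The problem is the order of operations: you integrate in $w$ first, producing the bound
$$
c\,\min\!\Bigl(\tfrac{|x-y|w^2}{(h_l^{-1}(1/t))^2},1\Bigr)\mu_m(w)\prod_i \max_{|u|\le 2\delta}\tilde g_{i,t}(z_i+b^*_i u),
$$
and only afterward change variables $y\mapsto\xi=\tilde\Psi_x(y)$ in $\R^d$ (Remark~\ref{intA1}). The maxima $\max_{|u|\le 2\delta}\tilde g_{i,t}(\xi_i+b^*_i u)$ that survive the $w$-integration are not uniformly integrable in $t$: the shift $|b^*_i u|$ can be as large as $2\delta\eta_5$, a fixed quantity, while $h_i^{-1}(1/t)\lesssim t^{1/\beta}\to 0$. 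So for all $t$ below a fixed threshold, Lemma~\ref{xxprime} applies nowhere — the hypothesis $|x-x'|\le h^{-1}(1/t)/4$ fails identically — and on the slab $\{|\xi_i|\le 2\delta\eta_5\}$ the max equals $\tilde g_{i,t}(0)\approx 1/h_i^{-1}(1/t)\gtrsim t^{-1/\beta}$. Picking a region where all $|\xi_i|\approx\delta\eta_5$ (so the weight $|\xi|^{\alpha/2}$ is of constant order), your bound is at least of order $t^{-\beta/\alpha}\cdot t^{-d/\beta}\cdot(\text{const.})$, which dominates $t^{-\sigma}$ for small $t$ already when $\beta$ is modest. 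In short, your display $\int_{\R^d}(|\xi|^{\alpha/2}+|\xi|^{\beta/2})\prod_i\tilde g_{i,t}(\xi_i)\,d\xi\le c(t^{\alpha/(2\beta)}+t^{1/2})$ would be what you want, but what you actually have is $\prod_i\max_u\tilde g_{i,t}(\xi_i+b^*_iu)$, and there is no way to clean up the maxima afterwards.

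The paper sidesteps this entirely by \emph{not} integrating $w$ first. It instead uses the full $(d+1)$-dimensional change of variables $\Psi_x(w,y)=(w,\xi)$ from Lemma~\ref{intA}, with $\xi_i=z_i+b^*_iw$. Under this map the arguments of the one-dimensional densities become $g^*_{i,t}(\xi_i)$ \emph{exactly}, with no shift, no maximum, and no conflation of $w$- and $y$-dependence; the bound $|w|^2|x-y|\le c(|\xi|+|w|)|w|^2$ is then plugged in, the $w$-integral against $\mu_m$ is handled by Lemma~\ref{levy1} (e.g.\ (\ref{levy1b}), (\ref{levy2a}), (\ref{levy2b})), and the $\xi$-integral by Lemma~\ref{momenty}. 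To make your argument work you would need to carry out the change of variables in the pair $(w,y)$ jointly before touching the $w$-integral — that is, you need Lemma~\ref{intA}, not Remark~\ref{intA1}.
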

\begin{proof} In the proof we assume that constants $c$ may additionally depend on $\eta_5, \eta_6$.  It is enough to prove the estimates for $l=1$ and $k=2$.
For $x,y\in\Rd$ we get  $|b^*_{1}-b^*_{10}|\le \eta_6 |x-y|$. Hence,  from (\ref{gdelta4}), we have for $w\in \R$, %$1\le l\le d$,
 \begin {eqnarray*} && \left|g_{1,t}(z_{1} \pm b^*_{1}w) - g_{1,t}(z_{1} \pm b^*_{10}w)\right| \\
 &\le& c \left(\frac {|b^*_{1}-b^*_{10}|w^2}{\left(h_1^{-1}(1/t)\right)^2}\wedge 1\right)(g^*_{1,t}(z_{1} \pm b^*_{1}w) + g^*_{1,t}(z_{1} \pm b^*_{10}w)) \\
&\le& c \left(\frac {|x-y|w^2}{\left(h_1^{-1}(1/t)\right)^2}\wedge 1\right)(g^*_{1,t}(z_{1} \pm b^*_{1}w) + g^*_{1,t}(z_{1} \pm b^*_{10}w)).  
\end{eqnarray*}

This implies that 
   $$\text{A}_{1,m}\le c(\text{A}^1_{1,m}+\text{A}^2_{1,m}+\text{A}^3_{1,m}+\text{A}^4_{1,m}),$$
where 
$$
\text{A}^r_{1,m}=   \int_{\R}  \prod_{i=1}^{d}g^{*}_{i,t}(z_i + \hat{b}^{r}_{i}w)  \left(\frac {|x-y|w^2}{\left(h_1^{-1}(1/t)\right)^2}\wedge 1\right)\mu_m(w) \, dw $$ 
with $\hat{b}^{r}_{i}= b^*_{i}, i\ge 2$ and  $\hat{b}^1_1= b^*_{1}$, $\hat{b}^2_1= -b^*_{1}$,  $\hat{b}^3_1= b^*_{10}$ and 
$\hat{b}^4_1= -b^*_{10}.$
Note that the functions $\hat{b}^{r}_{i}= \hat{b}^{r}_{i}(x,y) $ have the same properties (\ref{bounded1},  \ref{Lipschitz1}) as $b^*_{i}$.
%Next let $\eps_0$ be as found in Lemma \ref{intA}.
To evaluate the integral $\int_{|x-y|\le \eps_0}\text{A}^1_{1,m}dy$ we introduce new variables in $\R^{d+1}$, given by  $(w,\xi)=\Psi_x(w, y)$, where $   \xi_i=z_i + b^*_iw, i=1,\dots,d$ (or $\xi_i=z_i + \hat{b}^r_iw$ if $\text{A}^r_{1,m}$ is treated for $r=2,3,4$). Note that the vector $\xi= (\xi_1, \dots, \xi_d) $ can be written as
$$\xi= (x-y)B(y)^T+ w b^*,$$
where $b^* = (b^*_1, \dots, b^*_d)$, hence
$$(\xi-w b^*)A(y)^T= x-y.$$ From this we infer that
$$|w|^2|x-y|\le c(|\xi|+|w|)|w|^2.$$

Let $Q_x=\{(w,y):|y-x|\le \eps_0, \ |w|\le \eps_0\}$.  Due to Lemma \ref{intA}, almost surely on $Q_x$, the absolute value of the  Jacobian determinant  of the map $\Psi_x$ is bounded from below and above by two positive constants and $\Psi_x$ is an injective transformation. Let $V_x= \Psi_x(Q_x)$. Observing that the support of the measure $\mu$ is contained in $[-\eps_0, \eps_0]$ and then  applying the above change of variables, we have 
\begin{eqnarray*}\int_{|y-x|\le \eps_0}\text{A}^1_{1,m}dy&\le& 
c \int_{|y-x|\le \eps_0} \int_{\R} \prod_{i=1}^{d} g^{*}_{i,t}(\xi_i )
 \left(\frac{(|\xi|+|w|)|w|^2}{ \left(h_1^{-1}(1/t)\right)^2}\wedge1\right)\mu_m(w) \, dw \, d y \\ 
&\le& c \int_{|y-x|\le \eps_0} \int_{\R} \prod_{i=1}^{d} g^{*}_{i,t}(\xi_i )
 \left(\frac{(|\xi|+|w|)|w|^2}{ \left(h_1^{-1}(1/t)\right)^2}\wedge1\right)\\
&& \times \,\, \mu_m(w)|J_{\Psi_x}(w,y)| \, dw \, d y \\ 
&=& c \int_{V_x}  \prod_{i=1}^{d} g^{*}_{i,t}(\xi_i )
 \left(\frac{(|\xi|+|w|)|w|^2}{ \left(h_1^{-1}(1/t)\right)^2}\wedge1\right)\mu_m(w) \, dw \, d \xi,\end{eqnarray*}
where the last equality follows from the general  change of variable formula  
for injective  Lipschitz maps (see e.g. \cite[Theorem 3]{H1993}). 
Since $|\xi|\le 1$ for $(w,\xi)\in V_x $,   we get 
$$\int_{|y-x|\le \eps_0}\text{A}^1_{1,m}dy\le c \int_{|\xi|\le1} \prod_{i=1}^{d} g^{*}_{i,t}(\xi_i )
\int_{\R}\left(\frac{(|\xi|+|w|)|w|^2}{ \left(h_1^{-1}(1/t)\right)^2}\wedge1\right)\mu_m(w) \, dw \, d \xi.   $$
Applying  (\ref{levy1b}) for $m=1$ we have for   $|\xi|\le 1$,
$$\int_{\R} \left(\frac{(|\xi|+|w|)|w|^2}{ \left(h_1^{-1}(1/t)\right)^2}\wedge1\right)\mu_1(w) \, dw \le c 
\frac{t^{\alpha/(3\beta)}+|\xi|^{\alpha/2}}{t}.
$$
Consequently, by Lemma \ref{momenty}, we obtain 
\begin{equation}
\label{A111}
\int_{|y-x|\le \eps_0}\text{A}^1_{1,1} \, dy\le c  \int_{|\xi|\le1} \prod_{i=1}^{d} g^{*}_{i,t}(\xi_i )\frac{t^{\alpha/(3\beta)}+|\xi|^{\alpha/2}}{t} \, d \xi  \le c t^{-(1-\alpha/(3\beta))}. 
\end{equation}
If the assumptions (Z1) are satisfied, then $\mu_1 = \ldots = \mu_d$ and $\sigma = 1 - \alpha/(3\beta)$, so
\begin{equation}
\label{A1Z1}
\int_{|y-x|\le \eps_0}\text{A}^1_{1,m} \, dy
\le c t^{- \sigma} \quad \text{for} \quad m = 1, \ldots, d. 
\end{equation}
Now assume that the assumptions (Z2) are satisfied. Then applying (\ref{levy2b}) for $m \ge 2$, we have for $|\xi| \le 1$,
$$\int_{\R} \left(\frac{(|\xi|+|w|)|w|^2}{ \left(h_1^{-1}(1/t)\right)^2}\wedge1\right)\mu_m(w) \, dw \le c \left[
|\xi|^{\beta/2} t^{-\beta/\alpha}+   {t^{-{2\beta}/(3\alpha)}}\right].
$$
By Lemma \ref{momenty}, we obtain
\begin{equation}
\label{A11m}
\int_{|y-x|\le \eps_0}\text{A}^1_{1,m}dy  \le c {t^{-{2\beta}/(3\alpha)}} + c t^{1/2 - \beta/\alpha} \le c {t^{-{2\beta}/(3\alpha)}}
\quad \text{for} \quad m = 2, \ldots, d. 
\end{equation}
By elementary arguments $-1+\alpha/(3\beta) > -{2\beta}/(3\alpha)$, %(because $0 < \alpha \le \beta$) 
so for $t \in (0,\tau]$ we have 
$t^{-1+\alpha/(3\beta)} \le c t^{-{2\beta}/(3\alpha)}$. Hence, when the assumptions (Z2) are satisfied, using (\ref{A111}), (\ref{A11m}) and the fact that $\sigma = 2\beta/(3 \alpha)$, we have
\begin{equation}
\label{A1Z2}
\int_{|y-x|\le \eps_0}\text{A}^1_{1,m} \, dy
\le c t^{- \sigma} \quad \text{for} \quad m = 1, \ldots, d. 
\end{equation}

In a similar way as (\ref{A1Z1}), (\ref{A1Z2}) were obtained, for the both assumptions (Z1), (Z2), we get
\begin{equation*}
\int_{|y-x|\le \eps_0}\text{A}^r_{1,m} \, dy
\le c t^{- \sigma} \quad \text{for} \quad m = 1, \ldots, d, \,\, r =2, 3, 4.
\end{equation*}
This completes the proof of the bound (for the both assumptions (Z1), (Z2))
\begin{equation*}
\int_{|y-x|\le \eps_0}\text{A}_{1,m} \, dy
\le c t^{- \sigma} \quad \text{for} \quad m = 1, \ldots, d.
\end{equation*}

For $x,y\in\Rd$ we get  $|b^*_{1}-b^*_{10}|\le \eta_6 |x-y|$. Hence,  from (\ref{gdelta2}), we have for $w\in \R$,

 \begin {eqnarray*}&&  \left|g_{1,t}(z_{1} + b^*_{1}w) - g_{1,t}(z_{1} + b^*_{10}w)\right|
\left|g_{2,t}(z_{2} + b^*_{2}w) - g_{2,t}(z_{2} - b^*_{2}w)\right| \\
&\le& c \left(\frac {|b^*_{1}-b^*_{10}||w|}{h_1^{-1}(1/t)}\wedge 1\right)\left(\frac {|w|}{h_2^{-1}(1/t)}\wedge 1\right)\\
&&\times \left(g^*_{1,t}(z_{1} + b^*_{1}w) + g^*_{1,t}(z_{1} + b^*_{10}w)\right)g^*_{2,t}(z_{2} \pm b^*_{2}w)   \\
&\le& c \left(\frac {|y-x||w|}{h_1^{-1}(1/t)}\wedge 1\right)\left(\frac {|w|}{h_2^{-1}(1/t)}\wedge 1\right)\\
&&\times \left(g^*_{1,t}(z_{1} + b^*_{1}w) + g^*_{1,t}(z_{1} + b^*_{10}w)\right)g^*_{2,t}(z_{2} \pm b^*_{2}w).  \\ 
\end{eqnarray*}
This implies that 
   $$\text{B}_{1,2,m}\le c(\text{B}^1_{1,2,m}+\text{B}^2_{1,2,m}+\text{B}^3_{1,2,m}+\text{B}^4_{1,2,m}),$$
where 
$$
\text{B}^r_{1,2,m}=   \int_{\R} \left( \prod_{i=1}^{d}g^{*}_{i,t}(z_i + \hat{b}^{r}_{i}w)\right) \left(\frac {|y-x||w|}{h_1^{-1}(1/t)}\wedge 1\right)\left(\frac {|w|}{h_2^{-1}(1/t)}\wedge 1\right)\mu_m(w) \, dw $$ 
with $\hat{b}^{r}_{i}= b^*_{i}, i\ge 3$ and  $\hat{b}^1_1=\hat{b}^2_1= b^*_{1}$, $\hat{b}^3_1=\hat{b}^4_1= b^*_{10}$ and
$\hat{b}^1_2= \hat{b}^3_2=-\hat{b}^2_2=-\hat{b}^4_2= b^*_{2}$. 
Note that the functions $\hat{b}^{r}_{i}= \hat{b}^{r}_{i}(x,y) $ have the same properties (\ref{bounded1},  \ref{Lipschitz1}) as $b^*_{i}$.

We proceed as before  and  introduce new variables in $\R^{d+1}$, given by  $(w,\xi)=\Psi_x(w, y)$, where $   \xi_i=z_i + \hat{b}^{r}_{i}w, i=1,\dots,d$. 
Again we have that
$$|w||x-y|\le c(|\xi|+|w|)|w|.$$

By the same arguments as before  
\begin{equation}
\label{B12m1}
\int_{|y-x|\le \eps_0}\text{B}^1_{1,2,m}dy
\le c \int_{|\xi|\le1} \prod_{i=1}^{d} g^{*}_{i,t}(\xi_i )\int_{\R}
 \left(\frac {(|\xi|+|w|)|w|^2}{h_1^{-1}(1/t) h_2^{-1}(1/t)}\wedge 1\right) \mu_m(w) \, dw \, d \xi.   
\end{equation}
If assumptions (Z1) are satisfied then $h_1 = h_2$, $\mu_1 = \ldots = \mu_d$ and $\sigma = 1 - \alpha/(3\beta)$. Repeating the arguments which give (\ref{A1Z1}) we get
\begin{equation}
\label{B12mZ1}
\int_{|y-x|\le \eps_0}\text{B}^1_{1,2,m} \, dy
\le c t^{- \sigma} \quad \text{for} \quad m = 1, \ldots, d. 
\end{equation}
If the assumptions (Z2) are satisfied, then by (\ref{B12m1}), (\ref{levy2a}) and Lemma \ref{momenty}, we get
\begin{equation}
\label{B12mZ2}
\int_{|y-x|\le \eps_0}\text{B}^1_{1,2,m} \, dy
\le c t^{1 - \beta/\alpha} \log(1+1/t) + c t^{-\beta/(2\alpha)} \le c t^{-2\beta/(3\alpha)} = c t^{-\sigma} 
\end{equation}
for  $m = 1, \ldots, d$. 
In a similar way as (\ref{B12mZ1}), (\ref{B12mZ2}) were obtained, for the both assumptions (Z1), (Z2), we get
\begin{equation*}
\int_{|y-x|\le \eps_0}\text{B}^r_{1,2,m} \, dy
\le c t^{- \sigma} \quad \text{for} \quad m = 1, \ldots, d, \,\, r =2, 3, 4.
\end{equation*}
This completes the proof of the bound (for the both assumptions (Z1), (Z2))
\begin{equation*}
\int_{|y-x|\le \eps_0}\text{B}_{1,2,m} \, dy
\le c t^{- \sigma} \quad \text{for} \quad m = 1, \ldots, d,
\end{equation*}
which finishes the proof of the first estimate.  

To estimate the second integral (with respect to $dx$) we proceed exactly in the same way.
\end{proof}

For fixed $l \in \{1,\ldots,d\}$ let us consider a family  of functions $b_i^*(x,y)= b_{i}(y) a_{l}(x), i \in \{1,\dots,d\}$. They satisfy the conditions (\ref{bounded1}) and (\ref{Lipschitz1}) with $\eta_5= d\eta_1^2$ and 
$\eta_6= d\eta_1\eta_3$. Let  $\eps_0=\eps_0(\eta_1,\eta_3,\eta_5,\eta_6,d)$ be as found in Lemma \ref{intA} and Remark \ref{intA1}. 
Finally we choose $\eps=\eps(\eta_1,\eta_3,d)= \frac{\eps_0}{4d^{3/2} \eta_1} $. From now on we keep 
$\eps_0, \eps$ fixed as above. Recall that if we fixed $\eps$ we fix $\delta$ according to Lemma \ref{gtht1}.

\begin{lemma}\label{difference}
For any $i \in \{1, \ldots, d\}$ and $\mathcal{a}_i, \mathcal{b}_i, \mathcal{c}_i, \mathcal{d}_i \in \R$ we have
\begin{eqnarray}
\nonumber
&& \prod_{i=1}^{d} \mathcal{a}_i + \prod_{i=1}^{d} \mathcal{b}_i  - \prod_{i=1}^{d} \mathcal{c}_i - \prod_{i=1}^{d} \mathcal{d}_i\\
\nonumber
&=&  \sum_{j=1}^{d} \left[
\sum_{k=1}^{j-1} \left(\prod_{i=1}^{k-1} \mathcal{d}_i\right) \left(\mathcal{c}_k - \mathcal{d}_k\right) \left(\prod_{i=k+1}^{j-1} \mathcal{c}_i\right)
\left(\mathcal{a}_j-\mathcal{c}_j\right) \left(\prod_{i=j+1}^{d} \mathcal{a}_i\right) \right. \\
\nonumber
&+&   \left(\prod_{i=1}^{j-1} \mathcal{d}_i\right) \left(\mathcal{a}_j-\mathcal{c}_j - (\mathcal{d}_j - \mathcal{b}_j)\right) \left(\prod_{i=j+1}^{d} \mathcal{a}_i\right)\\
\label{difference_formula}
&+& \left.  
\sum_{k=j+1}^{d} \left(\prod_{i=1}^{j-1} \mathcal{d}_i\right) \left(\mathcal{d}_j-\mathcal{b}_j\right) \left(\prod_{i=j+1}^{k-1} \mathcal{b}_i\right)
\left(\mathcal{a}_k-\mathcal{b}_k\right) \left(\prod_{i=k+1}^{d} \mathcal{a}_i\right) \right].
\end{eqnarray}
We understand here that for $m > n$ we have $\prod_{i=m}^{n} \mathcal{e}_i = 1$ and $\sum_{i=m}^{n} \mathcal{e}_i = 0$.
\end{lemma}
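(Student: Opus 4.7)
My plan is to prove this purely algebraic identity by two successive applications of the elementary telescoping formula
\[
\prod_{i=1}^{d} x_i - \prod_{i=1}^{d} y_i = \sum_{j=1}^{d} \Bigl(\prod_{i<j} y_i\Bigr)(x_j - y_j)\Bigl(\prod_{i>j} x_i\Bigr),
\]
valid for arbitrary real numbers. First, I would rewrite the left-hand side of \eqref{difference_formula} as $(\prod \mathcal{a}_i - \prod \mathcal{c}_i) - (\prod \mathcal{d}_i - \prod \mathcal{b}_i)$ and apply the above formula to each of these two differences, yielding
\[
\sum_{j=1}^{d}\Bigl(\prod_{i<j}\mathcal{c}_i\Bigr)(\mathcal{a}_j-\mathcal{c}_j)\Bigl(\prod_{i>j}\mathcal{a}_i\Bigr) \;-\; \sum_{j=1}^{d}\Bigl(\prod_{i<j}\mathcal{d}_i\Bigr)(\mathcal{d}_j-\mathcal{b}_j)\Bigl(\prod_{i>j}\mathcal{b}_i\Bigr).
\]

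Second, I would apply the same telescoping formula once more, this time to the inner products. In the first sum, expand
\[
\prod_{i<j}\mathcal{c}_i = \prod_{i<j}\mathcal{d}_i + \sum_{k<j}\Bigl(\prod_{i<k}\mathcal{d}_i\Bigr)(\mathcal{c}_k-\mathcal{d}_k)\Bigl(\prod_{k<i<j}\mathcal{c}_i\Bigr),
\]
so that the first summand contributes a diagonal piece $(\prod_{i<j}\mathcal{d}_i)(\mathcal{a}_j-\mathcal{c}_j)(\prod_{i>j}\mathcal{a}_i)$, while the telescoping contribution reproduces exactly the $k<j$ term on the right-hand side of \eqref{difference_formula}. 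In the second sum, analogously expand
\[
\prod_{i>j}\mathcal{b}_i = \prod_{i>j}\mathcal{a}_i - \sum_{k>j}\Bigl(\prod_{j<i<k}\mathcal{b}_i\Bigr)(\mathcal{a}_k-\mathcal{b}_k)\Bigl(\prod_{i>k}\mathcal{a}_i\Bigr);
\]
after accounting for the overall minus sign, the first term contributes a second diagonal piece $-(\prod_{i<j}\mathcal{d}_i)(\mathcal{d}_j-\mathcal{b}_j)(\prod_{i>j}\mathcal{a}_i)$, and the telescoping contribution reproduces the $k>j$ term. For each fixed $j$ the two diagonal pieces combine into $(\prod_{i<j}\mathcal{d}_i)(\mathcal{a}_j-\mathcal{c}_j-(\mathcal{d}_j-\mathcal{b}_j))(\prod_{i>j}\mathcal{a}_i)$, which is precisely the middle summand.

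The argument is purely formal; the main obstacle is bookkeeping the five disjoint product ranges $i<k$, $k<i<j$, $i=j$, $j<i<k$, $i>k$, together with signs, and making sure nothing is double-counted when the inner telescoping index coincides with the outer one (this is exactly what produces the middle term). As a sanity check one can verify the identity directly for $d=1$ and $d=2$. An alternative route would be induction on $d$, extending each of the four products by one factor, but the direct double-telescoping above seems cleaner and more transparent.
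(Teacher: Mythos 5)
Your proof is correct and follows essentially the same route as the paper: first telescope $\prod\mathcal{a}_i-\prod\mathcal{c}_i$ and $\prod\mathcal{d}_i-\prod\mathcal{b}_i$, then for each fixed $j$ perform a second telescoping to convert $\prod_{i<j}\mathcal{c}_i$ into $\prod_{i<j}\mathcal{d}_i$ (giving the $k<j$ terms) and $\prod_{i>j}\mathcal{b}_i$ into $\prod_{i>j}\mathcal{a}_i$ (giving the $k>j$ terms), with the two leftover pieces combining into the middle summand. The paper's equation (\ref{difference4}) is precisely this inner telescoping step, which you have spelled out more explicitly than the paper does.
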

%One can apply this lemma for 
%$$
%a_i=g_{i,t}(z_i+k_i w), \,\,\,
%b_i=g_{i,t}(z_i-k_i w), \,\,\,
%c_i=g_{i,t}(z_i+k_{i0} w), \,\,\,
%d_i=g_{i,t}(z_i-k_{i0} w).
%$$
\begin{proof}
We observe that
\begin{equation}
\label{difference1}
\prod_{i=1}^{d} \mathcal{a}_i - \prod_{i=1}^{d} \mathcal{c}_i =
\sum_{j=1}^{d} \left(\prod_{i=1}^{j-1} \mathcal{c}_i\right) \left(\mathcal{a}_j-\mathcal{c}_j\right) \left(\prod_{i=j+1}^{d} \mathcal{a}_i\right).
\end{equation}
Similarly, we obtain
\begin{equation*}
\prod_{i=1}^{d} \mathcal{b}_i - \prod_{i=1}^{d} \mathcal{d}_i =
\sum_{j=1}^{d} \left(\prod_{i=1}^{j-1} \mathcal{d}_i\right) \left(\mathcal{b}_j-\mathcal{d}_j\right) \left(\prod_{i=j+1}^{d} \mathcal{b}_i\right),
\end{equation*}
so
\begin{equation}
\label{difference2}
\prod_{i=1}^{d} \mathcal{d}_i - \prod_{i=1}^{d} \mathcal{b}_i =
\sum_{j=1}^{d} \left(\prod_{i=1}^{j-1} \mathcal{d}_i\right) \left(\mathcal{d}_j-\mathcal{b}_j\right) \left(\prod_{i=j+1}^{d} \mathcal{b}_i\right).
\end{equation}
By (\ref{difference1}) and (\ref{difference2}) we get
\begin{eqnarray}
\nonumber
&& \prod_{i=1}^{d} \mathcal{a}_i + \prod_{i=1}^{d} \mathcal{b}_i  - \prod_{i=1}^{d} \mathcal{c}_i - \prod_{i=1}^{d} \mathcal{d}_i\\
\nonumber
&=& \left(\prod_{i=1}^{d} \mathcal{a}_i - \prod_{i=1}^{d} \mathcal{c}_i \right)  - \left( \prod_{i=1}^{d} \mathcal{d}_i - \prod_{i=1}^{d} \mathcal{b}_i \right)\\
\label{difference3}
&=& \sum_{j=1}^{d} \left(\prod_{i=1}^{j-1} \mathcal{c}_i\right) \left(\mathcal{a}_j-\mathcal{c}_j\right) \left(\prod_{i=j+1}^{d} \mathcal{a}_i\right)
- \sum_{j=1}^{d} \left(\prod_{i=1}^{j-1} \mathcal{d}_i\right) \left(\mathcal{d}_j-\mathcal{b}_j\right) \left(\prod_{i=j+1}^{d} \mathcal{b}_i\right).
\end{eqnarray}
For any $j \in \{1,\ldots,d\}$ we have
\begin{eqnarray}
\nonumber
&& \left(\prod_{i=1}^{j-1} \mathcal{c}_i\right) \left(\mathcal{a}_j-\mathcal{c}_j\right) \left(\prod_{i=j+1}^{d} \mathcal{a}_i\right)
- \left(\prod_{i=1}^{j-1} \mathcal{d}_i\right) \left(\mathcal{d}_j-\mathcal{b}_j\right) \left(\prod_{i=j+1}^{d} \mathcal{b}_i\right)\\
\nonumber
&=&  
\sum_{k=1}^{j-1} \left(\prod_{i=1}^{k-1} \mathcal{d}_i\right) \left(\mathcal{c}_k - \mathcal{d}_k\right) \left(\prod_{i=k+1}^{j-1} \mathcal{c}_i\right)
\left(\mathcal{a}_j-\mathcal{c}_j\right) \left(\prod_{i=j+1}^{d} \mathcal{a}_i\right)\\
\nonumber
&+&   \left(\prod_{i=1}^{j-1} \mathcal{d}_i\right) \left(\mathcal{a}_j-\mathcal{c}_j - (\mathcal{d}_j - \mathcal{b}_j)\right) \left(\prod_{i=j+1}^{d} \mathcal{a}_i\right)\\
\label{difference4}
&+&  
\sum_{k=j+1}^{d} \left(\prod_{i=1}^{j-1} \mathcal{d}_i\right) \left(\mathcal{d}_j-\mathcal{b}_j\right) \left(\prod_{i=j+1}^{k-1} \mathcal{b}_i\right)
\left(\mathcal{a}_k-\mathcal{b}_k\right) \left(\prod_{i=k+1}^{d} \mathcal{a}_i\right).
\end{eqnarray}
Now, (\ref{difference3}) and (\ref{difference4}) give (\ref{difference_formula}).
\end{proof}

\begin{proposition}
\label{integralq02}
%There exists $\sigma = \sigma(\tau, \alpha, \eta_1, \eta_2, \eta_3,\delta,\gamma) > 0$ such that 
  For any $x,y \in \R^d$, $t \in (0,\tau]$  we have 
\begin{equation}\label{global}
|q_0(t,x,y)| \le    c\frac 1{t^{\beta/\alpha+d/\alpha}}.
\end{equation}
%\begin{equation}\label{global}
%|q_0(t,x,y)| \le    c\frac 1{t^{1+(d-1)/\alpha}} h_t( (\eps/\eps_0)|x-y|).
%\end{equation}
%\begin{equation}\label{global}
%|q_0(t,x,y)| \le   c t^{-1-1/\alpha}\min\left\{\tilde{g}_{1,t}\left(\frac{|x-y|}{\eta_12^{5/2}}\right),\tilde{g}_{2,t}\left(\frac{|x-y|}{\eta_12^{5/2}}\right)\right\}.
%\end{equation}
 For  $x,y \in \R^d$, $t \in (0,\tau]$, $|y-x|\ge \eps_0$ we have\begin{equation}\label{exp}
|q_0(t,x,y)| \le    c e^{-(\eps/\eps_0)|x-y|}.
\end{equation}
For any $t \in (0,\tau]$, $x \in \R^d$ we have
\begin{equation}\label{q0int}
\int_{\R^d} |q_0(t,x,y)| \, dy \le c t^{-\sigma}.
\end{equation}
For any $t \in (0,\tau]$, $y \in \R^d$ we have

\begin{equation}\label{q0intx}
\int_{\R^d} |q_0(t,x,y)| \, dx \le c t^{-\sigma}.
\end{equation}

\end{proposition}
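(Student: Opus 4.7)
The plan is to rewrite $q_0(t,x,y)$ in a form that exposes the structure handled by Lemma~\ref{difference}, and then reduce the four claims to pointwise and integrated bounds on the building blocks $A_{l,m}$ and $B_{l,k,m}$ from Corollary~\ref{int10}. Expanding $p_y(t,v)=\det(B(y))\prod_j g_{j,t}(b_j(y)v)$ and setting $z_j=b_j(y)(x-y)$, $b_j^*=b_j(y)a_l(x)$, $b_{j0}^*=b_j(y)a_l(y)=\delta_{jl}$ (using $B(y)A(y)=I$), one obtains
\begin{align*}
q_0(t,x,y) = \tfrac12 \det B(y) \sum_{l=1}^d \int_\R \Bigl[& \prod_j g_{j,t}(z_j + b_j^* w) + \prod_j g_{j,t}(z_j - b_j^* w) \\
& - \prod_j g_{j,t}(z_j + b_{j0}^* w) - \prod_j g_{j,t}(z_j - b_{j0}^* w) \Bigr] \mu_l(w)\,dw.
\end{align*}
Identifying $\mathcal{a}_j,\mathcal{b}_j,\mathcal{c}_j,\mathcal{d}_j$ with $g_{j,t}(z_j \pm b_j^* w)$ and $g_{j,t}(z_j \pm b_{j0}^* w)$ and applying Lemma~\ref{difference}, each resulting summand is---in absolute value, after integration against $\mu_l$---majorized (up to the bounded factor $|\det B(y)|$) by the integrand defining $A_{l,m}(x,y)$ or $B_{l,k,m}(x,y)$, depending on whether it involves a single second-order-type difference or a product of two first-order differences.

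For the global pointwise bound (\ref{global}), I would estimate each $A_{l,m}$ and $B_{l,k,m}$ pointwise: apply (\ref{gdelta1}) to get $g_{i,t}(z_i+b_i^*w)\le c\tilde g_{i,t}(\cdot)\le c/h_i^{-1}(1/t)\le c t^{-1/\alpha}$; split the bracket in $A_{l,m}$ as the difference of two symmetric second-order differences and apply (\ref{gdelta41})/(\ref{gdelta31}) with $a\in\{b_l^*,b_{l0}^*\}$ (both bounded by (A0)), yielding a bound of $c\,t^{-1/\alpha-\beta/\alpha}$ for the $\mu_m$-integral; multiplying by the $(t^{-1/\alpha})^{d-1}$ contributed by the remaining factors gives $A_{l,m}\le c\,t^{-(d+\beta)/\alpha}$, and analogously for $B_{l,k,m}$ via (\ref{gdelta42})/(\ref{gdelta32}). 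For the exponential bound (\ref{exp}) with $|x-y|\ge\eps_0$, I use $x-y=A(y)z$ with $\|A(y)\|\le\sqrt{d}\,\eta_1$ to obtain $\max_j|z_j|\ge|x-y|/(d\eta_1)$; the calibration $\delta\le\eps/(d\eta_1^2)$ guarantees $|b_j^*w|\le|z_j|/2$ throughout $\supp\mu$, so for that index $j$ one has $|z_j+b_j^*w|\ge (\eps/\eps_0)|x-y|$ after absorbing the matrix constants. By the second clause of (\ref{hdefinition}), this single $\tilde g_{j,t}$-factor contributes $c\,t^{(d+\beta-1)/\alpha}e^{-(\eps/\eps_0)|x-y|}$, while the remaining $d-1$ factors of $\tilde g$ contribute $c\,t^{-(d-1)/\alpha}$ and the $\mu$-integration contributes $c\,t^{-\beta/\alpha}$; the exponents of $t$ cancel to zero, giving $|q_0(t,x,y)|\le c\,e^{-(\eps/\eps_0)|x-y|}$. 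The same cancellation holds whether the large coordinate $j$ coincides with $l$, $k$, or neither.

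For the integrated bounds (\ref{q0int})--(\ref{q0intx}), I split the $dy$-integral over $\{|x-y|\le\eps_0\}$ and its complement. On the near set, one interchanges the order of integration inside the decomposition from Lemma~\ref{difference} and applies Corollary~\ref{int10} term by term, obtaining the bound $c\,t^{-\sigma}$. On the complement, (\ref{exp}) reduces the problem to $\int_{\R^d}e^{-(\eps/\eps_0)|x-y|}dy$, which is bounded by a constant, and since $t\le\tau$ this constant is absorbed into $c\,t^{-\sigma}$. The bound (\ref{q0intx}) follows identically after invoking the $dx$-version of Corollary~\ref{int10}. I expect the most delicate step to be the verification of the $t$-power cancellation in (\ref{exp}): the specific choices $\eps=\eps_0/(4d^{3/2}\eta_1)$ and $\delta\le\eps/(d\eta_1^2)$ are precisely what guarantee that the single exponentially decaying $\tilde g_{j,t}$-factor carries exactly the $t^{(d+\beta-1)/\alpha}$ weight needed to balance the $t^{-(d-1)/\alpha-\beta/\alpha}$ produced by the other factors and the $\mu$-integration, and a careful case analysis (depending on whether the large coordinate $j$ equals $l$, $k$, or neither) is required to confirm this in each of the summands coming from Lemma~\ref{difference}.
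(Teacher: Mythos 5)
Your proposal is correct and follows essentially the same route as the paper's proof: decompose $q_0$ as $\sum_l R_l$, expand each bracket via Lemma~\ref{difference}, obtain the pointwise bound (\ref{global}) from Lemma~\ref{integralgt} together with $\tilde g_{i,t}\le c\,t^{-1/\alpha}$, obtain (\ref{exp}) from the tail clause $\tilde g_{j,t}(r)\le c_\eps\, t^{(d+\beta-1)/\alpha}e^{-r}$ for $r\ge\eps$ applied at the coordinate where $|z_j|$ is maximal (with the exponents of $t$ cancelling exactly), and obtain (\ref{q0int})--(\ref{q0intx}) from Corollary~\ref{int10} on $\{|x-y|\le\eps_0\}$ together with (\ref{exp}) on the complement. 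The one minor slip is the lower bound $\max_j|z_j|\ge|x-y|/(d\eta_1)$, which should read $\max_j|z_j|\ge|x-y|/(\eta_1 d^{3/2})$ (an extra $\sqrt d$ is lost passing from $|z|_2$ to $\max_j|z_j|$); this is harmless, since $\eps=\eps_0/(4d^{3/2}\eta_1)$ is calibrated to the correct constant and your concluded decay rate $e^{-(\eps/\eps_0)|x-y|}$ still holds.
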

\begin{proof}
We have
\begin{eqnarray}
\nonumber
 q_0(t,x,y) &=&  \frac{1}{2} \sum_{i = 1}^d   \int_{\R} \left[p_y(t,x-y + a_i(x) w) 
+ p_y(t,x-y - a_i(x) w)\right. \\
\label{q0formula}
&& \left. -p_y(t,x-y + a_i(y) w) 
- p_y(t,x-y - a_i(y) w)\right] \, \mu_i(w) \, dw.
\end{eqnarray}
For $i = 1, \ldots, d$ we put 
\begin{eqnarray} 
\nonumber
   R_i &=&  \frac{1}{2} \int_{\R} \left[p_y(t,x-y + a_i(x) w) 
+ p_y(t,x-y - a_i(x) w)\right. \\
\label{term1}
&& \left. -p_y(t,x-y + a_i(y) w) 
- p_y(t,x-y - a_i(y) w)\right] \, \mu_i(w) \, dw.
\end{eqnarray}
We have $q_0(t,x,y) =  R_1 + \ldots + R_d$. It is clear that it is enough to handle $R_1$ alone.
Note that 
\begin{eqnarray}
\nonumber
  R_1  &=& \frac{1}{2} \det(B(y)) \int_{\R} \left[G_t\left((x-y + w e_1 (A(x))^T) (B(y))^T\right) \right.\\
\nonumber 
&&+ G_t\left((x-y - w e_1 (A(x))^T) (B(y))^T\right)\\
\nonumber 
&&  - G_t\left((x-y + w e_1 (A(y))^T) (B(y))^T\right)\\
\label{formulaR1}  
&& \left. - G_t\left((x-y - w e_1 (A(y))^T) (B(y))^T\right)
\right] \, \mu_1(w) \, dw.
\end{eqnarray}

We will use the following abbreviations
\begin{eqnarray*}
&& z_i=B_i(x,y) = b_i(y)(x-y) = b_{i1}(y)  (x_1-y_1) + ... + b_{id}(y) (x_d-y_d),\\
%&& B_2(y) = b_{21}(y) y_1 + b_{22}(y) y_2,\\
&&k_i= \tbfs{i} = b_{i}(y) a_{1}(x),\\ %&& \tbs = b_{21}(y) a_{11}(0).
&&k_{i0}=\tilde{b}_{i1}(x,x). \end{eqnarray*}
Note that $k_{10}=1$ and $ k_{i0}=0,\ 2\le i\le d$.%We assume, at least for a while, $d=2$.

Let $$\delta_t(w)=\prod_{i=1}^{d} g_{i,t}(z_i + k_i w)+
	\prod_{i=1}^{d} g_{i,t}(z_i - k_i w)-\prod_{i=1}^{d} g_{i,t}(z_i + k_{i0} w)-\prod_{i=1}^{d} g_{i,t}(z_i - k_{i0} w).$$
We can rewrite (\ref{formulaR1}) as
\begin{eqnarray*}
   R_1&=& \frac{1}{2} \det(B(y))   \int_{\R} \delta_t(w)\, \mu_1(w) \, dw.
\end{eqnarray*}
%
%We note that $\tilde{b}_{11}(x,x)=1$ and $\tilde{b}_{i1}(x,x)=0$ for $i\neq 1$ and therefore %$|\tbfs{1}|\to 1 $ and $\tbfs{i} \to 0$ for
%$i\neq 1$ as $|x-y|\to 0^+.$ 
%
 By Lemma \ref{difference},  denoting $$
\mathcal{a}_i=g_{i,t}(z_i+k_i w), \,\,\,
\mathcal{b}_i=g_{i,t}(z_i-k_i w), \,\,\,
\mathcal{c}_i=g_{i,t}(z_i+k_{i0} w), \,\,\,
\mathcal{d}_i=g_{i,t}(z_i-k_{i0} w),
$$ we have

\begin{equation}
\label{difference_formula1}
 \delta_t(w) = \sum_{j=1}^{d} 
\left[ \left(\sum_{k=1}^{j-1} \delta^{k,j}_t(w)\right) + \delta^{j,j}_t(w) + \left(\sum_{k=j+1}^{d} \delta^{k,j}_t(w)\right)\right],
\end{equation}
where
\begin{eqnarray*}
\nonumber
\delta^{k,j}_t(w)&=&  
 \left(\prod_{i=1}^{k-1} \mathcal{d}_i\right) \left(\mathcal{c}_k - \mathcal{d}_k\right) \left(\prod_{i=k+1}^{j-1} \mathcal{c}_i\right)
\left(\mathcal{a}_j-\mathcal{c}_j\right) \left(\prod_{i=j+1}^{d} \mathcal{a}_i\right), k<j, \\
\nonumber
\delta^{j,j}_t(w)&=&   \left(\prod_{i=1}^{j-1} \mathcal{d}_i\right) \left(\mathcal{a}_j-\mathcal{c}_j - (\mathcal{d}_j - \mathcal{b}_j)\right) \left(\prod_{i=j+1}^{d} \mathcal{a}_i\right),\\
\delta^{k,j}_t(w) &=&  
 \left(\prod_{i=1}^{j-1} \mathcal{d}_i\right) \left(\mathcal{d}_j-\mathcal{b}_j\right) \left(\prod_{i=j+1}^{k-1} \mathcal{b}_i\right)
\left(\mathcal{a}_k-\mathcal{b}_k\right) \left(\prod_{i=k+1}^{d} \mathcal{a}_i\right), k>j .
\end{eqnarray*}

We denote $M_{i,t}=\max_{|w|\le 2\delta }
 \tilde{g}_{i,t}(z_i + k^*w )$, where  $k^*= max \{1,|k_1|, |k_2|, \dots |k_d|\} $.
 By  (\ref{gdelta42}), for $k < j$ we have 
$$
\int_{\R}|\delta^{k,j}_t(w)|\mu_1(w)\, dw 
\le  \prod_{i\ne j,k } M_{i,t}\int_{\R}|\left(\mathcal{c}_k - \mathcal{d}_k\right) \left(\mathcal{a}_j-\mathcal{c}_j\right)|
\mu_1(w)\, dw 
\le c  t^{-\beta/\alpha} \prod_{i=1 }^{d} M_{i,t}.
$$
Similarly, for $k > j$ we have
$$
\int_{\R}|\delta^{k,j}_t(w)|\mu_1(w)\, dw 
\le  \prod_{i\ne j,k } M_{i,t}\int_{\R}|\left(\mathcal{d}_j - \mathcal{b}_j\right) \left(\mathcal{a}_k-\mathcal{b}_k\right)|
\mu_1(w)\, dw 
\le c  t^{-\beta/\alpha} \prod_{i=1 }^{d} M_{i,t}.
$$
By  (\ref{gdelta41}) we obtain
$$
\int_{\R}|\delta^{j,j}_t(w)|\mu_1(w)\, dw 
\le  \prod_{i\ne j } M_{i,t}\int_{\R} |\mathcal{a}_j-\mathcal{c}_j - (\mathcal{d}_j - \mathcal{b}_j)| 
\mu_1(w)\, dw 
\le c  t^{-\beta/\alpha} \prod_{i=1 }^{d} M_{i,t}.
$$
It follows that
$$
|R_1|\le c   t^{-\beta/\alpha} \prod_{i=1 }^{d} M_{i,t}.
$$
Since $M_{i,t}\le \frac c{t^{1/\alpha} }$ we obtain (\ref{global}) and moreover 
$$
|R_1|\le c  \min_i M_{i,t} \, t^{-\frac{d-1+\beta}\alpha}.
$$
By \cite[Lemma 3.2]{KRS2018}, $\max_i|z_i| \ge \frac1{\eta_1d^{3/2}}|x-y|$  and suppose that  
$|z_1| \ge \frac1{\eta_1d^{3/2}}|x-y|$. Then, since   $|k^*|\le \eta_1^2d$, we have  for $|x-y|\ge 4d^{5/2}\eta_1^3\delta $ and $|w|\le 2\delta$,
\begin{eqnarray*}
|z_1 + k^*w|&\ge&|z_1| - |k^*w|\ge  \frac1{\eta_1d^{3/2}}|x-y|- 2\eta_1^2 d \delta\\& =&\frac1{\eta_1d^{3/2}}|x-y|\left(1- 2d^{5/2}\frac{\eta_1^3 \delta}{|x-y|}
\right) \ge  \frac1{2\eta_1d^{3/2}}|x-y|.
\end{eqnarray*}
This yields that 
\begin{equation*}
|R_1| \le 
 c t^{-\beta/\alpha -(d-1)/\alpha}\tilde{g}_{1,t}\left(\frac{|x-y|}{2\eta_1d^{3/2}}\right),   \  |x-y|\ge 4d^{5/2}\eta_1^3\delta 
\end{equation*}
and provides the exponential bound 
\begin{equation*}
|R_1| \le  c e^{-\left(\frac{|x-y|}{2\eta_1d^{3/2}}\right)}, \quad {|x-y|}\ge \max\{2d^{3/2} \eta_1\eps, 4d^{5/2}\eta_1^3\delta\}.
\end{equation*}
Recall that $\eps= \frac{\eps_0}{4d^{3/2}\eta_1} $ and $\delta = \min\{\delta_0, \frac{\eps \alpha}{8(d+\beta + 2)},\frac{\eps}{d\eta_1^2}\}$. Hence\\ $\max\{2d^{3/2} \eta_1\eps, 4d^{5/2}\eta_1^3\delta\}\le \eps_0$, so finally
\begin{equation*}
|R_1| \le  c e^{-\frac{|x-y|}{2\eta_1d^{3/2}}}, \quad {|x-y|}\ge \eps_0,
\end{equation*}
which proves (\ref{exp}).

The estimates  (\ref{q0int}) and  (\ref{q0intx}) follow from Corollary \ref{int10} and (\ref{exp}). For example to handle the integral 
$$
\int_{|y-x|\le \eps_0}\int_R |\delta^{j,j}_t(w)| \mu_1(w) \, dw \, dy , \quad x \in \R^d,
$$ we take
$$
b_i^*(x,y)= -k_{i0}(x,x), i=1,\dots, j-1, \quad  b_i^*(x,y)= k_{i}(x,y), i=j,\dots, d.
$$
Such choice of functions $b_i^*$ enable us to apply Corollary \ref{int10}, since they satisfy all the assumptions of Lemma \ref{intA}. Hence
$$\int_{|y-x|\le \eps_0}\int_R |\delta^{j,j}_t(w)|\mu_1(w) \, dw \, dy \le c t^{-\sigma}.$$
The same argument (with an appropriate choice of $b_i^*$) shows that for $k \ne j$
$$\int_{|y-x|\le \eps_0}\int_R |\delta^{k,j}_t(w)|\mu_1(w) \, dw \, dy \le c t^{-\sigma}.$$
This implies that 
$$\int_{|y-x|\le \eps_0}|q_0(t,x,y)| \, dy \le c t^{-\sigma}.$$
By (\ref{exp}) we can extend the domain of integration to the whole $\R^d$ keeping the upper bound as above. 
\end{proof}
Using Corollary \ref{int_tilde_q} and similar arguments as in the proof of Proposition 3.10 in \cite{KRS2018} we obtain the following result.
\begin{proposition}
\label{pyintegral}
For any $t \in (0,\tau]$, $x \in \R^d$ we have
\begin{equation} \label{pyintegral1}
\int_{\R^d} p_y(t,x-y) \, dy \le c,
\end{equation}
\begin{equation} \label{ryintegral1}
\int_{\R^d} r_y(t,(x-y)/2) \, dy \le c.
\end{equation}
For any  $\delta_1 > 0$, 
\begin{equation}
\label{supsup}
\lim_{t\to 0^+} \sup_{x \in \R^d} \int_{B^c(x,\delta_1)} p_y(t,x-y) \, dy =0.
\end{equation}
Moreover,
\begin{equation}
\label{limp_y}
\lim_{t \to 0^+} \int_{\R^d} p_y(t,x-y) \, dy = 1,
\end{equation}
uniformly with respect to $x \in \R^d$.
\end{proposition}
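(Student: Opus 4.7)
\textbf{Proof proposal for Proposition \ref{pyintegral}.}
My plan follows the scheme of \cite[Proposition 3.10]{KRS2018}. The two workhorses are: (a) the change of variables $\xi = \tilde{\Psi}_x(y) = (x-y)B(y)^T$ from Remark \ref{intA1}, which is injective on $B(x,\eps_0)$ with almost-everywhere Jacobian $|J_{\tilde{\Psi}_x}(y)| \in [\tfrac12|\det B(y)|,\,2|\det B(y)|]$; and (b) the exponential decay $r_y(t,x-y) \le c t\, e^{-c|x-y|}$ from (\ref{htht2}), which applies whenever $|x-y|\ge \eps \eta_1 d^{3/2} = \eps_0/4$. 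Throughout I also use that $\det B(y)$ is bounded above and below by positive constants (from (A0)). Each integral will be split into a near part $\{|x-y|\le\eps_0\}$, treated by the change of variables, and a far part $\{|x-y|>\eps_0\}$, where the exponential estimate gives a contribution of order $O(t)$ uniformly in $x$.

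For (\ref{pyintegral1}), on the near part I apply Hajłasz's change-of-variables formula \cite[Theorem 3]{H1993}: since $p_y(t,x-y)=\det B(y)\,G_t(\xi)$ and $\det B(y)\le 2|J_{\tilde{\Psi}_x}(y)|$,
\begin{equation*}
\int_{|x-y|\le\eps_0} p_y(t,x-y)\, dy \,\le\, 2\int_{\R^d} G_t(\xi)\, d\xi \,=\, 2.
\end{equation*}
The bound (\ref{ryintegral1}) is identical except that one inserts a bounded factor $\det B(y)$ before changing variables, ending with $c\prod_i \int_{\R}\tilde{g}_{i,t}(\xi_i/2)\, d\xi_i \le c$ by Corollary \ref{int_tilde_q}. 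For (\ref{supsup}) with fixed $\delta_1>0$ I further restrict the near part to the annulus $\{\delta_1\le|x-y|\le\eps_0\}$. By \cite[Lemma 3.2]{KRS2018}, $\max_i|b_i(y)(x-y)|\ge |x-y|/(\eta_1 d^{3/2})$, so on this annulus $|\xi|\ge \delta_1':=\delta_1/(\eta_1 d^{3/2})$. After the change of variables and the inclusion $\{|\xi|\ge\delta_1'\}\subset \bigcup_i\{|\xi_i|\ge\delta_1'/\sqrt d\}$, the integral is dominated by $\sum_i \int_{|\xi_i|\ge\delta_1'/\sqrt d} g_{i,t}(\xi_i)\, d\xi_i$, each of which tends to $0$ as $t\to 0^+$ by stochastic continuity of the one-dimensional truncated Lévy process at $0$. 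All bounds are independent of $x$.

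For (\ref{limp_y}), I exploit the identity $\int_{\R^d} p_x(t,x-y)\, dy = \int_{\R^d} p_x(t,z)\, dz = 1$ to write
\begin{equation*}
\int_{\R^d} p_y(t,x-y)\, dy - 1 \,=\, \int_{\R^d}\bigl(p_y(t,x-y) - p_x(t,x-y)\bigr)\, dy.
\end{equation*}
The far part $\{|x-y|\ge\delta_1\}$ is uniformly $o(1)$ by (\ref{supsup}) applied to both $p_y$ and $p_x$. On the near part I decompose
\begin{equation*}
p_y(t,x-y)-p_x(t,x-y) \,=\, \bigl(\det B(y)-\det B(x)\bigr) G_t((x-y)B(x)^T) + \det B(y)\bigl(G_t(\xi_y)-G_t(\xi_x)\bigr),
\end{equation*}
where $\xi_y=(x-y)B(y)^T$ and $\xi_x=(x-y)B(x)^T$. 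The first summand integrates to $O(\delta_1)$ by Lipschitz continuity of $\det B$ and the change of variables; for the second, since $|\xi_y-\xi_x|\le c|x-y|^2$, the gradient estimate (\ref{gdelta2}) applied coordinatewise (as in Lemma \ref{pyholder}) yields $|G_t(\xi_y)-G_t(\xi_x)|\le c\,|x-y|^2\sum_i h_i^{-1}(1/t)^{-1}(\tilde G_t(\xi_y/2)+\tilde G_t(\xi_x/2))$, and after the change of variables the factor $|\xi|^2/h_i^{-1}(1/t)$ absorbs into the Gaussian-like tail to give a bound of order $h_i^{-1}(1/t)\to 0$ as $t\to 0^+$. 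Sending first $t\to 0^+$ and then $\delta_1\to 0^+$ closes the argument.

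The principal obstacle is (\ref{limp_y}): uniformity in $x$ requires a quantitative comparison of $p_y(t,x-y)$ and $p_x(t,x-y)$, and it is precisely the Lipschitz hypothesis (\ref{Lipschitz}) on $A$ (hence on $B=A^{-1}$) together with the first-order estimates of Lemma \ref{gtht1} that make the decomposition above integrable with the desired rate.
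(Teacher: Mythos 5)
Your proofs of (\ref{pyintegral1}), (\ref{ryintegral1}) and (\ref{supsup}) are sound: the near-field change of variables via $\tilde{\Psi}_x$ (Lemma~\ref{intA}, Remark~\ref{intA1}), the far-field exponential decay from (\ref{htht2}), and the vanishing-tail estimate from Corollary~\ref{int_tilde_q} are exactly the right ingredients, and the computations are correct.

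However, the estimate for the second summand $\det B(y)\bigl(G_t(\xi_y)-G_t(\xi_x)\bigr)$ in your proof of (\ref{limp_y}) contains a genuine gap. After applying (\ref{gdelta2}) coordinatewise you obtain
$$
|G_t(\xi_y)-G_t(\xi_x)|\le c\,|x-y|^2\sum_i\bigl(h_i^{-1}(1/t)\bigr)^{-1}\bigl(\tilde G_t(\xi_y/2)+\tilde G_t(\xi_x/2)\bigr),
$$
and then assert that, after changing variables, $|\xi|^2/h_i^{-1}(1/t)$ ``absorbs into the Gaussian-like tail'' to give a bound of order $h_i^{-1}(1/t)$. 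That would require $\int|\xi_i|^2\tilde g_{i,t}(\xi_i)\,d\xi_i\asymp\bigl(h_i^{-1}(1/t)\bigr)^2$, which is the Gaussian scaling. But $\tilde g_{i,t}$ is heavy-tailed: Lemma~\ref{momenty} with $\eta=2>\beta$ gives $\int_0^1 x^2\tilde g_{i,t}(x)\,dx\le ct$, and the part $|x|\ge\eps$ contributes $O\bigl(t^{(d+\beta-1)/\alpha}\bigr)=o(t)$ since $d\ge 2$; hence $\int|\xi_i|^2\tilde g_{i,t}(\xi_i)\,d\xi_i\le ct$, which dominates $\bigl(h_i^{-1}(1/t)\bigr)^2\le C_4^2\,t^{2/\beta}=o(t)$ because $\beta<2$. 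The integral you need to be small is therefore controlled only by $c\,t/h_i^{-1}(1/t)$, and by (\ref{h-1}) this is bounded from below by $c\,t^{1-1/\beta}$ and from above by $c\,t^{1-1/\alpha}$; it diverges whenever $\alpha<1$, which is allowed under both (Z1) and (Z2) (e.g.\ $\alpha$-stable coordinates with $\alpha<1$). So the step as written does not establish (\ref{limp_y}).

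A way to close (\ref{limp_y}) that avoids this pointwise comparison is to change variables only once. Because $B$ is Lipschitz, the almost-everywhere Jacobian matrix of $\tilde\Psi_x$ equals $-B(y)+E(y)$ with $\|E(y)\|\le c|x-y|$, so $\det B(y)/|J_{\tilde\Psi_x}(y)|=1+O(|x-y|)$ uniformly. Hence
$$
\int_{|x-y|\le\delta_1}p_y(t,x-y)\,dy
=\int_{\tilde\Psi_x(B(x,\delta_1))}G_t(\xi)\,d\xi + O(\delta_1),
$$
uniformly in $t\in(0,\tau]$ and $x\in\R^d$. Since $\tilde\Psi_x(B(x,\delta_1))\supset B(0,\delta_2)$ for some $\delta_2>0$ depending only on the structural constants (Remark~\ref{intA1}), and $\int_{\R^d}G_t(\xi)\,d\xi=1$, the main term tends to $1$ uniformly by Corollary~\ref{int_tilde_q}. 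Together with (\ref{supsup}) for the far part and then sending $\delta_1\to0^+$, this yields (\ref{limp_y}) without any appeal to moment estimates for $\tilde G_t$.
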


In the sequel we will use the following standard estimate. For any $\gamma \in (0,1]$, $\theta_0 > 0$ there exists $c = c(\gamma,\theta_0)$ such that for any $\theta \ge \theta_0$, $t > 0$ we have
\begin{equation}
\label{gammatheta}
\int_0^t (t - s)^{\gamma-1} s^{\theta - 1} \, ds \le \frac{c}{\theta^{\gamma}} t^{(\gamma-1)+(\theta-1)+1}.
\end{equation}

\begin{lemma}
\label{integralqn}
For any $t > 0$, $x \in \R^d$ and $n \in \N$ the kernel $q_n(t,x,y)$ is well defined. For any $t \in (0,\tau]$, $x \in \R^d$ and $n \in \N$ we have
\begin{equation}
\label{integralqn1}
\int_{\R^d} |q_n(t,x,y)| \, dy \le \frac{c_1^{n+1} t^{(n+1)(1-\sigma) - 1}}{(n!)^{1 - \sigma}},
\end{equation}
\begin{equation}
\label{integralqn2}
\int_{\R^d} |q_n(t,y,x)| \, dy \le \frac{c_1^{n+1} t^{(n+1)(1-\sigma) - 1}}{(n!)^{(1-\sigma)}}.
\end{equation}
For any $t \in (0,\tau]$, $x, y \in \R^d$ and $n \in \N$ we have
\begin{equation}
\label{qnestimate1}
|q_n(t,x,y)|  \le c_1 \frac{c_2^n t^{n(1-\sigma) - 1}}{(n!)^{(1-\sigma)} t^{-1+(d+\beta)/\alpha}}.
\end{equation}
For any $t \in (0,\tau]$, $x, y \in \R^d$ and $n \in \N$, $|x - y| \ge n + 1$ we have
\begin{equation}
\label{qnestimate2}
|q_n(t,x,y)|  \le c_1 \frac{c_2^n t^{n(1-\sigma)}}{(n!)^{(1-\sigma)}} e^{-\frac{\lambda |x - y|}{n+1}},
\end{equation}
where $\lambda=\eps/\eps_0$.
\end{lemma}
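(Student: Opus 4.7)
The plan is to proceed by induction on $n$, taking the base case $n=0$ from Proposition \ref{integralq02}; well-definedness of $q_n$ will follow at each stage from absolute convergence of the defining integral (\ref{defqn}), which is itself a consequence of the inductive estimates. All four estimates are proved simultaneously in one inductive step.

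For the integral bound (\ref{integralqn1}), I would apply Fubini and the inductive hypothesis on $q_{n-1}$ (uniform in $z$) to obtain
\[
\int_{\R^d} |q_n(t,x,y)|\,dy \le \frac{c_1^n}{((n-1)!)^{1-\sigma}} \int_0^t \Bigl(\int_{\R^d} |q_0(t-s,x,z)|\,dz\Bigr)\, s^{n(1-\sigma)-1}\,ds,
\]
then combine (\ref{q0int}) with the auxiliary estimate (\ref{gammatheta}) (taking $\gamma=1-\sigma$, $\theta=n(1-\sigma)$), and use $((n-1)!)^{1-\sigma}\cdot n^{1-\sigma}=(n!)^{1-\sigma}$ to recover the claimed form after absorbing $c/(1-\sigma)^{1-\sigma}$ into $c_1$. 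Estimate (\ref{integralqn2}) is handled symmetrically via (\ref{q0intx}).

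For the pointwise bound (\ref{qnestimate1}), I would split the time integral at $t/2$. On $[0,t/2]$, where $t-s\ge t/2$, I bound $|q_0(t-s,x,z)| \le c\,t^{-(d+\beta)/\alpha}$ via (\ref{global}) and integrate $|q_{n-1}(s,z,y)|$ in $z$ using the just-established (\ref{integralqn2}). On $[t/2,t]$, I use the inductive pointwise bound on $q_{n-1}$ (bounding $s^{(n-1)(1-\sigma)-(d+\beta)/\alpha}$ uniformly by a $(d,\alpha,\beta)$-dependent multiple of $t^{(n-1)(1-\sigma)-(d+\beta)/\alpha}$, since the exponent's absolute value is controlled by a fixed multiple of $(d+\beta)/\alpha$ in the relevant range), and integrate $|q_0|$ in $z$ via (\ref{q0int}). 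Both pieces yield contributions of the form $C\,c_1 c_2^{n-1} t^{n(1-\sigma)-(d+\beta)/\alpha}/(n!)^{1-\sigma}$, and a sufficiently large choice of $c_2$ closes the induction; the Beta-function form of the iterated convolution (equivalently, rewriting $(n!)^{1-\sigma}$ as $\Gamma(n(1-\sigma)+1)$ up to geometric factors $(1-\sigma)^{-n(1-\sigma)}$ absorbed into $c_2^n$) is what makes the constants track uniformly.

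For the exponential bound (\ref{qnestimate2}), when $|x-y|\ge n+1$ I would split $\R^d = E_1\cup E_2$, where $E_1=\{z:|x-z|\ge |x-y|/(n+1)\}$ and $E_2=\R^d\setminus E_1\subseteq\{z:|z-y|\ge n|x-y|/(n+1)\}$. On $E_1$ (taking $n+1\ge\eps_0$, with smaller $n$ absorbed into initial constants), the exponential estimate (\ref{exp}) on $q_0(t-s,x,z)$ extracts the factor $e^{-\lambda|x-y|/(n+1)}$, to be paired with the inductive integral bound on $q_{n-1}$; on $E_2$, the inductive exponential bound on $q_{n-1}(s,z,y)$ yields $e^{-\lambda|z-y|/n}\le e^{-\lambda|x-y|/(n+1)}$, to be paired with (\ref{q0int}). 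Summing and integrating via (\ref{gammatheta}) closes the induction. The main obstacle, running through both (\ref{qnestimate1}) and (\ref{qnestimate2}), is the uniform-in-$n$ tracking of $c_2$ — ensuring each of the two halves of the $t/2$-split (resp.\ each of $E_1,E_2$) is dominated by half of the target, which ultimately rests on the Beta-function compatibility of the $(n!)^{1-\sigma}$ decay rate with the $\sigma$-integrability of $q_0$.
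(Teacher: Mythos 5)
The overall strategy is right and matches the paper's: simultaneous induction, with the base case coming from Proposition \ref{integralq02}, the $t/2$-split for the pointwise bound, the spatial split for the exponential bound, and (\ref{gammatheta}) supplying the factorial gain. The arguments you sketch for (\ref{integralqn1}), (\ref{integralqn2}) and (\ref{qnestimate2}) do close.

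However, your treatment of the $[t/2,t]$ piece of (\ref{qnestimate1}) contains a genuine gap. You propose to bound the \emph{entire} $s$-power in the inductive estimate for $q_{n-1}$, namely $s^{(n-1)(1-\sigma)-(d+\beta)/\alpha}$, by a fixed multiple of $t^{(n-1)(1-\sigma)-(d+\beta)/\alpha}$. That bound is indeed uniform in $n$ (for negative exponents the modulus is $\le (d+\beta)/\alpha$, and for positive ones the constant is $1$), but it discards the $s$-dependence entirely. What remains to integrate over $[t/2,t]$ is only $(t-s)^{-\sigma}$, and
$$\int_{t/2}^{t}(t-s)^{-\sigma}\,ds = \frac{(t/2)^{1-\sigma}}{1-\sigma}$$
carries no $n$-decay. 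The piece is then bounded by a constant times
$$c_1\,\frac{c_2^{\,n-1}\,t^{\,n(1-\sigma)-(d+\beta)/\alpha}}{((n-1)!)^{1-\sigma}}
= c_1\,n^{1-\sigma}\,\frac{c_2^{\,n-1}\,t^{\,n(1-\sigma)-(d+\beta)/\alpha}}{(n!)^{1-\sigma}},$$
which is \emph{not} of the claimed form $C\,c_1 c_2^{n-1} t^{\,n(1-\sigma)-(d+\beta)/\alpha}/(n!)^{1-\sigma}$: the spurious $n^{1-\sigma}$ cannot be absorbed by any fixed choice of $c_2$, so the induction does not close. The remedy (and this is what the paper does) is to convert \emph{only} the $n$-independent factor $s^{\,1-(d+\beta)/\alpha}$ to $(t/2)^{\,1-(d+\beta)/\alpha}$ while keeping $s^{(n-1)(1-\sigma)-1}$ inside the integral; then $\int_{t/2}^{t}(t-s)^{-\sigma}s^{(n-1)(1-\sigma)-1}\,ds$ is controlled by (\ref{gammatheta}), which produces the needed factor $\lesssim n^{-(1-\sigma)}$ turning $((n-1)!)^{1-\sigma}$ into $(n!)^{1-\sigma}$. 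So the "Beta-function compatibility" you invoke at the end is exactly right as a principle, but the explicit step you wrote down bypasses it on this piece.
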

\begin{proof} By  Proposition \ref{integralq02} there is a constant $c^* \ge 1$ such that 
for any $x,y \in \R^d$, $t \in (0,\tau]$  we have 
\begin{equation}\label{global1}
|q_0(t,x,y)| \le    c^*\frac 1{t^{(d+\beta)/\alpha}},
\end{equation}
\begin{equation}\label{global2}
|q_0(t,x,y)| \le    c^* e^{-\lambda|x-y|}, |x-y|\ge 1 .
\end{equation}
\begin{equation}\label{q0int1}
\int_{\R^d} |q_0(t,x,u)| \, du \le c^* t^{- \sigma},
\end{equation}
\begin{equation}\label{q0intx1}
\int_{\R^d} |q_0(t,u,x)| \, du \le c^* t^{ - \sigma}.
\end{equation}
It follows from (\ref{gammatheta}) there is  $p = p(\sigma)\ge 1$  such that for $n \in \N$,
\begin{equation*}
\int_0^t (t - s)^{-\sigma} s^{(n+1)(1-\sigma)-1} \, ds \le \frac{p}{(n+1)^{1 - \sigma}} t^{(n+2)(1-\sigma) -1}, 
\end{equation*}
\begin{equation*}
\int_{t/2}^t (t - s)^{-\sigma} s^{n(1-\sigma) - 1} \, ds \le \frac{p}{(n+1)^{1 - \sigma}} t^{(n+1)(1-\sigma) -1},
\end{equation*}
\begin{equation*}
\int_0^t (t - s)^{-\sigma} s^{n (1-\sigma)} \, ds \le \frac{p}{(n+1)^{1 - \sigma}} t^{(n+1) (1 - \sigma)}.
\end{equation*}
We define $c_1= pc^*\ge c^*$ and $c_2= 2^{(d+\beta)/\alpha} c_1((1-\sigma)^{-1}+p)> c_1 $.

We will prove (\ref{integralqn1}), (\ref{integralqn2}), (\ref{qnestimate1}) simultaneously by induction. They are true for $n = 0$ by  (\ref{global1}, \ref{q0int1}, \ref{q0intx1}) and the choice of $c_1$. Assume that (\ref{integralqn1}), (\ref{integralqn2}), (\ref{qnestimate1}) are true for $n \in \N$, we will show them for $n + 1$. By the definition of $q_{n}(t,x,y)$ and the induction hypothesis we obtain
\begin{eqnarray*}
|q_{n+1}(t,x,y)| &\le& 
c_1 \frac{2^{(d+\beta)/\alpha}}{t^{(d+\beta)/\alpha}} \int_0^{t/2} \int_{\R^d} |q_n(s,z,y)| \, dz \, ds\\
&& + c_1 \frac{c_2^n 2^{(d+\beta)/\alpha}}{(n!)^{1 - \sigma} t^{-1+(d+\beta)/\alpha}} 
\int_{t/2}^t \int_{\R^d} |q_0(t - s,x,z)| \, dz s^{n(1-\sigma) - 1} \, ds\\
&&  \le c_1 \frac{c_1^{n+1} 2^{(d+\beta)/\alpha}}{(n!)^{1-\sigma} t^{(d+\beta)/\alpha}} \int_0^{t/2} s^{(n+1)(1-\sigma) - 1} \, ds\\
&& + c_1 \frac{c_2^n 2^{(d+\beta)/\alpha} c_1}{(n!)^{1-\sigma} t^{-1+(d+\beta)/\alpha}} 
\int_{t/2}^t (t-s)^{(1-\sigma)-1} s^{n(1-\sigma) - 1} \, ds\\
&& \le c_1 \frac{c_2^n t^{(n+1)(1-\sigma)}}{((n+1)!)^{1-\sigma} t^{(d+\beta)/\alpha}} 
\left(c_1 2^{(d+\beta)/\alpha} \frac{1}{1-\sigma}+ c_1 2^{(d+\beta)/\alpha} p \right)\\
&& = c_1 \frac{c_2^{n+1} t^{(n+1) (1 - \sigma)}}{((n+1)!)^{1 - \sigma} t^{(d + \beta)/\alpha}}. 
\end{eqnarray*}
Hence we get (\ref{qnestimate1}) for $n+1$. In particular this gives that the kernel $q_{n+1}(t,x,y)$ is well defined.

By the definition of $q_{n}(t,x,y)$, (\ref{q0int1}) and the induction hypothesis we obtain 
\begin{eqnarray*}\int_{\R^d} |q_{n+1}(t,x,y)| \, dy &\le&   \int_0^{t} \int_{\R^d}\int_{\R^d} |q_0(t - s,x,z)||q_n(s,z,y)| \, dz \, dy \, ds \\ &\le&
 c^* \frac{c_1^{n+1}}{(n!)^{1 - \sigma}} \int_0^{t}(t-s)^{-\sigma}  s^{(n+1)(1-\sigma) - 1}\, ds 
\\ &\le&
 c^* \frac{c_1^{n+1}}{(n!)^{1 - \sigma}} \frac{p}{(n+1)^{1-\sigma}} t^{(n+2)(1-\sigma) - 1}\\ &=&
  \frac{c_1^{n+2}}{((n+1)!)^{1 - \sigma  }}  t^{(n+2)(1-\sigma) - 1},
\end{eqnarray*}            
which proves (\ref{integralqn1}) for $n+1$. Similarly we get (\ref{integralqn2}).

Now we will show (\ref{qnestimate2}). For $n = 0$ this follows from  (\ref{global2}). Assume that (\ref{qnestimate2}) is true for $n \in \N$, we will show it for $n+1$. 

Using our induction hypothesis, (\ref{integralqn1}) and (\ref{integralqn2}) we get for $|x - y| \ge n+2$
\begin{eqnarray*}
 |q_{n+1}(t,x,y)| &=&
\left|\int_0^t \int_{|x-z| \ge \frac{|x-y|}{n+2}} q_0(t-s,x,z) q_n(s,z,y) \, dz \, ds\right. \\
&& + \left. \int_0^t \int_{|x-z| \le \frac{|x-y|}{n+2}} q_0(t-s,x,z) q_n(s,z,y) \, dz \, ds\right| \\
&& \le c_1 e^{-\frac{\lambda |x - y|}{n+2}} \int_0^t \int_{\R^d} |q_n(s,z,y)| \, dz \, ds\\
&& + c_1 \frac{c_2^n}{(n!)^{1-\sigma}} e^{-\frac{\lambda |x - y|}{n+2}}
\int_0^t \int_{\R^d} |q_0(t-s,x,z)| \, dz s^{n (1-\sigma)} \, ds\\
&& \le \frac{c_1}{1-\sigma} \frac{c_1^{n+1} t^{(n + 1)(1-\sigma)}}{((n + 1)!)^{1-\sigma}} e^{-\frac{\lambda |x - y|}{n+2}} 
+ c_1 \frac{c_2^n c^* t^{(n+1)(1-\sigma)} p}{((n+1)!)^{1 - \sigma}} e^{-\frac{\lambda |x - y|}{n+2}}\\
&& = \left(\frac{c_1}{1-\sigma}  c_1^{n+1} +c_2^n c_1^2\right) \frac{ t^{(n + 1) (1-\sigma)}}{((n + 1)!)^{1 - \sigma}} 
e^{-\frac{\lambda |x - y|}{n+2}},
\end{eqnarray*}
which proves  (\ref{qnestimate2}) for $n+1$ since by the choice of constants $\frac{c_1}{1-\sigma}  c_1^{n+1} +c_2^n c_1^2\le c_1c_2^{n+1}$.
\end{proof}

By standard estimates one easily gets that for any $C>0$,
\begin{equation}
\label{sumsigma}
\sum_{n=k}^{\infty} \frac{C^n}{(n!)^{(1 - \sigma)}} \le \frac{C^k}{(k!)^{(1 - \sigma)}}
\sum_{n=k}^{\infty} \frac{C^{n-k}}{((n-k)!)^{(1 - \sigma)}} \le C_1 e^{-k}, \quad k \in \N,
\end{equation}
where $C_1$ depends on $C$ and $\sigma$.

\begin{proposition}
\label{qestimate}
For any $t \in (0,\infty)$, $x, y \in \R^d$ the kernel $q(t,x,y)$ is well defined. For any $t \in (0,\tau]$, $x,y \in \R^d$ we have
$$
|q(t,x,y)| \le \frac{c}{t^{(d+\beta)/\alpha}} e^{-c_3 \sqrt{|x-y|}} \le \frac{c}{t^{(d+\beta)/\alpha} (1 + |x - y|)^{d+1}}.
$$
There exists $a > 0$ ($a$ depends on 
$\tau, \alpha, \beta, \underline{C}, \overline{C}, d, \eta_1, \eta_2, \eta_3, \eta_4, \nu_0, \eps_0, \delta_0$) such that for any $t \in (0,\tau]$, $x,y \in \R^d$, $|x-y| \ge a$ we have
$$
|q(t,x,y)| \le c e^{-c_3 \sqrt{|x-y|}}.
$$
For any $t \in (0,\tau]$ and $x \in \R^d$ we have
\begin{equation}
\label{integralq1}
\int_{\R^d} |q(t,x,y)| \, dy \le c t^{-\sigma},
\end{equation}
\begin{equation}
\label{integralq2}
\int_{\R^d} |q(t,y,x)| \, dy \le c t^{-\sigma}.
\end{equation}
\end{proposition}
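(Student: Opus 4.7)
The plan is to treat $q(t,x,y) = \sum_{n=0}^\infty q_n(t,x,y)$ and deduce each conclusion from the two bounds for $q_n$ supplied by Lemma~\ref{integralqn}: the uniform bound (\ref{qnestimate1}), valid for all $(x,y)$, and the exponential bound (\ref{qnestimate2}), valid only in the regime $n+1 \le |x-y|$. The decay of the partial tails of the underlying series will be controlled through the super-factorial estimate (\ref{sumsigma}).

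First I will sum (\ref{qnestimate1}) directly: since $(n!)^{1-\sigma}$ dominates any geometric factor, one obtains $|q(t,x,y)| \le c/t^{(d+\beta)/\alpha}$. This shows that the series converges absolutely (hence $q(t,x,y)$ is well defined) and yields the global bound. On the ball $\{|x-y| \le a\}$ one has $e^{-c_3\sqrt{|x-y|}} \ge e^{-c_3\sqrt{a}}$, so at the cost of enlarging $c$ this already gives the first claimed inequality in that region.

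The technical core is the exponential estimate in the region $|x-y| \ge a$, with $a$ to be chosen. I will split the sum at two thresholds, $k = \lfloor \sqrt{|x-y|}\rfloor$ and $N = \lfloor |x-y| \rfloor$. For $n \le N-1$ the inequality $n+1 \le |x-y|$ holds, so (\ref{qnestimate2}) applies; within this range I separate further. For $n \le k$, a short calculation gives $\lambda |x-y|/(n+1) \ge (\lambda/2)\sqrt{|x-y|}$ once $|x-y|$ is large, and the factorially convergent sum in $n$ contributes at most $C e^{-(\lambda/2)\sqrt{|x-y|}}$. For $k < n \le N-1$ I keep only $e^{-\lambda |x-y|/(n+1)} \le e^{-\lambda}$ and apply (\ref{sumsigma}) from index $k+1$, contributing at most $C e^{-k} \le C e^{1-\sqrt{|x-y|}}$. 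The remaining range $n \ge N$ is handled via (\ref{qnestimate1}); here the main point, and the only genuine obstacle, is to absorb the divergent prefactor $t^{-(d+\beta)/\alpha}$. The key observation is that for $n \ge n_0 := \lceil (d+\beta)/(\alpha(1-\sigma))\rceil$ the ratio $t^{n(1-\sigma)}/t^{(d+\beta)/\alpha}$ is bounded by $\tau^{n(1-\sigma)-(d+\beta)/\alpha}$ uniformly in $t \in (0,\tau]$. Choosing $a > n_0 + 1$ guarantees $N \ge n_0$ whenever $|x-y|\ge a$, and a second application of (\ref{sumsigma}) from index $N$ gives a tail bound $\le C e^{-N} \le C e^{1-|x-y|}$. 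Adding the three contributions yields $|q(t,x,y)| \le c\, e^{-c_3 \sqrt{|x-y|}}$ with $c, c_3$ independent of $t$.

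Combining the two regimes $|x-y| \le a$ and $|x-y| \ge a$ gives the first displayed inequality in full, and the weaker polynomial form $c/(t^{(d+\beta)/\alpha}(1+|x-y|)^{d+1})$ follows because stretched-exponential decay dominates every polynomial. Finally, the integral bounds (\ref{integralq1}) and (\ref{integralq2}) are obtained by summing (\ref{integralqn1}) (respectively (\ref{integralqn2})) term by term: writing $(n+1)(1-\sigma)-1 = -\sigma + n(1-\sigma)$ and factoring, one gets
$$
\int_{\R^d}|q(t,x,y)|\,dy \le c_1\, t^{-\sigma} \sum_{n=0}^\infty \frac{(c_1 t^{1-\sigma})^n}{(n!)^{1-\sigma}} \le c_1\, t^{-\sigma} \sum_{n=0}^\infty \frac{(c_1 \tau^{1-\sigma})^n}{(n!)^{1-\sigma}} \le c\, t^{-\sigma},
$$
and the integral in $x$ is handled identically using (\ref{integralqn2}).
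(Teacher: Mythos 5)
Your proposal is correct and follows essentially the same route as the paper: bound $|q|$ globally by summing (\ref{qnestimate1}), and obtain the stretched-exponential decay by splitting the series in $n$, using (\ref{qnestimate2}) for small $n$ and (\ref{qnestimate1}) together with the super-factorial estimate (\ref{sumsigma}) for large $n$, choosing $a$ so that $t^{-(d+\beta)/\alpha}$ is absorbed into the $\tau^{n(1-\sigma)}$ factors in the tail. The only difference is cosmetic: the paper splits the sum once at $\lfloor\sqrt{|x-y|}\rfloor$ and takes $a=n_0^2$, whereas you split twice (at $\lfloor\sqrt{|x-y|}\rfloor$ and at $\lfloor|x-y|\rfloor$), using (\ref{qnestimate2}) on the middle range as well and correspondingly taking $a$ of order $n_0$; both work, and the paper's two-way split is the slightly leaner version. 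Your derivation of the first displayed bound by patching the global bound on $\{|x-y|\le a\}$ with the exponential bound on $\{|x-y|\ge a\}$ (absorbing $\tau^{(d+\beta)/\alpha}$ into the constant) is also fine, and the term-by-term summation for (\ref{integralq1})--(\ref{integralq2}) is exactly what is needed.
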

\begin{proof}
By (\ref{qnestimate1}) we clearly get $\sum_{n = 0}^{\infty} \left| q_n(t,x,y) \right| \le c t^{-(d+\beta)/\alpha}$. 
Hence, the kernel $q(t,x,y)$ is well defined and  $|q(t,x,y)| \le c t^{-(d+\beta)/\alpha}$. 

For $|x - y| \ge 1$, by (\ref{qnestimate1}), (\ref{qnestimate2}) and (\ref{sumsigma}), we get
\begin{eqnarray*}
 |q(t,x,y)| &=&
\left|\sum_{n = 0}^{\left[\sqrt{|x-y|}-1\right]} q_n(t,x,y) + \sum_{n = \left[\sqrt{|x-y|}\right]}^{\infty} q_n(t,x,y)\right| \\
&& \le
c_1 \sum_{n = 0}^{\left[\sqrt{|x-y|}-1\right]} \frac{c_2^n \tau^{n (1-\sigma)}}{(n!)^{1 - \sigma}} e^{-\lambda \sqrt{|x-y|}} +
c_1 \sum_{n = \left[\sqrt{|x-y|}\right]}^{\infty} \frac{c_2^n \tau^{n (1 - \sigma)}}{(n!)^{1 - \sigma} t^{(d+\beta)/\alpha}}\\
&& \le \frac{c}{t^{(d+\beta)/\alpha}} e^{-c_3 \sqrt{|x-y|}},
\end{eqnarray*}
where $[z]$ denotes the integer part of $z$. Take the smallest $n_0 \in \N$ such that \newline $n_0 (1-\sigma) - 1 \ge (d+\beta)/\alpha$ and $a = n_0^2$. For $\sqrt{|x-y|} \ge \sqrt{a} = n_0$ we get
\begin{eqnarray*}
|q(t,x,y)| &\le&
c_1 \sum_{n = 0}^{\left[\sqrt{|x-y|}-1\right]} \frac{c_2^n \tau^{n (1-\sigma)}}{(n!)^{1 - \sigma}} e^{-\lambda \sqrt{|x-y|}} +
c_1 \sum_{n = \left[\sqrt{|x-y|}\right]}^{\infty} \frac{c_2^n t^{n (1 - \sigma)}}{(n!)^{1 - \sigma} t^{(d+\beta)/\alpha}}\\
&\le& c e^{-c_3 \sqrt{|x-y|}}.
\end{eqnarray*}

The inequalities (\ref{integralq1}) and (\ref{integralq2}) follows easily from (\ref{integralqn1}) and (\ref{integralqn2}).
\end{proof}

The above result enable us to obtain estimates of $u(t,x,y)$.
\begin{corollary}
\label{uintegral}
For any $t \in (0,\infty)$, $x, y \in \R^d$ the kernel $u(t,x,y)$ is well defined. For any $t \in (0,\tau]$, $x,y \in \R^d$ we have
\begin{equation}
\label{utxy}
|u(t,x,y)| \le \frac{c}{t^{-1+(d+\beta)/\alpha}} e^{-c_1 \sqrt{|x-y|}} \le \frac{c}{t^{-1+(d+\beta)/\alpha} (1 + |x - y|)^{d+1}}.
\end{equation}
There exists $a > \eps > 0$ ($a$ depends on $\tau, \alpha, \beta, \underline{C}, \overline{C}, d, \eta_1, \eta_2, \eta_3, \eta_4, \nu_0, \eps_0, \delta_0$) such that for any $t \in (0,\tau]$, $x,y \in \R^d$, $|x-y| \ge a$ we have
$$
|u(t,x,y)| \le c e^{-c_2 \sqrt{|x-y|}}.
$$
For any $t \in (0,\tau]$ and $x \in \R^d$ we have
\begin{equation}
\label{integralu1}
\int_{\R^d} |u(t,x,y)| \, dy \le c,
\end{equation}
\begin{equation}
\label{integralu2}
\int_{\R^d} |u(t,y,x)| \, dy \le c.
\end{equation}
\end{corollary}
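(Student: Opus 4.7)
The approach is to bound the two summands in the definition (\ref{defu}) $u(t,x,y) = p_y(t,x-y) + J(t,x,y)$ separately, combining Lemma \ref{estimate_pytx} for $p_y$, Proposition \ref{qestimate} for $q$, and Proposition \ref{pyintegral} for the $L^1$-bound on $p_z$. For the first summand, $g_{i,t} \le c\,\tilde{g}_{i,t}$ (Lemma \ref{gtht1}) gives $p_y(t,x-y) \le c\,r_y(t,x-y)$, and Lemma \ref{estimate_pytx} yields $r_y(t,x-y) \le c\,t^{-d/\alpha} e^{-c|x-y|}$. Since $\beta \ge \alpha$ and $t \le \tau$, one has $t^{-d/\alpha} \le c_\tau\,t^{1-(d+\beta)/\alpha}$, and $e^{-c|x-y|} \le c\,e^{-c_1\sqrt{|x-y|}}$ by an elementary comparison. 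For $|x-y| \ge \eps\eta_1 d^{3/2}$, the second part of Lemma \ref{estimate_pytx} further gives the $t$-free bound $p_y(t,x-y) \le c\,t\,e^{-c|x-y|} \le c'\,e^{-c_2\sqrt{|x-y|}}$.

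For the integral $J(t,x,y)$, split $s$ at $t/2$. On $s \in (0,t/2)$, the previous estimate applied to $p_z(t-s,\cdot)$ gives $|p_z(t-s,x-z)| \le c\,t^{1-(d+\beta)/\alpha} e^{-c|x-z|}$, and the inner integral over $z$ is dominated by $\int|q(s,z,y)|\,dz \le c\,s^{-\sigma}$ from (\ref{integralq1}); since $\sigma < 1$, integrating $s^{-\sigma}$ on $(0,t/2)$ is bounded. On $s \in (t/2,t)$, use $|q(s,z,y)| \le c\,t^{-(d+\beta)/\alpha}$ (Proposition \ref{qestimate}) and $\int|p_z(t-s,x-z)|\,dz \le c$ (Proposition \ref{pyintegral}), picking up a factor $t/2$. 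Both halves produce the prefactor $c\,t^{1-(d+\beta)/\alpha}$. To promote this to $e^{-c_1\sqrt{|x-y|}}$ decay, split the $z$-integration using $|x-y| \le |x-z|+|z-y|$ into $\{|x-z| \ge |x-y|/2\}$, where the pointwise exponential decay of $p_z$ yields $e^{-c|x-y|/2}$, and $\{|z-y| \ge |x-y|/2\}$, where the decay of $q$ from Proposition \ref{qestimate} yields $e^{-c_3\sqrt{|x-y|/2}}$.

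The sharper statement $|u(t,x,y)| \le c\,e^{-c_2\sqrt{|x-y|}}$ for $|x-y| \ge a$ is the main technical point, since both $p_z(t-s,x-z)$ and $q(s,z,y)$ have singularities as $s \to t$ and $s \to 0$ respectively, yet we need a bound free of $t$. The resolution is to invoke the refined far-field bounds, neither of which has a $t$-singularity: $|p_z(t-s,x-z)| \le c(t-s)e^{-c|x-z|}$ for $|x-z| \ge \eps\eta_1 d^{3/2}$ (second part of Lemma \ref{estimate_pytx}) and $|q(s,z,y)| \le c\,e^{-c_3\sqrt{|z-y|}}$ for $|z-y| \ge a_*$ (second part of Proposition \ref{qestimate}). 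Choose $a > 2\max\{\eps\eta_1 d^{3/2}, a_*\}$; then on $\{|x-z| \ge |x-y|/2\}$ the refined $p_z$ bound combined with (\ref{integralq1}) gives $\int_0^t (t-s)s^{-\sigma}\,ds \le c\,t^{2-\sigma}$, bounded on $(0,\tau]$, while on $\{|z-y| \ge |x-y|/2\}$ the refined $q$ bound together with $\int|p_z(t-s,x-z)|\,dz \le c$ gives a factor $t$. The polynomial-decay form $c/(t^{-1+(d+\beta)/\alpha}(1+|x-y|)^{d+1})$ follows trivially from $e^{-c_1\sqrt{|x-y|}} \le c/(1+|x-y|)^{d+1}$.

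The $L^1$ estimates (\ref{integralu1}), (\ref{integralu2}) follow by Fubini. For (\ref{integralu1}), (\ref{pyintegral1}) handles the first summand, and the integral term is $\int_0^t \int p_z(t-s,x-z)\bigl(\int|q(s,z,y)|\,dy\bigr)\,dz\,ds \le c\int_0^t s^{-\sigma}\,ds \le c$ by (\ref{integralq1}) and Proposition \ref{pyintegral}. For (\ref{integralu2}), $\int p_x(t,y-x)\,dy = 1$ since $p_x(t,\cdot)$ is a probability density, and the normalization $\int p_z(t-s,y-z)\,dy = 1$ reduces the integral term to $\int_0^t\int |q(s,z,x)|\,dz\,ds \le c$ via (\ref{integralq2}).
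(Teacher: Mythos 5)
Your proposal is correct and follows essentially the same route as the paper's proof: bound $p_y(t,x-y)$ via Lemma \ref{estimate_pytx}, split the convolution integral at $s=t/2$ using $p_z(t-s,\cdot)\le c t^{-d/\alpha}$ on $(0,t/2)$ and $|q(s,\cdot,y)|\le ct^{-(d+\beta)/\alpha}$ on $(t/2,t)$, and obtain the far-field $t$-free bound by splitting the $z$-integration at $|x-z|\gtrless|x-y|/2$ and invoking the singularity-free exponential bounds on $p_z$ (second part of Lemma \ref{estimate_pytx}) and $q$ (second part of Proposition \ref{qestimate}). One small citation slip: the bound $\int|q(s,z,y)|\,dz\le cs^{-\sigma}$ you invoke twice integrates over the \emph{first} spatial argument and is therefore (\ref{integralq2}), not (\ref{integralq1}), though both give the same rate so nothing breaks.
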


\begin{proof} 
We start by obtaining estimates of 
$$I(t,x,y)=\int_0^t \int_{\R^d} p_z(t-s,x-z)|q(s,z,y)| \, dz \, ds.$$
By Lemma \ref{estimate_pytx}, for $0<s<t/2$, we have 
$$p_z(t-s,x-z)\le \frac c{t^{d/\alpha}},$$
and, by Proposition \ref{qestimate}, for $t/2<s<t$,
$$
|q(s,z,y)| \le \frac{c}{t^{(d+\beta)/\alpha}}.
$$
Hence, 
\begin{eqnarray}
\nonumber
 I(t,x,y)
&=&\int_0^{t/2} \int_{\R^d}p_z(t-s,x-z)|q(s,z,y)| \, dz \, ds\\
&&   + \int_{t/2}^t \int_{\R^d} p_z(t-s,x-z)|q(s,z,y)| \, dz \, ds \nonumber\\
&\le&\frac c{t^{d/\alpha}}\int_0^{t/2} \int_{\R^d} |q(s,z,y)| \, dz \, ds 
+ \frac{c}{t^{(d+\beta)/\alpha}}\int_{t/2}^t \int_{\R^d} p_z(t-s,x-z) \, dz \, ds \nonumber\\
&\le&\frac{c}{t^{-1+\sigma +d/\alpha}} + \frac c{t^{-1 + (d+\beta)/\alpha}} \le  \frac c{t^{-1 + (d+\beta)/\alpha}}, \label{ubound1}
\end{eqnarray}
where   (\ref{integralq2}) and  Proposition \ref{pyintegral} were applied to estimate the integrals with respect to the space variable. 

Let $a$ be the constant found in Proposition \ref{qestimate}.
Assume that  $|x-y|\ge 2+2a$. By Lemma
\ref{estimate_pytx}, for $0<s<t$, we have 
$$p_z(t-s,x-z)\le  c  e^{-c_1|x-y|},\quad |x-z|> |x-y|/2>1.$$
Proposition \ref{qestimate} implies that  for $0<s<t$,
$$
|q(s,z,y)| \le  c e^{-c_1\sqrt{|x-y|}}, \quad |y-z|> |x-y|/2>a.
$$
Hence,
\begin{eqnarray}
 I(t,x,y)
&\le&\int_0^{t} \int_{|x-z|> |x-y|/2}\dots  \, dz \, ds + \int_0^{t} \int_{|y-z|> |x-y|/2}\dots  \, dz \, ds\nonumber\\
&\le& c e^{-c_1|x-y|}\int_0^{t} \int_{\R^d} |q(s,z,y)| \, dz \, ds \nonumber \\&+& c e^{-c_1\sqrt{|x-y|}}\int_{0}^t \int_{\R^d} p_z(t-s,x-z) \, dz \, ds\nonumber\\
&\le& c{t^{1 - \sigma}} e^{-c_1|x-y|}+ ct e^{-c_1\sqrt{|x-y|}}\nonumber\\
&\le& c  e^{-c_1\sqrt{|x-y|}}\label{ubound2}.\end{eqnarray}
Combining (\ref{ubound1}) and (\ref{ubound2}) we obtain 
\begin{equation}
\label{Itxy}
 I(t,x,y) \le \frac{c}{t^{-1 + (d+\beta)/\alpha}}  e^{-c_1\sqrt{|x-y|}}.
\end{equation}
By (\ref{defu}), (\ref{htht1}) and (\ref{Itxy}) we get (\ref{utxy}).

Next, (\ref{integralu1}) and (\ref{integralu2}) immediately follow from (\ref{integralq1}), (\ref{integralq2}) and  Proposition \ref{pyintegral}.
\end{proof}

For any $x, v \in \R^d$ put 
$$
D(x,v) = [x_1 - |v_1|, x_1 + |v_1|] \times \ldots \times [x_d - |v_d|, x_d + |v_d|].
$$
\begin{lemma}
\label{double_difference}
For any $t \in (0,\tau]$, $x, y, v \in \R^d$ we have
$$
|p_y(t, x + v) + p_y(t, x - v) - 2p_y(t,x)| \le
c |1 \wedge (t^{-2/\alpha} |v|^2)| \max_{\xi \in D(x,v)} r_y(t,\xi).
$$
\end{lemma}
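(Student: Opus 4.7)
The plan is to establish two upper bounds for $|p_y(t,x+v)+p_y(t,x-v)-2p_y(t,x)|$ and then take their minimum to produce the $1\wedge(t^{-2/\alpha}|v|^2)$ factor. Throughout I rely on the factorization $p_y(t,z)=\det(B(y))\prod_{i=1}^{d}g_{i,t}(b_i(y)z)$ and the uniform bound $|\det B(y)|\le 1/\eta_2$.

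For the size-independent bound, the pointwise inequality $g_{i,t}(u)\le c\,\tilde g_{i,t}(u)$ from Lemma~\ref{gtht1} gives $p_y(t,z)\le c\,r_y(t,z)$ for every $z\in\R^d$. Since each of $x,\, x+v,\, x-v$ lies in the box $D(x,v)$, the triangle inequality yields
\[
|p_y(t,x+v)+p_y(t,x-v)-2p_y(t,x)|\;\le\;c\,\max_{\xi\in D(x,v)} r_y(t,\xi).
\]

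For the quadratic bound I use a second-order Taylor expansion. Set $\phi(s)=p_y(t,x+sv)$ for $s\in[-1,1]$; the identity
\[
\phi(1)+\phi(-1)-2\phi(0)=\int_{-1}^{1}(1-|s|)\,\phi''(s)\,ds
\]
reduces everything to estimating $\phi''$. By the chain rule $\phi''(s)=\sum_{k,l=1}^{d}v_{k}v_{l}(\partial_{k}\partial_{l}p_y)(t,x+sv)$. Differentiating the product $\det(B(y))\prod_{i}g_{i,t}(b_i(y)z)$ twice produces two kinds of terms: pure second-derivative terms $g_{i,t}''(b_i(y)z)\,b_{ik}(y)b_{il}(y)\prod_{j\neq i}g_{j,t}(b_j(y)z)$ and cross terms $g_{i,t}'(b_i(y)z)g_{i',t}'(b_{i'}(y)z)b_{ik}(y)b_{i'l}(y)\prod_{j\neq i,i'}g_{j,t}(b_j(y)z)$. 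Lemma~\ref{gtht1} provides $|g_{i,t}^{(m)}(u)|\le c\,(h_i^{-1}(1/t))^{-m}\tilde g_{i,t}(u)$ for $m=0,1,2$, and combining this with $|b_{ij}(y)|\le\eta_1$, $|\det B(y)|\le 1/\eta_2$, and the lower bound $h_i^{-1}(1/t)\ge C_3 t^{1/\alpha}$ from (\ref{h-1}) yields the Hessian estimate
\[
|\partial_{k}\partial_{l}p_y(t,z)|\;\le\;c\,t^{-2/\alpha}\,r_y(t,z),\qquad k,l\in\{1,\dots,d\}.
\]
Consequently $|\phi''(s)|\le c\,t^{-2/\alpha}|v|^{2}r_y(t,x+sv)$, and since the segment $\{x+sv:s\in[-1,1]\}$ is contained in $D(x,v)$, integration against the nonnegative kernel $(1-|s|)$ on $[-1,1]$ (which has total mass $1$) gives
\[
|p_y(t,x+v)+p_y(t,x-v)-2p_y(t,x)|\;\le\;c\,t^{-2/\alpha}|v|^{2}\max_{\xi\in D(x,v)} r_y(t,\xi).
\]

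The conclusion follows by taking the minimum of the two bounds. The proof is essentially the standard second-order difference estimate for a smooth function; the only nontrivial ingredient is the uniform Hessian bound $|\partial_k\partial_l p_y(t,z)|\le c\,t^{-2/\alpha}r_y(t,z)$, and the main bookkeeping obstacle is merely keeping track of the many terms produced when the product $\prod_i g_{i,t}(b_i(y)z)$ is differentiated twice. Since the one-dimensional derivative estimates of Lemma~\ref{gtht1} and the scaling $h_i^{-1}(1/t)\ge c t^{1/\alpha}$ are already available, no genuine difficulty arises.
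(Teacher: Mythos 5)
Your proof is correct, but it takes a genuinely different route from the paper's. The paper handles the second-order difference purely at the level of finite differences: it invokes the algebraic identity of Lemma~\ref{difference} (a discrete Leibniz rule for a difference of two products against another two products), and after setting $\mathcal{c}_i=\mathcal{d}_i=g_{i,t}(b_i(y)x)$, $\mathcal{a}_i=g_{i,t}(b_i(y)(x+v))$, $\mathcal{b}_i=g_{i,t}(b_i(y)(x-v))$, the surviving terms are controlled with the one-dimensional finite-difference estimates~(\ref{gdelta3}) (second difference) and~(\ref{gdelta2}) (first difference). You instead work in the continuum: the integral-remainder form of Taylor's theorem reduces the symmetric second difference to a Hessian estimate, and the Hessian of the product $\det(B(y))\prod_i g_{i,t}(b_i(y)z)$ is controlled via the chain rule together with the pointwise derivative bound~(\ref{der_est}) and the scaling $h_i^{-1}(1/t)\ge C_3\,t^{1/\alpha}$. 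Both routes are sound; they rely on the same underlying one-dimensional input (the bounds on $g_{i,t}$, $g_{i,t}'$, $g_{i,t}''$ relative to $\tilde g_{i,t}$). The paper's finite-difference decomposition has the advantage of being reusable in Proposition~\ref{integralq02}, where a genuine discrete structure (integration against $\mu_j$) is needed and Taylor expansion is not available; your argument is more self-contained and makes the $1\wedge(\cdot)$ mechanism fully explicit (the paper's proof only writes out the quadratic bound, leaving the trivial $O(1)$ bound implicit). One small attribution note: the estimate you cite as coming from Lemma~\ref{gtht1} is the display~(\ref{der_est}), which appears inside the proof of that lemma rather than in its statement, so it is cleaner to cite~(\ref{der_est}) (together with~(\ref{g_est01}),~(\ref{g(0)}),~(\ref{h-1})) directly.
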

\begin{proof}
Put $\mathcal{a}_i = g_{i,t}(b_i(y)(x+v))$, $\mathcal{b}_i = g_{i,t}(b_i(y)(x-v))$, $\mathcal{c}_i = \mathcal{d}_i = g_{i,t}(b_i(y)x)$.

Note that
$$
\frac{1}{\det(B(y))} \left(p_y(t, x + v) + p_y(t, x - v) - 2p_y(t,x)\right) = 
\prod_{i=1}^{d} \mathcal{a}_i + \prod_{i=1}^{d} \mathcal{b}_i  - \prod_{i=1}^{d} \mathcal{c}_i - \prod_{i=1}^{d} \mathcal{d}_i. 
$$
For any $j,k \in \{1,\ldots,d\}$, by (\ref{gdelta3}) and (\ref{gdelta2}), we get
$$
\left|\mathcal{a}_j-\mathcal{c}_j - (\mathcal{d}_j - \mathcal{b}_j)\right| \le \frac{c |v|^2}{t^{2/\alpha}}
\max_{z \in [b_j(y)x - |b_j(y)v|, b_j(y)x + |b_j(y)v|]} \tilde{g}_{j,t}(z).
$$
and
\begin{eqnarray*}
&& \left|\mathcal{a}_k-\mathcal{b}_k\right| \left|(\mathcal{d}_j - \mathcal{b}_j)\right| \\
&& \le \frac{c |v|^2}{t^{2/\alpha}}
\max_{z \in [b_k(y)x - |b_k(y)v|, b_k(y)x + |b_k(y)v|]} \tilde{g}_{k,t}(z) 
\max_{z \in [b_j(y)x - |b_j(y)v|, b_j(y)x + |b_j(y)v|]} \tilde{g}_{j,t}(z).
\end{eqnarray*}
%For any $j, k \in \{1,\ldots,d\}$ by (\ref{gdelta2}) we get
%\begin{eqnarray*}
%&& \left|\mathcal{a}_k-\mathcal{b}_k\right| \left|(\mathcal{d}_j - \mathcal{b}_j)\right| \\
%&& \le \frac{c |v|^2}{t^{2/\alpha}}
%\max_{z \in [b_k(y)x - |b_k(y)v|, b_k(y)x + |b_k(y)v|]} \tilde{g}_{k,t}(z) 
%\max_{z \in [b_j(y)x - |b_j(y)v|, b_j(y)x + |b_j(y)v|]} \tilde{g}_{j,t}(z).
%\end{eqnarray*}
Using this, Lemma \ref{difference} and (\ref{gdelta1}) we obtain
$$
|p_y(t, x + v) + p_y(t, x - v) - 2p_y(t,x)| \le \frac{c |v|^2}{t^{2/\alpha}} \max_{\xi \in D(x,v)} r_y(t,\xi).
$$
\end{proof}

\begin{lemma}
\label{Lxpyestimate} 
For any $\xi \in (0,1]$, $\zeta > 0$, $x, y, v \in \R^d$ and $t \in (\xi,\tau+\xi]$ we have
\begin{eqnarray}
\nonumber
&&\sum_{i = 1}^d   \int_{\R} \left|p_y(t,x-y + a_{i}(v) w) + p_y(t,x-y - a_{i}(v) w)- 2 p_y(t,x-y)\right| \, \mu_i(w) \, dw\\
\label{est1}
&&\le c(\xi) e^{-c |x - y|},\\
\nonumber
&&\sum_{i = 1}^d   \int_{|w| \le \zeta} \left|p_y(t,x-y + a_{i}(v) w) + p_y(t,x-y - a_{i}(v) w) - 2 p_y(t,x-y)\right| \, \mu_i(w) \, dw\\
\label{est2}
&&\le c(\xi) \zeta^{2 - \beta}.
\end{eqnarray}
where $c(\xi)$ is a constant depending on $\xi, \tau, \alpha, \beta, \underline{C}, \overline{C}, d, \eta_1, \eta_2, \eta_3, \eta_4, \nu_0, \eps_0, \delta_0$.
\end{lemma}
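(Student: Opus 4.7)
The plan is to apply Lemma \ref{double_difference} pointwise in $w$ with the $\R^d$-valued shift $a_i(v)w$ playing the role of ``$v$'' there, which gives
\[
\bigl|p_y(t,(x-y)+a_i(v)w) + p_y(t,(x-y)-a_i(v)w) - 2p_y(t,x-y)\bigr| \le c\bigl(1 \wedge t^{-2/\alpha}|a_i(v)w|^2\bigr) M_{i,w},
\]
where $M_{i,w} := \max_{\rho \in D(x-y,a_i(v)w)} r_y(t,\rho)$. Since $|a_i(v)| \le \sqrt{d}\,\eta_1$ by (\ref{bounded}) and $\mu_i$ is supported in $[-2\delta,2\delta]$, the shift magnitude $|a_i(v)w|$ is uniformly bounded by a fixed constant $c_0 := 2\delta\sqrt{d}\,\eta_1$.

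To prove (\ref{est1}) I would distinguish the regimes $|x-y|\le 2c_0$ and $|x-y|>2c_0$. In the bounded regime, Lemma \ref{estimate_pytx} yields the uniform-in-$w$ control $M_{i,w}\le c(\xi)$ (since $t\ge\xi$ forces $\prod 1/h_i^{-1}(1/t)\le c(\xi)$), and the standard Lévy moment estimate $\int_\R (1\wedge t^{-2/\alpha}|a_i(v)w|^2)\mu_i(w)\,dw \le c(\xi)$ (using $\int_\R(w^2\wedge 1)\mu_i(w)\,dw<\infty$ combined with $t^{-2/\alpha}\le\xi^{-2/\alpha}$) produces a constant bound, which is trivially absorbed into $c(\xi)e^{-c|x-y|}$ because $|x-y|$ is itself bounded. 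In the unbounded regime every $\rho\in D(x-y,a_i(v)w)$ satisfies $|\rho|\ge |x-y|-c_0\ge |x-y|/2$, so (\ref{htht1}) with $t\ge\xi$ gives $M_{i,w}\le c(\xi)e^{-c|x-y|}$ uniformly in $w$; pulling this factor outside the $w$-integral and applying the same Lévy-moment estimate produces (\ref{est1}).

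For (\ref{est2}), restricting to $|w|\le\zeta$ allows one to discard the truncation and bound $1 \wedge t^{-2/\alpha}|a_i(v)w|^2 \le c(\xi)w^2$. From (\ref{nu-ubound}) one has $\mu_i(w)\le c(|w|^{-1-\alpha}+|w|^{-1-\beta})$, hence
\[
\int_{|w|\le\zeta} w^2\,\mu_i(w)\,dw \le c\int_0^\zeta (w^{1-\alpha}+w^{1-\beta})\,dw \le c(\xi)\,\zeta^{2-\beta},
\]
where for $\zeta\le 1$ the $\zeta^{2-\alpha}$ contribution is dominated by $\zeta^{2-\beta}$, while for $\zeta>1$ the integral is already a constant (since $\mu_i$ vanishes beyond $2\delta$) and $\zeta^{2-\beta}\ge 1$, so the bound still holds after enlarging $c(\xi)$. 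Combining with the uniform bound $M_{i,w}\le c(\xi)$ from Lemma \ref{estimate_pytx} yields (\ref{est2}).

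The only substantive technical point is verifying that the shift $a_i(v)w$ does not destroy the exponential decay in $|x-y|$ encoded by $r_y$; this is guaranteed by the boundedness of $|a_i(v)w|$ (compact support of $\mu_i$ together with (\ref{bounded})) and by the fact that once $t$ is bounded away from $0$, the $t$-dependent prefactor in Lemma \ref{estimate_pytx} is absorbed into $c(\xi)$. Apart from this, the argument is a direct combination of the pointwise second-difference estimate of Lemma \ref{double_difference} with the two elementary integrability properties of the truncated Lévy density $\mu_i$ around the origin and at its support boundary.
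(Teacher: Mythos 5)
Your proposal is correct and follows essentially the same route as the paper: apply Lemma~\ref{double_difference} pointwise in $w$ with shift $a_i(v)w$, use Lemma~\ref{estimate_pytx} to control $\max_{\xi\in D(x-y,a_i(v)w)} r_y(t,\xi)$ (noting $|a_i(v)w|\le 2\delta\sqrt{d}\,\eta_1$ on the support of $\mu_i$ and $t\ge\xi$), and then integrate against the L\'evy density using $\mu_i(w)\le c\,\mathbf{1}_{[-2\delta,2\delta]}(w)|w|^{-1-\beta}$ from (\ref{nu-ubound}). The only cosmetic difference is that the paper derives the exponential factor $e^{-c|x-y|}$ in a single step from (\ref{htht1}), since that estimate already carries the exponential decay uniformly in $|x-y|$, whereas you split into the regimes $|x-y|\le 2c_0$ and $|x-y|>2c_0$; both are valid.
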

\begin{proof}
We estimate the summand  corresponding to  $i = 1$.  By Lemma \ref{double_difference},  we get for $w \in \R$,
\begin{eqnarray*}
&&\left|p_y(t,x-y + a_{1}(v) w) + p_y(t,x-y - a_{1}(v) w)- 2p_y(t,x-y)\right|\\
&& \le c t^{-2/\alpha} |w|^2 \max_{\xi \in D(x - y,a_1(v)w)} r_y(t,\xi). 
\end{eqnarray*}
Recall that $\mu_1(w) = 0$ for $|w| \ge 2 \delta$, so we may assume that $|w| \le 2 \delta$. By Lemma \ref{estimate_pytx} we get
$$
\max_{w \in [-2\delta,2\delta]} \max_{\xi \in D(x - y,a_1(v)w)} r_y(t,\xi)\le c_1 t^{-d/\alpha} e^{-c |x-y|}.
$$
Now (\ref{est1}) and (\ref{est2}) follow by the fact that $\mu_1(w) \le c 1_{[-2\delta,2\delta]}(w) |w|^{-1-\beta}$. The last inequality is a consequence of (\ref{nu-ubound}).
\end{proof}

\begin{lemma}
\label{pyeps_limit}
Let $\tau_2 > \tau_1 > 0$ and assume that a function $f_t(x)$ is bounded and uniformly continuous on $[\tau_1,\tau_2] \times \R^d$. Then 
$$
\sup_{t \in [\tau_1,\tau_2], \, x \in \R^d} \left|\int_{\R^d} p_y(\eps_1,x-y) f_t(y) \, dy - f_t(x)\right| \to 0 \quad \text{as} \quad \eps_1 \to 0^+.
$$
\end{lemma}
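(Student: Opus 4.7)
The plan is to treat $p_y(\eps_1,x-y)\,dy$ as an approximate identity in $x$, uniformly in $y$ and $t$, and run the standard summability-kernel argument. I first decompose
\[
\int_{\R^d} p_y(\eps_1,x-y)f_t(y)\,dy - f_t(x) = \mathrm{I}(\eps_1,t,x) + \mathrm{II}(\eps_1,t,x),
\]
where
\[
\mathrm{I}(\eps_1,t,x) = \int_{\R^d} p_y(\eps_1,x-y)\bigl(f_t(y)-f_t(x)\bigr)\,dy,
\]
\[
\mathrm{II}(\eps_1,t,x) = f_t(x)\left(\int_{\R^d} p_y(\eps_1,x-y)\,dy - 1\right).
\]
The term $\mathrm{II}$ is handled immediately: since $f_t$ is bounded on $[\tau_1,\tau_2]\times\R^d$, and by \eqref{limp_y} the inner integral tends to $1$ uniformly in $x$ as $\eps_1\to 0^+$, we get $\sup_{t,x}|\mathrm{II}(\eps_1,t,x)|\to 0$.

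For $\mathrm{I}$, I fix an arbitrary $\eta>0$ and use the joint uniform continuity of $f_t(x)$ on $[\tau_1,\tau_2]\times\R^d$ to choose $\delta_1>0$ such that $|f_t(y)-f_t(x)|<\eta$ whenever $|y-x|<\delta_1$, uniformly in $t\in[\tau_1,\tau_2]$. Splitting the integral along $B(x,\delta_1)$ and its complement, the near part is bounded by $\eta\int_{\R^d}p_y(\eps_1,x-y)\,dy\le c\eta$ via \eqref{pyintegral1}, while the far part is bounded by
\[
2\|f\|_\infty\,\sup_{x\in\R^d}\int_{B^c(x,\delta_1)}p_y(\eps_1,x-y)\,dy,
\]
which tends to $0$ as $\eps_1\to 0^+$ by \eqref{supsup}. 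Letting first $\eps_1\to 0^+$ and then $\eta\to 0^+$ yields $\sup_{t,x}|\mathrm{I}(\eps_1,t,x)|\to 0$ as $\eps_1\to 0^+$.

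The argument is essentially routine: the three ingredients I need, namely (a) the total mass tending to $1$ uniformly in $x$, (b) a uniform bound on the total mass for small $\eps_1$, and (c) the uniform-in-$x$ vanishing of the mass outside fixed balls, are precisely the content of Proposition \ref{pyintegral}. There is no substantial obstacle; the only point requiring minor attention is that the modulus of continuity used to bound the near part must be uniform in $t$, which is exactly what the joint uniform continuity hypothesis provides.
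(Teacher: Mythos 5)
Your proof is correct and is exactly the standard approximate-identity argument the authors had in mind when they wrote that the lemma ``follows easily from Proposition \ref{pyintegral}''; you use precisely the three ingredients (\ref{pyintegral1}), (\ref{supsup}), (\ref{limp_y}) together with joint uniform continuity, so this matches the paper's (omitted) proof.
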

\begin{proof}
The lemma follows easily from Propostion  \ref{pyintegral}.
\end{proof}

For any $t > 0$, $x,y \in \R^d$ we define
$$
\varphi_y(t,x) = \int_0^t \int_{\R^d} p_z(t-s,x-z) q(s,z,y) \, dz  \, ds.
$$
Clearly we have 
$$
u(t,x,y) = p_y(t,x-y) + \varphi_y(t,x).
$$
For any $t > 0$, $x,y \in \R^d$, $f \in \Bb$ we define
$$
\Phi_t f(x) = \int_{\R^d} \varphi_y(t,x) f(y) \, dy,
$$
$$
U_t f(x) = \int_{\R^d} u(t,x,y) f(y) \, dy,
$$
$$
Q_t f(x) = \int_{\R^d} q(t,x,y) f(y) \, dy.
$$

Now, following the ideas from \cite{KK2018}, we will define the so-called approximate solutions. 
For any $t \ge 0$, $\xi \in [0,1]$, $t + \xi > 0$, $x,y \in \R^d$ we define
$$
\varphi_y^{(\xi)}(t,x) = \int_0^t \int_{\R^d} p_z(t-s+\xi,x-z) q(s,z,y) \, dz  \, ds
$$
and
$$
u^{(\xi)}(t,x,y) = p_y(t+\xi,x-y) + \varphi_y^{(\xi)}(t,x).
$$
For any $t \ge 0$, $\xi \in [0,1]$, $t + \xi > 0$, $x \in \R^d$, $f \in \Bb$ we define
$$
\Phi_t^{(\xi)} f(x) = \int_{\R^d} \varphi_y^{(\xi)}(t,x) f(y) \, dy,
$$
$$
U_t^{(\xi)} f(x) = \int_{\R^d} u^{(\xi)}(t,x,y) f(y) \, dy,
$$
$$
\Phi_0 f(x) = 0, \quad U_0^{(0)} f(x) = U_0 f(x) = f(x).
$$

By the same arguments as in the proof of Corollary \ref{uintegral} we obtain the following result.
\begin{corollary}
\label{ueps_estimate}
For any $t \in [0,\infty)$, $\xi \in [0,1]$, $t + \xi > 0$, $x, y \in \R^d$ the kernel $u^{(\xi)}(t,x,y)$ is well defined. For any $t \in (0,\tau]$, $\xi \in [0,1]$, $x,y \in \R^d$ we have
$$
|u^{(\xi)}(t,x,y)| \le \frac{c}{(t + \xi)^{-1 + (d + \beta)/\alpha} (1 + |x - y|)^{d+1}}.
$$
For any $t \in (0,\tau]$, $\xi \in [0,1]$ and $x \in \R^d$ we have
\begin{equation*}
\int_{\R^d} |u^{(\xi)}(t,x,y)| \, dy \le c,
\end{equation*}
\begin{equation*}
\int_{\R^d} |u^{(\xi)}(t,y,x)| \, dy \le c.
\end{equation*}
\end{corollary}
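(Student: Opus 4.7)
The plan is to mirror the proof of Corollary \ref{uintegral}, exploiting the identity $u^{(\xi)}(t,x,y) = p_y(t+\xi,x-y) + \varphi_y^{(\xi)}(t,x)$ and the fact that $\varphi_y^{(\xi)}(t,x)$ has the same form as $\varphi_y(t+\xi,x)$ except that the $s$-integration runs only up to $t$ rather than to $t+\xi$. The two terms will be estimated separately, and since $t+\xi\in(0,\tau+1]$, all quantitative inputs from the preceding results apply (with $\tau$ replaced by $\tau+1$, which only affects the constants in the allowed way).

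For the first term, Lemma \ref{estimate_pytx} together with Lemma \ref{gtht1} and the bound $1/h_i^{-1}(1/s)\le c s^{-1/\alpha}$ from (\ref{h-1}) yield $p_y(t+\xi,x-y)\le c\, r_y(t+\xi,x-y)\le c(t+\xi)^{-d/\alpha}e^{-c|x-y|}$. Since $(t+\xi)^{-d/\alpha}/(t+\xi)^{1-(d+\beta)/\alpha}=(t+\xi)^{(\beta-\alpha)/\alpha}$ is bounded for $t+\xi\le\tau+1$, this converts into $c(t+\xi)^{1-(d+\beta)/\alpha}e^{-c|x-y|}$, and trading part of the exponential for polynomial decay gives the required bound.

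For $\varphi_y^{(\xi)}$, I would introduce the unified cutoff $t^{*}=\min(t,(t+\xi)/2)$. On $(0,t^{*})$ one has $t-s+\xi\ge(t+\xi)/2$, so Lemma \ref{estimate_pytx} yields $p_z(t-s+\xi,x-z)\le c(t+\xi)^{-d/\alpha}$; pairing with $\int_{\R^d}|q(s,z,y)|\,dz\le c s^{-\sigma}$ from Proposition \ref{qestimate} produces a contribution of order $c(t+\xi)^{-d/\alpha}(t^{*})^{1-\sigma}\le c(t+\xi)^{1-\sigma-d/\alpha}$, absorbed into $c(t+\xi)^{1-(d+\beta)/\alpha}$ thanks to $\sigma<1\le\beta/\alpha$ and the boundedness of $t+\xi$. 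The range $(t^{*},t)$ is nonempty only if $\xi<t$, in which case $s\ge(t+\xi)/2$ on this range, so $|q(s,z,y)|\le c(t+\xi)^{-(d+\beta)/\alpha}$; combining with $\int p_z(t-s+\xi,x-z)\,dz\le c$ (Proposition \ref{pyintegral}) and $t-t^{*}\le(t+\xi)/2$ delivers a bound of the correct order. The sharper exponential decay in $|x-y|$ follows by the familiar spatial splitting $\{|x-z|>|x-y|/2\}\cup\{|y-z|>|x-y|/2\}$, using Lemma \ref{estimate_pytx} on $p_z$ and Proposition \ref{qestimate} on $q$ to produce a factor $e^{-c\sqrt{|x-y|}}$ that is then exchanged for the polynomial tail $(1+|x-y|)^{-(d+1)}$.

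Finally, the two integral estimates follow immediately from Fubini: $\int|u^{(\xi)}(t,x,y)|\,dy$ is controlled by $\int p_y(t+\xi,x-y)\,dy+\int_0^t\int p_z(t-s+\xi,x-z)\int|q(s,z,y)|\,dy\,dz\,ds$, which is bounded by (\ref{pyintegral1}) and (\ref{integralq1}); the analogous computation with (\ref{integralq2}) handles the second integral. The only mild obstacle is ensuring the estimates are uniform across the regimes $\xi\le t$ and $\xi>t$; the choice of $t^{*}$ above sidesteps any case distinction and keeps every constant dependent only on the admissible parameters.
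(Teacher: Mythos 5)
Your proposal is correct and follows the paper's approach: the paper simply asserts that Corollary~\ref{ueps_estimate} is obtained ``by the same arguments as in the proof of Corollary~\ref{uintegral},'' and your argument is exactly that proof transposed to the shifted time variable $t+\xi$, with the cutoff $t^{*}=\min(t,(t+\xi)/2)$ being a clean way to handle the split point uniformly over $\xi\in[0,1]$.
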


For any $\zeta > 0$ and $x, y \in \R^d$ we put
$$
\calL_{\zeta} f(x) = 
\sum_{i = 1}^d   \int_{|w| > \zeta} \left[f(x + a_i(x) w) + f(x - a_i(x) w) - 2f(x)\right] \, \mu_i(w) \, dw,
$$
$$
\calL_{\zeta}^y f(x) = 
\sum_{i = 1}^d  \int_{|w| > \zeta} \left[f(x + a_i(y) w) + f(x - a_i(y) w) - 2f(x)\right] \, \mu_i(w) \, dw.
$$

\begin{lemma}
\label{qtf_uniform}
Let $f \in C_0(\R^d)$ and $ \tau_2 > \tau_1 > 0$. Then $Q_t f(x)$ as a function of $(t,x)$ is uniformly continuous on $[\tau_1,\tau_2] \times \R^d$. We have $\lim_{|x| \to \infty} Q_t f(x) = 0$ uniformly in $t \in [\tau_1,\tau_2]$. For each $t > 0$ we have $Q_t f \in C_0(\R^d)$.
\end{lemma}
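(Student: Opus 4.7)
The strategy has three stages: (a) prove joint continuity of the kernel $q(t,x,y)$ in all three variables; (b) exploit the pointwise exponential decay of $q$ in $|x-y|$ together with the fact that $f\in C_0(\R^d)$ to obtain $Q_t f(x)\to 0$ as $|x|\to\infty$ uniformly in $t\in[\tau_1,\tau_2]$; (c) combine (a) and (b) via a compactness argument to deduce uniform continuity on $[\tau_1,\tau_2]\times\R^d$.

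For (a) I would first show that $q_0(t,x,y)$ is continuous on $(0,\infty)\times\R^d\times\R^d$. From its definition as $\calL^x p_y(t,\cdot)(x-y)-\calL^y p_y(t,\cdot)(x-y)$ and Lemma \ref{pycontinuity}, the integrand in (\ref{q0formula}) is continuous in $(t,x,y,w)$; the $|w|\le 2\delta$ double-difference is controlled by $c\,t^{-2/\alpha}|w|^2\max_{\xi}r_y(t,\xi)$ via Lemma \ref{double_difference}, and $|w|^2\mu_i(w)$ is integrable, so dominated convergence applies uniformly on compact neighborhoods. Continuity of each $q_n$ then follows by induction using (\ref{defqn}): the integrable majorants from Lemma \ref{integralqn} allow us to apply dominated convergence in both the $s$ and $z$ integrals. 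Finally, the uniform bound (\ref{qnestimate1}) on $[\tau_1,\tau_2]\times\R^d\times\R^d$ together with (\ref{sumsigma}) gives uniform convergence of $\sum_n q_n$ on this set, so $q(t,x,y)$ is continuous there.

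For (b) I would fix $\eta>0$. Choose $R>0$ so large that $|f(y)|<\eta$ for $|y|\ge R$. Then by (\ref{integralq1}),
\[
\int_{|y|\ge R}|q(t,x,y)||f(y)|\,dy\;\le\;\eta\cdot c\,\tau_1^{-\sigma},
\]
uniformly in $t\in[\tau_1,\tau_2]$ and $x\in\R^d$. For $|x|\ge 2R+a$ (where $a$ is the constant from Proposition \ref{qestimate}) and $|y|<R$ we have $|x-y|\ge R+a\ge a$, so Proposition \ref{qestimate} gives
\[
\int_{|y|<R}|q(t,x,y)||f(y)|\,dy\;\le\;c\,\|f\|_\infty R^d\,e^{-c_3\sqrt{|x|-R}},
\]
which tends to $0$ as $|x|\to\infty$ uniformly in $t\in[\tau_1,\tau_2]$. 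This yields the vanishing at infinity.

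For (c), the vanishing at infinity produces $M>0$ with $|Q_tf(x)|<\eta/4$ whenever $|x|\ge M$ and $t\in[\tau_1,\tau_2]$. It remains to establish uniform continuity on the compact set $[\tau_1,\tau_2]\times\overline{B(0,M+2)}$; continuity on that set suffices, since both $Q_tf(x)$ and $Q_{t'}f(x')$ are $O(\eta)$ as soon as either argument has norm $>M+1$. Continuity on the compact set follows from joint continuity of $q(t,x,y)$ established in (a) and dominated convergence: for $(t,x)$ in that compact set the bound $|q(t,x,y)|\le c(1+|x-y|)^{-d-1}$ of Proposition \ref{qestimate}, combined with $(1+|x-y|)^{-(d+1)}\le (2(M+3))^{d+1}(1+|y|)^{-(d+1)}$ for $|x|\le M+2$, supplies a uniform integrable majorant $C(1+|y|)^{-(d+1)}\|f\|_\infty$ independent of $(t,x)$. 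Thus $(t,x)\mapsto Q_tf(x)$ is continuous on this compact set, hence uniformly continuous, and the last claim $Q_tf\in C_0(\R^d)$ follows from (b) and continuity at each fixed $t$. The main technical obstacle is the justification in (a): since the series construction involves iterated integration over $(0,t)\times\R^d$, one must keep the exponential $|x-y|$-decay bounds (\ref{qnestimate1})–(\ref{qnestimate2}) visible after each convolution step so that the dominated convergence argument for continuity survives the infinite summation.
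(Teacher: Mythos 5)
Your strategy differs from the paper's. The paper never proves joint continuity of the kernel $q(t,x,y)$ in this lemma. Instead it works directly with the functions $Q_{n,t}f(x)=\int q_n(t,x,y)f(y)\,dy$, using the recursion $Q_{n,t}f(x)=\int_0^t\int q_0(t-s,x,z)\,Q_{n-1,s}f(z)\,dz\,ds$, and splits the time integral at $t-\eps_1$: the tail $\int_{t-\eps_1}^t$ is bounded by $c\,\tau_1^{-\sigma}\eps_1^{1-\sigma}\|f\|_\infty$ uniformly in $(t,x,n)$ via (\ref{q0int}) and (\ref{qnt}), so letting $\eps_1\to0$ gives uniform continuity without ever confronting the kernel-level singularity. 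Your route (continuity of the kernel first, then integrate) is a legitimate alternative, and indeed the paper asserts kernel continuity in the proof of Proposition \ref{heatkernel}, but there it is stated without detail.

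The gap in your proposal is in stage (a): the sentence ``the integrable majorants from Lemma \ref{integralqn} allow us to apply dominated convergence in both the $s$ and $z$ integrals'' is not justified as written. The natural pointwise majorant $|q_0(t-s,x,z)|\cdot|q_{n-1}(s,z,y)|$ produced by (\ref{global}) and (\ref{qnestimate1}) gives, after the $z$-integration, something of order $(t-s)^{-(d+\beta)/\alpha}\,s^{(n-1)(1-\sigma)}$ or $(t-s)^{-\sigma}\,s^{(n-1)(1-\sigma)-(d+\beta)/\alpha}$, and since $(d+\beta)/\alpha>1$ these are not integrable over $(0,t)$ (the first blows up at $s=t$, the second at $s=0$ already for $n=1$). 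Making dominated convergence work at the kernel level requires splitting $\int_0^t=\int_0^{t/2}+\int_{t/2}^t$ and using the $z$-integral bound (\ref{integralqn2}) on $q_{n-1}$ on the first piece (where $q_0(t-s,\cdot,\cdot)$ is bounded since $t-s\ge t/2$) and the $z$-integral bound (\ref{q0int}) on $q_0$ on the second piece (where $q_{n-1}(s,\cdot,\cdot)$ is bounded since $s\ge t/2$), together with an argument that the split point and the endpoint $t$ moving with $(t_k)$ do not spoil convergence. This is precisely the device used in the proof of Lemma \ref{integralqn} and, in modified form, in the paper's proof here. You also misdiagnose the ``main technical obstacle'' at the end: the exponential $|x-y|$-decay survives the convolution steps without trouble (it is needed only for stage (b)); the genuine difficulty is the time-endpoint singularity of $q_0$. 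Stages (b) and (c) of your plan are sound, provided (a) is repaired.
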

\begin{proof}
For any $\zeta > 0$, $y \in \R^d$, by Lemma \ref{pycontinuity}, we obtain that
$$
(t,x) \to \calL_{\zeta}^{x}p_y(t,\cdot)(x-y) - \calL_{\zeta}^{y} p_y(t,\cdot)(x-y)
$$
is continuous on $(0,\infty) \times \R^d$. Using this and (\ref{est2}) we show that 
\begin{equation}
\label{q0_cont}
(t,x) \to q_0(t,x,y) \quad \text{is continuous on $(0,\infty) \times \R^d$.}
\end{equation}
By Proposition \ref{integralq02} we have
\begin{equation}
\label{q0_est}
|q_0(t,x,y)| \le \frac{c}{t^{(d + \beta)/\alpha}} e^{-c_1|x-y|}.
\end{equation}

For any $n \in \N$, $t > 0$, $x \in \R^d$ denote
$$
Q_{n,t} f(x) = \int_{\R^d} q_n(t,x,y) f(y) \, dy.
$$
By (\ref{q0_cont}), (\ref{q0_est}) and the dominated convergence theorem we obtain that $(t,x) \to Q_{0,t} f(x)$ is continuous on $(0,\infty) \times \R^d$. By Lemma \ref{integralqn}, for any $t \in (0,\tau]$, $x \in \R^d$, $n \in \N$, we have
\begin{equation}
\label{qnt}
|Q_{n,t} f (x)| \le \frac{c_1^{n+1} t^{(n+1)(1-\sigma) - 1}}{(n!)^{1 - \sigma}} \|f\|_{\infty}.
\end{equation}
Note that for any $t > 0$, $x \in \R^d$, $n \in \N$, $n \ge 1$ we have
$$
Q_{n,t} f (x) = \int_0^t \int_{\R^d} q_0(t-s,x,z) Q_{n-1,s}f(z) \, dz \, ds.
$$
For any $\eps_1 \in (0,\tau_1/2)$, using (\ref{q0_cont}), (\ref{q0_est}) and (\ref{qnt}), we show that
$$
(t,x) \to \int_0^{t - \eps_1} \int_{\R^d} q_0(t-s,x,z) Q_{n-1,s}f(z) \, dz \, ds
$$
is continuous on $[\tau_1,\tau_2] \times \R^d$. Note also that for any $\eps_1 \in (0,\tau_1/2)$, $t \in [\tau_1,\tau_2]$, $x \in \R^d$, $n \in \N$, $n \ge 1$ we have, by (\ref{q0int}),
\begin{eqnarray*}
\left| \int_{t - \eps_1}^t \int_{\R^d} q_0(t-s,x,z) Q_{n-1,s}f(z) \, dz \, ds \right| 
&\le& c \|f\|_{\infty} \int_{t - \eps_1}^t (t-s)^{-\sigma} s^{-\sigma} \, ds\\ 
&\le& c  \tau_1^{-\sigma} \eps_1^{1 - \sigma} \|f\|_{\infty}.
\end{eqnarray*}
This implies that $(t,x) \to Q_{n,t}f(x)$ is continuous on $[\tau_1,\tau_2] \times \R^d$. Using this and (\ref{qnt}) we obtain that $(t,x) \to Q_t f(x) = \sum_{n =0}^{\infty} Q_{n,t}f(x)$ is continuous on $[\tau_1,\tau_2] \times \R^d$. By Proposition \ref{qestimate} we obtain that $\lim_{|x| \to \infty} Q_t f(x) = 0$ uniformly in $t \in [\tau_1,\tau_2]$. This implies the assertion of the lemma.
\end{proof}

The proofs of the next few results are very similar to the proofs of related results in \cite{KRS2018}. We will not repeat these reasonings but we refer the reader to the appropriate proofs in \cite{KRS2018}. 
\begin{proposition}
\label{utfholder}
Choose $\gamma \in (0,\alpha) \cap (0,1]$. For any $t \in (0,\tau]$, $x, x' \in \R^d$, $f \in \Bb$ we have
$$
|U_tf(x) - U_tf(x')| \le c t^{-\gamma/\alpha} |x - x'|^{\gamma} \|f\|_{\infty}.
$$
\end{proposition}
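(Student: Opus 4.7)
The plan is to split $U_t f$ via the defining formula (\ref{defu}),
$$
U_t f(x) = \int_{\R^d} p_y(t, x-y) f(y) \, dy + \Phi_t f(x) =: I_t f(x) + \Phi_t f(x),
$$
and to bound the spatial increments of $I_t f$ and $\Phi_t f$ separately. The key quantitative input for both parts is the pointwise Hölder bound of Lemma \ref{pyholder}, which I would apply with arguments $x-y$ and $x'-y$. Since $(x-y)_j-(x'-y)_j = x_j-x_j'$, the prefactor depends only on $|x-x'|$. Combined with the lower scaling $h_j^{-1}(1/t) \ge C_3 t^{1/\alpha}$ from (\ref{h-1}) and the elementary inequality $1 \wedge a \le a^\gamma$ valid for $a \ge 0$ and $\gamma \in (0,1]$, this yields
$$
1 \wedge \sum_{j=1}^d \frac{|x_j-x_j'|}{h_j^{-1}(1/t)} \le c\, t^{-\gamma/\alpha} |x-x'|^\gamma.
$$
Integrating against $|f|$ and using (\ref{ryintegral1}) to absorb the factor $r_y(t,(x-y)/2) + r_y(t,(x'-y)/2)$ produces
$$
|I_t f(x) - I_t f(x')| \le c\, t^{-\gamma/\alpha} |x-x'|^\gamma \|f\|_\infty.
$$

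For the correction term $\Phi_t f$, I would first use Fubini (justified by Proposition \ref{qestimate} combined with (\ref{pyintegral1})) to obtain
$$
\Phi_t f(x) = \int_0^t \int_{\R^d} p_z(t-s,x-z) Q_s f(z) \, dz \, ds.
$$
From (\ref{integralq1}) one gets the uniform bound $\|Q_s f\|_\infty \le c\, s^{-\sigma} \|f\|_\infty$. Applying the same argument as for $I_t f$ to $p_z(t-s,\cdot)$ and integrating in $z$ against (\ref{ryintegral1}) gives
$$
\int_{\R^d} |p_z(t-s,x-z) - p_z(t-s,x'-z)| \, dz \le c\, (t-s)^{-\gamma/\alpha} |x-x'|^\gamma.
$$
Combining these two bounds yields
$$
|\Phi_t f(x) - \Phi_t f(x')| \le c\, |x-x'|^\gamma \|f\|_\infty \int_0^t (t-s)^{-\gamma/\alpha} s^{-\sigma} \, ds.
$$
Since $\gamma < \alpha$ and $\sigma \in (0,1)$, both exponents exceed $-1$, so (\ref{gammatheta}) bounds the time integral by $c\, t^{1-\sigma-\gamma/\alpha}$; using $t \le \tau$ and $1-\sigma > 0$ this is in turn dominated by $c\, \tau^{1-\sigma}\, t^{-\gamma/\alpha}$. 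Summing the $I_t f$ and $\Phi_t f$ bounds completes the proof.

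The main organizational point is ensuring that the Hölder prefactor $r_y$ coming from Lemma \ref{pyholder} can be integrated out uniformly both in the frozen-parameter $y$ (in $I_t f$) and in $z$ (in $\Phi_t f$); this is precisely what (\ref{ryintegral1}) provides. Once this is in place, the only remaining ingredient is a standard Beta-type time integral, and the factor $t^{1-\sigma}$ that remains after it is harmless on the bounded interval $(0,\tau]$ because $\sigma < 1$.
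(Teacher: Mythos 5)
Your proof is correct and follows essentially the same route the paper intends: the paper merely cites the analogous \cite[Proposition~3.18]{KRS2018}, and the decomposition you use (frozen part plus correction $\Phi_t f$, with Lemma~\ref{pyholder} and (\ref{ryintegral1}) controlling the spatial increments, (\ref{integralq1}) giving the uniform bound on $Q_sf$, and (\ref{gammatheta}) handling the Beta-type time integral) is exactly the argument that reference carries out. All the individual steps — the elementary inequality $1\wedge a\le a^\gamma$, the scaling bound $h_j^{-1}(1/t)\ge C_3 t^{1/\alpha}$ from (\ref{h-1}), the Fubini justification via Proposition~\ref{qestimate}, and the observation that $\gamma/\alpha<1$ and $\sigma<1$ keep both exponents above $-1$ — are in order.
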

The above result follows by the same arguments as in the proof of \cite[Proposition 3.18]{KRS2018}.

Note that, by Lemma \ref{Lxpyestimate}, for any $\xi \in (0,1]$, $t \in [\xi,\tau+\xi]$, $x, z \in \R^d$ we have
\begin{equation}
\label{tderivative}
\left|\frac{\partial p_z(t,x-z)}{t}\right| = \left|\calL^z p_z(t,\cdot)(x-z)\right|\le c(\xi) e^{-c|x-z|},  
\end{equation}
where $c(\xi)$ is a constant depending on $\xi, \tau, \alpha, \beta, \underline{C}, \overline{C}, d, \eta_1, \eta_2, \eta_3, \eta_4, \nu_0, \eps_0, \delta_0$.

\begin{lemma}
\label{regularity_uteps}
(i) For every $f \in C_0(\R^d)$, $\xi \in (0,1]$ the function $U_t^{(\xi)} f(x)$ belongs to $C^1((0,\infty))$ as a function of $t$ and to $C_0^2(\R^d)$ as a function of $x$. Moreover, 
\begin{equation}
\label{tderivative_estimate}
\left|\frac{\partial}{\partial t} (U_t^{(\xi)} f)(x)\right| \le c({\xi}) \|f\|_{\infty},
\end{equation}
for each $f \in C_0(\R^d)$, $t \in (0,\tau]$, $x \in \R^d$, $\xi \in (0,1]$, where $c(\xi)$ depends on $\xi, \tau, \alpha, \beta, \underline{C}, \overline{C}, d, \eta_1, \eta_2, \eta_3, \eta_4, \nu_0, \eps_0, \delta_0$,

(ii) For every $f \in C_0(\R^d)$ we have
\begin{equation*}
\lim_{t,\xi \to 0^+}\|U_t^{(\xi)} f - f\|_{\infty} = 0.
\end{equation*}

(iii) For every $f \in C_0(\R^d)$ we have
\begin{equation*}
U_t^{(\xi)} f(x) \to 0, \quad \text{as} \quad |x| \to \infty,
\end{equation*}
uniformly in $t \in [0,\tau]$, $\xi \in [0,1]$.

(iv) For every $f \in C_0(\R^d)$ we have
\begin{equation*}
\|U_t^{(\xi)} f - U_t f\|_{\infty} \to 0, \quad \text{as} \quad \xi \to 0^+,
\end{equation*}
uniformly in $t \in [0,\tau]$. 
\end{lemma}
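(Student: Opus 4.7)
I would split
\[
U_t^{(\xi)} f(x) = V_{t+\xi} f(x) + \Phi_t^{(\xi)} f(x), \qquad V_r f(x) = \int_{\R^d} p_y(r, x-y) f(y)\,dy,
\]
and treat the two pieces separately. For (i), the first summand $V_{t+\xi} f$ is $C^1$ in $t$ and $C^2$ in $x$ by Lemma \ref{pycontinuity}, and differentiation under the integral is justified by Lemma \ref{properties_pytx} (space derivatives) and~(\ref{tderivative}) (for the time derivative, whose integrand is dominated by $c(\xi) e^{-c|x-y|}$ since $t+\xi \ge \xi$), immediately giving $|\partial_t V_{t+\xi} f(x)| \le c(\xi)\|f\|_\infty$. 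For $\Phi_t^{(\xi)} f$, continuity of $s \mapsto Q_s f$ on $(0,\infty)$ is Lemma \ref{qtf_uniform}, and the pointwise bound $\|Q_s f\|_\infty \le c s^{-\sigma}\|f\|_\infty$ (from~(\ref{integralq1})) is integrable at $s = 0$; this makes the Leibniz rule under the integral legitimate, and the space derivatives up to second order are controlled by Lemma \ref{properties_pytx} applied at time $t-s+\xi \ge \xi > 0$. The decay at infinity yielding $C_0$-membership is deduced from part~(iii).

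For part~(ii), decompose $U_t^{(\xi)} f - f = (V_{t+\xi} f - f) + \Phi_t^{(\xi)} f$: the first summand tends to $0$ uniformly as $t+\xi \to 0^+$ by Lemma \ref{pyeps_limit} (cf.~(\ref{limp_y})), while $|\Phi_t^{(\xi)} f(x)| \le c\|f\|_\infty \int_0^t s^{-\sigma}\,ds \le c\,t^{1-\sigma}\|f\|_\infty \to 0$. For part~(iii), Lemma \ref{estimate_pytx} provides $p_y(t+\xi, x-y) \le c\,e^{-c|x-y|}$ when $|x-y|$ is sufficiently large, uniformly in $t \in [0,\tau]$ and $\xi \in [0,1]$; combined with the $C_0$-property of $f$ and dominated convergence this yields $V_{t+\xi} f(x) \to 0$ as $|x|\to\infty$ uniformly, and the analogous decay for $\Phi_t^{(\xi)} f$ follows from~(\ref{utxy}). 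For part~(iv), using the identity
\[
u^{(\xi)}(t,x,y) - u(t,x,y) = \bigl(p_y(t+\xi,x-y)-p_y(t,x-y)\bigr) + \int_0^t\int_{\R^d}\bigl(p_z(t-s+\xi,x-z)-p_z(t-s,x-z)\bigr) q(s,z,y)\,dz\,ds,
\]
continuity of $p_y(\cdot,x-y)$ in its first argument (Lemma \ref{pycontinuity}), the dominating bound~(\ref{utxy}), and the integrability~(\ref{integralq1}) for $q$, I apply the dominated convergence theorem (after integrating in $y$ against $f$) to obtain convergence uniform in $t \in [0,\tau]$ and $x \in \R^d$ as $\xi \to 0^+$.

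The main obstacle is the uniform-in-$t$ bound~(\ref{tderivative_estimate}), in particular the Leibniz boundary term $\int p_z(\xi,x-z)\,Q_tf(z)\,dz$ arising in $\partial_t \Phi_t^{(\xi)} f$, which is only bounded naively by $c(\xi)\|Q_tf\|_\infty \le c(\xi)\,t^{-\sigma}\|f\|_\infty$ and thus appears to blow up as $t\to 0^+$. The resolution is to show that $u^{(\xi)}$ satisfies an approximate heat equation $\partial_t u^{(\xi)}(t,x,y) = \calL^x u^{(\xi)}(t,\cdot,y)(x) + E^{(\xi)}(t,x,y)$, derived using $\partial_t p_y(r,\cdot)(w) = \calL^y p_y(r,\cdot)(w)$ and the Volterra identity $q(t,x,y) = q_0(t,x,y) + \int_0^t\int_{\R^d} q_0(t-s,x,z) q(s,z,y)\,dz\,ds$; the error $E^{(\xi)}$, tested against $f$, reduces to the controlled terms $-Q_{0,t+\xi}f(x)$ (bounded by $c(t+\xi)^{-\sigma}\|f\|_\infty \le c\,\xi^{-\sigma}\|f\|_\infty$, since $t+\xi \ge \xi$) and a Volterra iterate with $q_0(t-s+\xi,x,z)$ whose time argument is $\ge \xi$ (bounded by $c(\xi)\|f\|_\infty$). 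Meanwhile $|\calL^x U_t^{(\xi)} f(x)| \le c(\xi)\|f\|_\infty$ is proved directly by applying Lemma \ref{Lxpyestimate} (at time argument $\ge \xi$) to both $p_y(t+\xi,x-y)$ and $p_z(t-s+\xi,x-z)$ and integrating, which combined with the error bound yields the claimed estimate on $\partial_t U_t^{(\xi)} f(x)$.
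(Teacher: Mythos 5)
Your decomposition $U_t^{(\xi)} f = V_{t+\xi}f + \Phi_t^{(\xi)}f$ and the treatment of parts (ii)--(iv) are sensible, and you correctly pinpoint the main difficulty in part (i), namely the Leibniz boundary term $\int_{\R^d} p_z(\xi,x-z) Q_t f(z)\,dz$ in $\partial_t \Phi_t^{(\xi)}f$. The problem is that your proposed resolution does not actually eliminate it. If you carry out the calculation of $E^{(\xi)} = \partial_t u^{(\xi)} - \calL u^{(\xi)}(\cdot,y)$ carefully, using $\partial_t p_y(r,\cdot) = \calL^y p_y(r,\cdot)$ and $\calL^x p_z(r,\cdot)(x-z) = \calL^z p_z(r,\cdot)(x-z) + q_0(r,x,z)$, you get
\begin{equation*}
E^{(\xi)}(t,x,y) \;=\; \int_{\R^d} p_z(\xi,x-z)\,q(t,z,y)\,dz \;-\; q_0(t+\xi,x,y) \;-\; \int_0^t\!\int_{\R^d} q_0(t-s+\xi,x,z)\,q(s,z,y)\,dz\,ds.
\end{equation*}
For $\xi=0$ the Volterra identity makes the right-hand side vanish, but for $\xi>0$ the first term is still exactly the boundary term you were worried about: tested against $f$ it is $\int p_z(\xi,x-z)Q_tf(z)\,dz$, which you can only bound by $c(\xi)\,t^{-\sigma}\|f\|_\infty$. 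Your account of $E^{(\xi)}$ lists only the second and third terms; the first one is dropped without justification. The Volterra identity redistributes but does not cancel it, since $\int p_z(\xi,x-z)\,q_0(r,z,w)\,dz$ is not equal to $q_0(r+\xi,x,w)$ (the kernel $p_z(\xi,\cdot)$ carries the $z$-dependent freezing and $\{p_z(r,\cdot)\}_r$ is not a genuine convolution semigroup). So, as written, you have not established the uniform-in-$t$ bound~(\ref{tderivative_estimate}); you would need an additional argument showing that $\int p_z(\xi,x-z)Q_tf(z)\,dz$ stays bounded as $t\to 0^+$, or a genuinely different computation of $\partial_t\Phi_t^{(\xi)}f$ that avoids this boundary term. (The paper itself defers to the proof of Lemma~3.19 in \cite{KRS2018}, so the bar here is precisely to supply what that proof supplies; your sketch skips exactly the step where that cited proof has to do real work.)
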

The proof of the above lemma is almost the same as the proof of \cite[Lemma 3.19]{KRS2018}.

For any $t > 0$, $\xi \in (0,1]$, $x \in \R^d$ we put 
$$
\Lambda_t^{(\xi)} f(x) = \frac{\partial}{\partial t} (U_t^{(\xi)} f)(x) - \calL (U_t^{(\xi)} f)(x).
$$
Heuristically, the next lemma states that, if  $\xi$ is small, then $\Lambda_t^{(\xi)} f(x)$ is small. The proof of this lemma almost exactly follows the lines of the proof of \cite[Lemma 3.22]{KRS2018}.
\begin{lemma} $\Lambda_t^{(\xi)} f(x)$ is well defined for every $f \in C_0(\R^d)$, $t \in (0,\tau]$, $\xi \in (0,1]$, $x \in \R^d$ and  we have

\label{heat_u}
(i) for any $f \in C_0(\R^d)$, 
\begin{equation*}
\Lambda_t^{(\xi)} f(x) \to 0, \quad \text{as} \quad \xi \to 0^+,
\end{equation*}
uniformly in $(t,x) \in [\tau_1,\tau_2] \times \R^d$ for every $\tau \ge \tau_2 > \tau_1 > 0$,

(ii) for any $f \in C_0(\R^d)$, 
\begin{equation}
\label{Lambda_s}
\int_0^t \Lambda_s^{(\xi)} f(x) \, ds \to 0, \quad \text{as} \quad \xi \to 0^+,
\end{equation}
uniformly in $(t,x) \in (0,\tau] \times \R^d$.
\end{lemma}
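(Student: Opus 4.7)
My plan is to first differentiate $U_t^{(\xi)} f$ in $t$ and apply $\calL$ to it, justifying that the derivative and the operator pass inside the $y$-integral. For the term $\int p_y(t+\xi,x-y) f(y)\,dy$ this is straightforward: Lemma \ref{Lxpyestimate} gives a uniform (in $\xi>0$) bound of the form $c e^{-c|x-y|}$ on both the $t$-derivative and the $\calL$-integrand, while Lemma \ref{properties_pytx} controls spatial derivatives so that $\calL_x p_y(t+\xi,\cdot - y)(x) = \calL^x p_y(t+\xi,\cdot)(x-y)$. Using the identity $\calL^x p_y(s,\cdot)(x-y) - \calL^y p_y(s,\cdot)(x-y) = q_0(s,x,y)$, this piece contributes $-\int q_0(t+\xi,x,y) f(y)\,dy$ to $\Lambda_t^{(\xi)} f(x)$. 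For $\varphi_y^{(\xi)}(t,x)$ I would split $\int_0^t = \int_0^{t-\eps_1} + \int_{t-\eps_1}^t$ and differentiate under the integral on the first piece, reserving the second for a boundary contribution that, in the limit $\eps_1\to 0$, produces $\int p_z(\xi,x-z) q(t,z,y)\,dz$ (this is finite thanks to Proposition \ref{qestimate} and Lemma \ref{estimate_pytx}).

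Assembling, and interchanging the order of integration (justified by the absolute bounds in Proposition \ref{qestimate} and Corollary \ref{uintegral}), I arrive at
\begin{equation*}
\Lambda_t^{(\xi)} f(x) = \int_{\R^d} p_z(\xi,x-z)\, Q_t f(z)\,dz \;-\; Q_{0,t+\xi} f(x) \;-\; \int_0^t \!\!\int_{\R^d} q_0(t-s+\xi,x,z)\, Q_s f(z)\,dz\,ds,
\end{equation*}
where $Q_{0,s} f(x) = \int q_0(s,x,y) f(y)\,dy$. Adding and subtracting $Q_t f(x)$ and invoking the Volterra identity $Q_t f = Q_{0,t} f + \int_0^t Q_{0,t-s}(Q_s f)\,ds$ (which follows from $q = q_0 + q_0 \ast q$ by Fubini and the bounds of Lemma \ref{integralqn}), I rewrite
\begin{align*}
\Lambda_t^{(\xi)} f(x) &= \Bigl[\int_{\R^d} p_z(\xi,x-z)\, Q_t f(z)\,dz - Q_t f(x)\Bigr] + \bigl[Q_{0,t} f(x) - Q_{0,t+\xi} f(x)\bigr] \\
&\quad + \int_0^t \!\!\int_{\R^d} \bigl[q_0(t-s,x,z) - q_0(t-s+\xi,x,z)\bigr] Q_s f(z)\,dz\,ds.
\end{align*}

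Now I treat the three brackets. The first vanishes uniformly on $[\tau_1,\tau_2]\times\R^d$ as $\xi\to 0^+$ by Lemma \ref{pyeps_limit} applied to $f_t = Q_t f$, which is bounded and uniformly continuous on $[\tau_1,\tau_2]\times\R^d$ (and vanishes at infinity) by Lemma \ref{qtf_uniform}. The second tends to $0$ uniformly by continuity of $(t,x)\mapsto q_0(t,x,y)$ established in the proof of Lemma \ref{qtf_uniform}, combined with dominated convergence against the integrable majorant $c\,t^{-(d+\beta)/\alpha} e^{-c_1|x-y|}\|f\|_\infty$ of Proposition \ref{integralq02}, for $t$ bounded away from $0$. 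For the third term, for each fixed $s<t$ the integrand converges pointwise to $0$ and is dominated by $2|q_0(t-s,x,z)|\cdot |Q_s f(z)|$, whose $dz\,ds$-integral is finite and bounded (uniformly in $x$) by $c\|f\|_\infty \int_0^t (t-s)^{-\sigma} s^{-\sigma}\,ds$ using (\ref{q0int}) and Proposition \ref{qestimate}; dominated convergence then yields uniform convergence to $0$ on $[\tau_1,\tau_2]\times\R^d$, proving (i).

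For (ii), I integrate the displayed decomposition from $0$ to $t$ and treat the three resulting terms separately. The first and second integrate without issue by the uniform bounds above, giving uniform convergence on $(0,\tau]\times\R^d$. For the double-integral term, Fubini converts it into a single time-integral against $|q_0(t-s,x,z)-q_0(t-s+\xi,x,z)|$ against $Q_s f(z)$ times a factor bounded using $\int_0^\tau s^{-\sigma} ds < \infty$. The main obstacle is the singularity of $q_0$ at $s=t$: I handle it by splitting the $s$-integral at $t-\eps_1$, controlling the $(t-\eps_1,t)$ piece by $c\|f\|_\infty \eps_1^{1-\sigma}$ uniformly in $\xi$ (using Proposition \ref{integralq02}), and then sending $\xi\to 0^+$ for fixed $\eps_1$ via dominated convergence; choosing $\eps_1$ small first makes the whole quantity arbitrarily small uniformly in $(t,x)\in(0,\tau]\times\R^d$.
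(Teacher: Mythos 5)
Your derivation of the formula
\begin{equation*}
\Lambda_t^{(\xi)} f(x) = \int_{\R^d} p_z(\xi,x-z) Q_t f(z)\,dz - Q_{0,t+\xi} f(x) - \int_0^t\!\!\int_{\R^d} q_0(t-s+\xi,x,z) Q_s f(z)\,dz\,ds
\end{equation*}
is correct, and the decomposition obtained by adding and subtracting $Q_t f(x)$ and invoking the Volterra identity $q=q_0+q_0\ast q$ is precisely the parametrix cancellation that the paper (via its reference to the proof of Lemma 3.22 in \cite{KRS2018}) relies on. So the route you take is essentially the paper's, and the use of Lemma \ref{pyeps_limit} together with Lemma \ref{qtf_uniform} for the first bracket is the right way to get uniform convergence there.

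The one step you gloss over is the passage from pointwise to uniform convergence for the second and third brackets. Dominated convergence, as you invoke it, only gives that for each fixed $(t,x)$ the bracket tends to $0$; it does not by itself give $\sup_{(t,x)\in[\tau_1,\tau_2]\times\R^d}|\cdot|\to 0$. What is actually needed is a compactness argument: for the second bracket, use that $f\in C_0$ is concentrated (up to a small error) on a compact $K$, that for $(t,y)\in[\tau_1,\tau_2+1]\times K$ one has $|q_0(t,x,y)|\le c\,e^{-c_1|x-y|}$ (so the contribution from $|x|$ large is uniformly small), and that $q_0$ is uniformly continuous on the compact remainder $[\tau_1,\tau_2+1]\times \overline{B(0,R)}\times K$; an analogous argument is needed for the third bracket once you have cut away $s\in(t-\eps_1,t)$. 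This is the same splitting you already perform in (ii), so the fix is small — but "dominated convergence then yields uniform convergence" is not a valid inference as written and should be replaced by the compactness/decay argument.
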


The next result (positive maximum principle) is based on the ideas from \cite[Section 4.2]{KK2018}. Its proof is very similar to the proof of \cite[Lemma 4.3]{KK2018} and it is omitted.
\begin{lemma}
\label{max_principle}
Let us consider the function $v: [0,\infty) \times \R^d \to \R$ and the family of functions $v^{(\xi)}: [0,\infty) \times \R^d \to \R$, $\xi \in (0,1]$. Assume that for each $\xi \in (0,1]$ $\sup_{t \in (0,\tau], x \in \R^d} |v^{(\xi)}(t,x)| < \infty$, $v^{(\xi)}$ is $C^1$ in the first variable and $C^2$ in the second variable. We also assume that (for any $\tau > 0$)

(i)
$$
v^{(\xi)}(t,x) \to v(t,x) \quad \text{as} \quad \xi \to 0^+,
$$
uniformly in $t \in [0,\tau]$, $x \in \R^d$;

(ii)
$$
v^{(\xi)}(t,x) \to 0 \quad \text{as} \quad |x| \to \infty,
$$
uniformly in $t \in [0,\tau]$, $\xi \in (0,1]$;

(iii) for any $0 < \tau_1 < \tau_2 \le \tau$,
$$
\frac{\partial}{\partial t} v^{(\xi)}(t,x)
- \calL v^{(\xi)}(t,x) \to 0 \quad \text{as} \quad \xi \to 0^+,
$$
uniformly in $t \in [\tau_1,\tau_2]$, $x \in \R^d$;

(iv)
$$
v^{(\xi)}(t,x) \to v(0,x) \quad \text{as} \quad \xi \to 0^+\,\,\, \text{and}\,\,\, t \to 0^+,
$$
uniformly in $x \in \R^d$;

(v) for any $x \in \R^d$ $v(0,x) \ge 0$.

Then for any $t \ge 0$, $x \in \R^d$ we have $v(t,x) \ge 0$.
\end{lemma}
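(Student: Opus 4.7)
The plan is to argue by contradiction using a minimum principle for the non-local operator $\calL$, combined with a small linear-in-time perturbation that upgrades a non-strict inequality into a strict one. Suppose, for contradiction, that $v(t_*,x_*) < 0$ for some $(t_*,x_*)$; by (v) we must have $t_* > 0$. Choose $\lambda > 0$ small enough that $v(t_*,x_*) + \lambda t_* < 0$, and set
\[
w(t,x) := v(t,x) + \lambda t, \qquad w^{(\xi)}(t,x) := v^{(\xi)}(t,x) + \lambda t.
\]
Since $\calL$ annihilates functions depending only on $t$, we have $\calL w^{(\xi)} = \calL v^{(\xi)}$, while $\partial_t w^{(\xi)} = \partial_t v^{(\xi)} + \lambda$; hence the approximate equation from (iii) carries over to $w^{(\xi)}$ up to an additive $\lambda$.

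First I would locate a minimizer of $w^{(\xi)}$. By (i), $w^{(\xi)} \to w$ uniformly on $[0,\tau]\times\R^d$, so for $\xi$ sufficiently small $\inf w^{(\xi)} \le w(t_*,x_*)/2 < 0$. By (ii), $w^{(\xi)}(t,x) \to \lambda t \ge 0$ as $|x|\to\infty$ uniformly in $t$ and $\xi$, so outside a sufficiently large ball $w^{(\xi)}$ exceeds $\inf w^{(\xi)}$; continuity of $w^{(\xi)}$ (it is $C^1$ in $t$ and $C^2$ in $x$) together with compactness then produces a global space–time minimizer $(t_\xi, x_\xi) \in [0,\tau] \times \R^d$. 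By (iv) combined with (v), for $t$ in a neighborhood of $0$ and $\xi$ small enough, $w^{(\xi)}(t,x)$ is arbitrarily close to $v(0,x) + \lambda t \ge 0$, uniformly in $x$; this forces $t_\xi \ge \tau_1$ for some $\tau_1 > 0$ independent of $\xi$ (for all sufficiently small $\xi$).

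At such an interior–in–time minimizer, standard calculus gives $\partial_t w^{(\xi)}(t_\xi,x_\xi) \le 0$ (with equality when $t_\xi < \tau$, and $\le 0$ when $t_\xi = \tau$ because $t_\xi$ is a boundary minimum in $t$). The operator $\calL$ has the form $\tfrac12\sum_i \int [f(x+a_i(x)w)+f(x-a_i(x)w)-2f(x)]\mu_i(w)\,dw$, which is non-negative whenever $f$ attains a global spatial minimum at $x$; applied to $f=w^{(\xi)}(t_\xi,\cdot)$ this yields $\calL w^{(\xi)}(t_\xi,x_\xi) \ge 0$. Combining,
\[
\bigl(\partial_t v^{(\xi)} - \calL v^{(\xi)}\bigr)(t_\xi,x_\xi) + \lambda \;=\; \bigl(\partial_t w^{(\xi)} - \calL w^{(\xi)}\bigr)(t_\xi,x_\xi) \;\le\; 0.
\]
But (iii) applied on $[\tau_1, \tau]$ forces the first summand to tend to $0$ uniformly in $x$ as $\xi \to 0^+$, so for $\xi$ small the left side is at least $\lambda/2 > 0$, a contradiction.

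The principal subtlety, and the only place where the full hypothesis list is used non-trivially, is separating the minimizer away from the initial slice $\{t=0\}$: the approximate equation (iii) is only supplied on intervals $[\tau_1,\tau_2]$ strictly inside $(0,\tau]$, so without ruling out $t_\xi \to 0$ we could not invoke it. Conditions (iv) and (v) are tailored precisely to make this separation possible, while the linear perturbation $\lambda t$ converts the otherwise marginal inequality $\partial_t v^{(\xi)} - \calL v^{(\xi)} \le 0$ (which is a priori compatible with the vanishing in (iii)) into a quantitative contradiction. Everything else is a routine application of the positive maximum principle for integro-differential operators of Lévy type.
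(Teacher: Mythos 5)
Your argument is correct and is exactly the standard proof of this type of positive maximum principle, which is what the paper indicates it uses (the proof is omitted with a reference to \cite[Lemma 4.3]{KK2018}): the linear-in-time perturbation $\lambda t$ makes the vanishing residual in (iii) incompatible with the sign at a global minimizer, (ii) confines the minimizer to a compact set, and (iv)--(v) push it away from $\{t=0\}$ so that (iii) applies. One small point worth making explicit if you were to write this up: extracting a genuine minimizer requires joint continuity of $w^{(\xi)}$ on $[0,\tau]\times\overline{B(0,R)}$, which is not literally what the hypotheses ``$C^1$ in $t$, $C^2$ in $x$'' state but is implicitly intended and is verified in the application ($v^{(\xi)}=U_t^{(\xi)}f$); an almost-minimizer will not do here because $\int\mu_i$ may diverge, so the pointwise nonnegativity of the integrand of $\calL$ at a true minimum is essential.
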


\begin{proposition}
\label{positivity}
For any $t > 0$, $x \in \R^d$ and $f \in C_0(\R^d)$ such that $f(x) \ge 0$ for all $x \in \R^d$ we have
$U_t f (x) \ge 0$.
\end{proposition}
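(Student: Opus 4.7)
The plan is to apply the positive maximum principle, Lemma \ref{max_principle}, with
$$v(t,x) = U_t f(x) \quad (t>0), \qquad v(0,x) = f(x),$$
and with the approximating family
$$v^{(\xi)}(t,x) = U_t^{(\xi)} f(x), \qquad t\ge 0,\ \xi\in(0,1].$$
Nonnegativity of $v(0,\cdot)=f$ is hypothesis (v), so once the other hypotheses of Lemma \ref{max_principle} are verified for this pair, we immediately obtain $U_t f(x)=v(t,x)\ge 0$ for all $t\ge 0$ and $x\in\R^d$.

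Each of the remaining hypotheses has essentially already been established earlier in the section. The regularity required in Lemma \ref{max_principle} (namely $v^{(\xi)}\in C^1$ in $t$, $C^2$ in $x$, with $\sup_{t\in(0,\tau],x\in\R^d}|v^{(\xi)}(t,x)|<\infty$) is contained in Lemma \ref{regularity_uteps}(i), together with the uniform bound $\|U_t^{(\xi)}f\|_\infty\le c\|f\|_\infty$ coming from Corollary \ref{ueps_estimate}. Hypothesis (i) of Lemma \ref{max_principle} (uniform convergence $v^{(\xi)}\to v$ on $[0,\tau]\times\R^d$) is Lemma \ref{regularity_uteps}(iv); hypothesis (ii) (vanishing at spatial infinity uniformly in $t,\xi$) is Lemma \ref{regularity_uteps}(iii); hypothesis (iv) (the joint limit $v^{(\xi)}(t,x)\to f(x)$ as $t,\xi\to 0^+$, uniformly in $x$) is Lemma \ref{regularity_uteps}(ii).

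The crucial hypothesis (iii), stating
$$\frac{\partial}{\partial t}v^{(\xi)}(t,x) - \calL v^{(\xi)}(t,x) = \Lambda_t^{(\xi)} f(x) \longrightarrow 0 \quad \text{as }\xi\to 0^+,$$
uniformly on $[\tau_1,\tau_2]\times\R^d$, is exactly Lemma \ref{heat_u}(i). This is the heart of Levi's construction: the kernel $q$ was chosen precisely so that $U_t f$ solves the heat equation for $\calL$ \emph{in the limit} $\xi\to 0^+$, while for positive $\xi$ the truncation provides enough regularity for $\calL v^{(\xi)}$ to be evaluated classically (ensured by the exponential decay in Lemma \ref{Lxpyestimate} and estimate \eqref{tderivative}). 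Checking this hypothesis is conceptually the main obstacle, but all the work has already been done in Lemma \ref{heat_u}.

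Putting the five hypotheses together, Lemma \ref{max_principle} yields $v(t,x)\ge 0$ for all $t\ge 0$, $x\in\R^d$, which is the desired conclusion $U_t f(x)\ge 0$. The only subtle point worth underscoring in the writeup is that the approximation scheme $v^{(\xi)}$ must be used rather than working with $U_t f$ directly, because the construction \eqref{defu} does not a priori guarantee that $U_t f$ is classically twice differentiable in $x$; it is the shifted kernels $p_z(t-s+\xi,x-z)$ that restore the smoothness needed to invoke a pointwise maximum principle, and the error committed by this shift vanishes in the limit by Lemma \ref{heat_u}.
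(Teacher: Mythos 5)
Your proof is correct and takes exactly the same route as the paper: set $v = U_t f$, $v^{(\xi)} = U_t^{(\xi)} f$, check the hypotheses of Lemma \ref{max_principle} via Lemma \ref{regularity_uteps} and Lemma \ref{heat_u}, and conclude. You merely spell out the hypothesis-by-hypothesis mapping that the paper leaves implicit.
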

\begin{proof}
Let $f \in C_0(\R^d)$ be such that $f(x) \ge 0$ for all $x \in \R^d$. For $t \ge 0$, $x \in \R^d$, $\xi \in (0,1]$ put $v(t,x) = U_t f(x)$, $v^{(\xi)}(t,x) = U_t^{(\xi)} f(x)$. By Lemmas \ref{regularity_uteps} and \ref{heat_u} we obtain that $v(t,x)$, $v^{(\xi)}(t,x)$ satisfy the  assumptions of Lemma \ref{max_principle}. The assertion follows from Lemma \ref{max_principle}.
\end{proof}

\section{Construction and properties of the semigroup of $X_t$}

In this section we will construct the semigroup $T_t$ corresponding to the solution of (\ref{main}). This will be done by, heuristically speaking, adding the impact of long jumps to the semigroup $U_t$, constructed in the last section, corresponding to the solution of (\ref{main}) in which the process $Z$ is replaced by the process with truncated L{\'e}vy measure.  The construction of the semigroup $T_t$ is rather standard. Many arguments in this section are similar to the analogous proofs in \cite{KRS2018}. Such arguments will be omitted. At the end of this section we show that $T_t = P_t$ (where $P_t$ is defined in (\ref{semigroup})) and prove Theorems \ref{mainthm}, \ref{PtL1Linfty} and Proposition \ref{heatkernel}. 

Note that by (\ref{nu-ubound}) we have $\nu_i(x) \le c |x|^{-1-\alpha}$ for $|x| \ge \delta$. Let us introduce the following notation
$$
\lambda_0 = \sum_{i = 1}^d \int_{\R} \left(\nu_i(x) - \mu_i(x)\right) \, dx<\infty.
$$
Note that, by (\ref{KLR}), for any $x\in \R^d$ and  $f \in \Bb$, we have
$$
\calR f(x) =  \sum_{i = 1}^d   \int_{\R} \left(f(x + a_{i}(x) w) - f(x)\right) \left(\nu_i(w) - \mu_i(w)\right) \, dw.
$$ 
We denote, for any $x\in \R^d$ and  $f \in \Bb$, 
$$
\calN f(x) = \sum_{i = 1}^d  \int_{\R} f(x + a_{i}(x) w) \left(\nu_i(w) - \mu_i(w)\right) \, dw. 
$$
It is clear that

\begin{equation}
\label{N_bound}
||\calN f||_\infty \le  \lambda_0|| f||_\infty. 
\end{equation}

For any $t \ge 0$, $\xi \in [0,1]$, $x \in \R^d$ and $n \in \N$,  $f \in \Bb$ we define
\begin{eqnarray}
\label{Psi_nt0}
\Psi_{0,t} f(x) &=& U_t f(x),\\
\label{Psi_nt}
\Psi_{n,t} f(x) &=& \int_0^t  U_{t-s}(\calN(\Psi_{n-1,s}f))(x) \, ds, \quad n \ge 1,\\
\label{Psi_nteps0}
\Psi_{0,t}^{(\xi)} f(x) &=& U_t^{(\xi)} f(x), \\
\label{Psi_nteps}
\Psi_{n,t}^{(\xi)} f(x) &=& \int_0^t  U_{t-s}^{(\xi)}(\calN(\Psi_{n-1,s}^{(\xi)}f))(x) \, ds, \quad n \ge 1.
\end{eqnarray}
We remark that  $\Psi_{n,t}=\Psi_{n,t}^{(0)}$.

For any $x \in \R^d$ we define
\begin{eqnarray*}
T_t f(x) &=& e^{-\lambda_0 t} \sum_{n = 0}^{\infty} \Psi_{n,t} f(x), \quad t \ge 0,\\
T_0 f(x) &=& f(x),\\
T_t^{(\xi)} f(x) &=& e^{-\lambda_0 t} \sum_{n = 0}^{\infty}  \Psi_{n,t}^{(\xi)} f(x), \quad t \ge 0, \,\, \xi \in [0,1].
\end{eqnarray*}

By the same arguments as in Lemma 4.1 and Corollary 4.2 in \cite{KRS2018} one can easily show that $\Psi_{n,t} f(x)$, $\Psi_{n,t}^{(\xi)} f(x)$, $T_t f(x)$ and  $T_t^{(\xi)} f(x)$ are well defined for any $t \ge 0$,  $f \in \Bb$, $x\in \R^d$, $n \in \N$ and $\xi \in [0, 1]$. Moreover, for $t \in [0,\tau]$, $f \in \Bb$, $x\in \R^d$, $\xi \in [0, 1]$ we have $\max\{|T_t f(x)|,|T_t^{(\xi)} f(x)|\} \le c \|f\|_{\infty}$.

Next, we present two regularity results concerning the operators $T_t$. The proofs of these two following results are almost the same as the proofs of Theorems 4.3 and 4.4 in \cite{KRS2018} and are omitted.
\begin{theorem} 
\label{TtL1Linfty} 
For any $\gamma \in (0,\alpha/(d+\beta-\alpha))$, $t \in (0,\tau]$, $x \in \R^d$ and $f \in L^1(\R^d) \cap L^{\infty}(\R^d)$ we have
\begin{equation*}
|T_t f(x)| \le c t^{-\gamma (d+\beta-\alpha)/\alpha}  \|f\|_\infty^{1-\gamma}  \|f\|_1^{\gamma}.
\end{equation*}
\end{theorem}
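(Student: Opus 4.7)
The plan is to derive the estimate first at the level of $U_t$ (where it falls out of Corollary~\ref{uintegral} and a trivial interpolation), and then propagate it through the defining series of $T_t$ by induction on the index $n$ of $\Psi_{n,t}$.

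Set $a = \gamma(d+\beta-\alpha)/\alpha$. The assumption $\gamma < \alpha/(d+\beta-\alpha)$ is equivalent to $a \in (0,1)$, which will be essential. From Corollary~\ref{uintegral}, for $t\in(0,\tau]$ and all $x$ we have both $\int_{\R^d}|u(t,x,y)|\,dy\le c$ and the pointwise bound $|u(t,x,y)|\le c\,t^{-(d+\beta-\alpha)/\alpha}$. Consequently $|U_t f(x)|\le c\|f\|_\infty$ and $|U_t f(x)|\le c\,t^{-(d+\beta-\alpha)/\alpha}\|f\|_1$, and raising the first to the $(1-\gamma)$-th power and the second to the $\gamma$-th power gives
$$|U_t f(x)| \;\le\; c_0\, t^{-a}\,\|f\|_\infty^{1-\gamma}\,\|f\|_1^{\gamma}.$$

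The main step is to prove by induction on $n \ge 0$ the claim
$$|\Psi_{n,t} f(x)| \;\le\; \frac{C^{n+1}\, t^{n}}{n!}\, t^{-a}\,\|f\|_\infty^{1-\gamma}\,\|f\|_1^{\gamma},$$
for a suitable $C$ independent of $n$. The case $n=0$ is the bound above. For the inductive step, I combine the crude bound $\|U_{t-s}h\|_\infty \le c\|h\|_\infty$ from Corollary~\ref{uintegral} with (\ref{N_bound}) and the inductive hypothesis in (\ref{Psi_nt}) to get
$$|\Psi_{n+1,t} f(x)| \;\le\; c\lambda_0 \int_0^t \|\Psi_{n,s}f\|_\infty\, ds \;\le\; \frac{c\lambda_0\,C^{n+1}}{n!\,(n+1-a)}\, t^{\,n+1-a}\,\|f\|_\infty^{1-\gamma}\|f\|_1^{\gamma}.$$
A straightforward calculation shows that $(n+1)/(n+1-a)$ is decreasing in $n$ with maximum $1/(1-a)$ attained at $n=0$, so choosing $C := \max\{c_0,\, c\lambda_0/(1-a)\}$ ensures $c\lambda_0/(n+1-a) \le C/(n+1)$ for every $n$, and the induction closes. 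Summing the series via $\sum_{n\ge 0}C^{n+1}t^n/n! = C e^{Ct}$ gives
$$|T_t f(x)| \;\le\; e^{-\lambda_0 t}\, C\, e^{Ct}\, t^{-a}\,\|f\|_\infty^{1-\gamma}\|f\|_1^{\gamma} \;\le\; c\, t^{-a}\,\|f\|_\infty^{1-\gamma}\|f\|_1^{\gamma}$$
uniformly for $t \in (0,\tau]$, which is the desired estimate.

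The only delicate point is the integrability of $s^{-a}$ at $s=0$ in the first iteration, and this is precisely what dictates the restriction $\gamma < \alpha/(d+\beta-\alpha)$. One might naively hope to also control $\|\Psi_{n,s}f\|_1$ to sharpen the argument, but $\Psi_{n,s}f$ need not lie in $L^1$ and the operator $\calN$ does not preserve $L^1$ in any quantitatively adequate way (the map $x\mapsto x+a_i(x)w$ is only locally close to the identity); hence the $L^1$-input is consumed once, in the $U_t$ kernel bound at level $n=0$, and the iteration thereafter runs purely in the sup-norm.
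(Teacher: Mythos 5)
Your proof is correct, and since the paper explicitly omits this proof and refers to \cite[Theorem 4.3]{KRS2018}, your argument — interpolating the $L^\infty\to L^\infty$ and $L^1\to L^\infty$ bounds for $U_t$ from Corollary \ref{uintegral}, then running a factorial-decay induction on $\Psi_{n,t}$ using (\ref{N_bound}) and the mapping bound $\|U_{t-s}\|_{L^\infty\to L^\infty}\le c$ — is precisely the natural route dictated by the series construction of $T_t$ and matches what the cited argument does. Your closing remark about why the $L^1$ norm can only be used at level $n=0$ is a correct and useful observation, and the constraint $\gamma<\alpha/(d+\beta-\alpha)$ is indeed needed exactly to make $s^{-a}$ integrable at $s=0$ in that first convolution.
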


\begin{theorem}
\label{Holdermain}
Choose $\gamma \in (0,\alpha) \cap (0,1]$. For any $t \in (0,\tau]$, $x, x' \in \R^d$, $f \in \Bb$ we have
$$
|T_tf(x) - T_tf(x')| \le c t^{-\gamma/\alpha} |x - x'|^{\gamma} \|f\|_{\infty}.
$$
\end{theorem}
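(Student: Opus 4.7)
The plan is to prove the Hölder bound term by term in the series defining $T_t$, using Proposition \ref{utfholder} as the engine and relying on the fact that $\calN$ is a bounded operator on $\Bb$. The key observation is that Proposition \ref{utfholder} already gives the required estimate for the $n=0$ term $\Psi_{0,t}f = U_t f$; the remaining terms $\Psi_{n,t}f$ are built by alternating applications of $U_{t-s}$ and $\calN$, so each application of $U$ contributes a factor $(t-s)^{-\gamma/\alpha}$, and the time integrals produce Beta-type factors that, together with the prefactor $e^{-\lambda_0 t}$, yield a convergent series.

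First I would record the sup-norm bound. From Corollary \ref{uintegral} we have $\|U_t f\|_\infty \le c\|f\|_\infty$ for some constant $c\ge 1$ uniform on $(0,\tau]$, while (\ref{N_bound}) gives $\|\calN g\|_\infty \le \lambda_0 \|g\|_\infty$. Iterating the defining recursion (\ref{Psi_nt0})--(\ref{Psi_nt}) yields, by straightforward induction on $n$,
\begin{equation*}
\|\Psi_{n,s} f\|_\infty \le c\,\frac{(c\lambda_0 s)^n}{n!}\,\|f\|_\infty, \qquad s\in (0,\tau],\ n\ge 0.
\end{equation*}
This shows in particular that $\calN(\Psi_{n-1,s}f)$ is a bounded Borel function to which Proposition \ref{utfholder} may be applied.

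Next I would estimate $|\Psi_{n,t}f(x)-\Psi_{n,t}f(x')|$ inductively. For $n=0$ the bound is Proposition \ref{utfholder}. For $n\ge 1$, applying Proposition \ref{utfholder} pointwise under the integral in (\ref{Psi_nt}) gives
\begin{equation*}
|\Psi_{n,t}f(x)-\Psi_{n,t}f(x')|
\le c\,|x-x'|^\gamma \int_0^t (t-s)^{-\gamma/\alpha}\,\|\calN(\Psi_{n-1,s}f)\|_\infty\,ds,
\end{equation*}
and inserting the sup-norm bound above together with the Beta integral
\begin{equation*}
\int_0^t (t-s)^{-\gamma/\alpha} s^{n-1}\,ds
= \frac{\Gamma(1-\gamma/\alpha)\,\Gamma(n)}{\Gamma(n+1-\gamma/\alpha)}\,t^{n-\gamma/\alpha}
\end{equation*}
produces, after absorbing the $(n-1)!$, a bound of the form
\begin{equation*}
|\Psi_{n,t}f(x)-\Psi_{n,t}f(x')|
\le C\,\frac{(C\lambda_0 t)^{n}}{\Gamma(n+1-\gamma/\alpha)}\,t^{-\gamma/\alpha}\,|x-x'|^\gamma \,\|f\|_\infty,
\end{equation*}
with $C$ depending on the constants in Proposition \ref{utfholder} and Corollary \ref{uintegral} but not on $n$.

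Finally I would sum the series. Since $\gamma/\alpha\in (0,1]$ we have $\Gamma(n+1-\gamma/\alpha)\ge \Gamma(n)$ for $n\ge 1$, so $\sum_{n\ge 0}(C\lambda_0 \tau)^n/\Gamma(n+1-\gamma/\alpha)$ converges. Combined with the global factor $e^{-\lambda_0 t}$ in the definition of $T_t f$, this yields
\begin{equation*}
|T_tf(x)-T_tf(x')|
\le e^{-\lambda_0 t}\sum_{n=0}^\infty |\Psi_{n,t}f(x)-\Psi_{n,t}f(x')|
\le c\,t^{-\gamma/\alpha}|x-x'|^\gamma\|f\|_\infty,
\end{equation*}
which is the desired bound. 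The only real care is bookkeeping of the constants so that the series converges uniformly on $(0,\tau]$; there is no genuine analytic obstacle because all the hard work---the Hölder regularity of the truncated kernel $u(t,x,y)$---is already encoded in Proposition \ref{utfholder}, and the long-jump perturbation $\calN$ is bounded on $\Bb$ with no derivatives involved.
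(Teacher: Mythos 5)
Your proof is correct and takes essentially the same route the paper intends: the paper simply defers the argument to \cite{KRS2018}, where the analogous Theorem 4.4 is proved exactly by this term-by-term iteration of the H{\"o}lder bound for $U_t$ through the recursion defining $\Psi_{n,t}$, using boundedness of $\calN$ and the Beta integral to close the induction and sum the series. One small slip worth flagging: you write $\gamma/\alpha\in(0,1]$, but since $\gamma<\alpha$ it is $\gamma/\alpha\in(0,1)$, and the claim $\Gamma(n+1-\gamma/\alpha)\ge\Gamma(n)$ is not literally true at $n=1$ (the Gamma function dips below $1$ on $(1,2)$); this does not affect convergence of the series, so the conclusion stands, but the justification should rather be that $\Gamma(n+1-\gamma/\alpha)$ grows at a factorial rate.
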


We need the following auxiliary result. Its proof  is similar to the proof of Lemma 4.10 in \cite{KRS2018} and it is omitted.
\begin{lemma}
\label{distant_support}
Assume that $f \in \Bb$. For any $\eps_1 > 0$ there exists $r \ge 1$ (depending on $\eps_1, \tau, \alpha, \beta, \underline{C}, \overline{C}, d, \eta_1, \eta_2, \eta_3, \eta_4, \nu_0, \eps_0, \delta_0$), such that for any $\xi \in [0,1]$, $t \in [0,\tau]$, $x \in \R^d$, if $\dist(x, \supp(f)) \ge r$, then 
$|T_t^{(\xi)} f(x)| \le \sum_{n = 0}^{\infty} |\Psi_{n,t}^{(\xi)} f(x)| \le \eps_1 \|f\|_{\infty}$.
\end{lemma}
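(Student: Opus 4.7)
The plan is to exploit the series representation $T_t^{(\xi)}f = e^{-\lambda_0 t}\sum_{n\ge 0}\Psi_{n,t}^{(\xi)}f$: first truncate the series at a level $N=N(\eps_1)$, and then bound each of the remaining finitely many terms $|\Psi_{n,t}^{(\xi)}f(x)|$, $n\le N$, by induction on $n$, showing it is small once $\dist(x,\supp f)$ is sufficiently large. Direct iteration in (\ref{Psi_nteps}), combined with the uniform bound $\|U_s^{(\xi)}g\|_\infty\le c\|g\|_\infty$ coming from Corollary \ref{ueps_estimate} and with (\ref{N_bound}), yields
$$
\|\Psi_{n,s}^{(\xi)}f\|_\infty \le \frac{(c\lambda_0\tau)^n}{n!}\,c\,\|f\|_\infty, \quad s\in[0,\tau],\ \xi\in[0,1].
$$
Hence, given $\eps_1>0$, $N$ can be chosen so that $e^{-\lambda_0 t}\sum_{n>N}\|\Psi_{n,t}^{(\xi)}f\|_\infty\le (\eps_1/2)\|f\|_\infty$, and the problem reduces to producing, for each $n\in\{0,\dots,N\}$, a radius $r_n$ such that $|\Psi_{n,t}^{(\xi)}f(x)|\le \eps_1\|f\|_\infty/(2(N+1))$ whenever $\dist(x,\supp f)\ge r_n$, uniformly in $t\in[0,\tau]$ and $\xi\in[0,1]$.

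For the base case $n=0$ I would use that $\Psi_{0,t}^{(\xi)}f(x)=\int u^{(\xi)}(t,x,y)f(y)\,dy$ and that $f$ vanishes off $\supp f$. Together with the exponential decay $|u^{(\xi)}(t,x,y)|\le c\,e^{-c'\sqrt{|x-y|}}$ for $|x-y|\ge a$, obtained by arguments analogous to those in Corollary \ref{uintegral} and uniform in $t\in(0,\tau]$ and $\xi\in[0,1]$, this gives $|\Psi_{0,t}^{(\xi)}f(x)|\le C e^{-c''\sqrt{r}}\|f\|_\infty$ whenever $\dist(x,\supp f)\ge r\ge a$, which is arbitrarily small for large $r$. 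The cases $t=0$ are handled either trivially ($\xi=0$: $\Psi_{0,0}^{(0)}f=f$ vanishes off $\supp f$) or via the same integral argument applied to $p_y(\xi,\cdot)$ using Lemma \ref{estimate_pytx}.

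For the inductive step, assume the conclusion holds for $n-1$ with some radius $r_{n-1}(\eta')$. Writing $\Psi_{n,t}^{(\xi)}f(x)=\int_0^t U_{t-s}^{(\xi)}(\calN\Psi_{n-1,s}^{(\xi)}f)(x)\,ds$, I would split the $w$-integral in $\calN\Psi_{n-1,s}^{(\xi)}f(z)$ at the threshold $|w|\le R$ versus $|w|>R$. Since $\mu_i$ is supported in $[-2\delta,2\delta]$ and $\nu_i(w)\le c|w|^{-1-\alpha}$ for $|w|\ge\delta$ (recalled at the start of Section 5), the large-jump contribution is uniformly $O(R^{-\alpha})\|f\|_\infty$. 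For $|w|\le R$ and $z$ with $\dist(z,\supp f)\ge r_{n-1}(\eta')+\eta_1\sqrt{d}\,R$, the shifted argument $z+a_i(z)w$ satisfies $\dist(z+a_i(z)w,\supp f)\ge r_{n-1}(\eta')$, so by the inductive hypothesis the short-jump contribution is bounded by $\lambda_0\eta'\|f\|_\infty$. Splitting the outer $z$-integral $\int u^{(\xi)}(t-s,x,z)(\cdots)\,dz$ into this good set and its complement, and using on the complement the exponential decay of $u^{(\xi)}$ (valid since $|x-z|\ge r-r_{n-1}(\eta')-\eta_1\sqrt{d}\,R$), I would obtain a bound of the form
$$
|\Psi_{n,t}^{(\xi)}f(x)| \le c\tau\bigl(\lambda_0\eta'+cR^{-\alpha}\bigr)\|f\|_\infty + C\,e^{-c'\sqrt{r-r_{n-1}(\eta')-\eta_1\sqrt{d}\,R}}\|f\|_\infty.
$$
Choosing $\eta'$ small first, then $R$ large, then $r$ large (in that order) makes each piece smaller than $\eps_1\|f\|_\infty/(2(N+1))$ and closes the induction.

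The main obstacle is the nonlocality of $\calN$, which permits jumps of arbitrarily large size and therefore destroys any naive finite-range propagation. The remedy is the three-scale tuning $\eta'\to R\to r$, which must be performed in the correct order and with constants uniform in $t\in[0,\tau]$, $\xi\in[0,1]$. A further delicate point is that the plain polynomial bound on $u^{(\xi)}$ from Corollary \ref{ueps_estimate} carries the factor $(t+\xi)^{1-(d+\beta)/\alpha}$, which blows up as $t+\xi\to 0$ when $(d+\beta)/\alpha>2$; the exponential-in-$\sqrt{|x-z|}$ decay (uniform in $t,\xi$) is therefore essential for obtaining a single bound valid on the whole range of parameters.
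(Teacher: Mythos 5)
Your argument is correct and matches the approach the paper outsources to \cite[Lemma 4.10]{KRS2018}: truncate the Duhamel series using the factorial bound $\|\Psi_{n,t}^{(\xi)}f\|_\infty \le c\,(c\lambda_0 t)^n/n!\,\|f\|_\infty$, then handle the finitely many remaining terms by induction on $n$, splitting the jump kernel $\nu_i-\mu_i$ at a scale $R$ and exploiting the $(t,\xi)$-uniform exponential decay of $u^{(\xi)}$ for $|x-y|\ge a$ (which indeed transfers from Corollary \ref{uintegral} because Lemma \ref{estimate_pytx} is stated on $(0,\tau+1]$), with the three-scale $\eta'\to R\to r$ tuning closing the induction. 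One small slip worth fixing: in the tail estimate the extraneous $e^{-\lambda_0 t}$ factor should be dropped, since the asserted inequality bounds $\sum_n|\Psi_{n,t}^{(\xi)}f(x)|$ directly rather than $e^{-\lambda_0 t}\sum_n|\Psi_{n,t}^{(\xi)}f(x)|$ — harmless, because the factorial decay already makes that tail summable without the exponential prefactor.
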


Now we need the following result which, roughly speaking, gives that locally $T_t f$ for $f \in \Bb$ may be approximated by a sequence $T_t f_k$, $k \in \N$, where $f_k \in C_0(\R^d)$.
\begin{proposition}
\label{borelbounded}
For each $t \in (0,\tau]$, $f \in \Bb$ and $R \ge 1$ there exists a sequence $f_k \in C_0(\R^d)$, $k \in \N$ such that $\lim_{k \to \infty} f_k(x) = f(x)$ for almost all $x \in B(0,R)$; for any $k \in \N$ we have $\|f_k\|_{\infty} \le \|f\|_{\infty}$ and for any $x \in B(0,R)$ we have $\lim_{k \to \infty} T_t f_k(x) = T_t f(x)$.
\end{proposition}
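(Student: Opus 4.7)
My plan is to prove this via an $L^1$-kernel representation of $T_t$ combined with standard approximation, since with such a representation in hand, dominated convergence immediately produces the convergence $T_tf_k(x)\to T_tf(x)$ for any $f_k\to f$ a.e.\ with $\|f_k\|_\infty\le\|f\|_\infty$.

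First I would show, by induction on $n$, that each $\Psi_{n,t}$ admits an $L^1$-kernel: there exists a measurable $u_n(t,x,y)$ with $\Psi_{n,t}f(x)=\int_{\R^d}u_n(t,x,y)f(y)\,dy$ and $\int_{\R^d}|u_n(t,x,y)|\,dy\le C_n(t)$, with $\sum_n C_n(t)<\infty$. The base case $n=0$ is Corollary \ref{uintegral}, which gives both the identity $\Psi_{0,t}f=U_tf=\int u(t,x,y)f(y)\,dy$ and the bound $\int|u(t,x,y)|\,dy\le c$. For the inductive step I would substitute the inductive hypothesis into $\Psi_{n,t}f(x)=\int_0^t U_{t-s}(\calN(\Psi_{n-1,s}f))(x)\,ds$ from (\ref{Psi_nt}) and apply Fubini several times, extracting $f(y)$ as the outermost integrand. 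Each swap is justified because $|\nu_i-\mu_i|$ has total mass at most $\lambda_0$, Corollary \ref{uintegral} bounds $\int|u(t-s,x,z)|\,dz$ uniformly in $x$, and the inductive $L^1$-bound on $u_{n-1}(s,z',\cdot)$ is uniform in $z'$ (so in particular valid at $z'=z+a_i(z)w$). Iterating gives $C_n(t)\le c(c'\lambda_0 t)^n/n!$, so $K_t(x,y):=e^{-\lambda_0 t}\sum_n u_n(t,x,y)$ is well defined with $\int_{\R^d}|K_t(x,y)|\,dy\le c$ uniformly in $x$, and $T_tf(x)=\int_{\R^d}K_t(x,y)f(y)\,dy$ for every $f\in\Bb$.

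Next I would construct the approximating sequence $f_k$ by the Lusin--Tietze procedure: for each $k$ apply Lusin's theorem on $B(0,k)$ to obtain a closed $F_k\subset B(0,k)$ with $|B(0,k)\setminus F_k|<k^{-2}$ on which $f$ is continuous, extend $f|_{F_k}$ by Tietze to some $g_k\in C(\R^d)$ with $\|g_k\|_\infty\le\|f\|_\infty$, and multiply by a continuous cutoff equal to $1$ on $B(0,k)$ and supported in $B(0,k+1)$ to produce $f_k\in C_c(\R^d)\subset C_0(\R^d)$ satisfying $\|f_k\|_\infty\le\|f\|_\infty$. For any fixed $R_0$ and every $k\ge R_0$, one has $f_k=f$ on $F_k\cap B(0,R_0)$, and the complement of this set in $B(0,R_0)$ has measure less than $k^{-2}$; a Borel--Cantelli argument then gives $f_k(x)\to f(x)$ for almost every $x\in\R^d$, and in particular a.e.\ in $B(0,R)$.

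The conclusion follows by dominated convergence applied to $\int K_t(x,y)f_k(y)\,dy$, with dominating function $\|f\|_\infty|K_t(x,\cdot)|\in L^1(\R^d)$. The main technical obstacle is the inductive Fubini step producing the kernels $u_n$: one must carefully track that the change of variable $z\mapsto z+a_i(z)w$ (which appears inside $\calN$) preserves the $L^1$-estimate on $u_{n-1}$ in the last variable uniformly in the intermediate variables, and that the bound propagates to a summable series; once this is in place, the rest of the argument is routine approximation theory.
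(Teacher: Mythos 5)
Your proposal is correct in substance, but it takes a genuinely different route from the paper. The paper's proof avoids constructing an $L^1$-kernel for $T_t$ on $\Bb$ at this stage. Instead it fixes a radius $R_k$ via Lemma \ref{distant_support} so that the contribution of $f\,1_{B^c(0,R_k)}$ to $T_t$ on $B(0,R)$ is at most $1/k$, splits $f = g_{1,k} + g_{2,k}$ with $g_{1,k}=f\,1_{B(0,R_k)}$, approximates $g_{1,k}$ in $L^1$ by $f_k\in C_0(\R^d)$ with $\|f_k\|_\infty\le\|f\|_\infty$ and $\supp f_k\subset B(0,R_k+1)$, and then controls $|T_t(f_k - g_{1,k})|$ pointwise by the $L^1$--$L^\infty$ ultracontractivity estimate of Theorem \ref{TtL1Linfty}. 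You instead derive the kernel identity $T_tf(x)=\int K_t(x,y)f(y)\,dy$ for all $f\in\Bb$ directly, by iterating Corollary \ref{uintegral} and the bound $\|\calN g\|_\infty\le\lambda_0\|g\|_\infty$ through the recursion (\ref{Psi_nt}), and then conclude by Lusin--Tietze and dominated convergence. Both approaches are non-circular and rest on the same ingredients ultimately (the $L^1$-bound $\int|u(t,x,y)|\,dy\le c$ and the finiteness of $\lambda_0$); your kernel construction essentially anticipates the one the paper carries out later in the proof of Proposition \ref{heatkernel}. What your approach buys is independence from Theorem \ref{TtL1Linfty} and a more transparent domination argument; what the paper's buys is that it postpones the kernel construction until it is actually needed and never has to justify the repeated Fubini interchanges in the series. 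The one place where your writeup is thinner than ideal is precisely those interchanges: to pass $f(y)$ out through the nested integrals defining $\Psi_{n,t}$ you should note that $u_{n-1}$ is jointly Borel measurable in $(s,z',y)$ (so that $u_{n-1}(s,z+a_i(z)w,y)$ is measurable in $(s,z,w,y)$) and that the iterated absolute integral is finite, $\le \|f\|_\infty\, c\,(c\lambda_0 t)^{n}/n!$; once stated, Fubini and then dominated convergence on the partial sums give $T_tf(x)=\int K_t(x,y)f(y)\,dy$ with $\int|K_t(x,y)|\,dy\le c\,e^{c\lambda_0 t - \lambda_0 t}$, and your conclusion follows.
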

\begin{proof}
Fix $t \in (0,\tau]$, $f \in \Bb$, $R \ge 1$ and $k \in \N$, $k \ge 1$. By Lemma \ref{distant_support} there exists 
$R_k \ge R$ such that for any $x \in B(0,R)$ we have
\begin{equation}
\label{large}
|T_t(f 1_{B^c(0,R_k)})(x)| \le \frac{1}{k}.
\end{equation}
Put $g_{1,k}(x) = 1_{B(0,R_k)}(x) f(x)$, $g_{2,k}(x) = 1_{B^c(0,R_k)}(x) f(x)$. By standard arguments there exists $f_k \in C_0(\R^d)$ such that 
$$
\|f_k - g_{1,k}\|_{1} \le \frac{1}{k}.
$$
and $\supp(f_k) \subset B(0,R_k + 1)$, $\|f_k\|_{\infty} \le \|f\|_{\infty}$. 
By Theorem \ref{TtL1Linfty}, for any $x \in \R^d$, we have
$$
|T_t(f_k - g_{1,k})(x)| \le 
\frac{c \|f\|_{\infty}^{1-\alpha/(2d+2\beta- 2\alpha)}}{k^{\alpha/(2d+2\beta- 2\alpha)} t^{1/2}}.
$$
This and (\ref{large}) imply that for any $x \in B(0,R)$ we have $\lim_{k \to \infty} T_t f_k(x) = T_t f(x)$. We also have $\|f_k 1_{B(0,R)} - f 1_{B(0,R)}\|_1 \le 1/k$. Hence, there exists a subsequence $k_m$ such that $\lim_{m \to \infty} f_{k_m}(x) = f(x)$ for almost all $x \in B(0,R)$.
\end{proof}

The next result, Proposition \ref{martingaleproblem} is a very important one, it will be a main tool (in the proof of Theorem  \ref{mainthm}) to show that for any $t > 0$ we have $T_t = P_t$, where $P_t$ is given by (\ref{semigroup}). The steps leading to prove this proposition are very similar to the arguments used in \cite{KRS2018} to show Proposition 4.21. In that paper one shows that $T_t f$ for $f \in C_0(\R^d)$ satisfies the appropriate heat equation in the approximate setting, see \cite[Lemma 4.18]{KRS2018}. Then in the proof of \cite[Proposition 4.21]{KRS2018} one uses this heat equation and the positive maximum principle (for the operator $\calK$), which is formulated in \cite[Lemma 4.19]{KRS2018}. These arguments can be repeated, almost without changes to obtain the proof of Proposition \ref{martingaleproblem}. We decided not to repeat these arguments, since the interested reader can easily find them in \cite{KRS2018}.

\begin{proposition}
\label{martingaleproblem}
For any $t \in (0,\infty)$, $x \in \R^d$ and $f \in C_0^2(\R^d)$ we have
\begin{equation}
\label{martingaleproblem1}
T_t f(x) = f(x) + \int_0^t T_s (\calK f)(x) \, ds.
\end{equation}
\end{proposition}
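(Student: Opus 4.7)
The strategy parallels the proof of \cite[Proposition 4.21]{KRS2018}. First, I would introduce the approximate semigroup
$$T_t^{(\xi)} f(x) = e^{-\lambda_0 t} \sum_{n = 0}^{\infty} \Psi_{n,t}^{(\xi)} f(x), \qquad \xi \in (0,1],$$
via (\ref{Psi_nteps0})--(\ref{Psi_nteps}). By (\ref{N_bound}) and the uniform $L^\infty$-bound on $U_t^{(\xi)}$, the series converges uniformly on $[0,\tau]\times\R^d$; each partial sum is $C^1$ in $t$ and $C^2$ in $x$ by Lemma \ref{regularity_uteps}(i); and Lemma \ref{regularity_uteps}(iv) then yields $T_t^{(\xi)} f \to T_t f$ uniformly on $[0,\tau]\times\R^d$ as $\xi \to 0^+$ for $f \in C_0(\R^d)$.

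Second, I would derive an approximate heat equation for $T_t^{(\xi)}$. By the very definition of $\Lambda_s^{(\xi)}$ (introduced before Lemma \ref{heat_u}), one has $\partial_s U_s^{(\xi)} g = \calL U_s^{(\xi)} g + \Lambda_s^{(\xi)} g$. Differentiating (\ref{Psi_nteps}) in $t$,
$$\partial_t \Psi_{n,t}^{(\xi)} f = U_0^{(\xi)}(\calN \Psi_{n-1,t}^{(\xi)} f) + \calL \Psi_{n,t}^{(\xi)} f + \int_0^t \Lambda_{t-s}^{(\xi)}(\calN \Psi_{n-1,s}^{(\xi)} f)\, ds$$
for $n \ge 1$, with the analogous identity for $n = 0$. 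Summing over $n$, using that $U_0^{(\xi)} g \to g$ uniformly for $g \in C_0(\R^d)$ as $\xi \to 0^+$, together with the identities $\calN - \lambda_0 I = \calR$ and $\calK = \calL + \calR$, I arrive at
$$\partial_t T_t^{(\xi)} f(x) = \calK T_t^{(\xi)} f(x) + E^{(\xi)}(t,x),$$
where $E^{(\xi)}(t,x) \to 0$ uniformly on $[\tau_1,\tau_2]\times\R^d$ for any $0 < \tau_1 < \tau_2 \le \tau$, by Lemma \ref{heat_u}(i) and uniform summability of the tail of the error series.

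Third, for $f \in C_0^2(\R^d)$ one checks directly that $\calK f \in C_0(\R^d)$ (the jump integrand vanishes when $x$ is far from $\supp f$ and $|w|$ is small, while the tails of $\nu_i$ at infinity are integrable). Put
$$v(t,x) = T_t f(x) - f(x) - \int_0^t T_s(\calK f)(x)\, ds,$$
and define $v^{(\xi)}$ analogously with $T_s$ replaced by $T_s^{(\xi)}$ (and $f$ replaced by $T_0^{(\xi)} f$ in the free term). Applying Step 2 both to $f$ and to $\calK f$, one verifies that $\partial_t v^{(\xi)} - \calK v^{(\xi)} \to 0$ uniformly on $[\tau_1,\tau_2]\times\R^d$; $v^{(\xi)}(0,x) = T_0^{(\xi)} f(x) - f(x) \to 0$ uniformly in $x$; $v^{(\xi)} \to v$ uniformly on $[0,\tau]\times\R^d$; and $v^{(\xi)}(t,x) \to 0$ as $|x| \to \infty$ uniformly in $t, \xi$ by Lemma \ref{distant_support}. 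The positive maximum principle for $\calK$ (the analogue of Lemma \ref{max_principle}, whose extension from $\calL$ to $\calK$ is routine since $\calR = \calN - \lambda_0 I$ is a bounded perturbation by (\ref{N_bound})) applied to $v$ and $-v$ then gives $v \equiv 0$, which is the claim.

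The principal obstacle is Step 2: justifying the termwise differentiation of the infinite series defining $T_t^{(\xi)} f$ and controlling the aggregate error $E^{(\xi)}$. One has $\|\Psi_{n-1,s}^{(\xi)} f\|_\infty \le c^n s^n/n!\,\|f\|_\infty$ by (\ref{N_bound}) and the uniform bound on $U_t^{(\xi)}$, which makes the error series absolutely summable; then the uniform convergence in Lemma \ref{heat_u}(i) applied termwise drives the full error to zero as $\xi \to 0^+$. The secondary matter, namely extending the positive maximum principle from $\calL$ to $\calK$, is handled by treating the bounded operator $\calR$ as a perturbation that does not affect the structure of the argument in Lemma \ref{max_principle}.
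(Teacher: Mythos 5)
Your strategy coincides with what the paper has in mind: establish an approximate heat equation for $T_t^{(\xi)}f$ and then invoke a positive maximum principle for $\calK$ (the analogue of Lemma \ref{max_principle} with $\calL$ replaced by $\calK$), exactly as referenced from \cite[Lemma 4.18, Lemma 4.19, Proposition 4.21]{KRS2018}. Your remark that the $\calL\to\calK$ extension of the maximum principle is routine because $\calR = \calN - \lambda_0 I$ is a bounded jump-type perturbation is also correct.

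One detail in Step 3 should be changed. You define
$$
v^{(\xi)}(t,x) = T_t^{(\xi)}f(x) - T_0^{(\xi)}f(x) - \int_0^t T_s^{(\xi)}(\calK f)(x)\,ds,
$$
replacing the free term $f$ by $T_0^{(\xi)}f = U_0^{(\xi)}f$. Computing $\partial_t v^{(\xi)} - \calK v^{(\xi)}$ with the approximate heat equation applied both to $f$ and to $\calK f$, the free term contributes
$$
\calK T_0^{(\xi)}f - T_0^{(\xi)}(\calK f) = \calL\bigl(U_0^{(\xi)}f - f\bigr) - \bigl(U_0^{(\xi)}(\calL f) - \calL f\bigr) + (\text{bounded-perturbation terms}),
$$
and while the second and third pieces go to $0$ uniformly as $\xi\to 0^+$, the term $\calL(U_0^{(\xi)}f - f)$ does not: $\calL$ is unbounded, and by Lemma \ref{properties_pytx} the second derivatives of $U_0^{(\xi)}f$ blow up like $\xi^{-(d+2)/\alpha}$. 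So condition (iii) of Lemma \ref{max_principle} is not verified with your $v^{(\xi)}$. The fix is simply to keep $f$, i.e.\ set
$$
v^{(\xi)}(t,x) = T_t^{(\xi)}f(x) - f(x) - \int_0^t T_s^{(\xi)}(\calK f)(x)\,ds.
$$
Then the troublesome term becomes $\calK f - T_0^{(\xi)}(\calK f)$, which tends to $0$ uniformly by Lemma \ref{pyeps_limit} since $\calK f \in C_0(\R^d)$ for $f\in C_0^2(\R^d)$, while conditions (iv)--(v) still hold because $v(0,\cdot)=0$ and $T_t^{(\xi)}f\to f$ uniformly as $\xi,t\to 0^+$. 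With this adjustment the rest of your argument goes through as described.

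One further point worth flagging: in Step 2 you invoke Lemma \ref{heat_u}(i) termwise with arguments $\calN\Psi_{n-1,s}^{(\xi)}f$, which themselves depend on $\xi$, whereas the lemma is stated for a fixed $f\in C_0(\R^d)$. This can be justified by splitting $\Psi_{n-1,s}^{(\xi)}f = \Psi_{n-1,s}f + (\Psi_{n-1,s}^{(\xi)}f - \Psi_{n-1,s}f)$, using Lemma \ref{heat_u}(i) on the first piece and the uniform smallness of the second together with the $c(\xi)\|\cdot\|_\infty$-type bounds on $\Lambda^{(\xi)}$; this requires some care to keep the bound uniform, but it is the same bookkeeping carried out in \cite[Lemma 4.18]{KRS2018}.
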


The next result, Theorem \ref{FellerT} shows that $\{T_t\}$ is a Feller semigroup. Its proof  is almost the same as the proof of \cite[Theorem 4.22]{KRS2018}. Again, we decided not to repeat it.
\begin{theorem}
\label{FellerT}
We have

(i) $T_t: C_0(\R^d) \to C_0(\R^d)$ for any $ t \in (0,\infty)$,

(ii) $T_t f (x) \ge 0$ for any $t > 0$, $x \in \R^d$ and $f \in C_0(\R^d)$ such that $f(x) \ge 0$ for all $x \in \R^d$,

(iii) $T_t 1_{\R^d}(x) = 1$ for any $t > 0$, $x \in \R^d$,

(iv) $T_{t + s} f(x) = T_t(T_s f)(x)$ for any $s, t > 0$, $x \in \R^d$, $f \in C_0(\R^d)$,

(v) $\lim_{t \to 0^+} ||T_t f - f||_{\infty} = 0$ for any $f \in C_0(\R^d)$.

(vi) there exists a nonnegative function $p(t,x,y)$ in $(t,x,y) \in (0,\infty)\times\Rd\times\Rd$; for each fixed $t > 0$, $x \in \R^d$ the function $y \to p(t,x,y)$ is Lebesgue measurable, $\int_{\R^d} p(t,x,y) \, dy = 1$ and $T_t f(x) = \int_{\R^d} p(t,x,y) f(y) \, dy$ for $f \in C_0(\R^d)$.
\end{theorem}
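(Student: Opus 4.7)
The plan is to follow the blueprint of \cite[Theorem 4.22]{KRS2018}. The tools already assembled carry over almost mechanically: Theorem \ref{Holdermain} gives H\"older continuity uniformly in $x$, Lemma \ref{distant_support} gives smallness of $T_t f$ far from $\supp(f)$, Proposition \ref{positivity} gives positivity of the truncated semigroup $U_t$, the series (\ref{Psi_nt0})--(\ref{Psi_nt}) together with the nonnegativity of $\nu_i-\mu_i$ gives positivity of $T_t$, and Proposition \ref{martingaleproblem} plays the role of Kolmogorov's backward equation on $C_0^2(\R^d)$. What remains is to glue these into a Feller semigroup and to extract the transition kernel.

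For (i), given $f\in C_0(\R^d)$ choose $f_n\in C_c(\R^d)$ with $\|f-f_n\|_\infty\to 0$. Since $\|T_t g\|_\infty\le c\|g\|_\infty$ (noted right after the definition of $T_t$), it suffices to show $T_t f_n\in C_0(\R^d)$. Continuity of $T_t f_n$ follows from Theorem \ref{Holdermain}. Vanishing at infinity follows from Lemma \ref{distant_support}: for $\eps_1>0$ choose $r=r(\eps_1)$ so that $|T_t f_n(x)|\le \eps_1\|f_n\|_\infty$ whenever $\dist(x,\supp(f_n))\ge r$. For (ii), proceed inductively: $\Psi_{0,t}f=U_t f\ge 0$ by Proposition \ref{positivity}; if $\Psi_{n-1,s}f\ge 0$, then $\calN(\Psi_{n-1,s}f)\ge 0$ (since $\nu_i-\mu_i\ge 0$ and the shifts keep nonnegativity), hence by (\ref{Psi_nt}) and Proposition \ref{positivity} applied to the nonnegative integrand, $\Psi_{n,t}f\ge 0$; summing gives $T_tf\ge 0$. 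For (v), if $f\in C_0^2(\R^d)$ then by Proposition \ref{martingaleproblem} and the bound $\|T_s\|_{\infty\to\infty}\le c$ on $[0,\tau]$,
$$
\|T_t f-f\|_\infty \le \int_0^t \|T_s(\calK f)\|_\infty \, ds \le c t\,\|\calK f\|_\infty \xrightarrow[t\to 0^+]{} 0,
$$
and density of $C_c^\infty(\R^d)\subset C_0^2(\R^d)$ in $(C_0(\R^d),\|\cdot\|_\infty)$, combined with the uniform bound $\|T_t g\|_\infty\le c\|g\|_\infty$, extends strong continuity to all of $C_0(\R^d)$.

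The two genuinely delicate points are (iii) and (iv); (iv) is the main obstacle. For (iii), take $\phi_n\in C_c^\infty(\R^d)$ with $0\le\phi_n\le 1$ and $\phi_n\equiv 1$ on $B(0,n)$, $\supp\phi_n\subset B(0,n+1)$. By the scaling bound (\ref{nu-ubound}), $\|\calK\phi_n\|_\infty\le c n^{-\alpha}$, and $\phi_n(x)\to 1$ pointwise, so Proposition \ref{martingaleproblem} together with positivity of $T_s$ gives $T_t\phi_n(x)=\phi_n(x)+\int_0^t T_s(\calK\phi_n)(x)\,ds\to 1$ as $n\to\infty$; on the other hand, writing $T_t 1_{\R^d}=T_t\phi_n+T_t(1_{\R^d}-\phi_n)$ and using Lemma \ref{distant_support} for $|x|\le n-r$ shows $T_t 1_{\R^d}(x)=1$. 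For (iv), fix $f\in C_0^2(\R^d)$ and $s>0$ and set $v(t,x)=T_{t+s}f(x)$, $w(t,x)=T_t(T_sf)(x)$; both are bounded, positive, and both satisfy the integral equation $u(t,x)=u(0,x)+\int_0^t T_r(\calK f)$-style identity coming from Proposition \ref{martingaleproblem}, with common initial datum $T_s f$. A positive maximum principle argument on $v-w$ (analogous to Lemma \ref{max_principle}, but applied after the parametrix construction has already been completed) forces $v\equiv w$; the density extension to $f\in C_0(\R^d)$ uses $\|T_t\|_{\infty\to\infty}\le c$. I expect this uniqueness step to be the most technically involved, since one must propagate the maximum principle along approximate, mollified versions of $v-w$ to accommodate that $T_s f$ need not lie in $C_0^2(\R^d)$. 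Finally, for (vi): properties (i)--(iv) make $T_t$ a Feller semigroup of sub-Markov operators with $T_t 1=1$, so Riesz representation yields a probability kernel $p_t(x,\cdot)$ with $T_tf(x)=\int f(y)p_t(x,dy)$; absolute continuity with respect to Lebesgue measure follows from Theorem \ref{TtL1Linfty}: given a Lebesgue-null Borel set $E$, approximate $1_E$ by $f_n\in C_0(\R^d)$ with $0\le f_n\le 1$ and $\|f_n\|_1\to 0$, and note
$$
p_t(x,E)=\lim_n T_t f_n(x)\le \lim_n c t^{-\gamma(d+\beta-\alpha)/\alpha}\|f_n\|_1^\gamma =0.
$$
The density $p(t,x,y)$ is then the Radon--Nikodym derivative of $p_t(x,\cdot)$.
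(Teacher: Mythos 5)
Parts (i), (ii), (iii), (v), (vi) are in good shape: your use of Theorem \ref{Holdermain} and Lemma \ref{distant_support} for (i), the inductive positivity of the $\Psi_{n,t}$ for (ii), the backward-Kolmogorov-plus-cutoff argument for (iii), the estimate $\|T_tf-f\|_\infty\le ct\|\calK f\|_\infty$ on $C_0^2(\R^d)$ plus density for (v), and Riesz representation plus Theorem \ref{TtL1Linfty} for (vi) are the right moves. Two small caveats: in (ii) you implicitly use Proposition \ref{positivity} for data that are bounded and nonnegative but not in $C_0(\R^d)$ (since $\calN(\Psi_{n-1,s}f)$ need not vanish at infinity); that extension is available once one knows $u(t,x,y)\ge 0$, but it should be said. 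In (vi) the passage $p_t(x,E)=\lim_n T_tf_n(x)$ should be organized around open supersets of a Lebesgue-null $E$ with small measure and monotone convergence from below; the conclusion via Theorem \ref{TtL1Linfty} is correct.

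The genuine gap is (iv). Setting $v(t,x)=T_{t+s}f(x)$ and $w(t,x)=T_t(T_sf)(x)$, it is not true that both satisfy ``the integral equation coming from Proposition \ref{martingaleproblem}''. For $w$, the initial datum $T_sf$ is generally not in $C_0^2(\R^d)$, so Proposition \ref{martingaleproblem} cannot be invoked for it. For $v$, Proposition \ref{martingaleproblem} only gives $\partial_t v(t,x)=T_{t+s}(\calK f)(x)$, which is not visibly $\calK v(t,\cdot)(x)$; the identity $T_r(\calK f)=\calK(T_rf)$ is precisely the semigroup property in disguise, so this reasoning is circular. To actually close (iv) one needs the $\calK$-analogue of Lemma \ref{heat_u}, i.e.\ that $\partial_t T_t^{(\xi)}g - \calK T_t^{(\xi)}g\to 0$ as $\xi\to 0^+$ for every $g\in C_0(\R^d)$ (this is \cite[Lemma 4.18]{KRS2018}), together with a positive maximum principle for the full operator $\calK$ (the paper's Lemma \ref{max_principle} is stated only for the truncated $\calL$; the $\calK$-version is \cite[Lemma 4.19]{KRS2018}). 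With these two ingredients one compares $T_{t+s}^{(\xi)}f$ and $T_t^{(\xi)}(T_s^{(\xi)}f)$ and sends $\xi\to 0^+$. As written, your proposal asserts the conclusion of this uniqueness argument without assembling the tools needed to carry it out, and a reader cannot reconstruct (iv) from the sketch.
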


We are now in a position to provide the proofs of Theorems  \ref{mainthm} and \ref{PtL1Linfty}.
\begin{proof}[proof of Theorem \ref{mainthm}] 
From Theorem \ref{FellerT}  we conclude that there is a Feller process $\tilde{X}_t$ with the semigroup $T_t$ on $C_0(\R^d)$. 
Let  $\tilde{\p}^x, \tilde{\E}^x $ be the distribution and expectation of the process $\tilde{X}_t$ starting from $x\in \R^d$. 

By Theorem \ref{FellerT} (vi), Proposition \ref{borelbounded} and Lemma \ref{distant_support} we get
\begin{equation}
\label{semigroup_t}
\tilde{\E}^x f(\tilde{X}_t) = T_t f(x) = \int_{\R^d} p(t,x,y) f(y) \, dy, \quad f \in \Bb, \, t > 0, \, x \in \R^d.
\end{equation}
By Proposition \ref{martingaleproblem}, for any function $f \in C_c^2(\R^d)$, 
the process
\begin{equation*} \label{martingale0}
M_t^{\tilde{X},f}=f(\tilde{X}_t)-f(\tilde{X}_0)- \int_0^t \calK f(\tilde{X}_s)ds 
\end{equation*}
is a  $(\tilde{\p}^x, \calF_t)$ martingale, where $\calF_t  $ is a natural filtration. That is  $\tilde{\p}^x$ solves the martingale problem for $(\calK, C_c^2(\R^d))$. On the other hand, by standard arguments, the unique solution $X$ to the stochastic equation
(\ref{main}) has the law which is the solution to the  martingale problem for 
$(\calK, C_c^2(\R^d))$ (see e.g. \cite[page 120]{K2011}). 

By the Lipschitz property of $a_{i,j}(x)$ and by the Yamada-Watanabe theorem (see \cite[Theorems 37.5 and 37.6]{M1982}) the equation (\ref{main}) has the weak uniqueness property. By this and \cite[Corollary 2.5]{K2011} weak uniqueness holds for the martingale problem for 
$(\calK, C_c^2(\R^d))$. 

Hence $\tilde{X}$ and $X$ have the same law so for any $t > 0$, $x \in \R^d$  and any Borel set $D \subset \R^d$ we have
\begin{equation}
\label{ExAX}
\E^x(1_D(X_t)) = \int_{D} p(t,x,y) \, dy.
\end{equation}
Using this, (\ref{semigroup}) and (\ref{semigroup_t}) we obtain
\begin{equation}
\label{pttt}
P_t f(x) = T_t f(x), \quad t > 0, \, x \in \R^d, \, f \in \Bb.
\end{equation}
Now the assertion of Theorem \ref{mainthm} follows from Theorem \ref{Holdermain} and (\ref{pttt}).
\end{proof}

\begin{proof}[proof of Theorem \ref{PtL1Linfty}] 
The result follows from Theorem \ref{TtL1Linfty} and (\ref{pttt}).
\end{proof}

\begin{proof}[proof of Proposition \ref{heatkernel}]
From  (\ref{ExAX}) we know that transition densities $p(t,x,y)$ for $X_t$ exists. By Lemma \ref{pycontinuity} $(t,x,y) \to p_y(t,x)$ is continuous on $(0,\infty)\times \R^d \times \R^d$. By (\ref{q0formula}) and (\ref{est2}) we obtain that $(t,x,y) \to q_0(t,x,y)$ is continuous on $(0,\infty)\times \R^d \times \R^d$. It follows that $(t,x,y) \to q(t,x,y)$ and $(t,x,y) \to u(t,x,y)$ are continuous on $(0,\infty)\times \R^d \times \R^d$. Using this and Proposition \ref{positivity} we obtain that for any $t> 0$, $x, y \in \R^d$ we have $u(t,x,y) \ge 0$. Denote $u_0(t,x,y) = u(t,x,y)$.

For $n \in \N$, $n \ge 1$, $t > 0$, $x, y \in \R^d$ let us define by induction 
$$
u_n(t,x,y) = \sum_{i= 1}^d \int_0^t \int_{\R^d} u_0(t-s,x,z) \int_{\R} u_{n-1}(s,z+a_i(z)w,y) 
(\nu_i(w) -\mu_i(w)) \, dw \, dz \, ds.
$$
By (\ref{Psi_nt}) we have
$$
\Psi_{n,t} f(x) = \int_{\R^d} u_n(t,x,y) f(y) \, dy.
$$
It follows that for any $t> 0$, $x \in \R^d$ we have 
$p(t,x,y) = e^{-\lambda_0 t} \sum_{n = 0}^{\infty} u_n(t,x,y)$ for almost all $y \in \R^d$ with respect to the Lebesgue measure. Denote $\theta_i(w) = \nu_i(w) -\mu_i(w)$ and $\tilde{p}(t,x,y) = e^{-\lambda_0 t} \sum_{n = 0}^{\infty} u_n(t,x,y)$. For any $t > 0$, $x, y \in \R^d$, $k \in \N$ put $u_0^{(k)}(t)(t,x,y) = u_0(t,x,y) \wedge k$. For $n \in \N$, $n \ge 1$, $k \in \N$, $t > 0$, $x, y \in \R^d$ let us define by induction 
$$
u_n^{(k)}(t,x,y) = \left( \sum_{i= 1}^d \int_0^t \int_{\R^d} u_0^{(k)}(t-s,x,z) \int_{\R} u_{n-1}^{(k)}(s,z+a_i(z)w,y) \theta_i(w) \, dw \, dz \, ds\right) \wedge k.
$$
It follows that $(t,x,y) \to u_n^{(k)}(t,x,y)$ are continuous on $(0,\infty)\times \R^d \times \R^d$ . Clearly, for any $t> 0$, $x, y \in \R^d$ we have $u_n^{(k)}(t,x,y) \ge 0$. We also have $\lim_{k \to \infty} u_n^{(k)}(t,x,y) = u_{n}(t,x,y)$. Hence $\tilde{p}(t,x,y) = \lim_{k \to \infty} e^{-\lambda_0 t} \sum_{n = 0}^{\infty} u_n^{(k)}(t,x,y)$. Therefore $(t,x,y) \to \tilde{p}(t,x,y)$ is lower semi-continuous on $(0,\infty)\times \R^d \times \R^d$ and for any Borel set $D \subset \R^d$ we have $\p^x(X_t \in D) = \int_{D} \tilde{p}(t,x,y) \, dy$.
\end{proof}

\textbf{ Acknowledgements.} 
We thank prof.  J. Zabczyk for communicating to us the problem of the strong Feller property for solutions of SDEs driven by  $\alpha$-stable processes with independent coordinates. We also thank A. Kulik for discussions on the problem treated in the paper.

\end{document}